\theoremstyle{plain}
\newtheorem{theorem}{Theorem}[section]
\newtheorem{lemma}[theorem]{Lemma}
\newtheorem{corollary}[theorem]{Corollary}
\newtheorem{prop}[theorem]{Proposition}
\newtheorem{algorithm}[theorem]{Algorithm}
\theoremstyle{remark}
\newtheorem{remark}[theorem]{Remark}
\newtheorem{example}[theorem]{Example}
\newtheorem*{note*}{Note}
\newtheorem*{remark*}{Remark}
\newtheorem*{example*}{Example}
\theoremstyle{definition}
\newtheorem*{definition*}{Definition}
\newtheorem*{hypothesis*}{Hypothesis}
\newtheorem*{assumptions*}{Assumptions}
\newtheorem{definition}[theorem]{Definition}
\newcommand{\Z}{\mathbb{Z}}
\newcommand{\R}{\mathbb{R}}
\newcommand{\Q}{\mathbb{Q}}
\newcommand{\F}{\mathbb{F}}
\newcommand{\Aut}{\mathrm{Aut}}
\newcommand{\Cl}{\mathrm{Cl}}
\newcommand{\GL}{\mathrm{GL}}
\newcommand{\rad}{\mathrm{rad}}
\newcommand{\im}{\mathrm{im}}
\newcommand{\coker}{\mathrm{coker}}
\newcommand{\nr}{\mathrm{nr}}
\newcommand{\st}{\mathrm{st}}
\newcommand{\iso}{\mathrm{iso}}
\newcommand{\LF}{\mathrm{LF}}
\renewcommand{\labelenumi}{(\roman{enumi})}
\numberwithin{equation}{section}
\newcommand{\calO}{\mathcal{O}}
\newcommand{\calM}{\mathcal{M}}
\newcommand{\fra}{\mathfrak{a}}
\newcommand{\frf}{\mathfrak{f}}
\newcommand{\frg}{\mathfrak{g}}
\newcommand{\frh}{\mathfrak{h}}
\newcommand{\frp}{\mathfrak{p}}
\newcommand{\frP}{\mathfrak{P}}
\newcommand{\frq}{\mathfrak{q}}
\newcommand{\frQ}{\mathfrak{Q}}
\newcommand{\bnr}{\overline{\mathrm{nr}}}
\newcommand{\lra}{\longrightarrow}
\newcommand{\Mat}{\mathrm{Mat}}
\newcommand{\mpar}[1]{}
\newcommand{\OC}{\calO_C}
\newcommand{\OL}{\calO_L}
\newcommand{\sseq}{\subseteq}
\title[Determination of the stably free cancellation property for orders]{Determination of the stably free \\
cancellation property for orders}
\author{Werner Bley}
\address{
Ludwig-Maximilians-Universität München\\
Theresienstr.\ 39\\
D-80333 M\"unchen\\
Germany}
\email{bley@math.lmu.de}
\urladdr{https://www.mathematik.uni-muenchen.de/$\sim$bley/}
\author{Tommy Hofmann}
\address{
Naturwissenschaftlich-Technische Fakult\"at\\
Universit\"at Siegen\\
Walter-Flex-Straße 3\\
57068 Siegen\\
Germany}
\email{tommy.hofmann@uni-siegen.de}
\author{Henri Johnston}
\address{
Department of Mathematics\\
University of Exeter\\
Exeter\\
EX4 4QE\\
United Kingdom
}
\email{H.Johnston@exeter.ac.uk}
\urladdr{https://mathematics.exeter.ac.uk/people/profile/index.php?username=hj241}
\subjclass[2020]{16H10, 16H20, 16Z05, 20C05, 20C10, 11R33, 11Y40}
\keywords{stably free cancellation, locally free cancellation}
\date{Version of 8th September 2025}
\begin{document}

\begin{abstract}
Let $K$ be a number field, let $A$ be a finite-dimensional semisimple $K$-algebra,
and let $\Lambda$ be an $\mathcal{O}_{K}$-order in $A$.
We give practical algorithms that determine whether $\Lambda$ 
has stably free cancellation (SFC). 
As an application, we determine all finite groups $G$ of order at most $383$
such that the integral group ring $\Z[G]$ has SFC.
\end{abstract}

\maketitle

\section{Introduction}

A ring $R$ is said to have \emph{stably free cancellation} (SFC)
if every finitely generated stably free (left) $R$-module is in fact free, that is, 
if $M$ is a finitely generated (left) $R$-module such that $M \oplus R^{\oplus n} \cong 
R^{\oplus (n+m)}$ for some $m,n \geq 0$, then $M \cong R^{\oplus m}$. 
In this article, we develop practical algorithms 
to determine whether a given order in a finite-dimensional semisimple algebra
over a number field has SFC.

Let $K$ be a number field with ring of integers $\mathcal{O}=\mathcal{O}_{K}$.
Let $A$ be a finite-dimensional semisimple $K$-algebra and let
$\Lambda$ be an $\mathcal{O}$-order in $A$.
For example, if $G$ is a finite group then the group ring $\mathcal{O}[G]$
is an $\mathcal{O}$-order in the group algebra $K[G]$. 
In addition to stably free cancellation for $\Lambda$, 
one can also consider \textit{locally free cancellation} (LFC),
whose definition we defer to \S \ref{subsec:SFCandLFC}.
If $\Lambda$ has LFC then it also has SFC. 
Note, however, that there exist orders with SFC but without LFC 
(see \cite{MR3403458} or \cite{MR4105795}).

We say that $A$ satisfies the \textit{Eichler condition} if no simple component of $A$
is a totally definite quaternion algebra (see \S \ref{subsec:totdefEichler}).
Jacobinski \cite{MR251063} showed that if $A$ satisfies the Eichler condition 
then $\Lambda$ has LFC and thus SFC. 
Smertnig--Voight \cite{MR4105795} showed that when 
$A$ is a totally definite quaternion algebra, the questions of whether
$\Lambda$ has SFC or LFC can be decided algorithmically.
They then used this to classify all orders in totally definite quaternion algebras that
have SFC, and of these, all those that have LFC. 
This classification builds on work of Vign\'eras \cite{MR429841}, Hallouin--Maire \cite{MR2244802} 
and Smertnig \cite{MR3403458}.

We can always decompose $A$ into a direct product of $K$-algebras 
$A= \prod_{i=1}^{k} A_{i}$, where $A_{1}$ satisfies the Eichler 
condition and $A_{2}, \ldots, A_{k}$ are all totally definite quaternion algebras. 
If $\Lambda$ decomposes as a product of $\mathcal{O}$-orders
$\Lambda= \prod_{i=1}^{k} \Lambda_{i}$
such that each $\Lambda_{i}$ is contained in $A_{i}$, then the questions of whether 
$\Lambda$ has SFC or LFC can be decided using the results discussed in the previous
paragraph, since both properties respect direct products. 
However, in many cases of interest, $\Lambda$ does not exhibit such a decomposition.
The desire to consider such cases motivates the following result, 
which is the main result of this article.

\begin{theorem}
There exists an algorithm that, given an arbitrary 
$\mathcal{O}$-order $\Lambda$ in a finite-dimensional semisimple $K$-algebra, 
decides whether $\Lambda$ has SFC.
\end{theorem}

In fact, we present three algorithms, all of which are practical in certain situations.
The first of these, Algorithm~\ref{alg:SFC-naive}, is only practical when $A$ is of `small' dimension. 
This relies on Theorem~\ref{thm:SFC-criterion} and
works by computing a maximal order $\mathcal{M}$ containing $\Lambda$, a suitable two-sided ideal $\mathfrak{f}$ of $\mathcal{M}$ contained in $\Lambda$, and the image of
$(\Lambda/\mathfrak{f})^{\times}$ in a certain ray class group, 
and then using this data to determine whether a finite number of explicit 
stably free `test lattices' are in fact free over $\Lambda$.
The main ideas employed utilize and build upon the results of \cite{bley-boltje} and
\cite{MR4493243}.
The second, Algorithm~\ref{alg:SFC-fail}, cannot prove that $\Lambda$ has SFC, but
can be used to show that it fails SFC, and in this situation is often much faster than Algorithm~\ref{alg:SFC-naive}. This works by sampling `random' test lattices
and checking if they fail to be free over $\Lambda$.
The third, Algorithm~\ref{alg:sfc-fiberproduct}, uses fiber products to reduce the problem 
for the original order to that for an order in an algebra of smaller dimension as well as the
computation of a certain subgroup of the unit group of a finite ring.
It uses Corollary~\ref{cor:Eichler-W-consequences}, which itself
crucially relies on results of Reiner--Ullom \cite{MR349828} (see Proposition \ref{prop:properties-of-Mu}) and Swan \cite{MR584612} (see Theorem~\ref{thm:Swan-Eichler}). 
These algorithms require several other algorithms of independent interest, 
including those that can be used to compute certain primary decompositions and
unit groups of finite rings (see \S \ref{sec:primary-decomposition}), and 
new freeness tests for certain modules over orders (see \S \ref{sec:hybrid-method}). 

Orders of particular interest are integral group rings $\Z[G]$ for $G$ a finite group.
For instance, the property of whether $\Z[G]$ has SFC or LFC has applications
in topology (see \cite{MR2012779}, \cite{MR4302168}, \cite{MR4788715}, for example) 
and in the determination of whether a finite Galois extension of $\Q$ has a normal 
integral basis (see \cite{MR1313877}, \cite{MR1827291}).

Let $G$ be a finite group and let 
$\mathbb{H}$ denote the quaternion division algebra over $\R$.
If no simple component of $\R[G]$ is isomorphic to $\mathbb{H}$,
then $\Q[G]$ satisfies the Eichler condition and so $\Z[G]$ has LFC (and thus SFC)
by the aforementioned result of Jacobinski. 
This condition on $G$ is equivalent to the requirement that $G$ has no quotient that 
is isomorphic to a \textit{binary polyhedral group}, that is, a generalized quaternion
group $Q_{4n}$ for $n \geq 2$,
or one of $\tilde{T}$, $\tilde{O}$ or $\tilde{I}$, the binary 
tetrahedral, octahedral and icosahedral groups.
In \S \ref{sec:review-canc-int-grp-rings}, 
we review important work of Swan \cite{MR703486}, Chen \cite{ChenPhD}, and Nicholson \cite{MR4299591} on SFC and LFC for $\Z[G]$ in the case that $G$ does have
a quotient isomorphic to a binary polyhedral group.

Before stating our own results on SFC for integral group rings, we first establish some further notation.
Let $C_{n}$ denote the cyclic group of order $n$.
Let $G_{(n,m)}$ denote the $m$th group of order $n$ in the
GAP \cite{GAP4} small groups library \cite{MR1935567}.

A special case of a result due to Swan and Fr\"ohlich says that 
if $\Z[G]$ has SFC then $\Z[G/N]$ has SFC for every quotient $G/N$,
or equivalently, if $\Z[G/N]$ fails SFC for some choice of $N$, then $\Z[G]$ also fails SFC
(see Theorem~\ref{thm:cancellation-under-map} and Lemma~\ref{lem:quot-group-ring}).
The following result is Theorem~\ref{thm:new-group-rings-fail-SFC},
in which the claim that each $\Z[G]$ fails SFC was proven using Algorithm~\ref{alg:SFC-fail};
the claim that $\Z[G/N]$ has SFC for each proper quotient $G/N$ 
can be interpreted as saying that each $G$ is minimal with respect to quotients.

\begin{theorem}\label{intro-thm:new-group-rings-fail-SFC}
Let $G$ be one of the following finite groups:
\begin{enumerate}
\item $Q_{4n} \times C_{2}$ for $2 \leq n \leq 5$,
\item $\tilde{T} \times C_{2}^{2}$, $\tilde{O} \times C_{2}$, $\tilde{I} \times C_{2}^{2}$, or
\item $G_{(32,14)}$, $G_{(36,7)}$, $G_{(64,14)}$, $G_{(100,7)}$. 
\end{enumerate}
Then $\Z[G]$ fails SFC but $\Z[G/N]$ has SFC for every proper quotient $G/N$.  
\end{theorem}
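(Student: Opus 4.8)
The plan is to prove the two assertions by quite different means: the failure of SFC for each listed $G$ by a direct algorithmic computation, and the minimality claim (SFC for every proper quotient) by a case analysis over quotients that combines Jacobinski's theorem, the classification results of \S\ref{sec:review-canc-int-grp-rings}, and, where necessary, the other algorithms of the paper.

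For the failure of SFC, I would run Algorithm~\ref{alg:SFC-fail} on the order $\Lambda = \Z[G]$ for each of the listed groups. Since that algorithm is only permitted to report that $\Lambda$ fails SFC and is proved correct when it does so, a successful run is a proof. Concretely it produces an explicit finitely generated stably free $\Z[G]$-module that is not free --- equivalently, a unimodular row in some $\Lambda^{(n)}$ that provably does not extend to an invertible matrix over $\Lambda$ --- together with the certificate of non-freeness that the algorithm relies on (for instance, a reduction to a finite quotient ring of $\Lambda$, or a reduced-norm obstruction). The only real issue here is efficiency: for $\tilde{I} \times C_2^2$ (order $480$) and $\tilde{O} \times C_2$ the algebra $\Q[G]$ is large and has several totally definite quaternion components, but this is precisely the regime in which Algorithm~\ref{alg:SFC-fail} is designed to outperform Algorithm~\ref{alg:SFC-naive}, so I expect it to terminate.

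For minimality, first note that every proper quotient $H = G/N$ of a group $G$ in the list satisfies $|H| \le |G|/2 \le 240$, so in particular $|H| \le 383$; using GAP one enumerates the finitely many isomorphism types of such $H$. For each one must confirm that $\Z[H]$ has SFC, which I would split into three cases. (i) If $\Q[H]$ satisfies the Eichler condition --- equivalently, $H$ has no quotient isomorphic to a binary polyhedral group --- then $\Z[H]$ has LFC, hence SFC, by Jacobinski's theorem. (ii) If $H$ is one of $Q_{4n}$ with $2 \le n \le 5$, $\tilde{T}$, $\tilde{O}$, $\tilde{I}$, or a small extension of one of these covered by the work of Swan, Chen and Nicholson reviewed in \S\ref{sec:review-canc-int-grp-rings}, quote the relevant result. (iii) For any remaining $H$, apply Algorithm~\ref{alg:SFC-naive} or Algorithm~\ref{alg:sfc-fibreproduct} to verify SFC directly. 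If the paper's determination of SFC for all groups of order at most $383$ is already available at this point, one may instead simply read minimality off from that classification.

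The hard part is computational rather than conceptual: granted correctness of the three algorithms, the theorem is a finite, if substantial, verification. The two places I would expect to stress the implementation are the witness search inside Algorithm~\ref{alg:SFC-fail} for $\tilde{I} \times C_2^2$ and $\tilde{O} \times C_2$, and, in case (iii), the fibre-product reduction together with the unit-group computations in finite rings required by Algorithm~\ref{alg:sfc-fibreproduct} for those quotients that are neither Eichler nor covered by the existing literature.
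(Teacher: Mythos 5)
Your overall plan matches the paper's, but there is a genuine practical gap in the failure-of-SFC half. You propose to run Algorithm~\ref{alg:SFC-fail} directly on $\Lambda = \Z[G]$ and ``expect it to terminate.'' The paper explicitly warns against this: Remark~\ref{rmk:projection-alg-fail} explains that Algorithm~\ref{alg:SFC-fail} relies on an expensive freeness test and is only practical for orders in algebras of moderate dimension --- for $G = \tilde{I} \times C_2^2$ we have $\dim_{\Q} \Q[G] = 480$, well out of range. What the paper actually does is project $\Z[G]$ onto a smaller order in a quotient algebra of $\Q[G]$, chosen so as to retain all the non-Eichler simple components: for every listed $G$ except $\tilde{O} \times C_2$ this is $e\Z[G]$, where $e$ is the central idempotent cutting out the direct sum of the commutative and totally definite quaternion components of $\Q[G]$; for $\tilde{O} \times C_2$ it is $\Z[G]/\mathrm{Tr}_N\Z[G]$ with $G/N \cong S_4 \times C_2$. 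The algorithm is run on that quotient order, shown to fail SFC, and then Theorem~\ref{thm:cancellation-under-map} propagates the failure to $\Z[G]$. Without this projection the computations will not finish for the larger groups, so your plan as written would stall. A secondary consequence: the explicit stably free non-free module the algorithm produces lives over the quotient order, not over $\Z[G]$; it is Theorem~\ref{thm:cancellation-under-map}, not a lifted witness, that yields the failure for $\Z[G]$ itself.

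On minimality your sketch is sound but can be sharpened. Lemma~\ref{lem:quot-group-ring} reduces the check to \emph{maximal} proper quotients, after which the only ones not disposed of by Jacobinski's theorem or Swan's Theorem~\ref{thm:bp-canc-group-rings} are $\tilde{T} \times C_2$, $\tilde{I} \times C_2$, $G_{(16,4)}$, and $G_{(32,10)}$. The first two require the paper's new Theorems~\ref{thm:tildeT-times-C2-has-SFC} and \ref{thm:new-groups-with-SFC} (proved via the fibre-product Algorithm~\ref{alg:sfc-fibreproduct}); the last two follow from Nicholson's Theorems~\ref{thm:Nicholson-C} and \ref{thm:Nicholson-B} with no new computation. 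Also, your suggested fallback of reading minimality off the $|G| \le 383$ classification is circular: Theorem~\ref{intro-thm:classification-383} is itself deduced from the theorem you are trying to prove.
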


Note that Swan \cite{MR703486} already proved that $\Z[Q_{8} \times C_{2}]$ fails SFC and Chen \cite{ChenPhD} already proved that $\Z[G]$ fails SFC for
$G=Q_{12} \times C_{2}$, $Q_{16} \times C_{2}$, $Q_{20} \times C_{2}$, $G_{(36,7)}$, $G_{(100,7)}$. Since Swan \cite{MR703486} proved that $\Z[Q_{4n}]$ fails SFC
for $n \geq 6$, we obtain the following, which is the 
most general result to date on integral group rings that fail SFC.

\begin{corollary}\label{intro-cor:group-ring-fail-SFC}
Let $G$ be a finite group with a quotient isomorphic to a group listed in 
Theorem~\ref{intro-thm:new-group-rings-fail-SFC}
or to $Q_{4n}$ for some $n \geq 6$. 
Then $\Z[G]$ fails SFC. 
\end{corollary}

Using Algorithm~\ref{alg:sfc-fiberproduct} with judicious choices of fiber products, 
we prove the following result on new cases of integral group rings with SFC (this
is Theorem~\ref{thm:new-groups-with-SFC}). 

\begin{theorem}\label{intro-thm:new-groups-with-SFC}
Let $G$ be one of the following finite groups:
\[
\tilde{T} \times C_{2}, \, 
\tilde{T} \times Q_{12}, \, 
\tilde{T} \times Q_{20}, \,
Q_{8} \rtimes \tilde{T}, \,
Q_{8} \rtimes Q_{12}, \,
\tilde{I} \times C_{2}, \, 
G_{(192, 183)}, \,
G_{(384, 580)}, \,
G_{(480, 962)}.
\]
Then $\Z[G]$ has SFC.
\end{theorem}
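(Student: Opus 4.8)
The plan is to apply Algorithm~\ref{alg:sfc-fibreproduct} to each group $G$ in the list, choosing in each case a convenient normal subgroup $N$ so that the induced fibre product decomposition of $\Z[G]$ reduces the SFC question to an algebra of smaller dimension together with a unit-group computation in a finite ring. First I would, for each $G$, decompose $\Q[G]$ into its simple components and identify exactly which components are totally definite quaternion algebras (i.e.\ the obstruction to the Eichler condition); for these groups the non-Eichler part comes entirely from the binary polyhedral quotients $\tilde T$, $\tilde I$, $Q_{12}$, $Q_{20}$, $Q_8$ appearing as direct or semidirect factors, so the relevant totally definite quaternion components are the familiar ones (the Hurwitz order in $\left(\tfrac{-1,-1}{\Q}\right)$ for $\tilde T$ and $Q_8$, and the icosahedral order over $\Q(\sqrt 5)$ for $\tilde I$, etc.). The key point, which one expects from the review in \S\ref{sec:review-canc-int-grp-rings} of Swan, Chen and Nicholson, is that each of the small building blocks $\Z[\tilde T]$, $\Z[\tilde I]$, $\Z[Q_{12}]$, $\Z[Q_{20}]$ already has SFC (indeed LFC), so the fibre-product reduction has a chance of propagating SFC upward.

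Next, for a chosen $N$ I would write $\Z[G]$ as the pullback of $\Z[G/N]$ and a second order $\Lambda'$ over a common quotient ring, exactly as prepared for Algorithm~\ref{alg:sfc-fibreproduct}: one verifies that $\Z[G/N]$ has SFC (by induction on the list, or by Jacobinski's theorem when $\Q[G/N]$ satisfies the Eichler condition), that $\Lambda'$ lies in an algebra of strictly smaller dimension for which SFC is already known or directly checkable by Algorithm~\ref{alg:SFC-naive}, and then one runs the finite unit-group computation that the algorithm requires to certify that stably free modules over the pullback are free. For the direct products $\tilde T\times C_2$, $\tilde T\times Q_{12}$, $\tilde T\times Q_{20}$, $\tilde I\times C_2$ the natural choice is $N$ the kernel of projection onto one factor, splitting off the $C_2$- or $Q_{4n}$-part (which satisfies the Eichler condition or is a known LFC order); for the semidirect products $Q_8\rtimes\tilde T$ and $Q_8\rtimes Q_{12}$ and for the three large library groups $G_{(192,183)}$, $G_{(384,580)}$, $G_{(480,962)}$ one must identify a suitable characteristic subgroup whose quotient is one of the already-treated groups, and here the choice of $N$ is the part that takes real work.

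The main obstacle I expect is twofold. First, making the fibre-product square conductor-square (so that Milnor patching and hence Algorithm~\ref{alg:sfc-fibreproduct} apply cleanly) and then actually carrying out the finite computation: the quotient ring $\Lambda/\frf$ can be a noncommutative finite ring of substantial size, and computing the image of the unit group of $\Lambda'$ together with the relevant $\K_1$-type map — i.e.\ deciding whether the connecting map in Milnor's exact sequence has the surjectivity needed for every stably free module to be free — is exactly the step that needs the primary-decomposition and finite-unit-group algorithms of \S\ref{sec primary decomposition}. Second, for the three groups of order $192$, $384$, $480$ the dimension of $\Q[G]$ is large enough that a single reduction is unlikely to suffice; one anticipates iterating Algorithm~\ref{alg:sfc-fibreproduct} through a chain of intermediate quotients, and one must check at each stage that SFC genuinely lifts (not merely that it is consistent) — equivalently, that no stably free non-free module is created in the patching. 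Once the chain of reductions terminates in orders covered either by Jacobinski's theorem or by the Smertnig--Voight classification of totally definite quaternion orders with SFC, the conclusion $\Z[G]$ has SFC follows. I would present the computations in a table listing, for each $G$, the chosen $N$, the resulting smaller-dimensional algebra, and the outcome of the finite unit-group test.
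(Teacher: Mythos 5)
Your high-level plan — reduce via fibre products and Algorithm~\ref{alg:sfc-fibreproduct} — matches the paper's strategy, but your specific choices of $N$ would break the crucial hypothesis of that algorithm, and you miss two ingredients the paper actually uses. Algorithm~\ref{alg:sfc-fibreproduct} requires that $A_1 = K\Lambda_1$ satisfy the Eichler condition. If, for $G=\tilde{T}\times C_2$, you take $N=C_2$ (or $N=\tilde{T}$), then the two legs of the fibre product are $\Z[\tilde{T}]$ and $\Gamma\cong\Z[\tilde{T}]$, and \emph{neither} $\Q$-algebra is Eichler (both have a totally definite quaternion component coming from $\tilde{T}$); so the algorithm simply does not apply. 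The same obstruction arises for $\tilde{T}\times Q_{12}$, $\tilde{T}\times Q_{20}$, $\tilde{I}\times C_2$ if you split off the obvious $C_2$- or $Q_{4n}$-factor: the resulting quotient $H=\tilde{T}$ or $\tilde{I}$ is binary polyhedral and fails Eichler, and the complementary $\Gamma$ also retains a quaternionic component. The paper circumvents this by choosing $N$ of order $2$ so that $\Q[G/N]$ \emph{does} satisfy Eichler: typically a ``diagonal'' central subgroup, not a subgroup of one of the direct factors (e.g.\ for $\tilde{I}\times C_2$ one takes $N=Z(\tilde{I})\times\{1\}$ to get $H\cong A_5\times C_2$; for $\tilde{T}\times Q_{12}$ the diagonal subgroup of the centre gives $H\cong G_{(144,127)}$). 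This choice also halves the dimension of the algebra in which $\Gamma=\Z[G]/\mathrm{Tr}_N\Z[G]$ lives, which is what makes the subsequent computations feasible.

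Two further ideas are absent from your sketch. First, even with a good $N$, $\tilde{T}\times C_2$ cannot be handled by Algorithm~\ref{alg:sfc-fibreproduct}: the paper treats this case separately (Theorem~\ref{thm:tildeT-times-C2-has-SFC}) by applying Proposition~\ref{prop:ZHtimesC2} and then explicitly producing a large enough subgroup of $V_{\tilde{T}}\subseteq\F_2[\tilde{T}]^\times$ using Ritter--Segal units, verifying $|V_{\tilde{T}}|=3\cdot 2^{17}$. Second, for the groups where a good $N$ exists, the paper does not run the full unit-group computation for the initial fibre product: it instead invokes Proposition~\ref{prop:ZG-SFC-iff-Gamma-SFC}, a criterion that only compares sizes of locally free class groups and reduces the question ``$\Z[G]$ has SFC?'' to ``$\Gamma$ has SFC?''; the heavy unit-group machinery of \S\ref{sec primary decomposition} and the freeness tests of \S\ref{sec:hybrid-method} are then applied only to $\Gamma$ (or recursively via Algorithm~\ref{alg:sfc-fibreproduct}). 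Without these three pieces — the careful choice of $N$, the Ritter--Segal computation for $\tilde{T}\times C_2$, and the class-group-size shortcut of Proposition~\ref{prop:ZG-SFC-iff-Gamma-SFC} — the programme you describe would stall at its first step.
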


By combining Corollary~\ref{intro-cor:group-ring-fail-SFC} and 
Theorem~\ref{intro-thm:new-groups-with-SFC}, and using the 
GAP \cite{GAP4} small groups library \cite{MR1935567}, we obtain the following result
(see Corollary~\ref{cor:classification-383}).

\begin{theorem}\label{intro-thm:classification-383}
Let $G$ be a finite group with $|G| \leq 383$.  
Then $\Z[G]$ has SFC if and only if $G$ has no quotient of the form
\begin{enumerate}
\item $Q_{4n} \times C_{2}$ for $2 \leq n \leq 5$,
\item $\tilde{T} \times C_{2}^{2}$, $\tilde{O} \times C_{2}$, $\tilde{I} \times C_{2}^{2}$,
\item $G_{(32,14)}$, $G_{(36,7)}$, $G_{(64,14)}$, $G_{(100,7)}$, or
\item $Q_{4n}$ for $6 \leq n \leq 95$.
\end{enumerate} 
\end{theorem}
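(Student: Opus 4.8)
The plan is to prove the two implications separately: the implication that SFC forces the absence of a forbidden quotient reduces at once to results already established, while the converse requires a finite enumeration of the groups of order at most $383$, carried out with the GAP small groups library \cite{GAP4, MR1935567}.

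For the first direction I argue by contraposition. Suppose $G$ has a quotient $G/N$ of one of the forms (i)--(iv). For (i)--(iii), $\Z[G/N]$ fails SFC by Theorem \ref{intro-thm:new-group-rings-fail-SFC}; for (iv), $\Z[G/N]$ fails SFC because $\Z[Q_{4n}]$ fails SFC for every $n \geq 6$ by Swan \cite{MR703486}. In either case $\Z[G]$ fails SFC by Theorem \ref{thm:cancellation-under-map} together with Lemma \ref{lem:quot-group-ring}; equivalently, this is Corollary \ref{intro-cor:group-ring-fail-SFC} specialised to $|G| \leq 383$. The range $6 \leq n \leq 95$ in (iv) is exactly the set of $n \geq 6$ for which $|Q_{4n}| = 4n \leq 383$: larger generalised quaternion quotients, as well as $\tilde{I} \times C_{2}^{2}$ of order $480$, cannot occur as quotients of a group of order at most $383$, so including them in the list would be harmless but vacuous.

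For the converse, let $G$ be a group with $|G| \leq 383$ and no quotient of the forms (i)--(iv); I must show $\Z[G]$ has SFC. If $\Q[G]$ satisfies the Eichler condition --- equivalently, $G$ has no binary polyhedral quotient --- then $\Z[G]$ has LFC, hence SFC, by Jacobinski \cite{MR251063}. Otherwise $G$ has a quotient isomorphic to a binary polyhedral group, and since $Q_{4n}$ with $n \geq 6$ is excluded by (iv) this quotient must be one of $Q_{8}$, $Q_{12}$, $Q_{16}$, $Q_{20}$, $\tilde{T}$, $\tilde{O}$, $\tilde{I}$. I would then run through the GAP small groups library to compile the (finite, explicit) list of all such $G$ and verify SFC for each entry. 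Many entries are dealt with by the work reviewed in \S \ref{sec:review-canc-int-grp-rings} (Swan \cite{MR703486}, Chen \cite{ChenPhD}, Nicholson \cite{MR4299591}); the remaining non-Eichler groups --- among them $\tilde{T} \times C_{2}$, $Q_{8} \rtimes \tilde{T}$, $Q_{8} \rtimes Q_{12}$, $\tilde{T} \times Q_{12}$, $\tilde{I} \times C_{2}$ and $G_{(192,183)}$, which are exactly the groups of order at most $383$ appearing in Theorem \ref{intro-thm:new-groups-with-SFC} --- are settled by Algorithm \ref{alg:sfc-fibreproduct} with the fibre-product choices used to prove that theorem.

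The main obstacle lies in this final verification. For the non-Eichler groups Jacobinski's theorem does not apply, and because SFC is inherited by quotient group rings but not in general by subgroups, there is no cheap reduction to groups of smaller order; one must actually carry out the fibre-product reduction of Algorithm \ref{alg:sfc-fibreproduct}, together with the attendant computation of a subgroup of the unit group of a finite ring, for each of these groups. A secondary, organisational difficulty is to certify that the enumeration is exhaustive: that every group of order at most $383$ without a forbidden quotient has been shown to have SFC, and --- as a consistency check --- that Algorithm \ref{alg:SFC-fail} applied to these groups never reports a failure of SFC.
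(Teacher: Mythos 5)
Your proposal takes the same route as the paper: the failure direction is exactly Corollary~\ref{intro-cor:group-ring-fail-SFC}, and the positive direction is Jacobinski plus a GAP-driven enumeration of the non-Eichler cases. The paper, however, does not prove the statement directly; it records it as an immediate specialisation of Theorem~\ref{thm:part-class-1023}, which handles $|G|\leq 1023$ with an explicit list of $49$ unresolved groups, all of order $\geq 384$, so nothing extra is needed for $|G|\leq 383$. More importantly, you are too vague about how the enumeration is actually carried through. You write that ``many entries are dealt with by the work reviewed in \S\ref{sec:review-canc-int-grp-rings}'' and that the remainder are ``settled by Algorithm~\ref{alg:sfc-fibreproduct}''; but the crucial point in the paper's proof of Theorem~\ref{thm:part-class-1023} is the precise battery of cheap structural criteria applied in sequence --- the $\mathbb{H}$-multiplicity conditions of Theorems~\ref{thm:Nicholson-B} and~\ref{thm:Nicholson-C} and Corollary~\ref{cor:lifts-of-tildeTxC2}, together with Theorems~\ref{thm:TnxIm} and~\ref{thm:direct-products} --- which reduce the groups needing heavy computation to the nine listed in Theorem~\ref{thm:new-groups-with-SFC}. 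Without naming these criteria, ``verify SFC for each entry'' leaves open a computationally infeasible task (Algorithm~\ref{alg:sfc-fibreproduct} cannot realistically be run on every non-Eichler group of order up to $383$), and your claim that the six groups you list are exactly the ones remaining is an assertion that requires the same machine-assisted certification you flag as the difficulty. The idea is right, but as written the positive direction has a real precision gap: you need to spell out which reduction theorems do the heavy lifting and verify exhaustiveness against that concrete list of criteria, as the paper does.
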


This represents a significant advance in the state-of-the-art, since before the results of this article, 
such a classification was only known for groups $G$ with $|G| \leq 31$
(for example, it was not known whether $\Z[G_{(32,14)}]$ had SFC).
In fact, Theorem~\ref{intro-thm:classification-383} is a special case of 
Theorem~\ref{thm:part-class-1023}, which gives a classification for $|G| \leq 1023$, but
with $49$ explicitly listed gaps. In other words, of the $11759892$ groups $G$ with 
$|G| \leq 1023$, there are now only $49$ for which it is not known whether $\Z[G]$ has SFC.

Nicholson \cite[Theorem 1.7]{nicholson2024cancellation}
has now shown that $\Z[\tilde{T} \times C_{2}]$ in fact has LFC, 
using our result that it has SFC 
(i.e.\ part of Theorem~\ref{intro-thm:new-groups-with-SFC}) as an input.
By combining this with further new results on integral group rings with LFC, and
using Theorem~\ref{intro-thm:new-group-rings-fail-SFC} as a crucial input, he has given 
classifications of all integral group rings $\Z[G]$ with LFC and SFC, provided that 
either (i) $G$ has no quotient isomorphic to $\tilde{T}$, $\tilde{O}$ or $\tilde{I}$, or 
(ii) $G$ does have a quotient isomorphic to $C_{2} \times C_{2}$
(see \cite[Theorems A and B]{nicholson2024cancellation}).
We remark that in the case of integral group rings,
the LFC property discussed here is equivalent to the projective cancellation (PC) property
discussed in loc.\ cit.

\subsection*{Organization}
This article is organized as follows. 
In \S \ref{sec:lattices-and-orders}, we give preliminaries on lattices and orders.
We review cancellation properties for orders in \S \ref{sec:review-canc-orders}, and in \S \ref{sec:review-canc-int-grp-rings}, we specialize to the case 
of integral group rings.
In \S \ref{sec:fiber-products}, we give reduction criteria for SFC for orders using
fiber products, and in \S \ref{sec:app-to-int-group-rings} we consider 
applications to integral group rings.
Next, in \S \ref{sec:critsfc}, we give criteria for freeness and stable freeness
of lattices over orders.
We develop the three algorithms discussed above for determining whether an order has SFC in \S \ref{sec:alg-SFC}, \S \ref{sec:withoutSFC} and \S \ref{sec:det-SFC-fiber-prods}. In \S \ref{sec:primary-decomposition}, we give algorithms to compute certain primary decompositions and
unit groups of finite rings, and in \S \ref{sec:hybrid-method} we give
new freeness tests for certain modules over orders.
Our main results on SFC 
for integral group rings, which include Theorem~\ref{intro-thm:classification-383}
as a special case, are proven in \S \ref{sec:further-SFC-for-int-group-rings}.  
Finally, in Appendix \ref{appendix}, we discuss the implementation of the algorithms
of this article, and give relevant data and timings.

\subsection*{Acknowledgments}
The authors are grateful to John Nicholson for helpful comments and discussions, 
including suggestions for which integral group rings to investigate initially. 
They would also like to thank Nigel Byott for several corrections and suggestions, 
as well as an anonymous referee for suggesting a number of improvements.
For the purpose of open access, the authors have applied a Creative Commons Attribution (CC BY) license to any author accepted manuscript version arising.

\subsection*{Funding}
Hofmann gratefully acknowledges support by the Deutsche Forschungsgemeinschaft —
Project-ID 286237555 – TRR 195; and Project-ID 539387714.

\subsection*{Notation and conventions}
All rings are assumed to be associative and unital. 
All modules are assumed to be left modules unless otherwise stated. 
For notational convenience, we shall henceforth 
abuse notation by using the symbol $\oplus$
to denote direct products of orders and algebras. 
Denote the center of a ring $R$ by $Z(R)$. 
For a division algebra $D$ and a positive integer $k$, let $\Mat_{k}(D)$
denote the ring of all $k \times k$ matrices with entries in $D$.
We call $\Mat_{k}(D)$ a \emph{full matrix algebra} over $D$. 

\section{Preliminaries on lattices and orders}\label{sec:lattices-and-orders}

For further background on lattices and orders, we refer the reader to \cite{curtisandreiner_vol1,Reiner2003}.

\subsection{Lattices and orders}\label{subsec:lattices-and-orders}
Let $R$ be a Dedekind domain with field of fractions $K$. 
An \textit{$R$-lattice} is a finitely generated torsion-free $R$-module, or equivalently,
a finitely generated projective $R$-module.
For any finite-dimensional $K$-vector space $V$, an \textit{$R$-lattice in $V$} is a finitely generated $R$-submodule $M$ of  $V$.
We say that $M$ is a \textit{full} $R$-lattice in $V$ if $K M=V$. 
We may identify $K \otimes_{R} M$ with $KM$ via the map $k \otimes m \mapsto km$.

Now let $A$ be a finite-dimensional semisimple $K$-algebra and let
$\Lambda$ be an \textit{$R$-order} in $A$, that is, a subring of $A$ that is an $R$-lattice such that $K\Lambda = A$. A \textit{$\Lambda$-lattice} is a left $\Lambda$-module that is also an $R$-lattice.

\subsection{Localizations and completions}\label{subsec:localizations-and-completions}
Henceforth assume that $K$ is a number field. 

Let $\mathfrak{p}$ range over all maximal ideals of $R$. 
Let $R_{\mathfrak{p}}$ denote the localization of $R$ at $\mathfrak{p}$, 
that is, the ring of $\mathfrak{p}$-integral elements in $K$.  
For an $R$-lattice $M$, define the localization $M_{\mathfrak{p}}$ to be the $R_{\mathfrak{p}}$-lattice 
$R_{\mathfrak{p}} \otimes_{R} M$. 
Localising a $\Lambda$-lattice $X$ at $\mathfrak{p}$ yields a 
$\Lambda_{\mathfrak{p}}$-lattice $X_{\mathfrak{p}}$.
Two $\Lambda$-lattices $X$ and $Y$ are said to be \textit{locally isomorphic}, or
\textit{in the same genus}, 
if there is an isomorphism 
of $\Lambda_{\mathfrak{p}}$-lattices $X_{\mathfrak{p}} \cong Y_{\mathfrak{p}}$ for every $\mathfrak{p}$,
in which case we write $X \vee Y$.

Let $\widehat{R}_{\mathfrak{p}}$ 
denote the $\mathfrak{p}$-adic completion of $R$.
For an $R$-lattice $M$, define the completion $\widehat{M}_{\mathfrak{p}}$ to be the 
$\widehat{R}_{\mathfrak{p}}$-lattice $\widehat{R}_{\mathfrak{p}} \otimes_{R} M$. 
Completing a $\Lambda$-lattice $X$ at $\mathfrak{p}$ yields a 
$\widehat{\Lambda}_{\mathfrak{p}}$-lattice $\widehat{X}_{\mathfrak{p}}$.
Given  $\Lambda$-lattices $X$ and $Y$,
we have that $X_{\mathfrak{p}} \cong Y_{\mathfrak{p}}$ as $\Lambda_{\mathfrak{p}}$-lattices
if and only if $\widehat{X}_{\mathfrak{p}} \cong \widehat{Y}_{\mathfrak{p}}$ as 
$\widehat{\Lambda}_{\mathfrak{p}}$-lattices
(see \cite[(30.17)]{curtisandreiner_vol1} or \cite[(18.2)]{Reiner2003}).
Therefore $X \vee Y$ if and only if 
there is an isomorphism 
of $\widehat{\Lambda}_{\mathfrak{p}}$-lattices $\widehat{X}_{\mathfrak{p}} \cong \widehat{Y}_{\mathfrak{p}}$ for every $\mathfrak{p}$.

\subsection{Locally free lattices and the principal genus}\label{subsec:lf-pg}
A $\Lambda$-lattice $X$ is said to be \textit{locally free of rank $n$} if
$X \vee \Lambda^{\oplus n}$.
Every locally free $\Lambda$-lattice is projective over $\Lambda$ by 
\cite[(8.19)]{curtisandreiner_vol1}. 

Define the \textit{principal genus} $g(\Lambda)$ to be the class 
of locally free $\Lambda$-lattices of rank $1$. 
By \cite[(31.7)]{curtisandreiner_vol1}, given $Y_{1}, Y_{2}, Y_{3} \in g(\Lambda)$, there
exists $Y_{4} \in g(\Lambda)$ such that 
\begin{equation}\label{eq:principal-genus-eq}
Y_{1} \oplus Y_{2} \cong Y_{3} \oplus Y_{4}. 
\end{equation}
Moreover, by \cite[(31.14)]{curtisandreiner_vol1}, given $Y \vee \Lambda^{\oplus n}$,
there exists $X \in g(\Lambda)$ such that 
\begin{equation}\label{eq:genus-reduce-to-rank-1}
Y \cong \Lambda^{\oplus (n-1)} \oplus X. 
\end{equation}
For $X \in g(\Lambda)$ we write $[X]_{\iso}$ for the isomorphism class of $X$
and define 
\[
\LF_{1}(\Lambda) = \{ [X]_{\iso} : X \in g(\Lambda) \}.
\]
Note that $\LF_{1}(\Lambda)$ is finite by virtue of the 
Jordan--Zassenhaus theorem \cite[(26.4)]{Reiner2003}.

\subsection{Stable isomorphism}\label{subsec:stable-iso}
Let $P(\Lambda)$ denote the class of finitely generated projective $\Lambda$-modules.
For $X, Y \in P(\Lambda)$, we say that $X$ and $Y$ are \textit{stably isomorphic} if
\[
X \oplus \Lambda^{\oplus m} \cong Y \oplus \Lambda^{\oplus m} \textrm{ for some } m \geq 0.
\]
A module $X \in P(\Lambda)$ is said to be \textit{stably free of rank $n$} if
it is stably isomorphic to $\Lambda^{\oplus n}$.
For $X \in P(\Lambda)$, let $[X]_{\st}$ denote the stable isomorphism class of $X$.
Note that by \cite[(6.15)]{curtisandreiner_vol1} and the discussion in \S \ref{subsec:localizations-and-completions}, stably isomorphic $\Lambda$-lattices are locally isomorphic.
In particular, stably free lattices are locally free.

\subsection{Locally free class groups}\label{subsec:lfcg}
We give an overview of the \textit{locally free class group of $\Lambda$},
for which there are several equivalent definitions (see \cite[\S 49A]{curtisandreiner_vol2}).
Here we follow \cite[(49.10)]{curtisandreiner_vol2}, to which we refer the reader for further details.
Let
\[
\Cl(\Lambda) = \{ [X]_{\st} : X \in g(\Lambda) \}.
\]
By \eqref{eq:principal-genus-eq}, given $X_{1}, X_{2} \in g(\Lambda)$ 
there exists $X_{3} \in g(\Lambda)$ such that $X_{1} \oplus X_{2} \cong \Lambda \oplus X_{3}$.
It is straightforward to check that the stable isomorphism class 
$[X_{3}]_{\st}$ is uniquely determined by the classes $[X_{1}]_{\st}$ and $[X_{2}]_{\st}$. 
We define a binary operation on $\Cl(\Lambda)$ by
\[
[X_{1}]_{\st} + [X_{2}]_{\st} = [X_{3}]_{\st} 
\textrm{ whenever } 
X_{1} \oplus X_{2} \cong \Lambda \oplus X_{3},
\]
where $X_{1}, X_{2}, X_{3} \in g(\Lambda)$. 
This is a well-defined commutative and associative operation, with identity element 
$[\Lambda]_{\st}$. 
Moreover, inverses exist since by \eqref{eq:principal-genus-eq}, 
given $X \in g(\Lambda)$ there exists $X' \in g(\Lambda)$ such that $X \oplus X' \cong \Lambda^{\oplus 2}$.
Therefore $\Cl(\Lambda)$ is an abelian group. 
Moreover, $\Cl(\Lambda)$ is finite since
$\LF_{1}(\Lambda)$ is finite and the canonical map $\LF_{1}(\Lambda) \rightarrow \Cl(\Lambda)$
defined by $[X]_{\iso} \mapsto [X]_{\st}$ is surjective.

\section{Review of cancellation properties for orders}\label{sec:review-canc-orders}

Let $R$ be a Dedekind domain whose quotient field $K$ is a number field.
Henceforth every $R$-order is assumed to be in a finite-dimensional semisimple $K$-algebra.

\subsection{Stably free and locally free cancellation}\label{subsec:SFCandLFC}
Let $\Lambda$ be an $R$-order.
Then $\Lambda$ is said to have \textit{stably free cancellation} (SFC) 
if every finitely generated stably free $\Lambda$-module is in fact free, that is, 
if for every finitely generated $\Lambda$-module $X$ we have
\begin{equation}\label{eq:SFC}
X \oplus \Lambda^{\oplus m} \cong \Lambda^{\oplus (m+n)} \textrm{ for some } m,n \geq 0 
\implies X \cong \Lambda^{\oplus n}.
\end{equation}
Similarly, $\Lambda$ is said to have \textit{locally free cancellation} (LFC) if for every pair of 
locally free $\Lambda$-lattices $X$ and $Y$ we have
\begin{equation}\label{eq:LFC}
X \oplus \Lambda^{\oplus m} \cong Y \oplus \Lambda^{\oplus m} \textrm{ for some } m \geq 0 \implies X \cong Y. 
\end{equation}

Since locally free $\Lambda$-lattices are projective (see \S \ref{subsec:lf-pg}), 
one can easily deduce that $\Lambda$ has LFC if and only if it has the apparently stronger property 
that justifies the name: for every triple of locally free $\Lambda$-lattices $X,Y,Z$ we have 
\[
X \oplus Z \cong Y \oplus Z \implies X \cong Y.
\]

If $X$ is a finitely generated $\Lambda$-module such that  
$X \oplus \Lambda^{\oplus m}
\cong \Lambda^{\oplus (m+n)} \textrm{ for some } m,n \geq 0$
then it is straightforward to see that $X$ is a $\Lambda$-lattice; 
since $X$ is stably free it is also locally free (see \S \ref{subsec:stable-iso}). 
Therefore if $\Lambda$ has LFC then it also has SFC.
Note, however, that there exist orders with SFC but without LFC 
(see \cite{MR3403458} or \cite{MR4105795}).

By \eqref{eq:genus-reduce-to-rank-1}, $\Lambda$ has SFC if and only if \eqref{eq:SFC} holds 
for all $X \in g(\Lambda)$. 
Similarly, $\Lambda$ has LFC if and only if \eqref{eq:LFC} holds for all $X,Y \in g(\Lambda)$.
Hence the canonical surjective map $\LF_{1}(\Lambda) \rightarrow \Cl(\Lambda)$
defined by $[X]_{\iso} \mapsto [X]_{\st}$ is bijective precisely when $\Lambda$ has LFC.
Moreover, $\Lambda$ has SFC precisely when the fiber above $0$ has exactly one element,
that is, when every stably free $\Lambda$-lattice of rank $1$
is in fact free (of rank $1$).

\begin{theorem}[Fr\"ohlich, Swan]\label{thm:cancellation-under-map}
Let $\Lambda \rightarrow \Gamma$ be a map of $R$-orders 
inducing a surjection $K\Lambda \rightarrow K\Gamma$ of $K$-algebras.
If $\Lambda$ has SFC (resp.\ LFC) then $\Gamma$ has SFC (resp.\ LFC).
\end{theorem}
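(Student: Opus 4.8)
The plan is to reduce everything to a statement about locally free class groups and the surjectivity of the induced map on them. Write $\varphi\colon\Lambda\to\Gamma$ for the given map and $\pi\colon K\Lambda\to K\Gamma$ for the induced surjection of $K$-algebras; since $\Lambda$ and $\Gamma$ are orders we have $K\Lambda=A$ and $K\Gamma=B$ with $B$ a semisimple quotient of $A$. First I would set up the functor $\Gamma\otimes_{\Lambda}(-)$ (equivalently, base change along $\varphi$) from $\Lambda$-lattices to $\Gamma$-lattices. The key observation is that this functor sends $\Lambda^{(n)}$ to $\Gamma^{(n)}$, and, because it is additive and right exact, sends locally free $\Lambda$-lattices of rank $n$ to locally free $\Gamma$-lattices of rank $n$: one checks this locally at each prime $\mathfrak{p}$, where $\widehat{\Lambda}_{\mathfrak{p}}$-projectivity and the identity $\Gamma\otimes_\Lambda\widehat{\Lambda}_{\mathfrak p}\cong\widehat{\Gamma}_{\mathfrak p}$ give $\widehat{\Gamma}_{\mathfrak p}\otimes_{\widehat{\Lambda}_{\mathfrak p}}\widehat{X}_{\mathfrak p}\cong\widehat{\Gamma}_{\mathfrak p}^{(n)}$. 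Hence base change restricts to a map $g(\Lambda)\to g(\Gamma)$ compatible with direct sums, inducing a group homomorphism $\Cl(\Lambda)\to\Cl(\Gamma)$ sending $[X]_{\st}\mapsto[\Gamma\otimes_\Lambda X]_{\st}$.

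Next I would prove that this induced homomorphism on class groups is surjective. This is where I expect the main obstacle to lie: one must show that every rank-one locally free $\Gamma$-lattice is stably isomorphic to one pulled back from $\Lambda$. The standard argument uses the adelic / idèlic description of $\Cl(\Lambda)$: by \cite[\S49A]{curtisandreiner_vol2}, $\Cl(\Lambda)$ is a quotient of the idèle group of the centre of $A$, and the map $\pi$ induces a surjection of the corresponding idèle groups (because $B$ is a direct summand of a quotient of $A$, the centre of $B$ is a quotient of the centre of $A$, and surjectivity of idèle groups for a surjection of semisimple algebras is classical). Chasing through the reduced-norm / idèlic presentation, surjectivity of $\Cl(\Lambda)\twoheadrightarrow\Cl(\Gamma)$ follows. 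Alternatively, and perhaps more cleanly, one can argue directly: given $Y\in g(\Gamma)$, lift $Y$ along $\varphi$ to a $\Lambda$-lattice — for instance by choosing a $\Lambda$-lattice $X\subseteq A$ with $\Gamma\otimes_\Lambda X\cong Y$, which is possible because the local data defining $Y$ can be realised locally over $\Lambda$ and then patched using that $g(\Lambda)$ surjects onto local genera.

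Finally I would deduce the two implications from surjectivity on class groups. For LFC: $\Lambda$ has LFC iff the natural map $\LF_1(\Lambda)\to\Cl(\Lambda)$ is bijective, i.e.\ iff $|\LF_1(\Lambda)|=|\Cl(\Lambda)|$, but in general $\LF_1(\Lambda)\twoheadrightarrow\Cl(\Lambda)$ is only surjective; the cleanest route is to use the characterisation via \eqref{eq:LFC} restricted to $g(\Lambda)$ and to observe that base change is a full functor on rank-one locally free lattices — more precisely, if $\Gamma\otimes_\Lambda X\cong\Gamma\otimes_\Lambda X'$ for $X,X'\in g(\Lambda)$ then a counting/cardinality argument using the surjection of class groups, together with LFC for $\Lambda$, forces $X\cong X'$; combined with surjectivity this gives that $\LF_1(\Gamma)\to\Cl(\Gamma)$ is injective, hence $\Gamma$ has LFC. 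For SFC: $\Lambda$ has SFC iff the fibre of $\LF_1(\Lambda)\to\Cl(\Lambda)$ over $0$ is a single point; since base change sends the fibre of $\LF_1(\Gamma)\to\Cl(\Gamma)$ over $0$ into (a set mapping to) the fibre over $0$ in $\LF_1(\Lambda)$ — because a stably free rank-one $\Gamma$-lattice lifts to a stably free rank-one $\Lambda$-lattice, which by SFC for $\Lambda$ is free, and base change of $\Lambda$ is $\Gamma$ — we conclude the fibre over $0$ for $\Gamma$ is also a single point, so $\Gamma$ has SFC. The genuinely delicate point throughout is the lifting step (that stably free, resp.\ locally free, rank-one $\Gamma$-lattices arise by base change from $\Lambda$), which is exactly the content of the Fröhlich--Swan surjectivity of $\Cl(\Lambda)\to\Cl(\Gamma)$ and should be cited or proved via the idèlic description rather than re-derived from scratch.
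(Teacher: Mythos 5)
Your overall strategy — pass through the locally free class groups and the $\LF_1$ sets via base change — is the right framework, and it matches the paper's approach in spirit. However, there is a genuine gap at the crux of the argument, and your proposed workaround for LFC contains a false claim.

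The paper's proof rests on a single precise input: Swan's Theorem A10 from \cite{MR703486}, which asserts that the commutative square
\[
\begin{tikzcd}
\LF_{1}(\Lambda) \arrow[r] \arrow[d, twoheadrightarrow] & \LF_{1}(\Gamma) \arrow[d,twoheadrightarrow] \\
\Cl(\Lambda) \arrow[r,twoheadrightarrow] & \Cl(\Gamma)
\end{tikzcd}
\]
is a \emph{weak pullback}: any compatible pair consisting of an element of $\Cl(\Lambda)$ and an element of $\LF_{1}(\Gamma)$ admits a (not necessarily unique) lift to $\LF_{1}(\Lambda)$. This immediately yields that the fibre of $\LF_{1}(\Lambda)\to\Cl(\Lambda)$ over any $x$ surjects onto the fibre of $\LF_{1}(\Gamma)\to\Cl(\Gamma)$ over the image of $x$, from which both LFC and SFC inheritance drop out (the latter by taking $x=0$). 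You correctly flag the lifting of locally free (resp.\ stably free) rank-one $\Gamma$-lattices as ``the genuinely delicate point,'' but you then misidentify it as being ``exactly the content of the Fr\"ohlich--Swan surjectivity of $\Cl(\Lambda)\to\Cl(\Gamma)$.'' That surjectivity is a strictly weaker statement at the level of stable isomorphism classes; the weak pullback is a statement about $\LF_1$, i.e.\ genuine isomorphism classes, and does not follow from surjectivity of the bottom arrow together with surjectivity of the vertical arrows. Without citing or proving the weak pullback, the lifting step you rely on in the SFC case is unsupported, and your LFC argument does not close.

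Furthermore, the specific mechanism you propose for LFC is incorrect. You claim that if $\Lambda$ has LFC then $\Gamma\otimes_{\Lambda}X\cong\Gamma\otimes_{\Lambda}X'$ for $X,X'\in g(\Lambda)$ forces $X\cong X'$, and that this plus a cardinality argument yields injectivity of $\LF_{1}(\Gamma)\to\Cl(\Gamma)$. This implication is false in general: take $\Lambda$ a maximal order with $\Cl(\Lambda)$ nontrivial (hence, having LFC, with several non-isomorphic rank-one lattices) projecting onto a $\Gamma$ with trivial class group; then all $\Gamma\otimes_{\Lambda}X$ are free while the $X$ are not pairwise isomorphic. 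Even if it were true, it would establish injectivity of $\LF_{1}(\Lambda)\to\LF_{1}(\Gamma)$, which is not what is needed. The correct route is the one the paper takes: fix $x\in\Cl(\Gamma)$, lift to $x'\in\Cl(\Lambda)$ by surjectivity, use the weak pullback to surject the fibre over $x'$ (a singleton, by LFC for $\Lambda$) onto the fibre over $x$, and conclude that the latter is also a singleton. The weak pullback is the key lemma you need to either prove (e.g.\ via the id\`elic description you allude to) or cite.
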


\begin{proof}
The claim for LFC was proven by Fr\"ohlich \cite[VIII]{MR376619}, whose approach was subsequently refined by Swan. Here we adapt the proof of \cite[Theorem 1.1]{MR4299591}.

By \cite[Theorem A10]{MR703486}, the diagram
\[
\begin{tikzcd}
\LF_{1}(\Lambda) \arrow[r] \arrow[d, twoheadrightarrow] & \LF_{1}(\Gamma)  \arrow[d,twoheadrightarrow] \\
\Cl(\Lambda) \arrow[r,twoheadrightarrow] & \Cl(\Gamma)
\end{tikzcd}
\]
is a weak pull back in the sense that a compatible pair of elements  in $\Cl(\Lambda)$ and 
$\LF_{1}(\Gamma)$ has a not-necessarily-unique lift to $\LF_{1}(\Lambda)$.
It follows that the fiber in $\LF_{1}(\Lambda)$ over an element $x \in \Cl(\Lambda)$ maps onto 
the fiber in $\LF_{1}(\Gamma)$ over the image of $x$ in $\Cl(\Gamma)$.
Thus if $\Lambda$ has LFC then $\Gamma$ has LFC.
Taking $x=0$ shows that if $\Lambda$ has SFC then $\Gamma$ has SFC.
\end{proof}

\begin{corollary}\label{cor:cancellation-over-order}
Let $\Lambda \subseteq \Gamma$ be $R$-orders such that $K\Lambda = K\Gamma$.
If $\Lambda$ has SFC (resp.\ LFC) then $\Gamma$ has SFC (resp.\ LFC). 
\end{corollary}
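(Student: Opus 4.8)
The plan is simply to invoke Theorem~\ref{thm:cancellation-under-map} with the inclusion map. First I would note that the inclusion $\Lambda \hookrightarrow \Gamma$ is a map of $R$-orders, since by hypothesis $\Lambda$ and $\Gamma$ are both subrings of the ambient semisimple $K$-algebra $A = K\Lambda = K\Gamma$ and $\Lambda \subseteq \Gamma$. Applying the functor $K \otimes_{R} -$ to this inclusion yields a $K$-algebra homomorphism $K\Lambda \rightarrow K\Gamma$; I would then observe that, under the canonical identifications $K\Lambda = A = K\Gamma$ arising from the respective embeddings into $A$, this homomorphism is nothing but the identity map on $A$. In particular it is surjective.

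With these two observations in hand, the hypotheses of Theorem~\ref{thm:cancellation-under-map} are satisfied, and the desired statement — that $\Gamma$ has SFC (resp.\ LFC) whenever $\Lambda$ does — is precisely its conclusion. There is no substantive obstacle here; the only point requiring a moment's care is the verification that the algebra map induced by the inclusion is genuinely surjective (in fact bijective), but this is immediate from the assumption $K\Lambda = K\Gamma$ together with the fact that the map in question is the one induced by the literal inclusion of orders inside $A$.
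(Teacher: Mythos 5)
Your proof is correct and is precisely the argument the paper intends: Corollary~\ref{cor:cancellation-over-order} is stated without proof immediately after Theorem~\ref{thm:cancellation-under-map} because the inclusion $\Lambda \hookrightarrow \Gamma$ is the obvious map of $R$-orders inducing the identity (hence a surjection) on $K\Lambda = K\Gamma$. Nothing to add.
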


\begin{corollary}\label{cor:cancellation-direct-product}
Let $\Lambda_{1}$ and $\Lambda_{2}$ be $R$-orders
and let $\Lambda = \Lambda_{1} \oplus \Lambda_{2}$.
Then $\Lambda$ has SFC (resp.\ LFC) if and only if both $\Lambda_{1}$ and $\Lambda_{2}$ have 
SFC (resp.\ LFC).
\end{corollary}

\begin{proof}
One direction follows from Theorem~\ref{thm:cancellation-under-map}; 
the other follows from the definitions.
\end{proof}

\subsection{Totally definite quaternion algebras and the Eichler condition}\label{subsec:totdefEichler}
Let $\mathbb{H}$ denote the quaternion division algebra over $\R$.
A \emph{totally definite quaternion algebra} is a simple finite-dimensional $K$-algebra $B$ 
whose completion at every archimedean prime of the center of $B$ is isomorphic to 
$\mathbb{H}$ (in particular, this forces $K$ to be totally real).

We say that a finite-dimensional semisimple $K$-algebra 
$A$ satisfies the \emph{Eichler condition} if no simple component of $A$
is a totally definite quaternion algebra.
A proof of the following result can be found in \cite[(51.24)]{curtisandreiner_vol2} or \cite[\S 4]{MR251063}.

\begin{theorem}[Jacobinski]\label{thm:Jacobinski-cancellation}
If $A$ is a finite-dimensional semisimple $K$-algebra satisfying the Eichler condition then
every $R$-order in $A$ has LFC. 
\end{theorem}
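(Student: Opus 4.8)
The plan is to establish the equivalent statement that, under the Eichler condition, the canonical surjection $\LF_{1}(\Lambda)\to\Cl(\Lambda)$ from \S\ref{subsec:SFCandLFC} is injective; by the discussion there this is precisely LFC, so what must be shown is that stably isomorphic lattices in the principal genus $g(\Lambda)$ are isomorphic. A convenient feature to exploit is that, although $\Lambda$ itself need not respect a decomposition of $A$, one may enlarge it to a maximal order $\mathcal{M}\supseteq\Lambda$ in $A$, which does; so I would first treat $\mathcal{M}$ and then bootstrap to $\Lambda$.

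\emph{The maximal order.} Write $A=B_{1}\oplus\cdots\oplus B_{r}$ with each $B_{i}$ simple and centre $E_{i}$; then $\mathcal{M}=\bigoplus_{i}\mathcal{M}_{i}$ with $\mathcal{M}_{i}$ a maximal order in $B_{i}$, and by Corollary \ref{cor:cancellation-direct-product} it suffices to treat each $\mathcal{M}_{i}$. Using the idelic description of rank-$1$ locally free $\mathcal{M}_{i}$-lattices --- such a lattice corresponds to an idele in $J(B_{i})$, two ideles yielding isomorphic lattices exactly when they differ by a unit adele of $\mathcal{M}_{i}$ on one side and by an element of $B_{i}^{\times}$ on the other --- together with the local fact that the reduced norm sends the unit group of a local maximal order onto the full unit group of the completed ring of integers of $E_{i}$, one sees that $\nr$ induces a surjection from the set of isomorphism classes of rank-$1$ locally free $\mathcal{M}_{i}$-lattices onto a certain ray class group of $E_{i}$, and that this ray class group is also $\Cl(\mathcal{M}_{i})$. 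What makes this map \emph{injective}, hence $\LF_{1}(\mathcal{M}_{i})\xrightarrow{\sim}\Cl(\mathcal{M}_{i})$ and so LFC for $\mathcal{M}$, is the Eichler--Hasse--Schilling--Maass norm theorem: since $B_{i}$ is not a totally definite quaternion algebra, the $E_{i}$-group $\mathrm{SL}_{1}(B_{i})$ satisfies strong approximation relative to the archimedean places, so $\nr(B_{i}^{\times})$ is exactly the group of elements of $E_{i}^{\times}$ positive at every real place of $E_{i}$ ramified in $B_{i}$, and every idele whose reduced norm is a unit already lies in $B_{i}^{\times}$ times the unit adeles of $\mathcal{M}_{i}$.

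\emph{From $\mathcal{M}$ to $\Lambda$.} Let $\mathfrak{f}$ be the conductor of $\Lambda$ in $\mathcal{M}$, i.e.\ the largest two-sided $\mathcal{M}$-ideal contained in $\Lambda$; it is nonzero since $\mathcal{M}/\Lambda$ is a finite $R$-module. Then $\Lambda=\mathcal{M}\times_{\mathcal{M}/\mathfrak{f}}\Lambda/\mathfrak{f}$ is a Milnor square, so there is a Mayer--Vietoris exact sequence
\[
K_{1}(\mathcal{M})\oplus K_{1}(\Lambda/\mathfrak{f})\longrightarrow K_{1}(\mathcal{M}/\mathfrak{f})\longrightarrow\Cl(\Lambda)\longrightarrow\Cl(\mathcal{M})\longrightarrow 0,
\]
where I use that the finite rings $\Lambda/\mathfrak{f}$ and $\mathcal{M}/\mathfrak{f}$ are semilocal, so every rank-$1$ locally free module over either is free and their class groups vanish. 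Milnor patching simultaneously describes $\LF_{1}(\Lambda)$: a rank-$1$ locally free $\Lambda$-lattice is a rank-$1$ locally free $\mathcal{M}$-lattice $P$ equipped with a gluing isomorphism $P/\mathfrak{f}P\xrightarrow{\sim}\mathcal{M}/\mathfrak{f}$ of $\mathcal{M}/\mathfrak{f}$-modules, taken modulo the actions of $\Aut_{\mathcal{M}}(P)$ and $(\Lambda/\mathfrak{f})^{\times}$. Combining the two descriptions reduces injectivity of $\LF_{1}(\Lambda)\to\Cl(\Lambda)$ to the already-proven injectivity of $\LF_{1}(\mathcal{M})\to\Cl(\mathcal{M})$ together with a compatibility between the fibres of $\LF_{1}(\Lambda)\to\LF_{1}(\mathcal{M})$ and of $\Cl(\Lambda)\to\Cl(\mathcal{M})$, which in turn comes down to surjectivity of a reduced-norm-type map out of $\mathcal{M}^{\times}$ --- once again strong approximation.

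The hard part, and the sole point where the hypothesis is used, is this strong-approximation input: the norm theorem identifying $\nr(B_{i}^{\times})$ (and its adelic counterpart), which is exactly what fails when some $B_{i}$ is a totally definite quaternion algebra, for then $\mathrm{SL}_{1}(B_{i})$ is anisotropic at every archimedean place. In the write-up I would import this and the relevant strong approximation statement as black boxes and cite them, so that what remains is the two reductions above; the residual difficulty is then organisational --- keeping the sidedness straight in the idelic description, and verifying that the Milnor-patching picture of $\LF_{1}(\Lambda)$ matches the Mayer--Vietoris sequence for $\Cl(\Lambda)$ compatibly on fibres.
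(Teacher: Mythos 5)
The paper does not prove this result; it simply cites Curtis--Reiner \cite[(51.24)]{curtisandreiner_vol2} and Jacobinski \cite[\S 4]{MR251063}, which establish a more general cancellation theorem directly in terms of genera of lattices and do not route through a maximal order. Your plan is therefore a genuinely different strategy, closer in spirit to the fibre-product and Mayer--Vietoris machinery the paper itself develops for SFC in \S\ref{sec:fibre-products}, and it correctly isolates the single non-formal ingredient: the Hasse--Schilling--Maass norm theorem and strong approximation for $\mathrm{SL}_1$, both of which hold precisely when no simple component is a totally definite quaternion algebra.

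Two points in the sketch are more than merely organisational. In the maximal-order step, the statement ``every idele whose reduced norm is a unit already lies in $B_i^\times$ times the unit adeles of $\mathcal{M}_i$'' is true, but it is not yet the injectivity statement that is needed. After reducing to $\nr(\alpha)=\nr(\beta)$, the idele $\beta^{-1}\alpha$ has reduced norm $1$ and must be factored as an element of $\mathrm{SL}_1(K)$ times an element of the \emph{conjugated} group $\beta^{-1}U(\mathcal{M}_i)\beta$; strong approximation applies because this is again the unit-idele group of a maximal order, but since $J(B_i)$ is noncommutative the conjugation cannot be elided. In the descent to $\Lambda$, after using $\mathcal{M}$-LFC to fix a locally free $\mathcal{M}$-lattice $P$, the ``reduced-norm-type map out of $\mathcal{M}^\times$'' should be out of $\Aut_{\mathcal{M}}(P)$, i.e.\ the unit group of the right order of $P$, since $P$ need not be free over $\mathcal{M}$; correspondingly the Mayer--Vietoris sequence you write computes the fibre over the trivial class in $\Cl(\mathcal{M})$ and must be twisted to describe the fibre of $\LF_1(\Lambda)\to\LF_1(\mathcal{M})$ over $[P]$. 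Neither obstruction is fatal, since the right order of $P$ is again maximal, but both must be addressed for the argument to close. A final remark: once the idelic description $\Cl(\Lambda)\cong J(C)/C^{\times+}\nr(U(\Lambda))$ of Theorem~\ref{thm:class-group-ideles} is in hand, the strong-approximation argument can be run for $\Lambda$ directly, so the detour through $\mathcal{M}$ is in fact dispensable.
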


For the rest of this section, we specialize to the case of orders over the ring 
of integers $\mathcal{O}=\mathcal{O}_{K}$ of a number field $K$.
In \cite{MR4105795}, it was shown that the questions of whether a given order in a
totally definite quaternion algebra has SFC or LFC can be tackled algorithmically.

\begin{theorem}[Smertnig--Voight]\label{thm:Smertnig-Voight-alg} 
There exists an algorithm that given an $\mathcal{O}$-order $\Lambda$ in a totally definite quaternion algebra over $K$, determines whether $\Lambda$ has SFC (resp.\ LFC).
\end{theorem}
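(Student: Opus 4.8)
The plan is to realize the algorithm as a reduction to the main computational engine of Smertnig--Voight, which counts stably free but non-free right ideals via a norm computation on an idele class group; the task is therefore to make the input data effective. First I would record the standard reduction from cancellation questions for $\Lambda$ to questions about $\LF_{1}(\Lambda)$ and $\Cl(\Lambda)$ described in \S\ref{subsec:SFCandLFC}: by \eqref{eq:genus-reduce-to-rank-1}, SFC holds iff the fibre of $\LF_{1}(\Lambda)\to\Cl(\Lambda)$ over $0$ is a singleton, and LFC holds iff this map is bijective; both questions thus reduce to understanding $\LF_{1}(\Lambda)$ together with the finite abelian group $\Cl(\Lambda)$. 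The group $\Cl(\Lambda)$ can be computed by standard idele-theoretic algorithms (it is a quotient of a ray class group of $K$ twisted by reduced norm data over the maximal order, cf.\ \cite[\S49]{curtisandreiner_vol2}), so the real content is in deciding, for each class, how many isomorphism classes lie above it.

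Next I would use the Eichler--Jacobinski machinery in the form made explicit in \cite{MR4105795}: since $A$ is a (simple) totally definite quaternion algebra with centre a totally real field $F$, the fibres of $\LF_{1}(\Lambda)\to\Cl(\Lambda)$ are all in bijection with a fixed finite double coset space of the form $\nr(\widehat{\Lambda}^{\times})\backslash\widehat{F}^{\times}/F^{\times}_{>0}$, where $\nr$ is the reduced norm and the positivity condition comes from the totally definite hypothesis via the Eichler norm theorem (the reduced norm of a unit of $B$ is totally positive). The cardinality of this double coset space --- call it the \emph{failure index} --- can be computed from purely local data: at almost all primes $\nr(\widehat{\Lambda}^{\times}_{\frp})=\widehat{F}^{\times}_{\frp}$, and at the finitely many ramified or `bad' primes one computes the finite-index subgroup $\nr(\widehat{\Lambda}^{\times}_{\frp})$ of $\widehat{F}^{\times}_{\frp}$ directly, since $\widehat{\Lambda}_{\frp}$ is an order in a $4$-dimensional algebra over a local field and its unit group (and the reduced norm thereof) is effectively computable. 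Combining these local contributions with the global sign obstruction gives a finite computation of the failure index. Then $\Lambda$ has SFC iff this index is $1$, which for SFC requires only checking the fibre over $0$; and it has LFC iff, in addition, every class of $\Cl(\Lambda)$ is hit, which after the fibre count is automatic once the index is $1$ --- so in the totally definite quaternion case SFC and LFC for rank-one lattices are governed by the same invariant, and the algorithm outputs accordingly.

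The main obstacle I expect is the effective computation of $\nr(\widehat{\Lambda}^{\times}_{\frp})$ at the bad primes and the correct bookkeeping of the archimedean sign conditions so that the idele class group quotient is presented as a genuinely finite, computable object rather than an abstract one. One must (i) enumerate the primes $\frp$ at which $\widehat{\Lambda}_{\frp}$ is not maximal or at which $B_{\frp}$ is a division algebra, (ii) for each such $\frp$ compute $\widehat{\Lambda}_{\frp}^{\times}$ modulo a suitable congruence subgroup and push it through $\nr$, and (iii) assemble a surjection from a ray class group of $K$ (with modulus supported on these primes and all real places) onto the double coset space, whose kernel/image can then be computed with standard class-group software. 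Steps (i)--(iii) are all finite and algorithmic given the primitive idempotents and a $\Z$-basis of $\Lambda$; the delicate point is uniformity --- ensuring the chosen modulus captures all ramification of the norm map --- but this is exactly the content of \cite{MR4105795}, so for the present theorem I would simply invoke that work, noting that every ingredient (the defining data of $\Lambda$, the centre $F$, the ramification set, the local unit groups) is computable, and conclude that the decision procedure terminates.
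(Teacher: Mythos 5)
The paper offers no proof of Theorem~\ref{thm:Smertnig-Voight-alg}; it is a citation to \cite{MR4105795}, so any review of your proposal is necessarily against that source rather than against an internal argument.

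Your high-level framing (reduce to the map $\LF_{1}(\Lambda)\to\Cl(\Lambda)$, compute $\Cl(\Lambda)$ by id\`ele-theoretic means, then understand the fibres through local norm computations) is broadly the right shape, but there is a concrete error in the middle of the argument: you assert that all fibres of $\LF_{1}(\Lambda)\to\Cl(\Lambda)$ are ``in bijection with a fixed finite double coset space,'' and you conclude that ``in the totally definite quaternion case SFC and LFC for rank-one lattices are governed by the same invariant.'' This is false, and the paper itself points this out twice: both the introduction and \S\ref{subsec:SFCandLFC} remark explicitly that there exist orders (in totally definite quaternion algebras) that have SFC but fail LFC, citing precisely \cite{MR3403458} and \cite{MR4105795}. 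Indeed, \cite[Theorem 1.3]{MR4105795} is a classification of the orders in totally definite quaternion algebras with SFC, and then \emph{of those}, the strictly smaller subset with LFC. If the fibres all had equal size, SFC (the fibre over $0$ is a singleton) would imply LFC (all fibres singletons), and there would be nothing to classify.

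The underlying issue is that while a suitable group (images of reduced norms of local unit groups, etc.) does act transitively on each fibre, the stabilisers vary from fibre to fibre when the Eichler condition fails, so the fibre sizes are not controlled by a single double coset space. Relatedly, your phrase ``LFC iff, in addition, every class of $\Cl(\Lambda)$ is hit'' is vacuous: $\LF_{1}(\Lambda)\to\Cl(\Lambda)$ is always surjective, so every class is always hit; LFC is about injectivity, i.e.\ every fibre being a singleton, not just the fibre over $0$. An honest rendering of the Smertnig--Voight algorithm must therefore carry out a computation for each stable isomorphism class separately (or otherwise certify that all fibres are trivial), and the distinction between the SFC test and the LFC test cannot be collapsed as you propose.
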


This result was used to classify all orders inside totally definite quaternion algebras that have SFC, 
and of these, all that have LFC (see \cite[Theorem 1.3]{MR4105795}).
This classification restricted to SFC and the case of maximal or hereditary orders had
previously been given by Vign\'eras \cite{MR429841}, Hallouin--Maire \cite{MR2244802} 
and Smertnig \cite{MR3403458}.

\begin{corollary}\label{cor:sfc-alg-maxord}
There exists an algorithm that given a maximal $\mathcal{O}$-order 
$\mathcal{M}$ 
in a finite-dimensional 
semisimple $K$-algebra, determines whether $\mathcal{M}$ has SFC (resp.\ LFC).
\end{corollary}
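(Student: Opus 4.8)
The plan is to reduce the general semisimple case to the two cases already handled: algebras satisfying the Eichler condition (Theorem~\ref{thm:Jacobinski-cancellation}) and totally definite quaternion algebras (Theorem~\ref{thm:Smertnig-Voight-alg}). The key structural fact is that a maximal order in a direct sum decomposes compatibly with that decomposition, so Corollary~\ref{cor:cancellation-direct-product} applies. Concretely, given a maximal $R$-order $\Lambda$ in a finite-dimensional semisimple $K$-algebra $A$, I would first compute the Wedderburn decomposition $A = A_{1} \oplus \cdots \oplus A_{k}$ into simple components; each $A_{i}$ is $\Mat_{n_{i}}(D_{i})$ for a division algebra $D_{i}$ over a number field, and this decomposition is algorithmic (this is standard and may be cited from the literature on algorithms for associative algebras).

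Next I would use the fact that a maximal order $\Lambda$ in $A = A_{1} \oplus \cdots \oplus A_{k}$ necessarily decomposes as $\Lambda = \Lambda_{1} \oplus \cdots \oplus \Lambda_{k}$ where each $\Lambda_{i} = \Lambda \cap A_{i}$ is a maximal $R$-order in $A_{i}$: indeed the central idempotents $e_{i}$ of $A$ lie in any maximal order since they are integral over $R$ and central, so $\Lambda = \bigoplus e_{i}\Lambda$, and maximality of $\Lambda$ forces maximality of each $e_{i}\Lambda$ in $A_{i}$. By Corollary~\ref{cor:cancellation-direct-product}, $\Lambda$ has SFC (resp.\ LFC) if and only if every $\Lambda_{i}$ does, so it suffices to give an algorithm in the case $A$ is simple, say $A = \Mat_{n}(D)$.

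When $A$ is simple, I would split into two cases according to whether $A$ satisfies the Eichler condition. Deciding this is algorithmic: one computes the centre $F$ of $A$, checks which archimedean places of $F$ ramify in $D$, and checks whether $D$ is a quaternion algebra and $F$ totally real with $D$ ramified at every archimedean place of $F$ — all routine given the Wedderburn data. If $A$ satisfies the Eichler condition, then by Theorem~\ref{thm:Jacobinski-cancellation} $\Lambda$ has LFC, hence also SFC, so the algorithm outputs ``yes'' in both cases. If $A$ is a totally definite quaternion algebra (so $n=1$ and $A=D$), then $\Lambda$ is a maximal order in a totally definite quaternion algebra, and Theorem~\ref{thm:Smertnig-Voight-alg} provides the required algorithm directly.

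The main obstacle, such as it is, is less a mathematical difficulty than a bookkeeping point: one must be careful that the case analysis for simple $A$ is genuinely exhaustive. The only simple algebras that do \emph{not} satisfy the Eichler condition are precisely the totally definite quaternion algebras (this is essentially the definition, once one recalls that $\Mat_{n}(D)$ with $n \geq 2$ always satisfies the Eichler condition at every archimedean place, so a simple component fails Eichler only when it is itself a division algebra that is a totally definite quaternion algebra over its centre). Hence every simple component falls under exactly one of the two theorems, and the two algorithms together cover all maximal orders. I would also note in passing that everything here depends only on $A$ and not on the particular maximal order, since all maximal orders in a given simple algebra over a number field are locally conjugate and have the same cancellation behaviour; but this observation is not strictly needed for the statement as phrased.
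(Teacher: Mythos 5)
Your proposal is correct and follows essentially the same route as the paper: decompose the maximal order along the Wedderburn decomposition, invoke Corollary~\ref{cor:cancellation-direct-product} to reduce to simple components, and then apply Theorem~\ref{thm:Jacobinski-cancellation} or Theorem~\ref{thm:Smertnig-Voight-alg} according to whether the component satisfies the Eichler condition or is a totally definite quaternion algebra. You spell out two points the paper leaves implicit — that the central idempotents lie in any maximal order (so the decomposition $\Lambda = \bigoplus_i \Lambda_i$ exists and is into maximal orders) and that the dichotomy ``Eichler or totally definite quaternion'' is exhaustive for simple algebras — which is a welcome addition, but the argument is the same.
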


\begin{proof}
We can write $\mathcal{M} = \mathcal{M}_{1} \oplus \cdots \oplus \mathcal{M}_{r}$,
where each $\mathcal{M}_{i}$ is a maximal $\mathcal{O}$-order contained in a simple $K$-algebra $A_{i}$,
and each $A_{i}$ either satisfies the Eichler condition or is a totally definite quaternion algebra.
Therefore the desired result now follows from Corollary~\ref{cor:cancellation-direct-product},
Theorem~\ref{thm:Jacobinski-cancellation} and Theorem~\ref{thm:Smertnig-Voight-alg}.
\end{proof}

\section{Review of cancellation properties for integral group rings}\label{sec:review-canc-int-grp-rings}

We recall results concerning SFC and LFC for integral group rings $\Z[G]$, where
$G$ is a finite group.

\begin{lemma}\label{lem:quot-group-ring}
Let $G$ be a finite group and let $H$ be a quotient of $G$.
If $\Z[G]$ has SFC (resp.\ LFC) then $\Z[H]$ has SFC (resp.\ LFC). 
\end{lemma}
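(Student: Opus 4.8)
The plan is to deduce this directly from Theorem~\ref{thm:cancellation-under-map}, since a quotient of a finite group induces a natural surjection on the corresponding integral group rings. First I would let $N \trianglelefteq G$ be the normal subgroup with $H \cong G/N$, and consider the quotient homomorphism $\pi \colon G \twoheadrightarrow G/N$. This extends $\Z$-linearly to a surjective ring homomorphism $\Z[G] \to \Z[G/N]$, which I will also denote by $\pi$. Both $\Z[G]$ and $\Z[G/N]$ are $\Z$-orders: $\Z[G]$ is a $\Z$-order in the semisimple $\Q$-algebra $\Q[G]$ (semisimplicity by Maschke's theorem, as $\Q$ has characteristic $0$), and likewise $\Z[G/N]$ is a $\Z$-order in $\Q[G/N]$.

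Next I would check the hypothesis of Theorem~\ref{thm:cancellation-under-map}, namely that the induced map $\Q[G] \to \Q[G/N]$ on the ambient algebras is a surjection of $K$-algebras with $K = \Q$. This is immediate: tensoring the surjection $\pi \colon \Z[G] \to \Z[G/N]$ with $\Q$ over $\Z$ gives the $\Q$-linear extension $\Q[G] \to \Q[G/N]$, which is still surjective (tensoring is right exact, or simply observe that the images of the group elements already span $\Q[G/N]$). Thus $\Z[G] \to \Z[G/N]$ is a map of $\Z$-orders inducing a surjection of the underlying semisimple $\Q$-algebras.

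Applying Theorem~\ref{thm:cancellation-under-map} with $\Lambda = \Z[G]$ and $\Gamma = \Z[G/N] \cong \Z[H]$ then yields the conclusion: if $\Z[G]$ has SFC (resp.\ LFC), so does $\Z[H]$. I do not anticipate any real obstacle here; the only mild point of care is making sure that what is being verified is precisely the hypothesis ``map of $R$-orders inducing a surjection $K\Lambda \to K\Gamma$'' rather than merely a surjection of the orders themselves, but as noted this follows formally by extending scalars to $\Q$. One could alternatively phrase the argument via Corollary~\ref{cor:cancellation-over-order} after identifying $\Z[H]$ with a quotient, but the direct appeal to Theorem~\ref{thm:cancellation-under-map} is cleanest.
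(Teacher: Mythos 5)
Your proof is correct and follows exactly the same route as the paper: both observe that the canonical map $\Z[G]\to\Z[H]$ of $\Z$-orders induces a surjection $\Q[G]\to\Q[H]$ of $\Q$-algebras and then invoke Theorem~\ref{thm:cancellation-under-map}. Your version simply spells out the details (Maschke, right exactness of tensoring) that the paper leaves implicit.
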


\begin{proof}
The canonical map $\Z[G] \rightarrow \Z[H]$ of $\Z$-orders
induces a surjection $\Q[G] \rightarrow \Q[H]$ of $\Q$-algebras, 
and so the desired result is a consequence of Theorem~\ref{thm:cancellation-under-map}.
\end{proof}

The finite non-cyclic subgroups of $\mathbb{H}^{\times}$ are called the \emph{binary polyhedral groups}.
These are the generalized quaternion groups $Q_{4n}$ for $n \geq 2$ or one of $\tilde{T}$, $\tilde{O}$ or $\tilde{I}$, the binary tetrahedral, octahedral and icosahedral groups (see \cite[\S 51A]{curtisandreiner_vol2} and the references therein).
It is well known that a finite group $G$ has a binary polyhedral quotient precisely when 
the group algebra $\Q[G]$ does not satisfy the Eichler condition
(see \cite[Proposition~1.5]{MR4299591} or \cite[(51.3)]{curtisandreiner_vol2}, for example).

We now recall two important theorems of Swan \cite[Theorems I and II]{MR703486}.

\begin{theorem}[Swan]\label{thm:bp-canc-group-rings}
Let $G$ be a binary polyhedral group.
Then $\Z[G]$ has SFC if and only if $\Z[G]$ has LFC if and only if $G$ is one of the following 
$7$ groups:
\[
Q_{8}, \, Q_{12}, \,  Q_{16}, \, Q_{20}, \, \tilde{T}, \, \tilde{O}, \, \tilde{I}.
\]
\end{theorem}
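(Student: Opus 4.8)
The statement asks us to classify which binary polyhedral groups $G$ have $\Z[G]$ satisfying SFC (equivalently LFC). The plan is to treat the two families separately: the generalised quaternion groups $Q_{4n}$ for $n \geq 2$, and the three exceptional groups $\tilde{T}, \tilde{O}, \tilde{I}$. For each such $G$, the group algebra $\Q[G]$ fails the Eichler condition (it has a simple component that is a totally definite quaternion algebra, arising from the faithful $2$-dimensional symplectic representation of $G$ over a totally real field), so Jacobinski's theorem (Theorem \ref{thm:Jacobinski-cancellation}) does not apply and the question is genuinely subtle. The main tool for the positive direction is an explicit analysis of the locally free class group and the stably free classes, using the decomposition of $\Z[G]$ via its image in the maximal order and the associated fibre square; one must show for the seven listed groups that every stably free module of rank one is free. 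For the negative direction, one exhibits for each remaining $G$ a stably free $\Z[G]$-module that is not free.

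First I would set up, for each binary polyhedral $G$, the conductor square relating $\Z[G]$, a maximal order $\calM$ in $\Q[G]$, and the quotient $\Z[G]/\mathfrak{f}$ by the conductor, reducing the computation of $\Cl(\Z[G])$ and the set of stably free classes to computations with idele class groups and reduced norms. The key point, due to Swan, is that for a totally definite quaternion simple component $B$ with maximal order $\calO_B$, the failure of the Eichler condition means the reduced norm map $\nr \colon K_1(\calO_B) \to (\text{ray class group data})$ need not be surjective onto the relevant subgroup, and whether a stably free ideal class is trivial is governed by whether certain unit congruences can be solved. For the seven groups $Q_8, Q_{12}, Q_{16}, Q_{20}, \tilde{T}, \tilde{O}, \tilde{I}$, the relevant totally real base fields are $\Q$ or small real cyclotomic-type fields with trivial or well-understood class groups and enough units, and one checks directly that the obstruction vanishes, so SFC holds; since in these cases one moreover checks $\LF_1(\Z[G]) \to \Cl(\Z[G])$ is injective, LFC holds too.

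For the negative direction, for $G = Q_{4n}$ with $n \notin \{2,3,4,5\}$, and for $G = \tilde{T}, \tilde{O}, \tilde{I}$ when these are \emph{not} among the seven — wait, $\tilde T, \tilde O, \tilde I$ \emph{are} among the seven, so the negative direction is only the $Q_{4n}$ with $n \geq 6$. Here Swan's construction produces, for each such $n$, a nonfree stably free ideal: one writes down a locally free rank-one ideal $I$ of $\Z[Q_{4n}]$ whose class lies in the kernel of $\Cl(\Z[Q_{4n}]) \to \Cl(\calM)$ but is nonzero in $\Cl(\Z[Q_{4n}])$, with the nonvanishing detected by the failure of a unit of the totally real subfield to lift along the reduced norm from the quaternionic component; equivalently one uses the count of stably free classes via Swan's $\overline{\nr}$ obstruction and shows it is $>1$ exactly when $n \geq 6$. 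The boundary case analysis — showing $n=5$ still works but $n=6$ fails — is the delicate part, turning on explicit unit and class-number data for $\Q(\zeta_{2n})^+$.

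The hard part will be the positive direction for the exceptional groups $\tilde O$ and $\tilde I$: their rational group algebras have quaternionic components over $\Q(\sqrt 2)$ and $\Q(\sqrt 5)$ respectively, and one must verify that the relevant $S$-unit groups surject onto the obstruction quotient, together with controlling the non-maximality of $\Z[G]$ at the primes $2, 3, 5$ via the conductor. These are finite computations in principle, but carrying them out cleanly — and matching Swan's original normalisations — is where essentially all the work lies; the $Q_{4n}$ family, by contrast, reduces to a uniform computation in the cyclotomic tower that Swan had already organised. For the purposes of this review section we simply cite \cite[Theorems I and II]{MR703486} for the complete argument.
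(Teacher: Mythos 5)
The paper treats this as a recalled result and gives no proof at all, simply citing Swan \cite[Theorems I and II]{MR703486}; your proposal likewise terminates with the same citation, so in the only sense that matters for this review section the two agree. Your preceding sketch of Swan's argument (conductor square for $\Z[G]$ inside a maximal order, reduced-norm obstruction on the totally definite quaternionic components, cyclotomic unit and class-number data for the $Q_{4n}$ family, and case analysis over $\Q(\sqrt 2)$ and $\Q(\sqrt 5)$ for $\tilde O$ and $\tilde I$) is a reasonable high-level description of what Swan does, but since neither the paper nor you attempts to reproduce the computation, the sketch is supplementary context rather than a proof to check; I will note only that the mid-paragraph self-correction about which groups are in the list of seven should simply be deleted, and that asserting ``one checks directly that the obstruction vanishes'' for the seven positive cases elides precisely the finite but nontrivial verifications that constitute the substance of \cite{MR703486}.
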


\begin{corollary}[Swan]\label{cor:ZG-fails-SFC-when-large-Q4n-quotient}
Let $G$ be a finite group with a quotient isomorphic to $Q_{4n}$ for some $n \geq 6$. 
Then $\Z[G]$ fails SFC. 
\end{corollary}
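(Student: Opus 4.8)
The statement to prove is Corollary~\ref{cor:ZG-fails-SFC-when-large-Q4n-quotient}: if $G$ has a quotient isomorphic to $Q_{4n}$ for some $n \geq 6$, then $\Z[G]$ fails SFC.

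The plan is as follows. First I would invoke the case $n \geq 6$ of Theorem~\ref{thm:bp-canc-group-rings} (Swan's theorem): since $Q_{4n}$ for $n \geq 6$ is a binary polyhedral group that is \emph{not} one of the seven listed groups $Q_8, Q_{12}, Q_{16}, Q_{20}, \tilde{T}, \tilde{O}, \tilde{I}$, the group ring $\Z[Q_{4n}]$ fails SFC. This is the substantive input, and it is already available to us as a cited theorem, so no work is needed there.

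Next I would propagate this failure up to $G$ via the contrapositive of Lemma~\ref{lem:quot-group-ring}. Explicitly: suppose for contradiction that $\Z[G]$ has SFC. By hypothesis there is a surjection $G \twoheadrightarrow Q_{4n}$ with $n \geq 6$, so $Q_{4n}$ is a quotient of $G$. Lemma~\ref{lem:quot-group-ring} then forces $\Z[Q_{4n}]$ to have SFC, contradicting the conclusion of the previous paragraph. Hence $\Z[G]$ fails SFC.

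That is really the whole argument: the corollary is a one-line deduction combining Theorem~\ref{thm:bp-canc-group-rings} with Lemma~\ref{lem:quot-group-ring} (which itself rests on the Fr\"ohlich--Swan Theorem~\ref{thm:cancellation-under-map}). There is no genuine obstacle here; if anything, the only point requiring a moment's care is the elementary observation that $Q_{4n}$ for $n \geq 6$ is not on Swan's list of seven exceptional groups, which is immediate since that list contains only $Q_8, Q_{12}, Q_{16}, Q_{20}$ among the generalised quaternion groups, i.e.\ only $Q_{4n}$ for $n \in \{2,3,4,5\}$. I would write the proof in two sentences accordingly.
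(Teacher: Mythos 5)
Your argument is exactly the paper's: it invokes one direction of Theorem~\ref{thm:bp-canc-group-rings} (that $\Z[Q_{4n}]$ fails SFC for $n \geq 6$, since $Q_{4n}$ is then not on Swan's list of seven) and then propagates the failure up to $G$ via Lemma~\ref{lem:quot-group-ring}. The proposal is correct and matches the paper's proof.
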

\begin{proof}
This follows from one direction of Theorem~\ref{thm:bp-canc-group-rings} combined with
Lemma~\ref{lem:quot-group-ring}.
\end{proof}

\begin{theorem}[Swan]\label{thm:bp-LFC-max-order-group-rings}
Let $G$ be a binary polyhedral group and let $\mathcal{M}$ be a maximal $\Z$-order 
in $\Q[G]$.
Then $\mathcal{M}$ has LFC if and only if $G$ is one of the following $11$ groups:
\begin{equation}\label{eq:list-bp-max}
Q_{8}, \, Q_{12}, \, Q_{16}, \, Q_{20}, \, Q_{24}, \, Q_{28}, \, Q_{36}, \, Q_{60}, \, 
\tilde{T}, \, \tilde{O}, \, \tilde{I}. 
\end{equation}
\end{theorem}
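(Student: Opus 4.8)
The plan is to reduce the statement to a finite computation involving maximal orders in the rational group algebras $\Q[G]$ for the eleven binary polyhedral groups, and then to invoke the algorithmic classification of cancellation for maximal orders. First I would recall that for a binary polyhedral group $G$, the group algebra $\Q[G]$ decomposes as $\Q[G] = A_1 \oplus B$, where $A_1$ satisfies the Eichler condition and $B$ is a direct sum of totally definite quaternion algebras (this is exactly the failure of the Eichler condition recorded after Lemma~\ref{lem:quot-group-ring}). Correspondingly, a maximal $\Z$-order $\mathcal{M}$ in $\Q[G]$ splits as $\mathcal{M} = \mathcal{M}_1 \oplus \mathcal{M}_B$, and by Corollary~\ref{cor:cancellation-direct-product} together with Theorem~\ref{thm:Jacobinski-cancellation}, $\mathcal{M}$ has LFC if and only if $\mathcal{M}_B$ has LFC, i.e.\ if and only if every maximal order in each totally definite quaternion component of $\Q[G]$ has LFC. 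So the problem becomes: identify the totally definite quaternion simple components of $\Q[G]$ for each binary polyhedral $G$, and decide LFC for maximal orders in each.

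Next I would carry out, group by group, the Wedderburn decomposition of $\Q[G]$ for $G \in \{Q_{4n} : n \geq 2\} \cup \{\tilde{T}, \tilde{O}, \tilde{I}\}$, isolating the totally definite quaternion components $B$, each with its center a totally real field $F$ and its discriminant (ramification set) determined by the representation theory. For a maximal order $\mathcal{M}_B$ in such a $B$, whether $\mathcal{M}_B$ has LFC depends only on the arithmetic invariants $(F, \mathrm{disc}(B))$, and the classification of Eichler--Vignéras type (see the work of Vignéras \cite{MR429841}, Hallouin--Maire \cite{MR2244802}, Smertnig \cite{MR3403458}, as recalled before Corollary~\ref{cor:sfc-alg-maxord}) tells us precisely which pairs give LFC; equivalently, one may simply invoke Corollary~\ref{cor:sfc-alg-maxord} to decide each case algorithmically. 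The key point to extract is that the relevant quaternion component fails LFC once $G = Q_{4n}$ with $n$ large enough, or once $G$ is $\tilde{O}$ or $\tilde{I}$ with the "wrong" arithmetic, and that the surviving list is exactly \eqref{eq:list-bp-max}.

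The main obstacle I expect is the bookkeeping for the generalised quaternion family $Q_{4n}$: one must understand, uniformly in $n$, which Wedderburn components of $\Q[Q_{4n}]$ are totally definite quaternion algebras and what their centers and discriminants are, and then determine for which $n$ a maximal order therein still has LFC. This is essentially a computation with the rational character table of $Q_{4n}$ combined with the Eichler-type mass/class number criterion for one-class-per-genus (or "all genera in the principal genus have one class, with the appropriate corrections for two-sided ideals") for quaternion orders over totally real fields. The finiteness of the list — that $n \leq 15$ suffices, with only $n \in \{2,3,4,5,6,7,9,15\}$ surviving — comes from the fact that the relevant quaternion discriminants and degrees grow with $n$, forcing the Eichler mass to exceed the threshold beyond which LFC must fail. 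The exceptional groups $\tilde{T}, \tilde{O}, \tilde{I}$ are then handled by three explicit finite checks of the same kind. Once all eleven surviving cases are verified to have LFC and all others to fail it, the theorem follows.
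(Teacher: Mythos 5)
The paper itself gives no proof of this theorem: it is recalled directly from Swan \cite{MR703486}, so there is nothing internal to compare your argument against. On its own merits, your reduction is sound: Jacobinski's theorem (Theorem~\ref{thm:Jacobinski-cancellation}) combined with Corollary~\ref{cor:cancellation-direct-product} reduces LFC for $\mathcal{M}$ to LFC for the maximal orders in the totally definite quaternion components of $\Q[G]$, and since all maximal orders in a fixed quaternion algebra lie in one genus, the LFC property there depends only on the algebra, i.e.\ on the totally real centre and ramification set, so appealing to the Vign\'eras/Hallouin--Maire/Smertnig/Smertnig--Voight classification (or equivalently running Corollary~\ref{cor:sfc-alg-maxord}) is legitimate. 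This is a genuinely different route from Swan's 1983 argument, which worked out the relevant class numbers and unit indices directly; your plan trades Swan's hands-on computation for reliance on the later classification results, buying brevity at the cost of a larger logical dependency.

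Two caveats. First, the proposal remains a sketch: the group-by-group identification of the quaternion components of $\Q[Q_{4n}]$, $\Q[\tilde T]$, $\Q[\tilde O]$, $\Q[\tilde I]$ and the comparison against the classification tables is not actually carried out, and that is precisely where the content lives. Second, a small imprecision: you write that $\tilde O$ or $\tilde I$ might fail ``with the wrong arithmetic,'' but in fact all three exceptional groups $\tilde T$, $\tilde O$, $\tilde I$ appear in the surviving list~\eqref{eq:list-bp-max}; among binary polyhedral groups it is only the $Q_{4n}$ with $n \notin \{2,3,4,5,6,7,9,15\}$ that fail.
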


\begin{remark}
In order, the groups listed in \eqref{eq:list-bp-max} are isomorphic to 
\begin{align*}
& G_{(8,4)}, \, G_{(12,1)}, \, G_{(16,9)}, \, G_{(20,1)}, \, G_{(24,4)}, \, G_{(28,1)}, \,
G_{(36,1)}, \, G_{(60,3)}, \\ 
& G_{(24,3)} \cong \mathrm{SL}_{2}(\F_{3}), \quad
G_{(48,28)} \cong \mathrm{CSU}_{2}(\F_{3}), \quad 
G_{(120,5)} \cong \mathrm{SL}_{2}(\F_{5}). 
\end{align*}
\end{remark}

\begin{corollary}\label{cor:LFC-max-order-group-rings}
Let $G$ be a finite group and let $\mathcal{M}$ be a maximal $\Z$-order in $\Q[G]$. 
Then $\mathcal{M}$ has LFC if and only if $G$ has no binary polyhedral quotient other than those
in \eqref{eq:list-bp-max}.
\end{corollary}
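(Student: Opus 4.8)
The plan is to deduce Corollary~\ref{cor:LFC-max-order-group-rings} from Theorem~\ref{thm:bp-LFC-max-order-group-rings} together with the decomposition and cancellation results already established. First I would decompose the maximal order: write $\mathcal{M} = \mathcal{M}_{1} \oplus \cdots \oplus \mathcal{M}_{r}$ according to the Wedderburn decomposition $\Q[G] = A_{1} \oplus \cdots \oplus A_{r}$ into simple components, where each $\mathcal{M}_{i}$ is a maximal $\Z$-order in $A_{i}$. By Corollary~\ref{cor:cancellation-direct-product}, $\mathcal{M}$ has LFC if and only if every $\mathcal{M}_{i}$ has LFC. By Theorem~\ref{thm:Jacobinski-cancellation}, any $\mathcal{M}_{i}$ whose component $A_{i}$ satisfies the Eichler condition automatically has LFC; so the question reduces to the components that are totally definite quaternion algebras.

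Next I would bring in the group-theoretic characterisation: $\Q[G]$ fails the Eichler condition precisely when $G$ has a binary polyhedral quotient, as recalled just before Theorem~\ref{thm:bp-canc-group-rings}. More precisely, each totally definite quaternion component $A_{i}$ of $\Q[G]$ arises as a component of $\Q[H]$ for some binary polyhedral quotient $H = G/N$ of $G$, and conversely each such quotient contributes (via the surjection $\Q[G] \twoheadrightarrow \Q[H]$) to the non-Eichler part. So I would argue that $\mathcal{M}$ fails LFC if and only if some maximal order in some $\Q[H]$, for $H$ a binary polyhedral quotient of $G$, fails LFC. Here I would invoke Corollary~\ref{cor:cancellation-over-order}/Theorem~\ref{thm:cancellation-under-map} applied to maximal orders: if $\mathcal{M}$ has LFC, then its image (or a maximal order containing the image) in $\Q[H]$ has LFC, hence $H$ lies on the list~\eqref{eq:list-bp-max} by Theorem~\ref{thm:bp-LFC-max-order-group-rings}; conversely, if every binary polyhedral quotient of $G$ lies on the list, then every totally definite quaternion component of $\Q[G]$ is Morita equivalent to a component coming from a listed group, whose maximal order has LFC, and combining with the Eichler components via Corollary~\ref{cor:cancellation-direct-product} gives LFC for $\mathcal{M}$.

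The step I expect to be the main obstacle is the precise bookkeeping of which simple quaternion components of $\Q[G]$ occur, and matching them against the binary polyhedral quotients. One must check that a totally definite quaternion simple component of $\Q[G]$ "is" (up to Morita equivalence, and with the maximal order matching up) a component of $\Q[H]$ for a binary polyhedral quotient $H$, so that the LFC property of its maximal order is governed by the list~\eqref{eq:list-bp-max}; this is the content of Swan's analysis and is where one needs the classification of the faithful totally definite quaternion representations of binary polyhedral groups. Once that identification is in place, the logical equivalence is a routine combination of Corollary~\ref{cor:cancellation-direct-product}, Theorem~\ref{thm:Jacobinski-cancellation}, and Theorem~\ref{thm:bp-LFC-max-order-group-rings}.

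Finally, I would phrase the conclusion symmetrically: $\mathcal{M}$ has LFC $\iff$ every totally definite quaternion component contributes an LFC maximal order $\iff$ every binary polyhedral quotient of $G$ appears in~\eqref{eq:list-bp-max} $\iff$ $G$ has no binary polyhedral quotient other than those in~\eqref{eq:list-bp-max}. I would keep the write-up short, citing Theorem~\ref{thm:bp-LFC-max-order-group-rings} for the hard classification input and the earlier corollaries for the formal manipulations.
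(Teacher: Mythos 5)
Your proposal follows essentially the same route as the paper: decompose $\mathcal{M}$ over the simple components of $\Q[G]$, dispatch the Eichler components via Theorem~\ref{thm:Jacobinski-cancellation}, handle the totally definite quaternion components by matching them to binary polyhedral quotients, and combine via Corollary~\ref{cor:cancellation-direct-product}. The ``only if'' direction is fine: the image of $\mathcal{M}$ under $\Q[G]\to\Q[H]$ is itself a maximal order, and one applies Theorem~\ref{thm:bp-LFC-max-order-group-rings} together with Theorem~\ref{thm:cancellation-under-map}.

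The genuine gap is exactly the step you flag, and you have slightly misdiagnosed its nature. You write that one must show each totally definite quaternion component $A_i$ of $\Q[G]$ is a component of $\Q[H]$ for some binary polyhedral quotient $H$, and you suggest this ``is the content of Swan's analysis and is where one needs the classification of the faithful totally definite quaternion representations of binary polyhedral groups.'' In fact no classification of representations is needed; the paper's argument is elementary. Compose $\psi_i\colon \Q[G]\twoheadrightarrow A_i\hookrightarrow\mathbb{H}$ and restrict to $G$ to obtain a group homomorphism $\psi_i|_G\colon G\to\mathbb{H}^\times$ with image $H_i$ and kernel $N_i$. Since the elements of $H_i$ span $\mathbb{H}$ over $\R$ and $\mathbb{H}$ is noncommutative, $H_i$ is non-abelian, so $H_i$ is a finite non-abelian subgroup of $\mathbb{H}^\times$, i.e.\ binary polyhedral; by hypothesis it appears in \eqref{eq:list-bp-max}. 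Then with $e_{N_i}=|N_i|^{-1}\sum_{n\in N_i}n$ one has $\psi_i|_{\Q[G](1-e_{N_i})}=0$, so $A_i$ is a direct summand of $\Q[G]e_{N_i}\cong\Q[H_i]$. Hence $\mathcal{M}_i$ is a direct summand of a maximal $\Z$-order containing $\Z[H_i]$, which has LFC by Theorem~\ref{thm:bp-LFC-max-order-group-rings}, and Corollary~\ref{cor:cancellation-direct-product} gives LFC for $\mathcal{M}_i$. Supplying this short argument closes the gap; without it, the proposal only reduces the claim to an unproven assertion.
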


\begin{proof}
Suppose that $G$ has a binary polyhedral quotient $H$ different to those listed in \eqref{eq:list-bp-max}.
Then the image of $\mathcal{M}$ under the canonical projection map $\Q[G] \rightarrow \Q[H]$ is
a maximal $\Z$-order, and thus does not have LFC by Theorem~\ref{thm:bp-LFC-max-order-group-rings}.
Hence $\mathcal{M}$ does not have LFC by Theorem~\ref{thm:cancellation-under-map}.

Suppose that $G$ has no binary polyhedral quotient other than those listed in \eqref{eq:list-bp-max}.
We can write $\mathcal{M} = \mathcal{M}_{1} \oplus \cdots \oplus \mathcal{M}_{r}$,
where each $\mathcal{M}_{i}$ is a maximal $R$-order contained in a simple $K$-algebra $A_{i}$,
and each $A_{i}$ either satisfies the Eichler condition or is a totally definite quaternion algebra.
By Corollary~\ref{cor:cancellation-direct-product}, it suffices to show that each $\mathcal{M}_{i}$
has LFC.
Fix $i$ with $1 \leq i \leq r$.
If $A_{i}$ satisfies the Eichler condition then $\mathcal{M}_{i}$ has LFC by Theorem 
~\ref{thm:Jacobinski-cancellation}.
So suppose that $A_{i}$ is a totally definite quaternion algebra.
Let $\psi_{i} |_{G} : G \rightarrow \mathbb{H}^{\times}$ denote the group homomorphism obtained by 
restricting the composition $\psi_{i} : \Q[G] \twoheadrightarrow A_{i} \hookrightarrow \mathbb{H}$.
Let $H_{i}$ and $N_{i}$ denote the image and kernel of $\psi_{i} |_{G}$, respectively. 
Then the elements of $H_{i}$ span $\mathbb{H}$ over $\R$ and so $H_{i}$ must be non-abelian
since $\mathbb{H}$ is a noncommutative ring. 
Thus $H_{i}$ is a binary polyhedral quotient of $G$ and so by hypothesis $H_{i}$ is 
isomorphic to one of the groups listed in \eqref{eq:list-bp-max}.
Let $e_{N_{i}} = |N_{i}|^{-1} \sum_{n \in N_{i}} n$ 
denote the central idempotent of $\Q[G]$ attached to $N_{i}$.
Then
\[
\Q[G] = \Q[G] e_{N_{i}} \oplus \Q[G] (1 - e_{N_{i}})
\]
and since $\psi_{i} |_{ \Q[G] (1 - e_{N_{i}})} = 0$ it follows that $A_i$ is a direct summand of
$\Q[G] e_{N_i} \cong \Q[H_{i}]$. Therefore $\mathcal{M}_{i}$
is a  direct summand of a maximal $\Z$-order $\mathcal{M}_{i}'$ containing $\Z[H_{i}]$.
Then $\mathcal{M}_{i}'$ has LFC by Theorem~\ref{thm:bp-LFC-max-order-group-rings}
and thus $\mathcal{M}_{i}$ also has LFC by Corollary~\ref{cor:cancellation-direct-product}.
\end{proof}

The following two results are \cite[Theorems 13.7 and 15.5]{MR703486}. 
Though the latter result is stated in loc.\ cit.\ for LFC only, it is
immediate from the proof that it also holds for SFC; the authors are grateful
to John Nicholson for bringing this to their attention.

\begin{theorem}[Swan]\label{thm:TnxIm}
The group ring $\Z[\tilde{T}^{m} \times \tilde{I}^{n}]$ has LFC for all $m,n \geq 0$. 
\end{theorem}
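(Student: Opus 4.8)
The plan is to reduce everything to maximal orders and then invoke Theorem \ref{thm:bp-LFC-max-order-group-rings}. The key structural observation is that $\tilde{T}$ and $\tilde{I}$ are perfect groups: both equal their own commutator subgroups, being the binary tetrahedral and binary icosahedral groups. Consequently every simple component of $\Q[\tilde{T}^{m} \times \tilde{I}^{n}]$ is, by the standard theory of group algebras of direct products, a tensor product (over $\Q$, after base change to the appropriate cyclotomic or common coefficient field) of simple components coming from the factors $\Q[\tilde{T}]$ and $\Q[\tilde{I}]$. In particular, the quaternionic (non-Eichler) simple components of $\Q[\tilde{T}^{m} \times \tilde{I}^{n}]$ arise only from a single quaternionic component of one factor $\Q[\tilde{T}]$ or $\Q[\tilde{I}]$ tensored with \emph{matrix algebras} (which do not spoil the Eichler condition) coming from the remaining factors. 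The crucial point is that tensoring a totally definite quaternion algebra with a nontrivial matrix algebra produces a matrix algebra over a quaternion algebra of rank $\geq 4$, which satisfies the Eichler condition. Hence the totally definite quaternion components of $\Q[\tilde{T}^{m} \times \tilde{I}^{n}]$ are precisely (copies of) those of $\Q[\tilde{T}]$ and $\Q[\tilde{I}]$ themselves.

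First I would make the previous paragraph precise: decompose $\mathbb{Q}[\tilde T^m \times \tilde I^n] = \bigoplus_i A_i$ and separate the index set into those $i$ for which $A_i$ satisfies the Eichler condition and those for which $A_i$ is a totally definite quaternion algebra (every simple algebra is one or the other once we know its reduced degree and its archimedean completions). Let $\mathcal{M}$ be a maximal $\Z$-order in $\Q[\tilde{T}^{m} \times \tilde{I}^{n}]$, write $\mathcal{M} = \bigoplus_i \mathcal{M}_i$ with $\mathcal{M}_i$ maximal in $A_i$, and appeal to Corollary \ref{cor:cancellation-direct-product}: it suffices to show each $\mathcal{M}_i$ has LFC. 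For the Eichler components this is immediate from Theorem \ref{thm:Jacobinski-cancellation}. For a totally definite quaternion component $A_i$, the argument of Corollary \ref{cor:LFC-max-order-group-rings} applies verbatim: the restriction to $\tilde{T}^{m} \times \tilde{I}^{n}$ of the projection $\Q[\tilde{T}^{m} \times \tilde{I}^{n}] \twoheadrightarrow A_i \hookrightarrow \mathbb{H}$ has nonabelian image $H_i$, which is therefore a binary polyhedral \emph{quotient} of $\tilde T^m \times \tilde I^n$; but any such quotient is a quotient of some single factor $\tilde T$ or $\tilde I$ (here the perfectness is used: a nonabelian quotient of a product of perfect groups, if it is to embed in $\mathbb H$ and hence be a binary polyhedral group, must factor through one coordinate — I would spell this out by noting $H_i$ has trivial abelianization while any quotient involving two or more factors nontrivially either has nontrivial abelianization or fails to embed in $\mathbb H$). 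Thus $H_i \in \{\tilde T, \tilde I\}$, both of which appear in the list \eqref{eq:list-bp-max}, and exactly as in the proof of Corollary \ref{cor:LFC-max-order-group-rings} one concludes $\mathcal{M}_i$ is a direct summand of a maximal $\Z$-order containing $\Z[H_i]$, which has LFC by Theorem \ref{thm:bp-LFC-max-order-group-rings}; hence $\mathcal{M}_i$ has LFC by Corollary \ref{cor:cancellation-direct-product}.

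Having shown $\mathcal{M}$ has LFC, the final step is to descend to $\Z[\tilde{T}^{m} \times \tilde{I}^{n}]$ itself, which does \emph{not} follow from Corollary \ref{cor:cancellation-over-order} — that corollary goes the wrong way. This is the genuine obstacle, and it is exactly the content of Swan's original argument: one needs that for the groups $\tilde T$ and $\tilde I$, the locally free class group map $\LF_1 \to \Cl$ behaves well enough under the inclusion $\Z[\tilde{T}^{m} \times \tilde{I}^{n}] \hookrightarrow \mathcal{M}$ that LFC lifts from $\mathcal{M}$ to $\Z[\tilde{T}^{m} \times \tilde{I}^{n}]$. Concretely, Swan's Theorem II gives a criterion (in terms of the relevant Eichler-type mass formula / unit-index computations being trivial for $\tilde T$ and $\tilde I$) under which cancellation for the maximal order forces cancellation for the group ring; I would cite \cite[Theorem 15.5]{MR703486} directly for this implication rather than reprove it, noting as the excerpt already does that the proof there works for SFC as well as LFC. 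Assembling the three steps — structural reduction to quaternion components, identification of those components with the $\tilde T$- and $\tilde I$-components handled by Theorem \ref{thm:bp-LFC-max-order-group-rings}, and Swan's descent from maximal order to group ring — completes the proof. The step I expect to require the most care is the second half of the middle paragraph: rigorously ruling out any new totally definite quaternion simple component of $\Q[\tilde T^m \times \tilde I^n]$ beyond those of the individual factors, i.e.\ the claim that tensoring up by the matrix-algebra factors contributed by the other coordinates always lands in the Eichler regime.
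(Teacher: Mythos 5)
The paper does not reprove this theorem; it simply cites \cite[Theorem~13.7]{MR703486} for Theorem~\ref{thm:TnxIm} (and \cite[Theorem~15.5]{MR703486} for the \emph{separate} direct-products criterion, Theorem~\ref{thm:direct-products}). So you are attempting to reconstruct Swan's proof from scratch, which is ambitious, but the reconstruction has two substantive problems.

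First, a factual error that drives your structural claim: $\tilde{T}$ is \emph{not} perfect. We have $\tilde{T} \cong \mathrm{SL}_2(\F_3)$, whose derived subgroup is $Q_8$, so $\tilde{T}^{\mathrm{ab}} \cong C_3$. Consequently $\Q[\tilde{T}]$ has the commutative component $\Q[C_3] \cong \Q \oplus \Q(\zeta_3)$, and the other factors of the direct product do \emph{not} contribute only $\Q$ and matrix algebras, contrary to what you assert. Your final conclusion — that the totally definite quaternion components of $\Q[\tilde{T}^m \times \tilde{I}^n]$ come from a single factor together with the trivial component on the remaining factors — is nevertheless correct, but for a different reason: a degree-two irreducible of $\tilde T^m \times \tilde I^n$ is a tensor product with exactly one two-dimensional tensor factor and the rest one-dimensional, and the nontrivial linear characters of $\tilde T$ (being characters of $C_3$, hence not self-conjugate) have Frobenius--Schur indicator $0$, so any such tensor product with a nontrivial linear character has indicator $0$ and is \emph{complex}, not quaternionic. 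Alternatively, at the level of Wedderburn components: $\Q(\zeta_3)$ has no real places, so a quaternion algebra over any extension containing it cannot be totally definite. This argument works, but it is not the one you gave, and perfectness of $\tilde T$ cannot be used.

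Second, and more seriously: you correctly identify that LFC for a maximal order $\mathcal{M}$ does not descend to the sub-order $\Z[\tilde{T}^m \times \tilde{I}^n]$ (Corollary~\ref{cor:cancellation-over-order} goes the other way), and that closing this gap is the heart of the matter. But you then propose to cite \cite[Theorem~15.5]{MR703486} for the descent. That theorem (restated as Theorem~\ref{thm:direct-products}) requires, for $\Z[G \times H]$, that $\Q[H]$ satisfy the Eichler condition; if you try to induct by writing $\tilde{T}^m \times \tilde{I}^n = \tilde T \times (\tilde{T}^{m-1} \times \tilde{I}^n)$, the $H$-factor has its own totally definite quaternion components and fails Eichler, so the hypothesis is not met. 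What you actually need at this point is precisely \cite[Theorem~13.7]{MR703486} — which is the statement you set out to prove. In other words, once you peel away the (partly incorrect) component analysis, your proof either has a genuine gap at the descent step or is circular. The fix, and the route the paper takes, is simply to cite Swan's Theorem~13.7 outright: Swan's own proof of that theorem does the descent by a detailed analysis of relative class groups and Eichler's unit theorem that is not recovered by the machinery you invoked.
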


\begin{theorem}[Swan]\label{thm:direct-products}
Let $G$ and $H$ be finite groups such that $\Z[G]$ has SFC (resp.\ LFC), 
$\Q[H]$ satisfies the Eichler condition, and $H$ has no subgroup of index $2$.
Then $\Z[G \times H]$ has SFC (resp. LFC).
\end{theorem}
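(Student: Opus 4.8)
The plan is to pull the cancellation property for $\Z[G\times H]$ back from that for $\Z[G]$, by splitting off inside the rational group algebra the block belonging to the trivial representation of $H$. Since having SFC (resp.\ LFC) is equivalent to a statement about locally free lattices of rank $1$ (see \S\ref{subsec:SFCandLFC}), the first and main algebraic ingredient will be the following decomposition. Let $e_{0}=|H|^{-1}\sum_{h\in H}h\in\Q[H]\subseteq\Q[G\times H]$, a central idempotent. Then
\[
\Q[G\times H]\;=\;\Q[G]e_{0}\;\oplus\;\Q[G\times H](1-e_{0}),
\]
where the $H$-augmentation $gh\mapsto g$ restricts to an isomorphism $\Q[G]e_{0}\xrightarrow{\ \sim\ }\Q[G]$ and where $\Q[G\times H](1-e_{0})$ satisfies the Eichler condition. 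Granting this, the totally definite quaternion simple components of $\Q[G\times H]$ are exactly those of $\Q[G]$, all of them lying in the block $\Q[G]e_{0}$.

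To prove the Eichler assertion, write $\Q[H]=\bigoplus_{j\geq 0}C_{j}$ with $C_{0}=\Q e_{0}\cong\Q$, so that $\Q[G\times H](1-e_{0})=\bigoplus_{j\geq 1}\Q[G]\otimes_{\Q}C_{j}$, and suppose for contradiction that some $\Q[G]\otimes_{\Q}C_{j}$ with $j\geq 1$ has a totally definite quaternion simple component $D$. Then $D$ occurs in $A_{i}\otimes_{\Q}C_{j}$ for a simple component $A_{i}$ of $\Q[G]$ with centre $K_{i}$; writing $A_{i}\otimes_{\Q}C_{j}=A_{i}\otimes_{K_{i}}(K_{i}\otimes_{\Q}C_{j})$ and comparing reduced degrees (positive integers that multiply under tensor product over a common centre, and are unchanged by base change), one sees that the only components $C_{j}$ which can produce a quaternion division factor are those that are fields or (division) quaternion algebras. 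If $C_{j}$ is a quaternion algebra, then since $\Q[H]$ satisfies the Eichler condition $C_{j}$ is not totally definite --- its centre is not totally real, or else $C_{j}$ is split at some real place of its centre --- and $D$, which in this situation is a base change of $C_{j}$ along a field extension of its centre, inherits this failure, contradicting that $D$ is totally definite. If $C_{j}$ is a field, then the centre of $D$ is a totally real field extension of $C_{j}$, whence $C_{j}$ is totally real; but a field simple component of a rational group algebra is generated over $\Q$ by the image of the group, a finite --- hence cyclic --- subgroup of the multiplicative group of $C_{j}$, so $C_{j}$ is a cyclotomic field, and a cyclotomic field is totally real only if it is $\Q$. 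Thus $C_{j}\cong\Q$; but then, as $j\geq 1$, the algebra $\Q[H]$ has a simple component isomorphic to $\Q$ other than the trivial one, i.e.\ $H$ admits a surjection onto $C_{2}$, i.e.\ a subgroup of index $2$ --- contradicting the hypothesis. (Both hypotheses on $H$ are used here; without the index-$2$ hypothesis the argument genuinely fails, for instance $\Q[Q_{8}\times C_{2}]$ has two copies of the rational quaternions.)

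The remaining step --- which I expect to be the main obstacle --- is to transfer the cancellation property across this decomposition. The point of difficulty is that $\Z[G\times H]$ does \emph{not} decompose as a direct sum of orders matching $\Q[G]e_{0}\oplus\Q[G\times H](1-e_{0})$ --- indeed $e_{0}\notin\Z[G\times H]$ --- so neither Corollary~\ref{cor:cancellation-over-order} nor Corollary~\ref{cor:cancellation-direct-product} applies, and one cannot reduce to a maximal overorder, since an order may fail SFC even when all of its maximal overorders have it. I would instead use the idele-theoretic description of the locally free class group (cf.\ \cite[\S\S 49, 51]{curtisandreiner_vol2}): for an order $\Lambda$ in $A$, the isomorphism classes of rank-$1$ stably free $\Lambda$-lattices --- equivalently the fibre over $0$ of $\LF_{1}(\Lambda)\to\Cl(\Lambda)$ --- form a double-coset space assembled from the reduced-norm-one idele group of $A$, the group $A^{\times}$ (acting through reduced norms) and the unit groups of the completions $\widehat{\Lambda}_{\frp}$; by Eichler's strong approximation theorem the Eichler block $\Q[G\times H](1-e_{0})$ contributes nothing to this space, so it is built solely from the totally definite quaternion simple components $B_{k}$ together with the images in each $B_{k}$ of $\Q[G\times H]^{\times}$ and of the groups $\widehat{\Z[G\times H]}_{\frp}^{\times}$. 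These data coincide with the analogous data for $\Z[G]$: the $B_{k}$ agree by the lemma; each projection $\Q[G\times H]\to B_{k}$ factors through the $H$-augmentation $\pi_{1}\colon\Q[G\times H]\to\Q[G]$ (as $B_{k}\subseteq\Q[G]e_{0}$), and $\pi_{1}(\Z[G\times H])=\Z[G]$, so the local orders projected to $B_{k}$ are exactly those coming from $\Z[G]$; and since $\Z[G]=\Z[G\times 1]\hookrightarrow\Z[G\times H]$ is a ring-theoretic section of $\pi_{1}$, the map $\pi_{1}$ is surjective on unit groups of completions and its image contains $\Q[G]^{\times}$, so the unit data agree as well. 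Hence the obstruction group for $\Z[G\times H]$ is isomorphic to that for $\Z[G]$, which vanishes because $\Z[G]$ has SFC; carrying out the same comparison for the whole map $\LF_{1}\to\Cl$, rather than only its fibre over $0$, gives the statement for LFC. The real work lies in this last step: verifying carefully that strong approximation removes precisely the Eichler block, and that the surviving double coset sees the ambient order only through the data listed above.
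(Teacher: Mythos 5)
Your decomposition $\Q[G\times H]=\Q[G]e_{0}\oplus\Q[G\times H](1-e_{0})$ is the right starting point, and the verification that the complementary block satisfies the Eichler condition is careful and correct: the reduced-degree count pins down which $C_{j}$ could contribute a quaternion factor, the covering of real places of $Z(C_{j})$ by real places of a totally real extension field disposes of the quaternion case, and the cyclotomic argument plus the index-$2$ hypothesis disposes of the field case. That part is solid, and you correctly observe that the key further piece of structure is the ring section $\Z[G]=\Z[G\times 1]\hookrightarrow\Z[G\times H]$.

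The gap is in the last step, which you flag yourself but which is more than a routine verification. The assertion that the obstruction to SFC for $\Z[G\times H]$ is ``built solely from the totally definite quaternion components together with the images in each $B_{k}$ of $\Q[G\times H]^{\times}$ and of $\widehat{\Z[G\times H]}_{\frp}^{\times}$, and these data coincide with those for $\Z[G]$'' is not established, and as stated it is misleading: the set of isomorphism classes of rank-one stably free lattices is a double coset, not a group, and its triviality is controlled not by the \emph{separate} projections of $\Lambda$ to the quaternionic blocks but by the \emph{joint} image (the gluing of $\widehat{\Lambda}_{\frp}^{\times}$ across blocks) together with the reduced-norm condition, which mixes the Eichler and non-Eichler centres. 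Strong approximation applied to the Eichler block does kill the $(1-e_{0})$-contribution, but the resulting reduction is to a statement about $\Lambda=\Z[G\times H]$ inside the fibre product over the conductor, not naively about $\Z[G]$; the section gives surjectivity of $U(\Lambda)\to U(\Z[G])$, which is necessary, but you then need to produce a \emph{compatible} trivialisation on the Eichler side lying in the subgroup of local units that glue to $1$ on the $\Q[G]e_{0}$-side, and that is exactly where strong approximation must be invoked with an explicit choice of open subgroup --- none of which is carried out. (Indeed, in the framework of \S\ref{sec:fibre-products} of the present paper, what the section buys you directly is $W=g_{2}(\Lambda_{2}^{\times})$ once you note $g_{1}=g_{2}\circ\pi_{2}\circ s$; the remaining content, $\ker(\partial)\subseteq g_{2}(\Lambda_{2}^{\times})$, is precisely the assertion that needs the serious input from the Eichler hypothesis at the level of $K_{1}$ and is not a formal consequence of the ``data agreeing''.) So the proof is an outline with the correct ingredients, but the central reduction is asserted rather than proved.
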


The following results on the failure of SFC are \cite[Theorem 15.1]{MR703486}
and a summary of the main results of Chen \cite{ChenPhD}. 
Though the latter result is stated for LFC in loc.\ cit., a close examination of the proofs shows that 
it also holds for SFC. 

\begin{theorem}[Swan]\label{thm:Q8xC2}
The group ring $\Z[Q_{8} \times C_{2}]$ fails SFC. 
\end{theorem}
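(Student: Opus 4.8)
The plan is to establish that $\Z[Q_{8} \times C_{2}]$ fails SFC by exhibiting an explicit stably free $\Z[Q_{8} \times C_{2}]$-lattice of rank $1$ that is not free, or equivalently (by the discussion in \S\ref{subsec:SFCandLFC}), by showing that the fibre above $0$ of the canonical surjection $\LF_{1}(\Z[Q_{8}\times C_{2}]) \to \Cl(\Z[Q_{8}\times C_{2}])$ has more than one element. First I would decompose $\Q[Q_{8}\times C_{2}] \cong \Q[Q_{8}] \oplus \Q[Q_{8}]$ (using the two characters of $C_{2}$), and recall that $\Q[Q_{8}] \cong \Q^{4} \oplus \mathbb{H}_{\Q}$ where $\mathbb{H}_{\Q}$ is the rational (totally definite) quaternion algebra ramified at $2$ and $\infty$. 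Thus $\Q[Q_{8}\times C_{2}]$ has exactly two simple components that are totally definite quaternion algebras, both isomorphic to $\mathbb{H}_{\Q}$, and SFC is a genuinely nontrivial question only because of these.

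The key point, following Swan, is that the failure comes from the interaction between the two copies of $\mathbb{H}_{\Q}$ sitting diagonally over the prime $2$. Concretely, I would pass to the fibre-product (Milnor square / conductor square) description of $\Z[Q_{8}\times C_{2}]$ relative to a maximal order $\calM$ containing it, localised at $2$: one gets a pullback diagram expressing $\Z[Q_{8}\times C_{2}]$ in terms of $\Z[Q_{8}]$-type data glued along a finite quotient ring at $2$. The group $\Cl(\Z[Q_{8}\times C_{2}])$ and the stably-free kernel are then computed from the Mayer--Vietoris / $K_{1}$-exact sequence attached to this square: the stably free classes of rank $1$ form the cokernel of the map from $K_{1}$ of the relevant adele-type ring, modulo the images of $K_{1}$ of the two "sides'' of the square. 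I would identify an idele/unit that represents a nontrivial such class — the standard choice is built from a unit of the quaternion order at $2$ (e.g.\ an element of reduced norm a $2$-adic unit that is not a norm from the global unit group) whose presence is forced precisely because $\mathbb{H}_{\Q}$ is totally definite, so the reduced norm map on $K_{1}$ of the maximal order at $\infty$ lands in positive reals and cannot absorb it. This produces a projective rank-$1$ lattice $P$ with $[P]_{\st} = 0$ in $\Cl$ but $P \not\cong \Z[Q_{8}\times C_{2}]$.

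The main obstacle is the explicit computation that the candidate class is genuinely nontrivial, i.e.\ that the relevant unit is not realised by a global unit of $\Z[Q_{8}\times C_{2}]^{\times}$ or by the automorphisms of the free module. This is a concrete but delicate calculation with the $2$-adic group ring $\widehat{\Z}_{2}[Q_{8}\times C_{2}]$, its Jacobson radical, and the unit group of the finite ring $\widehat{\Z}_{2}[Q_{8}\times C_{2}]/\rad$; one must check that the reduced-norm and determinant conditions coming from the Eichler-condition components of $\Q[Q_{8}\times C_{2}]$ (the $\Q^{8}$ part) do not kill the class. Swan handles this by a careful bookkeeping of $SK_{1}$ and the locally free class group; I would follow the same route, citing Theorem~\ref{thm:cancellation-under-map} to reduce to $Q_{8}\times C_{2}$ itself (rather than a larger group) and using the fibre-product machinery — which in this paper is formalised in Algorithm~\ref{alg:sfc-fibreproduct} — to make the computation effective. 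An alternative, more in the spirit of the present article, is simply to invoke Algorithm~\ref{alg:SFC-fail}: run it on $\Z[Q_{8}\times C_{2}]$ and observe that it certifies failure of SFC; but since the point here is to re-derive Swan's original result, I would present the conceptual fibre-product argument as the proof and remark that it is recovered by the algorithm.
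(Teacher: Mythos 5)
Your high-level strategy — decompose $\Q[Q_8 \times C_2]$, write $\Z[Q_8\times C_2]$ as a fibre product, use the attached $K_1$--Mayer--Vietoris sequence, and detect a nontrivial element of the stably-free kernel via a unit that cannot be absorbed by global units — is the right skeleton, and it is indeed the philosophy underlying both Swan's original proof and the paper's alternative proof in Example~\ref{ex:Q8xC2}. But the proposal never actually produces the proof: the step you flag as the "main obstacle" (checking that the candidate class is genuinely nontrivial) is precisely the content of the theorem, and you resolve it only by deferring to "Swan's bookkeeping" or to a run of Algorithm~\ref{alg:SFC-fail}. Neither of those is a self-contained argument.

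There is also a concrete confusion about which fibre product to use. You describe a conductor square relative to a maximal order $\calM$ localised at $2$ and then say it expresses $\Z[Q_8\times C_2]$ "in terms of $\Z[Q_8]$-type data." These are two different squares: the conductor square has $\calM$ and $\Lambda/\frf$ as its other corners, whereas the square whose other two corners are copies of $\Z[Q_8]$ is the one built from the trace $\mathrm{Tr}_{C_2}$ of the central $C_2$ (as in \S\ref{subsec:fibre-prod-int-grp-rings}, with $N = \{1\}\times C_2$ and $\Gamma \cong \Z[Q_8]$, glued over $\F_2[Q_8]$). The paper uses the latter, and the choice matters: it turns the final step into a one-line numerical check rather than a "delicate calculation with $\widehat{\Z}_2[Q_8\times C_2]$."

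Concretely, the paper applies Proposition~\ref{prop:ZHtimesC2} with $H = Q_8$: writing $V_{Q_8}$ for the image of $\Z[Q_8]^\times$ in $\F_2[Q_8]^\times$, one has
$|V_{Q_8}| \le |\Cl(\Z[Q_8])|^2\,|\F_2[Q_8]^\times|\,/\,|\Cl(\Z[Q_8\times C_2])|$,
with equality if and only if $\Z[Q_8\times C_2]$ has SFC. Plugging in the known facts $\Z[Q_8]^\times = \pm Q_8$ (so $|V_{Q_8}| = 2^3$), $|\Cl(\Z[Q_8])| = 2$, $|\Cl(\Z[Q_8\times C_2])| = 2^4$, and $|\F_2[Q_8]^\times| = 2^7$ gives a bound of $2^2\cdot 2^7/2^4 = 2^5 > 2^3$, so the inequality is strict and SFC fails. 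The piece of input that actually makes the argument close — the smallness of $\Z[Q_8]^\times$, i.e.\ that the integral group ring of $Q_8$ has only trivial units — is exactly what is missing from your plan; without some such explicit unit-group computation, your proposal remains a strategy rather than a proof.
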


\begin{theorem}[Chen]\label{thm:Chen}
Let $G$ be one of the following $5$ groups:
\[
Q_{12} \times C_{2}, \quad
Q_{16} \times C_{2}, \quad
Q_{20} \times C_{2}, \quad
G_{(36,7)}, \quad
G_{(100,7)}.
\]
Then $\Z[G]$ fails SFC.
\end{theorem}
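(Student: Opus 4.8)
The plan is to prove, for each of the five groups $G$, that $\Z[G]$ admits a stably free lattice of rank $1$ that is not free; by \S\ref{subsec:SFCandLFC} this is equivalent to showing that the fibre over $0$ of the canonical surjection $\LF_{1}(\Z[G]) \to \Cl(\Z[G])$ contains more than one element. (Chen's thesis establishes the formally weaker assertion that $\Z[G]$ fails LFC, i.e.\ produces locally free $X \not\cong Y$ with $X \oplus \Z[G] \cong Y \oplus \Z[G]$; the only additional input needed for SFC is that in his examples one may take $Y$ free, equivalently that $X$ lies in the fibre over $0$.) I would begin by noting that none of the earlier reduction tools applies here: one checks that every proper quotient $G/N$ of each of these groups either has $\Q[G/N]$ satisfying the Eichler condition --- whence $\Z[G/N]$ has LFC by Theorem~\ref{thm:Jacobinski-cancellation} --- or is one of the binary polyhedral groups with SFC in Theorem~\ref{thm:bp-canc-group-rings}, so Lemma~\ref{lem:quot-group-ring} yields nothing; and a maximal $\Z$-order $\mathcal{M}$ in $\Q[G]$ has LFC, hence SFC, in every case by Corollary~\ref{cor:LFC-max-order-group-rings}, since the only binary polyhedral quotients of these five groups are $Q_{12}$, $Q_{16}$ and $Q_{20}$, all of which occur in \eqref{eq:list-bp-max}, so Corollary~\ref{cor:cancellation-over-order} is equally useless. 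The failure must therefore be located strictly between $\Z[G]$ and $\mathcal{M}$.

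I would next pass to the totally definite quaternion part of $\Q[G]$. Write $\Q[G] = A_{1} \oplus B$ with $A_{1}$ satisfying the Eichler condition and $B$ a (direct sum of) totally definite quaternion algebra(s); since cancellation is detected componentwise, with the Eichler component contributing nothing by Theorem~\ref{thm:Jacobinski-cancellation} and Corollary~\ref{cor:cancellation-direct-product}, it suffices to exhibit a non-free stably free rank-$1$ lattice over the projection $\Lambda_{B}$ of $\Z[G]$ to $B$. For the three groups $Q_{4n} \times C_{2}$ the appropriate tool is the Milnor square attached to the central $C_{2}$,
\[
\begin{tikzcd}
\Z[Q_{4n} \times C_{2}] \arrow[r] \arrow[d] & \Z[Q_{4n}] \arrow[d] \\
\Z[Q_{4n}] \arrow[r] & \F_{2}[Q_{4n}],
\end{tikzcd}
\]
which presents $\Z[Q_{4n} \times C_{2}]$ as a proper suborder of $\Z[Q_{4n}]^{2}$ glued along $\F_{2}[Q_{4n}]$; as $\Z[Q_{4n}]^{2}$ has SFC for $2 \le n \le 5$ by Theorem~\ref{thm:bp-canc-group-rings} and Corollary~\ref{cor:cancellation-direct-product}, any failure is concentrated in the gluing over $\F_{2}[Q_{4n}]$ --- more precisely in the image there of the totally definite quaternion component of $\Q[Q_{4n}]$, which is a quaternion algebra over $\Q$, $\Q(\sqrt 2)$ or $\Q(\sqrt 5)$ according as $n = 3, 4, 5$. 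For $G_{(36,7)}$ and $G_{(100,7)}$ one instead uses the conductor square of $\Lambda_{B}$ in a maximal order $\mathcal{M}_{B} \supseteq \Lambda_{B}$, with conductor $\frf$ supported at the primes over $2$ and $3$, respectively over $2$ and $5$.

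In all cases the next step is Milnor patching for a cartesian square of rings: the isomorphism classes of rank-$1$ locally free lattices over the pullback are classified by a double-coset space formed from the unit group of the maximal order, the unit group of the finite lower-right ring, and the image of the unit group of the finite lower-left ring (via their reductions), with the free lattice corresponding to the coset of the identity; and chasing this through the Mayer--Vietoris sequence of the square shows that the stably free classes are precisely the cosets whose reduced norm lies in the subgroup generated by the reduced norms of the units of the two global "legs". One is thereby reduced to a concrete finite computation: determine $\frf$ and the finite rings, compute their unit groups together with all relevant reduced norms (including $\nr(\mathcal{M}_{B}^{\times})$ and $\nr(\mathcal{O}_{K}^{\times})$ for the centre $K$ of $B$), and exhibit a unit of the finite ring representing a stably free coset distinct from that of $1$ --- equivalently, show that a suitable local--global defect (of totally positive units arising as reduced norms, or of reduced-norm values realised in the finite quotient but not globally) is nontrivial.

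The hard part is exactly this last step. For $Q_{8} \times C_{2}$ it is the content of Swan's Theorem~\ref{thm:Q8xC2}; the substance of the present theorem is that the same computation, carried out over $\Q$, $\Q(\sqrt 2)$, $\Q(\sqrt 5)$ with $Q_{12}$, $Q_{16}$, $Q_{20}$ in place of $Q_{8}$, still yields a nontrivial defect, and that the analogous but separate computations attached to the quaternion components of $G_{(36,7)}$ and $G_{(100,7)}$ do likewise. Performing these reduced-norm calculations in the finite quotient rings by hand is delicate, and this is where Chen's argument necessarily proceeds group by group; alternatively, and more uniformly, one may simply run the failure test Algorithm~\ref{alg:SFC-fail} of this paper --- which is built to carry out exactly this kind of unit-group and freeness computation (see \S\ref{sec:hybrid-method}) --- on each of the five groups.
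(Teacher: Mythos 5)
The paper's own proof of Theorem~\ref{thm:Chen} is not a fresh argument but a citation: it refers the reader to Chen's thesis and notes (as you correctly observe) that Chen's arguments, though stated for LFC, actually show that the lattices produced are stably free, hence establish failure of SFC. You capture this key observation, and your sketch of the Milnor-square / unit-group / reduced-norm machinery together with the endgame suggestion of running Algorithm~\ref{alg:SFC-fail} is essentially the shape of the independent re-verification the paper supplies via Theorem~\ref{thm:new-group-rings-fail-SFC}. So the high-level plan is sound and is the paper's.

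There is, however, a genuine gap in the middle of your argument. You write that ``since cancellation is detected componentwise, with the Eichler component contributing nothing by Theorem~\ref{thm:Jacobinski-cancellation} and Corollary~\ref{cor:cancellation-direct-product}, it suffices to exhibit a non-free stably free rank-$1$ lattice over the projection $\Lambda_B$ of $\Z[G]$ to $B$.'' Corollary~\ref{cor:cancellation-direct-product} requires $\Lambda$ itself to split as a direct sum of orders, and $\Z[G]$ does not split as $\Lambda_{A_1}\oplus\Lambda_B$ --- that failure to split is the entire source of difficulty, as the fibre-product formalism of \S\ref{sec:fibre-products} makes clear: both $\Lambda_1$ and $\Lambda_2$ can have SFC while the pullback $\Lambda$ fails it, the obstruction being $W\subsetneq\ker(\partial)$. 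The direction you actually need, that $\Lambda_B$ failing SFC forces $\Z[G]$ to fail, is true but follows from Theorem~\ref{thm:cancellation-under-map} (see Remark~\ref{rmk:projection-alg-fail}), not from Corollary~\ref{cor:cancellation-direct-product}; and the converse is false, so your reduction is only useful if the projection to $B$ in fact fails SFC, which you do not verify. Notice that the paper's Theorem~\ref{thm:new-group-rings-fail-SFC} deliberately projects to $e\Z[G]$ with $e$ picking out the \emph{commutative as well as} the totally definite quaternion components: the obstruction lives in a finite ring whose unit group receives contributions from all of these, and it is not clear that restricting to the quaternion components alone would detect the failure. Your own Milnor square for $Q_{4n}\times C_2$ already contradicts the ``project to $B$'' reduction, since it glues two full copies of $\Z[Q_{4n}]$ along the full $\F_2[Q_{4n}]$, commutative components included. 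Correcting the citation, and replacing ``project to the quaternion part'' by ``project to the commutative-plus-quaternion part'' (or simply keeping the full Milnor square), removes the gap.
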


We now recall several results of Nicholson \cite[Theorems A, B, C]{MR4299591}.
Define the \emph{$\mathbb{H}$-multiplicity} 
$m_{\mathbb{H}}(G)$ of a finite group $G$ to be the number of 
copies of $\mathbb{H}$ in the Wedderburn decomposition of $\R[G]$, that is, the number of irreducible
one-dimensional quaternionic representations. 
Thus $\Q[G]$ satisfies the Eichler condition if and only if $m_{\mathbb{H}}(G)=0$.
Recall that for a ring $R$, the \emph{Whitehead group of $R$} is defined to be
$K_{1}(R)=\GL(R)^{\mathrm{ab}}$, where $\GL(R) = \varinjlim \GL_{n}(R)$ and the 
direct limit is taken with respect to inclusions $\GL_{n}(R) \hookrightarrow \GL_{n+1}(R)$
(see \cite[\S 40A]{curtisandreiner_vol2}). 
We say that $K_{1}(R)$ is \emph{represented by units} if the canonical map
$R^{\times} = \GL_{1}(R) \hookrightarrow \GL(R) \rightarrow K_{1}(R)$ is surjective.

\begin{theorem}[Nicholson]\label{thm:Nicholson-A}
Let $G$ be a finite group with quotient $H$ such that $m_{\mathbb{H}}(G)=m_{\mathbb{H}}(H)$
and suppose that $K_{1}(\Z[H])$ is represented by units. 
Then $\Z[G]$ has SFC if and only if $\Z[H]$ has SFC.
\end{theorem}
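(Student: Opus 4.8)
The plan is to exploit the pullback diagram relating locally free lattices and class groups that already appeared in the proof of Theorem~\ref{thm:cancellation-under-map}. Write $\pi : \Z[G] \twoheadrightarrow \Z[H]$ for the canonical projection, which induces a surjection $\Q[G] \twoheadrightarrow \Q[H]$ of $\Q$-algebras. First I would recall the weak pullback property from \cite[Theorem A10]{MR703486}: the fibre in $\LF_{1}(\Z[G])$ over an element $x \in \Cl(\Z[G])$ surjects onto the fibre in $\LF_{1}(\Z[H])$ over the image $\bar{x} \in \Cl(\Z[H])$. In particular, taking $x = 0$, the fibre of stably free rank-one $\Z[G]$-lattices surjects onto the fibre of stably free rank-one $\Z[H]$-lattices. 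This already gives the forward implication: if $\Z[G]$ has SFC, the fibre above $0$ in $\LF_{1}(\Z[G])$ is a singleton, hence so is its image, hence $\Z[H]$ has SFC — but of course this direction is just Lemma~\ref{lem:quot-group-ring}. The real content is the converse.

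For the converse, suppose $\Z[H]$ has SFC, so the fibre in $\LF_{1}(\Z[H])$ above $0 \in \Cl(\Z[H])$ is a single point $[\Z[H]]_{\iso}$. Let $X \in g(\Z[G])$ be stably free of rank one, i.e.\ $[X]_{\st} = 0$ in $\Cl(\Z[G])$; I must show $X \cong \Z[G]$. The induced lattice $\Z[H] \otimes_{\Z[G]} X$, or equivalently the image of $X$ under extension of scalars along $\pi$, lies in $g(\Z[H])$ and has trivial class in $\Cl(\Z[H])$, so by SFC for $\Z[H]$ it is isomorphic to $\Z[H]$ as a $\Z[H]$-module. The key point is now to transport this information back to $\Z[G]$, and this is where the hypothesis $m_{\mathbb{H}}(G) = m_{\mathbb{H}}(H)$ enters: it guarantees that the kernel algebra — the complementary summand of $\Q[H]$ inside $\Q[G]$ obtained via the central idempotent $e_N$ attached to $N = \ker(G \to H)$ — satisfies the Eichler condition. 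Indeed $\Q[G] \cong \Q[G]e_N \oplus \Q[G](1-e_N)$ with $\Q[G]e_N \cong \Q[H]$, and the totally definite quaternion components of $\Q[G]$ are exactly those of $\Q[H]$ by the $\mathbb{H}$-multiplicity assumption, so $\Q[G](1-e_N)$ satisfies the Eichler condition and hence (by Jacobinski, Theorem~\ref{thm:Jacobinski-cancellation}) the corresponding order component contributes nothing to cancellation failure.

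The mechanism to combine these two facts is a fibre-product / Milnor-square argument: $\Z[G]$ sits in a conductor-square pullback over $\Z[H]$ and a $\Z$-order in $\Q[G](1-e_N)$ (which satisfies the Eichler condition), with the fibre product taken over a finite quotient ring. A stably free $\Z[G]$-lattice of rank one is then pieced together from a free module over each factor together with a gluing unit over the finite ring; freeness over $\Z[G]$ fails only if this gluing unit cannot be adjusted to the identity modulo the images of the unit groups of the two factors. The Eichler side contributes its full unit group to $K_1$ of the finite ring by Bass--Milnor--Serre-type surjectivity (this is exactly where the Eichler condition is used a second time, to know stable range / $K_1$-surjectivity), while the $\Z[H]$ side, by SFC, already forces the corresponding part of the obstruction to vanish. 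Hence the gluing unit is trivialisable and $X \cong \Z[G]$. The main obstacle, and the step I would expect to require the most care, is making precise the claim that the relevant gluing obstruction group for $\Z[G]$ is generated by the images of the unit groups of the two factors of the pullback; this is essentially a $K_1$-Mayer--Vietoris argument combined with the Eichler-condition input, and the hypothesis that $K_1(\Z[H])$ is represented by units is exactly what is needed to lift the $\Z[H]$-side contribution from $K_1(\Z[H])$ to genuine units of $\Z[H]$ so that it can be realised in the pullback — without it, one could only control stable classes, not actual module isomorphisms.
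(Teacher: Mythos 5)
The paper cites this result to Nicholson (\cite[Theorem A]{MR4299591}) without proof, so there is no internal argument to compare against; the right comparison is with the general fibre-product machinery developed in \S\ref{sec:fibre-products}, which your sketch does reconstruct in outline. In the conductor square \eqref{eq:Gamma-fibre-prod} with $\Gamma = \Z[G]/\mathrm{Tr}_N\Z[G]$, the hypothesis $m_{\mathbb{H}}(G) = m_{\mathbb{H}}(H)$ means $\R\otimes_\Z\Gamma$ has no $\mathbb{H}$-factors, so $\Q\otimes_\Z\Gamma$ satisfies the Eichler condition and $\Gamma$ has SFC by Jacobinski; Proposition \ref{prop:SFC-W-criterion} then reduces the claim to showing $W_{G,N} = \ker(\partial)$.

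There is, however, a concrete gap at exactly the step you flag as the main obstacle. Knowing that both $K_1(\Gamma)$ (via the Eichler condition) and $K_1(\Z[H])$ (your hypothesis) are represented by units shows, via the Mayer--Vietoris sequence underlying Proposition \ref{prop:SFC-exact sequence}, only that $\pi(W_{G,N}) = \ker(\partial')$ inside $K_1(\overline{\Lambda})$, where $\pi \colon \overline{\Lambda}^\times \twoheadrightarrow K_1(\overline{\Lambda})$ is the canonical surjection and $\partial = \partial'\circ\pi$. Pulling back yields $\ker(\partial) = W_{G,N}\cdot\ker(\pi)$, not the required equality $W_{G,N}=\ker(\partial)$. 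The missing ingredient is Swan's strong approximation theorem (Theorem \ref{thm:Swan-Eichler}, i.e.\ \cite[Theorem~10.2]{MR584612}): since $\Q\otimes_\Z\Gamma$ satisfies the Eichler condition, the image $g_2(\Gamma^\times)$ already contains $[\overline{\Lambda}^\times,\overline{\Lambda}^\times]$, and it is this containment — not any generic ``Bass--Milnor--Serre-type surjectivity'' — that absorbs $\ker(\pi)$ into $W_{G,N}$. Your remark that ``the $\Z[H]$ side, by SFC, already forces the corresponding part of the obstruction to vanish'' also conflates two distinct inputs: SFC of $\Z[H]$ is needed to apply Proposition \ref{prop:SFC-W-criterion}, while the $K_1(\Z[H])$-representability hypothesis is what is needed for the $K_1$-level bookkeeping; neither substitutes for Theorem \ref{thm:Swan-Eichler}, which is the lemma your sketch omits.
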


\begin{theorem}[Nicholson]\label{thm:Nicholson-B}
Let $G$ be a finite group with binary polyhedral quotient $H$ such that
$m_{\mathbb{H}}(G)=m_{\mathbb{H}}(H)$.
Then $\Z[G]$ has SFC if and only if $\Z[H]$ has SFC. 
\end{theorem}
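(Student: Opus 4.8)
The plan is to deduce this from Theorem \ref{thm:Nicholson-A} by producing a suitable intermediate quotient whose integral group ring has $K_1$ represented by units, and then tracking the $\mathbb{H}$-multiplicity along the chain of quotients. First I would observe that since $H$ is a binary polyhedral quotient of $G$, the hypothesis $m_{\mathbb{H}}(G) = m_{\mathbb{H}}(H)$ already forces $m_{\mathbb{H}}(H) \geq 1$, and in fact one should check that for each binary polyhedral group $H$ the value $m_{\mathbb{H}}(H)$ is exactly what one expects from its Wedderburn decomposition over $\R$ (e.g.\ $Q_{4n}$ contributes one copy of $\mathbb{H}$ for each faithful two-dimensional quaternionic representation, and $\tilde T$, $\tilde O$, $\tilde I$ each contribute one). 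The key structural input is that $K_1(\Z[H])$ is represented by units for every binary polyhedral group $H$; this is a known consequence of the work of Swan (and of the explicit determination of $SK_1$ and of the unit groups in these small cases, cf.\ \cite{MR703486} and the references on $K_1$ of group rings in \cite[\S 40--45]{curtisandreiner_vol2}). Granting that, the result is immediate: take the quotient $H$ of $G$, which by hypothesis satisfies $m_{\mathbb{H}}(G) = m_{\mathbb{H}}(H)$, and since $K_1(\Z[H])$ is represented by units, Theorem \ref{thm:Nicholson-A} applies verbatim to give that $\Z[G]$ has SFC if and only if $\Z[H]$ has SFC.

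Alternatively, and perhaps closer to what the authors intend, one reduces further: by Lemma \ref{lem:quot-group-ring} and Theorem \ref{thm:bp-canc-group-rings}, if $\Z[G]$ has SFC then $\Z[H]$ has SFC, so $H$ must be one of the seven groups $Q_8, Q_{12}, Q_{16}, Q_{20}, \tilde T, \tilde O, \tilde I$; conversely one needs that if $H$ is one of these seven groups and $m_{\mathbb{H}}(G) = m_{\mathbb{H}}(H)$ then $\Z[G]$ has SFC. For the converse direction the engine is again Theorem \ref{thm:Nicholson-A}: one checks that $K_1(\Z[H])$ is represented by units for each of these seven specific groups (a finite check, feasible because their group rings and $K_1$-groups are completely understood), and then Theorem \ref{thm:Nicholson-A} transports SFC from $\Z[H]$ up to $\Z[G]$. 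In either formulation the content beyond Theorem \ref{thm:Nicholson-A} is entirely the assertion that binary polyhedral group rings have $K_1$ represented by units.

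The main obstacle is exactly verifying that $K_1(\Z[H])$ is represented by units for the relevant binary polyhedral groups $H$. This is not a triviality: for a general finite group $H$ the natural map $\Z[H]^\times \to K_1(\Z[H])$ need not be surjective, and the failure is measured by the image of $SK_1$ together with contributions from $K_1$ of the Wedderburn components that are not accounted for by units. For binary polyhedral groups one uses that $SK_1(\Z[H]) = 0$ in these cases (Alperin--Dennis--Stein type results, or direct computation), together with the fact that the simple components of $\Q[H]$ are matrix algebras over fields or quaternion algebras whose reduced norm / determinant obstructions vanish for these small orders, so that every element of $K_1$ is realised by a $1 \times 1$ matrix, i.e.\ by a unit. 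I would cite Nicholson's paper \cite{MR4299591} for the precise statement, since this is presumably where the claim "$K_1(\Z[H])$ is represented by units for $H$ binary polyhedral" is recorded; the remaining steps — combining with Theorem \ref{thm:Nicholson-A} and with Lemma \ref{lem:quot-group-ring} / Theorem \ref{thm:bp-canc-group-rings} to pin down which $H$ can occur — are then routine.
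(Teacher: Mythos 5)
The paper does not actually prove Theorem~\ref{thm:Nicholson-B}: it is stated (together with Theorems~\ref{thm:Nicholson-A} and~\ref{thm:Nicholson-C}) as a citation of \cite[Theorems A, B, C]{MR4299591}, so there is no in-paper argument to compare your proposal against. That said, your reduction is structurally sound and is almost certainly what the original proof does: the ``only if'' direction is Lemma~\ref{lem:quot-group-ring} with no $m_{\mathbb{H}}$ hypothesis needed, and in the ``if'' direction the hypothesis that $\Z[H]$ has SFC together with Theorem~\ref{thm:bp-canc-group-rings} forces $H$ to be one of $Q_{8}, Q_{12}, Q_{16}, Q_{20}, \tilde{T}, \tilde{O}, \tilde{I}$, at which point Theorem~\ref{thm:Nicholson-A} applies once one knows $K_{1}(\Z[H])$ is represented by units for each of these seven groups.

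That last claim is where the genuine content sits, and your sketch asserts it rather than proving it. The one concrete mechanism you point to --- Proposition~\ref{prop:Nicholson-D}, i.e.\ $SK_{1}(\Z[H])=0$ plus $r_{\R}(H)=r_{\Q}(H)$ --- does not cover all seven cases. For instance, the two faithful $2$-dimensional complex representations of $Q_{16}$ have character field $\Q(\sqrt{2})$: they give one rational Wedderburn component (a quaternion division algebra over $\Q(\sqrt{2})$) but two real components, so $r_{\Q}(Q_{16})=6 < 7 = r_{\R}(Q_{16})$. Similarly $r_{\Q}(Q_{20})=5 < 7 = r_{\R}(Q_{20})$ because of the $\Q(\sqrt{5})$-valued characters. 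So ``represented by trivial units'' fails for these groups and the conclusion you want --- the strictly weaker ``represented by units'' --- must come from another source (this is precisely where \cite{MR4299591} does genuine work, via explicit unit-group and $K_{1}$ computations for the binary polyhedral orders). Your framing of the reduction is correct, but as written the key technical ingredient is cited, not established, and the only proof route you gesture at concretely does not succeed in every case.
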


\begin{theorem}[Nicholson]\label{thm:Nicholson-C}
If $G$ is a finite group with $m_{\mathbb{H}}(G) \leq 1$ then $\Z[G]$ has SFC. 
\end{theorem}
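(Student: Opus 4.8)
The plan is to reduce to Swan's classification (Theorem~\ref{thm:bp-canc-group-rings}) by means of Nicholson's transfer result for binary polyhedral quotients (Theorem~\ref{thm:Nicholson-B}). First, if $m_{\mathbb{H}}(G)=0$ then $\Q[G]$ satisfies the Eichler condition, so $\Z[G]$ has LFC, and hence SFC, by Jacobinski's theorem (Theorem~\ref{thm:Jacobinski-cancellation}). Thus we may assume $m_{\mathbb{H}}(G)=1$. In particular $\Q[G]$ fails the Eichler condition, so $G$ admits a binary polyhedral quotient $H$ (by the equivalence recalled just before Theorem~\ref{thm:bp-canc-group-rings}).

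The next step is to compare $\mathbb{H}$-multiplicities. Since $H$ is a quotient of $G$, the natural surjection $\R[G]\twoheadrightarrow\R[H]$ of semisimple $\R$-algebras realises $\R[H]$ as the projection of $\R[G]$ onto a sub-product of its Wedderburn components, whence $m_{\mathbb{H}}(H)\le m_{\mathbb{H}}(G)=1$. On the other hand $H$ is binary polyhedral, so $\Q[H]$ fails the Eichler condition and therefore $m_{\mathbb{H}}(H)\ge 1$. Hence $m_{\mathbb{H}}(H)=1=m_{\mathbb{H}}(G)$, and Theorem~\ref{thm:Nicholson-B} applies: $\Z[G]$ has SFC if and only if $\Z[H]$ has SFC. (Alternatively one could invoke Theorem~\ref{thm:Nicholson-A} together with the fact that $K_{1}(\Z[H])$ is represented by units for each such $H$.)

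It then remains to identify the binary polyhedral groups $H$ with $m_{\mathbb{H}}(H)=1$ and to verify SFC for each. For the generalised quaternion group $Q_{4n}$, the faithful $2$-dimensional complex irreducible $\rho_{1}$ is of quaternionic type with character field the maximal real subfield $\Q(\zeta_{2n}+\zeta_{2n}^{-1})$ of $\Q(\zeta_{2n})$; since $\rho_{1}$ has Schur index $2$, its rational Wedderburn component is a totally definite quaternion algebra over that field and already contributes $[\Q(\zeta_{2n}+\zeta_{2n}^{-1}):\Q]=\varphi(2n)/2$ copies of $\mathbb{H}$ to $\R[Q_{4n}]$. This equals $1$ only for $n\in\{2,3\}$, that is, for $Q_{8}$ and $Q_{12}$, and in both of these cases one checks from the character table that indeed $m_{\mathbb{H}}=1$. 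For $\tilde{T}$ one finds $m_{\mathbb{H}}(\tilde{T})=1$ (the unique quaternionic irreducible being the rational $2$-dimensional representation, the other two $2$-dimensional irreducibles being of complex type), whereas for $\tilde{O}$ and $\tilde{I}$ the two faithful $2$-dimensional irreducibles are quaternionic and Galois-conjugate over $\Q(\sqrt{2})$ and $\Q(\sqrt{5})$ respectively, so $m_{\mathbb{H}}=2$. Hence $H\in\{Q_{8},Q_{12},\tilde{T}\}$, and for each of these three groups $\Z[H]$ has SFC by Theorem~\ref{thm:bp-canc-group-rings}; combined with the previous paragraph this gives that $\Z[G]$ has SFC.

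The main obstacle is the last step: the explicit determination of the binary polyhedral groups with $m_{\mathbb{H}}=1$. This requires the Wedderburn decomposition of $\R[H]$, equivalently a careful bookkeeping of Frobenius--Schur indicators and of how Galois-conjugate quaternionic characters fuse over $\Q$, for the infinite family $Q_{4n}$ and for the three exceptional groups $\tilde{T}$, $\tilde{O}$, $\tilde{I}$. Everything else in the argument is a short deduction from results already stated above.
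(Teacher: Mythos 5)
The paper does not prove Theorem~\ref{thm:Nicholson-C}; it cites it (together with Theorems~\ref{thm:Nicholson-A} and~\ref{thm:Nicholson-B}) from Nicholson~\cite{MR4299591} and uses it as a black box. So there is no internal proof to compare against, and what you have supplied is genuinely new content relative to this paper. Your derivation is correct. The case $m_{\mathbb{H}}(G)=0$ is exactly Jacobinski (Theorem~\ref{thm:Jacobinski-cancellation}). For $m_{\mathbb{H}}(G)=1$, the observation that $\R[H]$ is a direct factor of $\R[G]$ (so $m_{\mathbb{H}}(H)\le m_{\mathbb{H}}(G)$ for any quotient $H$) combined with $m_{\mathbb{H}}(H)\ge 1$ for any binary polyhedral $H$ forces $m_{\mathbb{H}}(H)=1$, which is precisely the hypothesis of Theorem~\ref{thm:Nicholson-B}; together with Swan's classification (Theorem~\ref{thm:bp-canc-group-rings}) this closes the argument. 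Your identification of the binary polyhedral groups with $m_{\mathbb{H}}=1$ as $Q_8$, $Q_{12}$, $\tilde{T}$ is also right: for $Q_{4n}$ one in fact has $m_{\mathbb{H}}(Q_{4n})=\lfloor n/2\rfloor$ (the $2$-dimensional irreducible $\rho_k$ is quaternionic iff $k$ is odd), but your lower bound $\varphi(2n)/2\ge 2$ for $n\ge 4$ coming from a single faithful quaternionic component already suffices.

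Two small points of precision, neither a gap. First, for $n\ge 3$ there is in general more than one faithful $2$-dimensional irreducible of $Q_{4n}$ (all Galois conjugate), so ``the faithful $\rho_1$'' should be ``any one of the faithful $\rho_k$''; since they all belong to the same rational Wedderburn component, the contribution of $\varphi(2n)/2$ copies of $\mathbb{H}$ is still correct. Second, the parenthetical alternative via Theorem~\ref{thm:Nicholson-A} would additionally require verifying that $K_1(\Z[H])$ is represented by units for each $H\in\{Q_8,Q_{12},\tilde{T}\}$; this is true (via Proposition~\ref{prop:Nicholson-D}, using $SK_1(\Z[H])=0$ and $r_{\R}(H)=r_{\Q}(H)$), but Theorem~\ref{thm:Nicholson-B} is the cleaner route and you correctly chose it as the primary one.
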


For a finite group $G$ and a field $\F$, let $r_{\F}(G)$ denote the number of 
irreducible $\F$-representations of $G$, and let 
$SK_{1}(\Z[G]) = \ker(K_{1}(\Z[G]) \rightarrow K_{1}(\Q[G]))$; this is a finite abelian group by a theorem 
of Bass \cite[(45.20)]{curtisandreiner_vol2}.
We say that $K_{1}(\Z[G])$ is \emph{represented by trivial units} if the canonical map
$\pm G \rightarrow K_{1}(\Z[G])$ is surjective.

As explained in \cite[p.\ 324]{MR4299591}, 
the following result can be deduced from results of Bass \cite[Corollary~6.3]{MR193120} 
(see also \cite[(45.21)(ii)]{curtisandreiner_vol2})
and Wall
\cite[Proposition~6.5]{MR376746} (see also \cite[(46.4)]{curtisandreiner_vol2}),
and in certain situations, can be used to apply Theorem~\ref{thm:Nicholson-A}.

\begin{prop}\label{prop:Nicholson-D}
Let $G$ be a finite group such that $SK_{1}(\Z[G])=0$ and $r_{\R}(G)=r_{\Q}(G)$.
Then $K_{1}(\Z[G])$ is represented by trivial units. 
\end{prop}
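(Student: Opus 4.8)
The statement to be proved is Proposition~\ref{prop:Nicholson-D}: if $G$ is a finite group with $SK_{1}(\Z[G])=0$ and $r_{\R}(G)=r_{\Q}(G)$, then $K_{1}(\Z[G])$ is represented by trivial units, i.e.\ the canonical map $\pm G \to K_{1}(\Z[G])$ is surjective. The plan is to factor the map $\pm G \to K_{1}(\Z[G])$ through a chain of well-understood maps and to control each step using the two hypotheses. First I would invoke the exact sequence
\[
1 \lra SK_{1}(\Z[G]) \lra K_{1}(\Z[G]) \lra K_{1}(\Q[G])
\]
(which is essentially the definition of $SK_{1}$). Since $SK_{1}(\Z[G])=0$ by hypothesis, the map $K_{1}(\Z[G]) \hookrightarrow K_{1}(\Q[G])$ is injective, so it suffices to prove that $\pm G$ surjects onto the image of $K_{1}(\Z[G])$ inside $K_{1}(\Q[G])$. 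This reduces the problem to a statement about rational group algebras, where the Wedderburn decomposition and reduced norms are available.

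Next I would bring in the result of Bass cited in the excerpt (\cite[Corollary~6.3]{MR193120}, equivalently \cite[(45.21)(ii)]{curtisandreiner_vol2}): the image of $K_{1}(\Z[G])$ in $K_{1}(\Q[G])$, after applying the reduced norm, lands in a subgroup of $\prod_i K_i^{\times}$ (the product of the centres of the simple components of $\Q[G]$) that is controlled by the unit groups of the maximal orders and, crucially, by the behaviour at the archimedean places. The hypothesis $r_{\R}(G)=r_{\Q}(G)$ says that the number of irreducible $\R$-representations equals the number of irreducible $\Q$-representations; this forces each simple component $A_i$ of $\Q[G]$ to remain simple after tensoring with $\R$, which in turn pins down the signs/archimedean data so that the relevant subgroup of $\prod_i K_i^\times$ is exactly the one generated by roots of unity and totally positive units, allowing torsion units of $\Z[G]$ to account for everything. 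Then I would apply Wall's result (\cite[Proposition~6.5]{MR376746}, equivalently \cite[(46.4)]{curtisandreiner_vol2}), which identifies the contribution of torsion in $K_{1}(\Z[G])$ with that coming from the torsion units of $\Z[G]$; combined with a theorem of Higman, the torsion units of $\Z[G]$ are (up to the relevant level of $K$-theory) just $\pm G$ together with elements that die in $K_1$. Assembling these: every class in $K_{1}(\Z[G])$ is, modulo $SK_1=0$, detected by its reduced norm, which by the $r_\R=r_\Q$ condition is realised by a trivial unit, so $\pm G \to K_1(\Z[G])$ is onto.

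The main obstacle, and the step I would spend the most care on, is the precise bookkeeping at the archimedean places that translates the numerical hypothesis $r_{\R}(G)=r_{\Q}(G)$ into the statement that Bass's subgroup of $\prod_i K_i^\times$ coincides with the subgroup generated by images of trivial units. One has to be careful about the distinction between the three types of simple components (those with $K_i$ real versus complex, and the quaternionic ones) and about the signs $\{\pm 1\}$ at real places: the condition $r_\R(G)=r_\Q(G)$ rules out any component of $\Q[G]$ splitting upon base change to $\R$, which is exactly what is needed, but making this rigorous requires identifying $r_\R(G)$ and $r_\Q(G)$ with the numbers of simple components of $\R[G]$ and $\Q[G]$ respectively and matching them componentwise. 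Once that identification is in place, the rest is a formal diagram chase through the Bass--Milnor--Serre / Wall machinery. Since the excerpt explicitly permits deducing the result from the cited work of Bass and Wall ``as explained in \cite[p.\ 324]{MR4299591}'', I would present the argument at the level of detail of that reference: state the two input theorems precisely, observe that $SK_1=0$ makes $K_1(\Z[G])$ a subgroup of $K_1(\Q[G])$, invoke $r_\R=r_\Q$ to control the archimedean data, and conclude.
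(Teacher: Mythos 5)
The paper provides no proof of its own here --- it simply refers to Nicholson \cite[p.~324]{MR4299591} and the results of Bass and Wall cited just before the statement --- so your proposal is being judged against the standard deduction from those sources. Your outline has the right inputs and the right first reduction ($SK_1(\Z[G]) = 0$ lets you work inside $K_1(\Q[G])$), but the way you use the two cited theorems needs fixing. The decisive consequence of Bass's theorem \cite[(45.21)(ii)]{curtisandreiner_vol2} is not a description of a subgroup ``generated by roots of unity and totally positive units,'' but a rank formula: the torsion-free rank of $K_1(\Z[G])/SK_1(\Z[G])$ equals $r_{\R}(G)-r_{\Q}(G)$. (Equivalently, the $r_{\Q}(G)$ centres $K_i$ of the simple components of $\Q[G]$ contribute $r_{\R}(G)$ archimedean places in total, so $r_{\R}(G)=r_{\Q}(G)$ forces each $K_i$ to have exactly one archimedean place, hence each $\mathcal{O}_{K_i}^{\times}$ is finite.) The hypothesis therefore makes $K_1(\Z[G]) \cong K_1(\Z[G])/SK_1(\Z[G])$ a finite, hence torsion, group --- that is the clean consequence you should extract, rather than the vaguer ``pinning down archimedean data.''

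The more serious misstep is the detour through Higman's theorem. Wall's result \cite[(46.4)]{curtisandreiner_vol2} already says that the torsion subgroup of $K_1(\Z[G])/SK_1(\Z[G])$ is precisely the image of the trivial units $\pm G$; it does not merely say that torsion classes are represented by torsion units of $\Z[G]$. So once the group is known to be torsion, Wall finishes the proof immediately: with $SK_1=0$, the map $\pm G \to K_1(\Z[G])$ is onto. Trying to bridge from ``torsion units of $\Z[G]$'' to ``trivial units'' via Higman would open a real gap, since Higman's classification of torsion units applies to abelian $G$ and the analogous statement is not available (and not true on the nose) for general finite $G$. Fortunately this extra step is simply not needed, and removing it turns your sketch into the intended argument.
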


\section{Fiber products and reduction criteria for SFC}\label{sec:fiber-products}

Let $R$ be a Dedekind domain whose quotient field $K$ is a number field.
Let $A$ be a finite-dimensional semisimple $K$-algebra and let $\Lambda$ be an 
$R$-order in $A$. 

\subsection{Fiber products}\label{subsec:fiber-products}
Suppose that there is a fiber product 
\begin{equation}\label{eq:fiber-product}
\begin{tikzcd}
\Lambda \arrow{d}[swap]{\pi_{2}} \arrow{r}{\pi_{1}} & \Lambda_{1} \arrow{d}{g_{1}}  \\
\Lambda_{2} \arrow{r}[swap]{g_{2}} & \overline{\Lambda}.
\end{tikzcd}
\end{equation}
in which $\Lambda_{1}$ and $\Lambda_{2}$ are $R$-orders in finite-dimensional semisimple 
$K$-algebras $A_{1}$, and $A_{2}$, respectively, and where $\overline{\Lambda}$ is an $R$-torsion $R$-algebra. In particular, this means that there are $R$-algebra isomorphisms
\begin{equation}\label{eq:fiber-in-direct-sum}
\Lambda \cong \{ (x_{1},x_{2}) \in \Lambda_{1} \oplus \Lambda_{2} \mid g_{1}(x_{1})=g_{2}(x_{2}) \}
\quad \textrm{ and } \quad
A \cong A_{1} \oplus A_{2}, 
\end{equation}
which we take to be identifications.

\begin{example}\label{example:fiber-ideals}
Let $I$ and $J$ be two-sided ideals of $\Lambda$ such that $I \cap J = 0$
and $K(I+J)=A$. Let $\Lambda_{1} = \Lambda / I$, let $\Lambda_{2} = \Lambda / J$
and let $\overline{\Lambda} = \Lambda / (I+J)$.
Then $\overline{\Lambda}$ is $R$-torsion and there is a fiber product as in \eqref{eq:fiber-product} such that each of $\pi_{1},\pi_{2},g_{1}, g_{2}$ is the relevant canonical surjection.
\end{example}

\subsection{Review of results of Reiner--Ullom}
We now recall \cite[Lemma 4.20]{MR349828}, which is crucial to all of the results in this section.

\begin{prop}[Reiner--Ullom]\label{prop:properties-of-Mu}
Given a fiber product as in \eqref{eq:fiber-product}, assume that at least one of $g_{1}$ and $g_{2}$
is surjective. For $u \in \overline{\Lambda}^{\times}$, define
\[
M(u) := \{ (x_{1},x_{2}) \in \Lambda_{1} \oplus \Lambda_{2} \mid g_{1}(x_{1})u=g_{2}(x_{2}) \}. 
\]
Then the following statements hold:
\begin{enumerate}
\item Each $M(u)$ is a locally free $\Lambda$-lattice of rank $1$ such that 
\[
\Lambda_{i} \otimes_{\Lambda} M(u) \cong \Lambda_{i} \cdot M(u) = \Lambda_{i}
\textrm{ for } i=1,2.
\]
\item For $u, u' \in \overline{\Lambda}^{\times}$, we have
\[
M(u) \oplus M(u') \cong \Lambda \oplus M(u u') \cong \Lambda \oplus M(u'u).
\]
\item Let $\mathcal{M}$ be a maximal $R$-order in $A$ containing $\Lambda$.
Then 
\[
\mathcal{M} \otimes_{\Lambda} M(u) \cong \mathcal{M} \cdot M(u) = \mathcal{M}.
\]
\item Let $X$ be any locally free $\Lambda$-lattice such that $\Lambda_{i} \otimes_{\Lambda} X \cong \Lambda_{i}$ for $i=1,2$. Then $X \cong M(u)$ for some $u \in \overline{\Lambda}^{\times}$.
\item For $u, u' \in \overline{\Lambda}^{\times}$, we have
$M(u) \cong M(u')$ if and only if 
$u' \in g_{1}(\Lambda_{1}^{\times}) \cdot u \cdot g_{2}(\Lambda_{2}^{\times})$.
\end{enumerate} 
\end{prop}

\begin{corollary}\label{cor:properties-of-Mu}
In the setting of Proposition~\ref{prop:properties-of-Mu}, the following statements hold:
\begin{enumerate}
\item The map $\partial : \overline{\Lambda}^{\times} \rightarrow \Cl(\Lambda)$
defined by $\partial(u) = [M(u)]_{\st}$ is a homomorphism.
\item  For $u \in \overline{\Lambda}^{\times}$, 
we have that $M(u) \cong \Lambda$ if and only if $u \in W$, where
\[
W := \{ w_{1}w_{2} \mid w_{i} \in g_{i}(\Lambda_{i}^{\times}), i=1,2 \}. 
\]
\item For $u \in \overline{\Lambda}^{\times}$, 
we have that $M(u)$ is stably free over $\Lambda$ if and only if $u \in \ker(\partial)$.
\item We have a containment of sets
$W \subseteq \ker(\partial)$.
\end{enumerate}
\end{corollary}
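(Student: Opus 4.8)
The plan is to derive all four statements of Corollary~\ref{cor:properties-of-Mu} directly from the five properties collected in Proposition~\ref{prop:properties-of-Mu}, together with the definition of the group operation on $\Cl(\Lambda)$ recalled in \S\ref{subsec:lfcg}. No new lattice-theoretic input is needed; the content is purely formal bookkeeping with the Reiner--Ullom modules $M(u)$.

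First I would prove (i). Given $u, u' \in \overline{\Lambda}^{\times}$, part (ii) of Proposition~\ref{prop:properties-of-Mu} gives $M(u) \oplus M(u') \cong \Lambda \oplus M(uu')$. Since all three of $M(u), M(u'), M(uu')$ lie in $g(\Lambda)$ by part (i) of the proposition, the definition of addition in $\Cl(\Lambda)$ immediately yields $[M(u)]_{\st} + [M(u')]_{\st} = [M(uu')]_{\st}$, i.e.\ $\partial(u) + \partial(u') = \partial(uu')$. As $M(1) = \Lambda$ (directly from the definition of $M(u)$), we have $\partial(1) = [\Lambda]_{\st} = 0$, so $\partial$ is a homomorphism.

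Next, statement (ii): by part (v) of Proposition~\ref{prop:properties-of-Mu}, $M(u) \cong M(1) = \Lambda$ if and only if $1 \in g_{1}(\Lambda_{1}^{\times}) \cdot u \cdot g_{2}(\Lambda_{2}^{\times})$, which after rearranging (using that $g_{i}(\Lambda_{i}^{\times})$ is a subgroup of $\overline{\Lambda}^{\times}$, being the image of a group under a ring homomorphism) is equivalent to $u^{-1} \in g_{1}(\Lambda_{1}^{\times}) \cdot g_{2}(\Lambda_{2}^{\times})$, hence to $u \in g_{2}(\Lambda_{2}^{\times}) \cdot g_{1}(\Lambda_{1}^{\times})$. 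One then observes that the set $W = \{ w_{1}w_{2} \mid w_{i} \in g_{i}(\Lambda_{i}^{\times}) \}$ coincides with $g_{2}(\Lambda_{2}^{\times}) \cdot g_{1}(\Lambda_{1}^{\times})$: a priori $W$ is only a set of products in one order, but part (ii) of Proposition~\ref{prop:properties-of-Mu} shows $M(w_1 w_2) \cong M(w_2 w_1)$ for units, and combining this with part (v) shows each product $w_1 w_2$ can be rewritten in the other order up to the same double coset, so membership in $W$ is order-insensitive; alternatively, and more cleanly, note that $M(u) \cong \Lambda$ is visibly symmetric under swapping the two components of the fibre product, which forces the characterising condition to be symmetric in $g_1(\Lambda_1^\times)$ and $g_2(\Lambda_2^\times)$. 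This identifies the condition with $u \in W$.

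For (iii): $M(u)$ is stably free over $\Lambda$ means $M(u)$ is stably isomorphic to $\Lambda^{(n)}$ for some $n$, and since $M(u) \in g(\Lambda)$ has rank $1$, this forces $n = 1$ and hence $[M(u)]_{\st} = [\Lambda]_{\st} = 0$ in $\Cl(\Lambda)$; conversely $[M(u)]_{\st} = 0$ means $M(u) \oplus \Lambda^{(m)} \cong \Lambda^{(m+1)}$ for some $m$. By definition of $\partial$ this says $M(u)$ is stably free $\iff \partial(u) = 0 \iff u \in \ker(\partial)$. Finally (iv) is immediate: if $u \in W$ then $M(u) \cong \Lambda$ by (ii), hence $M(u)$ is (stably) free, hence $u \in \ker(\partial)$ by (iii). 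I do not anticipate a genuine obstacle here; the one point requiring a little care is the symmetry argument identifying $W$ with the two-sided double coset product in statement (ii), since $\overline{\Lambda}^{\times}$ need not be abelian — but as indicated this follows either from the symmetry of the fibre product or from part (ii) of Proposition~\ref{prop:properties-of-Mu}.
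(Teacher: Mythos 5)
Your proposal for parts (i), (iii) and (iv) matches the paper's own (extremely brief) proof: use Proposition \ref{prop:properties-of-Mu}(ii) together with the definition of addition in $\Cl(\Lambda)$ for (i), the definition of $\Cl(\Lambda)$ for (iii), and then combine (ii) and (iii) for (iv). Part (ii) is also intended to follow directly from Proposition \ref{prop:properties-of-Mu}(v), and your proof does ultimately rest on that, but you have introduced a small algebraic slip and then two unnecessary (one of them unsound) workarounds.

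Here is the issue. From $1 \in g_{1}(\Lambda_{1}^{\times})\, u\, g_{2}(\Lambda_{2}^{\times})$ you claimed this is equivalent to $u^{-1} \in g_{1}(\Lambda_{1}^{\times}) g_{2}(\Lambda_{2}^{\times})$. That is not the correct rearrangement: writing $1 = w_{1} u w_{2}$ with $w_{i} \in g_{i}(\Lambda_{i}^{\times})$ and solving for $u$ gives $u = w_{1}^{-1} w_{2}^{-1}$, which places $u$ itself — not $u^{-1}$ — in $g_{1}(\Lambda_{1}^{\times}) g_{2}(\Lambda_{2}^{\times}) = W$, since these two images are subgroups of $\overline{\Lambda}^{\times}$. (Taking inverses would give $u^{-1} \in g_{2}(\Lambda_{2}^{\times}) g_{1}(\Lambda_{1}^{\times})$, with the factors in the opposite order from what you wrote.) Because the direct computation already lands in $W$ with the correct ordering, there is no need to establish that $g_{1}(\Lambda_{1}^{\times}) g_{2}(\Lambda_{2}^{\times}) = g_{2}(\Lambda_{2}^{\times}) g_{1}(\Lambda_{1}^{\times})$ at this stage — and indeed this set equality is not claimed anywhere in the corollary (it does hold later, in Proposition \ref{prop:Eichler-W-consequences}, but only under the Eichler hypothesis). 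Moreover, your first proposed workaround — deducing $M(w_{1}w_{2}) \cong M(w_{2}w_{1})$ from Proposition \ref{prop:properties-of-Mu}(ii) — is not valid as stated: that proposition only gives $\Lambda \oplus M(w_{1}w_{2}) \cong \Lambda \oplus M(w_{2}w_{1})$, i.e.\ stable isomorphism, and concluding genuine isomorphism from this would be circular (it is essentially the SFC property being studied). Your second workaround via the symmetry of the fibre product is sound, but it is rendered superfluous once the rearrangement is done correctly.
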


\begin{proof}
Parts (i) and (ii) follow from Proposition~\ref{prop:properties-of-Mu} parts (ii) and (v), respectively. 
Part (iii) follows from the definition of $\Cl(\Lambda)$ given in  \S \ref{subsec:lfcg}.
Part (iv) follows from parts (ii) and (iii).
\end{proof}

\subsection{A reduction criterion for SFC}
We henceforth assume the setting and notation of Proposition~\ref{prop:properties-of-Mu} and Corollary~\ref{cor:properties-of-Mu}.
The following underpins many of our results.

\begin{prop}\label{prop:SFC-W-criterion}
$\Lambda$ has SFC if and only if both 
$\Lambda_{1}$ and $\Lambda_{2}$ have SFC and $W=\ker(\partial)$.
\end{prop}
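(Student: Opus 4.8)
The plan is to prove both implications by combining the fibre-product dictionary of Proposition~\ref{prop:properties-of-Mu} and Corollary~\ref{cor:properties-of-Mu} with the characterisation of SFC in terms of stably free rank-$1$ lattices recorded in \S\ref{subsec:SFCandLFC}: namely, that $\Lambda$ has SFC precisely when every stably free $X \in g(\Lambda)$ satisfies $X \cong \Lambda$. The key observation is that Proposition~\ref{prop:properties-of-Mu}(iv) tells us exactly which rank-$1$ locally free $\Lambda$-lattices are of the form $M(u)$: those $X$ with $\Lambda_{i} \otimes_{\Lambda} X \cong \Lambda_{i}$ for $i=1,2$, which is automatic whenever $X$ is stably free (since then $\Lambda_{i} \otimes_{\Lambda} X$ is stably free of rank $1$ over $\Lambda_{i}$, and a stably free module is in particular locally free of the given rank, so $\Lambda_i \otimes_\Lambda X \vee \Lambda_i$; more carefully, $X \oplus \Lambda^{(m)} \cong \Lambda^{(m+1)}$ gives $(\Lambda_i \otimes_\Lambda X) \oplus \Lambda_i^{(m)} \cong \Lambda_i^{(m+1)}$, so it is stably free of rank $1$). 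Thus the collection of stably free rank-$1$ $\Lambda$-lattices is, up to isomorphism, exactly $\{ M(u) : u \in \ker(\partial) \}$ by Corollary~\ref{cor:properties-of-Mu}(iii).

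\textbf{The easy direction.} Suppose $\Lambda$ has SFC. First, $\Lambda_1$ and $\Lambda_2$ have SFC by Theorem~\ref{thm:cancellation-under-map} applied to the surjections $\pi_i : \Lambda \to \Lambda_i$ (these induce surjections $A \cong A_1 \oplus A_2 \twoheadrightarrow A_i$ by \eqref{eq:fibre-in-direct-sum}). Next, by Corollary~\ref{cor:properties-of-Mu}(iv) we always have $W \subseteq \ker(\partial)$; for the reverse inclusion, take $u \in \ker(\partial)$. Then $M(u)$ is stably free by Corollary~\ref{cor:properties-of-Mu}(iii), hence free by SFC, i.e.\ $M(u) \cong \Lambda$, which by Corollary~\ref{cor:properties-of-Mu}(ii) forces $u \in W$. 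So $W = \ker(\partial)$.

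\textbf{The substantive direction.} Conversely, assume $\Lambda_1$ and $\Lambda_2$ have SFC and $W = \ker(\partial)$. Let $X \in g(\Lambda)$ be stably free of rank $1$; we must show $X \cong \Lambda$. As noted above, the stable-freeness of $X$ gives $\Lambda_i \otimes_\Lambda X$ stably free of rank $1$ over $\Lambda_i$, hence free over $\Lambda_i$ by the SFC hypothesis on $\Lambda_i$, so $\Lambda_i \otimes_\Lambda X \cong \Lambda_i$ for $i = 1, 2$. By Proposition~\ref{prop:properties-of-Mu}(iv), $X \cong M(u)$ for some $u \in \overline{\Lambda}^\times$. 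Since $X$ is stably free, $u \in \ker(\partial)$ by Corollary~\ref{cor:properties-of-Mu}(iii); since $\ker(\partial) = W$, Corollary~\ref{cor:properties-of-Mu}(ii) gives $M(u) \cong \Lambda$, hence $X \cong \Lambda$. Therefore $\Lambda$ has SFC.

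\textbf{Main obstacle.} The only genuinely delicate point is verifying that a stably free rank-$1$ lattice $X$ really does satisfy $\Lambda_i \otimes_\Lambda X \cong \Lambda_i$ on the nose (not merely locally), so that Proposition~\ref{prop:properties-of-Mu}(iv) applies — this is where the SFC hypothesis on the $\Lambda_i$ is consumed, and it is essential that we have assumed SFC for the factors rather than trying to get by with less. One should also take a moment to note that at least one of $g_1, g_2$ being surjective is part of the ambient hypothesis (inherited from Corollary~\ref{cor:properties-of-Mu} / Proposition~\ref{prop:properties-of-Mu}), so that the Reiner--Ullom machinery is indeed available; everything else is a direct translation through the established dictionary.
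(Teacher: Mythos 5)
Your proof is correct and follows essentially the same route as the paper's: both directions rely on the characterisation of SFC via stably free rank-$1$ lattices from \S\ref{subsec:SFCandLFC}, together with parts (ii), (iii), (iv) of Corollary~\ref{cor:properties-of-Mu} and Proposition~\ref{prop:properties-of-Mu}(iv), in the same order. The only difference is that you spell out the deduction that $\Lambda_i \otimes_\Lambda X$ is stably free of rank $1$ from $X \oplus \Lambda^{(m)} \cong \Lambda^{(m+1)}$, where the paper simply asserts $[\Lambda_i \otimes_\Lambda X]_{\st} = 0$ in $\Cl(\Lambda_i)$; this is the same observation, just more explicit.
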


\begin{proof}
Suppose that $\Lambda$ has SFC. 
The maps $K \otimes_{R} \pi_{i} : A \rightarrow A_{i}$ are surjective for $i=1,2$
by \eqref{eq:fiber-in-direct-sum}.
Thus both $\Lambda_{1}$ and $\Lambda_{2}$ have SFC by Theorem~\ref{thm:cancellation-under-map}. 
Let $u \in \ker(\partial)$. 
Then $\partial(u) = [M(u)]_{\st} = 0$ in $\Cl(\Lambda)$ and so $M(u) \cong \Lambda$ since $\Lambda$ has SFC.
Thus $u \in W$ by Corollary~\ref{cor:properties-of-Mu} (ii).
Hence $\ker(\partial) \subseteq W$.
But the reverse containment also holds by Corollary~\ref{cor:properties-of-Mu} (iv) and 
so $W=\ker(\partial)$.

Suppose conversely that both $\Lambda_{1}$ and $\Lambda_{2}$ have SFC and that 
$W=\ker(\partial)$. 
Recall from \S \ref{subsec:SFCandLFC} that in order to show that $\Lambda$ has SFC, 
it suffices to show that every stably free $\Lambda$-lattice of rank $1$
is in fact free of rank $1$.
So suppose that  $X$ is a stably free $\Lambda$-lattice of rank $1$, that is, $X \in g(\Lambda)$
and $[X]_{\st} = 0$ in $\Cl(\Lambda)$. Then for $i=1,2$ we have
$[\Lambda_{i} \otimes_{\Lambda} X]_{\st} = 0$ in $\Cl(\Lambda_{i})$ 
and so $\Lambda_{i} \otimes_{\Lambda} X \cong \Lambda_{i}$ since $\Lambda_{i}$ has SFC.
Hence $X \cong M(u)$ for some $u \in \overline{\Lambda}^{\times}$
by Proposition~\ref{prop:properties-of-Mu} (iv). 
Since $X$ is stably free, Corollary~\ref{cor:properties-of-Mu} (iii)
implies that $u \in \ker(\partial)$.
Thus since $W=\ker(\partial)$, Corollary~\ref{cor:properties-of-Mu} (ii) implies that $M(u) \cong \Lambda$.
Therefore $X \cong M(u) \cong \Lambda$.
\end{proof}

\begin{corollary}\label{cor:subgroup-of-W}
Suppose that both $\Lambda_{1}$ and $\Lambda_{2}$ have SFC and that $W$ is a group. 
Let $N$ be a normal subgroup of $\ker(\partial)$ contained in $W$ and let $S$ be a generating subset 
of $\ker(\partial)/N$.
Then the following assertions are equivalent:
\begin{enumerate}
\item $\Lambda$ has SFC.
\item For every $\overline{u} \in \ker(\partial)/N$ and every lift $u \in \ker(\partial)$ we have
$M(u) \cong \Lambda$.
\item For every $\overline{u} \in S$ there exists a lift $u \in \ker(\partial)$ such that
$M(u) \cong \Lambda$. 
\end{enumerate}
\end{corollary}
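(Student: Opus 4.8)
The plan is to show the chain of implications (i) $\Rightarrow$ (ii) $\Rightarrow$ (iii) $\Rightarrow$ (i), using Proposition~\ref{prop:SFC-W-criterion} as the bridge between (i) and the module-theoretic conditions, and Corollary~\ref{cor:properties-of-Mu} to translate isomorphism statements $M(u) \cong \Lambda$ into membership in $W$.

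For (i) $\Rightarrow$ (ii): assuming $\Lambda$ has SFC, Proposition~\ref{prop:SFC-W-criterion} gives $W = \ker(\partial)$. Now take any $\overline{u} \in \ker(\partial)/N$ and any lift $u \in \ker(\partial) = W$. Then $M(u) \cong \Lambda$ by Corollary~\ref{cor:properties-of-Mu}~(ii). The implication (ii) $\Rightarrow$ (iii) is immediate, since $S \subseteq \ker(\partial)/N$ and any element of $S$ admits some lift to $\ker(\partial)$.

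The substantive implication is (iii) $\Rightarrow$ (i). By Proposition~\ref{prop:SFC-W-criterion}, since $\Lambda_1$ and $\Lambda_2$ have SFC, it suffices to prove $\ker(\partial) \subseteq W$ (the reverse containment is Corollary~\ref{cor:properties-of-Mu}~(iv)). First, I would use (iii) together with Corollary~\ref{cor:properties-of-Mu}~(ii) to observe that for each $\overline{u} \in S$, the chosen lift $u$ lies in $W$; hence the whole coset $\overline{u} = uN$ lies in $W$, because $N \subseteq W$ and $W$ is a group (so $uN \subseteq W \cdot W = W$). Therefore every generator $\overline{u} \in S$ lies in the image of $W$ in $\ker(\partial)/N$. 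Now $W$ is a subgroup of $\ker(\partial)$ containing $N$ (note $N \subseteq W$), so $W/N$ is a subgroup of $\ker(\partial)/N$; since it contains the generating set $S$, we get $W/N = \ker(\partial)/N$, and hence $W = \ker(\partial)$. Applying Proposition~\ref{prop:SFC-W-criterion} in the other direction, $\Lambda$ has SFC.

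The one point requiring care is verifying that the hypotheses genuinely make $W/N$ a subgroup: we need $W$ to be a group (given) and $N \subseteq W$ (given), so that $N$ is a normal subgroup of $W$ and the quotient $W/N$ makes sense as a subgroup of $\ker(\partial)/N$; normality of $N$ in $\ker(\partial)$ (also given) ensures $\ker(\partial)/N$ itself is a group. There is no real obstacle here beyond bookkeeping — the argument is essentially the observation that a subgroup of an abelian-type quotient group is determined by which cosets it contains, combined with the translation dictionary between $M(u)$-isomorphism and $W$-membership supplied by Corollary~\ref{cor:properties-of-Mu}. I would write it out in the three-implications format above, keeping each step to a sentence or two.
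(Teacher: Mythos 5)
Your proof is correct and follows essentially the same route as the paper's: reduce via Proposition~\ref{prop:SFC-W-criterion} to showing $W=\ker(\partial)$, and translate $M(u)\cong\Lambda$ into $u\in W$ via Corollary~\ref{cor:properties-of-Mu}~(ii). The only cosmetic difference is that the paper closes the (iii)~$\Rightarrow$~(i) implication elementwise (writing a general $v\in\ker(\partial)$ as a product $s_1\cdots s_r n$ with $s_i\in W$ and $n\in N$), whereas you package the same observation via the correspondence theorem for the quotient $\ker(\partial)/N$; both work.
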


\begin{proof}
We shall show that (i) $\implies$ (ii) $\implies$ (iii) $\implies$ (i).
Suppose (i) holds.
Then $W = \ker(\partial)$ by Proposition~\ref{prop:SFC-W-criterion}.
Thus $M(u) \cong \Lambda$ for every $u \in \ker(\partial)$ by Corollary~\ref{cor:properties-of-Mu}~(ii)
and so, in particular, assertion (ii) holds.
Clearly (ii) implies (iii).
Suppose (iii) holds. 
By Proposition~\ref{prop:SFC-W-criterion}, to show that (i) holds, it suffices to show that $\ker(\partial) = W$.
Let $v \in \ker(\partial)$ and let $\overline{v}$ denote its image in $\ker(\partial)/N$.
Then there exist (not necessarily distinct) elements $\overline{s}_{1}, \ldots, \overline{s}_{r} \in S$
such that $\overline{v}=\overline{s}_{1} \cdots \overline{s}_{r}$. 
By hypothesis, there exist lifts $s_{1}, \ldots, s_{r} \in \ker(\partial)$ such that 
$M(s_{1}) \cong \cdots \cong M(s_{r}) \cong \Lambda$, and so $s_{1}, \ldots, s_{r} \in W$
by Corollary~\ref{cor:properties-of-Mu}~(ii).
Moreover, there exists $n \in N$ such that $v=s_{1}\cdots s_{r}n$. 
Since $W$ is a group and $N \leq W$, we conclude that $v \in W$. 
Therefore $\ker(\partial) \subseteq W$.
Since the reverse inclusion holds by Corollary~\ref{cor:properties-of-Mu}~(iv), 
we conclude that $\ker(\partial) = W$.
\end{proof}

\subsection{A criterion in terms of orders of locally free class groups}\label{subsec:criterion-sizes-lfcg}

The following is a variant of \cite[(49.30)]{curtisandreiner_vol2}, which itself is based on 
\cite[\S 4, \S 5]{MR349828}.

\begin{prop}\label{prop:SFC-exact sequence}
There is an exact sequence
\begin{equation}\label{eq:SFC-exact-sequence}
1 \longrightarrow \ker(\partial) \longrightarrow
\overline{\Lambda}^{\times} \stackrel{\partial}{\longrightarrow} \Cl(\Lambda)
\longrightarrow \Cl(\Lambda_{1}) \oplus \Cl(\Lambda_{2}) \longrightarrow 0. 
\end{equation}
\end{prop}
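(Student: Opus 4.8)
The plan is to construct the four-term exact sequence \eqref{eq:SFC-exact-sequence} by splicing together two pieces: the homomorphism $\partial$ of Corollary \ref{cor:properties-of-Mu}(i) at the left, and the Mayer--Vietoris-type surjection $\Cl(\Lambda) \to \Cl(\Lambda_1) \oplus \Cl(\Lambda_2)$ coming from the fibre product at the right. The left-hand portion $1 \to \ker(\partial) \to \overline{\Lambda}^{\times} \xrightarrow{\partial} \Cl(\Lambda)$ is exact essentially by definition of $\ker(\partial)$, so the real content is identifying $\im(\partial)$ with the kernel of the map to $\Cl(\Lambda_1) \oplus \Cl(\Lambda_2)$ and showing the latter map is surjective.

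First I would define the map $\rho \colon \Cl(\Lambda) \to \Cl(\Lambda_1) \oplus \Cl(\Lambda_2)$ by $[X]_{\st} \mapsto ([\Lambda_1 \otimes_{\Lambda} X]_{\st}, [\Lambda_2 \otimes_{\Lambda} X]_{\st})$; one checks that extension of scalars along $\pi_i$ sends locally free rank-one $\Lambda$-lattices to locally free rank-one $\Lambda_i$-lattices and respects stable isomorphism, so $\rho$ is a well-defined group homomorphism. Exactness at $\Cl(\Lambda)$ then splits into two inclusions. For $\im(\partial) \subseteq \ker(\rho)$: by Proposition \ref{prop:properties-of-Mu}(i) we have $\Lambda_i \otimes_{\Lambda} M(u) \cong \Lambda_i$ for $i=1,2$, so $\rho(\partial(u)) = 0$. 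For $\ker(\rho) \subseteq \im(\partial)$: given $X \in g(\Lambda)$ with $\rho([X]_{\st}) = 0$, we get $\Lambda_i \otimes_{\Lambda} X \cong \Lambda_i$ for each $i$ — here one uses that over the orders $\Lambda_i$ (which sit in genuine semisimple $K$-algebras) a locally free rank-one lattice that is stably free is already forced to have the claimed isomorphism type, or more carefully, one first replaces $X$ within its stable class so that the hypothesis becomes a literal isomorphism — and then Proposition \ref{prop:properties-of-Mu}(iv) yields $X \cong M(u)$ for some $u \in \overline{\Lambda}^{\times}$, whence $[X]_{\st} = \partial(u) \in \im(\partial)$.

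The surjectivity of $\rho$ is where the fibre-product hypotheses and the Reiner--Ullom machinery get used most substantially, and I expect this to be the main obstacle. Given classes $[Y_1]_{\st} \in \Cl(\Lambda_1)$ and $[Y_2]_{\st} \in \Cl(\Lambda_2)$ with $Y_i \in g(\Lambda_i)$, the standard Milnor-patching construction produces a locally free rank-one $\Lambda$-lattice $X$ with $\Lambda_i \otimes_{\Lambda} X \cong Y_i$: one forms the pullback of $Y_1$ and $Y_2$ over an identification of $g_1^* Y_1$ with $g_2^* Y_2$ as $\overline{\Lambda}$-modules, which is possible because $\overline{\Lambda}$ is $R$-torsion and $\Lambda_i \otimes_{\Lambda} X$ localises correctly at every prime. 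This is exactly the mechanism behind \cite[(49.30)]{curtisandreiner_vol2}, and the cleanest route is to quote that result — or rather its proof — adapted to the present (slightly more general) setting where $g_2$ need not be surjective but $g_1$ is, and to note that the present $\partial$ agrees with the connecting map there under the identification of $\Cl(\Lambda)$ with the reduced $K_0$ / idele-class description. I would therefore write the proof as: (1) recall that \cite[(49.30)]{curtisandreiner_vol2} gives exactly the sequence $\overline{\Lambda}^{\times} \xrightarrow{\partial} \Cl(\Lambda) \to \Cl(\Lambda_1) \oplus \Cl(\Lambda_2) \to 0$ in the case both $g_i$ are surjective; (2) observe that the only place surjectivity of $g_2$ is used in loc.\ cit.\ is to guarantee the Reiner--Ullom lemma, which we have already restated as Proposition \ref{prop:properties-of-Mu} under the weaker hypothesis that merely one of $g_1, g_2$ is onto; (3) extend on the left by $\ker(\partial)$. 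The verification that each arrow composes to zero and that kernels match their predecessors' images is then the routine diagram-chasing indicated above, and I would present only the non-formal step — surjectivity of $\rho$ via patching — in any detail.
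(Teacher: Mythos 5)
The middle paragraph of your proposal contains a genuine circularity in the direction $\ker(\rho) \subseteq \im(\partial)$. If $\rho([X]_{\st}) = 0$, this only tells you that $\Lambda_i \otimes_\Lambda X$ is \emph{stably} free over $\Lambda_i$; the claim that this forces a literal isomorphism $\Lambda_i \otimes_\Lambda X \cong \Lambda_i$ is precisely the SFC property for $\Lambda_i$, which is not among the hypotheses of the proposition (indeed the point of Proposition~\ref{prop:SFC-W-criterion} is to extract SFC statements from this sequence, so one cannot assume SFC here). The alternative you offer — replacing $X$ within its stable class so the hypothesis becomes a literal isomorphism — is equally unjustified: by Proposition~\ref{prop:properties-of-Mu}(iv) the rank-one locally free $\Lambda$-lattices $Y$ with $\Lambda_i \otimes_\Lambda Y \cong \Lambda_i$ are \emph{exactly} the $M(u)$, so asserting that the stable class of $X$ contains such a $Y$ is the same as asserting $[X]_{\st} \in \im(\partial)$, which is what you set out to prove.

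Your third paragraph retreats to citing \cite[(49.30)]{curtisandreiner_vol2}, and that rescues the outline, but it is not what the paper does and it leaves two verifications hanging. The paper instead quotes the $K_1$-level Mayer--Vietoris sequence \cite[(49.27)]{curtisandreiner_vol2}, namely $K_1(\Lambda_1) \times K_1(\Lambda_2) \to K_1(\overline{\Lambda}) \xrightarrow{\partial'} \Cl(\Lambda) \to \Cl(\Lambda_1) \oplus \Cl(\Lambda_2) \to 0$, and then uses that $\overline{\Lambda}^{\times} \to K_1(\overline{\Lambda})$ is surjective because $\overline{\Lambda}$ is semilocal (\cite[(40.31)]{curtisandreiner_vol2}), so that $\im(\partial) = \im(\partial')$. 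This route has two advantages over yours. First, (49.27) already holds in the needed generality, so the paper does not have to re-open the proof of (49.30) to relax its hypotheses — your step (2) is asserted, not verified. Second, both routes must reconcile the $\partial$ of the statement (defined via $u \mapsto [M(u)]_{\st}$) with the connecting map in Curtis--Reiner; the paper does this explicitly (noting $\partial = \partial' \circ \pi$ and flagging a difference of convention), whereas your proposal silently identifies them. If you want to keep the (49.30)-based argument, you would need to carry out the hypothesis weakening and the identification of $\partial$ as explicit steps, at which point you have essentially redone the (49.27) proof anyway.
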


\begin{proof}
Since $\overline{\Lambda}$ is semilocal, the canonical map
$\pi : \overline{\Lambda}^{\times} \rightarrow K_{1}(\overline{\Lambda})$ is surjective
by \cite[(40.31)]{curtisandreiner_vol2}. Moreover, 
by \cite[(49.27)]{curtisandreiner_vol2} there is an exact sequence
\begin{equation}\label{eq:exact-seq-with-K1s}
K_{1}(\Lambda_{1}) \times K_{1}(\Lambda_{2}) 
\longrightarrow K_{1}(\overline{\Lambda}) 
\stackrel{\partial'}{\longrightarrow} 
\Cl(\Lambda)
\longrightarrow \Cl(\Lambda_{1}) \oplus \Cl(\Lambda_{2}) \longrightarrow 0.
\end{equation}
As shown in the proof of loc.\ cit.\ $\partial'$ can be defined by $\partial = \partial' \circ \pi$ and taking lifts via $\pi$.
(Note that the map $\partial$ here is different to that of loc.\ cit.)
Thus $\im(\partial)=\im(\partial')$.
Therefore the desired result follows from the existence and exactness of \eqref{eq:exact-seq-with-K1s}.
\end{proof}

\begin{corollary}\label{cor:SFC-conditions}
We have
\begin{equation}\label{eq:W-inequality}
|W| \leq |\ker(\partial)|= 
|\Cl(\Lambda_{1})||\Cl(\Lambda_{2})||\overline{\Lambda}^{\times}||\Cl(\Lambda)|^{-1}. 
\end{equation}
Moreover, $\Lambda$ has SFC if and only if both $\Lambda_{1}$ and $\Lambda_{2}$ have SFC and
\eqref{eq:W-inequality} is an equality.
\end{corollary}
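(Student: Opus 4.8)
The plan is to deduce both claims directly from the four-term exact sequence of Proposition~\ref{prop:SFC-exact sequence} together with the set-theoretic inclusion $W \subseteq \ker(\partial)$ recorded in Corollary~\ref{cor:properties-of-Mu}~(iv).

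First I would note that all the groups in sight are finite: $\overline{\Lambda}$ is a finitely generated $R$-module that is $R$-torsion, hence finite since $R$ is the ring of integers of a number field, so $\overline{\Lambda}^{\times}$ is finite; and $\Cl(\Lambda)$, $\Cl(\Lambda_{1})$, $\Cl(\Lambda_{2})$ are finite as recalled in \S\ref{subsec:lfcg}. The inequality $|W| \leq |\ker(\partial)|$ is then immediate from the inclusion $W \subseteq \ker(\partial)$.

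For the displayed equality I would split the exact sequence \eqref{eq:SFC-exact-sequence} into the two short exact sequences $1 \to \ker(\partial) \to \overline{\Lambda}^{\times} \to \im(\partial) \to 1$ and $0 \to \im(\partial) \to \Cl(\Lambda) \to \Cl(\Lambda_{1}) \oplus \Cl(\Lambda_{2}) \to 0$. Multiplicativity of orders along short exact sequences of finite abelian groups then gives $|\overline{\Lambda}^{\times}| = |\ker(\partial)| \cdot |\im(\partial)|$ and $|\Cl(\Lambda)| = |\im(\partial)| \cdot |\Cl(\Lambda_{1})| \cdot |\Cl(\Lambda_{2})|$; eliminating $|\im(\partial)|$ between these two identities yields the asserted formula for $|\ker(\partial)|$.

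Finally, for the SFC criterion I would invoke Proposition~\ref{prop:SFC-W-criterion}, which says that $\Lambda$ has SFC if and only if both $\Lambda_{1}$ and $\Lambda_{2}$ have SFC and $W = \ker(\partial)$. Since $W$ and $\ker(\partial)$ are finite sets with $W \subseteq \ker(\partial)$, one has $W = \ker(\partial)$ if and only if $|W| = |\ker(\partial)|$, that is, if and only if \eqref{eq:W-inequality} is an equality. I do not expect any serious obstacle: this corollary is essentially a bookkeeping consequence of Proposition~\ref{prop:SFC-W-criterion}, Corollary~\ref{cor:properties-of-Mu}, and the exact sequence \eqref{eq:SFC-exact-sequence}. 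The only point meriting a moment's care is the finiteness of $\overline{\Lambda}^{\times}$, which is why I would flag the $R$-torsion hypothesis on $\overline{\Lambda}$ at the outset.
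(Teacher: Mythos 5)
Your proposal is correct and takes essentially the same route as the paper: the cardinality identity comes from the exact sequence of Proposition~\ref{prop:SFC-exact sequence}, the inequality from $W \subseteq \ker(\partial)$ in Corollary~\ref{cor:properties-of-Mu}~(iv), and the SFC characterisation from Proposition~\ref{prop:SFC-W-criterion}; you merely spell out the order count by splitting the four-term sequence into two short ones. One minor imprecision: you invoke ``multiplicativity of orders along short exact sequences of finite abelian groups,'' but $\overline{\Lambda}^{\times}$ and $\ker(\partial)$ need not be abelian --- fortunately the order-multiplicativity holds for arbitrary finite groups, so the argument goes through unchanged.
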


\begin{proof}
Proposition~\ref{prop:SFC-exact sequence} shows that 
$|{\ker(\partial)}| = |\Cl(\Lambda_{1})||\Cl(\Lambda_{2})||\overline{\Lambda}^{\times}||\Cl(\Lambda)|^{-1}$.
The result now follows from Corollary~\ref{cor:properties-of-Mu}~(iv) and 
Proposition~\ref{prop:SFC-W-criterion}.
\end{proof}

\begin{remark}\label{rmk:replace-lfcgs-by-kernel-grps}
The locally free class groups in Proposition~\ref{prop:SFC-exact sequence} 
and Corollary~\ref{cor:SFC-conditions} can be replaced by 
the corresponding kernel groups as defined in \cite[(49.33)]{curtisandreiner_vol2};
this follows from Proposition~\ref{prop:properties-of-Mu}~(iii) and \cite[(49.39)]{curtisandreiner_vol2}. 
Thus \eqref{eq:SFC-exact-sequence} becomes
\[
1 \longrightarrow \ker(\partial) \longrightarrow
\overline{\Lambda}^{\times} \stackrel{\partial}{\longrightarrow} D(\Lambda)
\longrightarrow D(\Lambda_{1}) \oplus D(\Lambda_{2}) \longrightarrow 0,
\]
and so \eqref{eq:W-inequality} becomes
$|W| 
\leq |{\ker(\partial)}| =  
|D(\Lambda_{1})||D(\Lambda_{2})||\overline{\Lambda}^{\times}||D(\Lambda)|^{-1}$.  
Both locally free class groups and kernel groups can be calculated
by using the algorithm of \cite[\S 3]{bley-boltje}. 
\end{remark}

\begin{remark}\label{rmk:units-jac-radical}
For any associative unital ring $\mathfrak{R}$, we have a short exact sequence
\[
1 \longrightarrow 
1+J(\mathfrak{R}) 
\longrightarrow 
\mathfrak{R}^{\times} 
\longrightarrow
\left( \mathfrak{R}/J(\mathfrak{R}) \right)^{\times}
\longrightarrow 1,
\]
where $J(\mathfrak{R})$ denotes the Jacobson radical of $\mathfrak{R}$ (see \cite[Exercise 5.2]{curtisandreiner_vol1}, for example). 
In particular, if $\mathfrak{R}$ is finite then $|\mathfrak{R}^{\times}| 
= |J(\mathfrak{R})| |(\mathfrak{R}/J(\mathfrak{R}))^{\times}|$.
If $\mathfrak{R}$ is a finite-dimensional algebra over a finite field, then $J(\mathfrak{R})$
can be computed using the algorithm of \cite{Friedl1985}.
We will use these observations for the computation of the cardinality of
$\overline{\Lambda}^{\times}$ in certain situations.
\end{remark}

\subsection{A criterion assuming the Eichler condition for one component}
The following is a specialisation of \cite[Theorem 10.2]{MR584612}.

\begin{theorem}[Swan]\label{thm:Swan-Eichler}
Let $\mathfrak{I}$ be a two-sided ideal of $\Lambda$ such that 
$\Lambda/\mathfrak{I}$ is $R$-torsion
and let $f: \Lambda \rightarrow \Lambda / \mathfrak{I}$ be the canonical quotient map.
Assume that $A$ satisfies the Eichler condition.
Then $f(\Lambda^{\times})$ contains the commutator subgroup
$[(\Lambda/\mathfrak{I})^{\times}, (\Lambda/\mathfrak{I})^{\times}]$.
\end{theorem}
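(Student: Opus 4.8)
The plan is to deduce this from strong approximation (Eichler's theorem) for the reduced norm one group $\mathrm{SL}_{1}(A)$; alternatively one may simply cite \cite[Theorem 10.2]{MR584612} after checking that its hypotheses are met, but I will sketch the underlying argument. Write $\overline{\Lambda} = \Lambda/\mathfrak{I}$. The hypothesis forces $c\Lambda \subseteq \mathfrak{I}$ for some nonzero $c \in R$, so $K\mathfrak{I} = A$; since moreover $\Lambda/\mathfrak{I}$ is a finitely generated $R$-torsion module, $\overline{\Lambda}$ is a finite ring. As the commutator subgroup $[\overline{\Lambda}^{\times},\overline{\Lambda}^{\times}]$ is generated by commutators $[\overline{u},\overline{v}] = \overline{u}\,\overline{v}\,\overline{u}^{-1}\overline{v}^{-1}$ with $\overline{u},\overline{v} \in \overline{\Lambda}^{\times}$, it suffices to show that each such commutator lies in $f(\Lambda^{\times})$.

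The first step is to pass to completions. Let $\mathfrak{p}$ range over the finitely many maximal ideals of $R$ with $\widehat{\mathfrak{I}}_{\mathfrak{p}} \neq \widehat{\Lambda}_{\mathfrak{p}}$, so that $\overline{\Lambda} \cong \prod_{\mathfrak{p}} \widehat{\Lambda}_{\mathfrak{p}}/\widehat{\mathfrak{I}}_{\mathfrak{p}}$. For each such $\mathfrak{p}$ the reduction map $\widehat{\Lambda}_{\mathfrak{p}}^{\times} \to (\widehat{\Lambda}_{\mathfrak{p}}/\widehat{\mathfrak{I}}_{\mathfrak{p}})^{\times}$ is surjective: it factors through $\widehat{\Lambda}_{\mathfrak{p}}/\mathfrak{p}^{n}\widehat{\Lambda}_{\mathfrak{p}}$ for suitable $n$, where the first map is surjective on units because $\mathfrak{p}^{n}\widehat{\Lambda}_{\mathfrak{p}}$ lies in the Jacobson radical of $\widehat{\Lambda}_{\mathfrak{p}}$, and the second is a surjection of finite rings, which is always surjective on unit groups (lift modulo the nilpotent radical, then correct by an element of $1+J$). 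Hence one may lift $\overline{u},\overline{v}$ componentwise to elements $u_{\mathfrak{p}},v_{\mathfrak{p}} \in \widehat{\Lambda}_{\mathfrak{p}}^{\times}$; then $[u_{\mathfrak{p}},v_{\mathfrak{p}}] \in \widehat{\Lambda}_{\mathfrak{p}}^{\times}$ reduces to $[\overline{u}_{\mathfrak{p}},\overline{v}_{\mathfrak{p}}]$ and has reduced norm $1$, since $\nr$ is multiplicative with values in a commutative group and so kills commutators.

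The second step is strong approximation. Because $A$ satisfies the Eichler condition, no simple component of $A$ is a totally definite quaternion algebra, and this is exactly the condition under which the simply connected semisimple $K$-group $\mathrm{SL}_{1}(A)$ satisfies strong approximation (relative to the archimedean places). Concretely, this yields: for any full two-sided ideal $\mathfrak{I}$ of $\Lambda$ and any elements $\gamma_{\mathfrak{p}} \in \widehat{\Lambda}_{\mathfrak{p}}^{\times}$ with $\nr(\gamma_{\mathfrak{p}})=1$ (one for each $\mathfrak{p} \mid \mathfrak{I}$), there exists $\mu \in \Lambda^{\times}$ with $\nr(\mu)=1$ and $\mu \equiv \gamma_{\mathfrak{p}} \pmod{\widehat{\mathfrak{I}}_{\mathfrak{p}}}$ for all $\mathfrak{p} \mid \mathfrak{I}$ --- the prescribed congruences at $\mathfrak{p}\mid\mathfrak{I}$ together with the requirement $\mu \in \widehat{\Lambda}_{\mathfrak{q}}^{\times}$ at all remaining finite $\mathfrak{q}$ cut out a nonempty open subset of $\prod'_{\mathfrak{q}} \mathrm{SL}_{1}(\widehat{A}_{\mathfrak{q}})$, which meets the dense subgroup $\mathrm{SL}_{1}(A)$. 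Applying this with $\gamma_{\mathfrak{p}} = [u_{\mathfrak{p}},v_{\mathfrak{p}}]$ produces $\mu \in \Lambda^{\times}$ with $f(\mu) = [\overline{u},\overline{v}]$, and hence $[\overline{\Lambda}^{\times},\overline{\Lambda}^{\times}] \subseteq f(\Lambda^{\times})$.

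The main obstacle is the correct invocation of strong approximation: one must apply it so as to impose the required congruences at the primes dividing $\mathfrak{I}$ while retaining unit integrality at every other finite prime, and one must be sure that the Eichler hypothesis is precisely what licenses this, strong approximation for the reduced norm one group of a simple algebra failing exactly when that algebra is a totally definite quaternion algebra. The remaining ingredients --- finiteness of $\overline{\Lambda}$, surjectivity on units of the local reduction maps, and vanishing of $\nr$ on commutators --- are routine, and in any case the entire statement is contained in \cite[Theorem 10.2]{MR584612}.
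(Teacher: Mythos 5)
The paper gives no proof of this theorem; it simply states that it is a specialisation of \cite[Theorem~10.2]{MR584612}, exactly the citation you make, and your strong-approximation sketch correctly reproduces the standard mechanism underlying Swan's result. One small refinement: at the primes $\mathfrak{p}\mid\mathfrak{I}$ your open condition should explicitly include $\mu\in\widehat{\Lambda}_{\mathfrak{p}}^{\times}$, not just the congruence $\mu\equiv\gamma_{\mathfrak{p}}\pmod{\widehat{\mathfrak{I}}_{\mathfrak{p}}}$ together with $\nr(\mu)=1$ --- for a non-maximal local order these alone force $\mu\in\widehat{\Lambda}_{\mathfrak{p}}$ but not $\mu^{-1}\in\widehat{\Lambda}_{\mathfrak{p}}$; since $\gamma_{\mathfrak{p}}$ itself lies in $\widehat{\Lambda}_{\mathfrak{p}}^{\times}\cap\mathrm{SL}_{1}(\widehat{A}_{\mathfrak{p}})$ the resulting set is still open and nonempty, so the argument goes through unchanged.
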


This leads to the following result whose corollary is useful for computational purposes.

\begin{prop}\label{prop:Eichler-W-consequences}
Let $I$ and $J$ be two-sided ideals of $\Lambda$ such that $K(I+J)=A$ and $I \cap J = 0$. 
Let $\Lambda_{1} = \Lambda / I$, let $\Lambda_{2} = \Lambda / J$ 
and let $\overline{\Lambda} = \Lambda / (I+J)$.
Assume that $A_{1} := K\Lambda_{1} \cong KJ$ satisfies the Eichler condition.
For $i=1,2$, let $g_{i} : \Lambda_{i} \rightarrow \overline{\Lambda}$ be the canonical surjection.
Let $\partial$ and $W$ be as in Corollary~\ref{cor:properties-of-Mu}.
Then $\overline{\Lambda}$ is finite and we have
\[
[\overline{\Lambda}^{\times}, \overline{\Lambda}^{\times}] 
\trianglelefteq g_{1}(\Lambda_{1}^{\times})
\trianglelefteq W
\trianglelefteq \ker(\partial)
\trianglelefteq \overline{\Lambda}^{\times},
\]
where each term is a normal subgroup of $\overline{\Lambda}^{\times}$ with finite abelian quotient.
\end{prop}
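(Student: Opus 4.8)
The plan is to bootstrap everything from a single application of Swan's Theorem~\ref{thm:Swan-Eichler}, after which the chain of subgroups is pure (finite) group theory. First I would record the finiteness claim: since $K(I+J)=A$, the ideal $I+J$ is a full $R$-lattice in $A$, so $\overline{\Lambda}=\Lambda/(I+J)$ is $R$-torsion (cf.\ Example~\ref{example:fibre-ideals}), and being finitely generated over $R$ it is a finite ring; in particular $\overline{\Lambda}^{\times}$ is a finite group. (One also has $IJ,JI\subseteq I\cap J=0$, so $KI$ and $KJ$ are two-sided ideals of the semisimple algebra $A$ with $KI\cdot KJ=0=KJ\cdot KI$; hence $KI\cap KJ$ is nilpotent, so zero, $A=KI\oplus KJ$, and $A_{1}=K\Lambda_{1}\cong KJ$ as asserted in the statement.)

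The key step is to feed the right ideal into Theorem~\ref{thm:Swan-Eichler}. Let $\mathfrak{I}$ be the image of $I+J$ (equivalently of $J$) in $\Lambda_{1}=\Lambda/I$; this is a two-sided ideal of $\Lambda_{1}$ with $\Lambda_{1}/\mathfrak{I}=\Lambda/(I+J)=\overline{\Lambda}$, and $g_{1}\colon\Lambda_{1}\rightarrow\overline{\Lambda}$ is the canonical quotient map. Since $\overline{\Lambda}=\Lambda_{1}/\mathfrak{I}$ is $R$-torsion and $A_{1}=K\Lambda_{1}$ satisfies the Eichler condition by hypothesis, Theorem~\ref{thm:Swan-Eichler} applies to the pair $(\Lambda_{1},\mathfrak{I})$ and yields $[\overline{\Lambda}^{\times},\overline{\Lambda}^{\times}]\subseteq g_{1}(\Lambda_{1}^{\times})$. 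This is the one genuinely non-formal input; the main thing to get right is precisely this reduction, i.e.\ recognising that the Eichler hypothesis on $A_{1}$ is to be exploited by applying Swan's theorem to $\Lambda_{1}$ with the ideal $\mathfrak{I}$, and then checking its two hypotheses (that $\Lambda_{1}/\mathfrak{I}\cong\overline{\Lambda}$ is $R$-torsion and that $K\Lambda_{1}=A_{1}$ satisfies the Eichler condition).

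To finish I would run elementary group theory. Set $G=\overline{\Lambda}^{\times}$ and $H_{i}=g_{i}(\Lambda_{i}^{\times})$, so that $W=H_{1}H_{2}$ in the notation of Corollary~\ref{cor:properties-of-Mu}, and recall that $\partial\colon G\rightarrow\Cl(\Lambda)$ is a homomorphism (Corollary~\ref{cor:properties-of-Mu}(i)) with $W\subseteq\ker(\partial)$ (Corollary~\ref{cor:properties-of-Mu}(iv)). Since $H_{1}$ is a subgroup of $G$ containing $[G,G]$, it is normal in $G$ with abelian quotient; hence $W=H_{1}H_{2}$ is again a subgroup of $G$ (a normal subgroup times a subgroup), still contains $[G,G]$, and is therefore normal in $G$ with abelian quotient; likewise $\ker(\partial)$ is a subgroup of $G$ containing $[G,G]$ --- indeed $G/\ker(\partial)$ embeds in the abelian group $\Cl(\Lambda)$ --- so it too is normal in $G$ with abelian quotient. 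As $[G,G]$, $H_{1}$, $W$, $\ker(\partial)$ are nested in this order and each is normal in $G$, the displayed chain
\[
[\overline{\Lambda}^{\times},\overline{\Lambda}^{\times}]\trianglelefteq g_{1}(\Lambda_{1}^{\times})\trianglelefteq W\trianglelefteq\ker(\partial)\trianglelefteq\overline{\Lambda}^{\times}
\]
follows, with all quotients finite because $G$ is finite. Apart from Theorem~\ref{thm:Swan-Eichler} and the already-established bookkeeping in Corollary~\ref{cor:properties-of-Mu}, the only tool used is the standard fact that a subgroup containing the commutator subgroup is normal with abelian quotient, so I do not anticipate any real obstacle.
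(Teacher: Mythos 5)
Your proof is correct and follows essentially the same route as the paper: a single application of Theorem~\ref{thm:Swan-Eichler} to $\Lambda_{1}$ with ideal $\mathfrak{I}=(I+J)/I$ yields $[\overline{\Lambda}^{\times},\overline{\Lambda}^{\times}]\subseteq g_{1}(\Lambda_{1}^{\times})$, after which the chain is formal group theory combined with Corollary~\ref{cor:properties-of-Mu}. You merely spell out the choice of $\mathfrak{I}$ a bit more explicitly than the paper (which just says ``applied with $f=g_{1}$'') and include a short verification of $A_{1}\cong KJ$ via semisimplicity, but the logical structure and inputs are the same.
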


\begin{proof}
The hypotheses imply that we are in the setting of Example~\ref{example:fiber-ideals}.
In particular, $\overline{\Lambda}$ is $R$-torsion and hence finite. 
Moreover, every subgroup of $\overline{\Lambda}^{\times}$ containing 
$[\overline{\Lambda}^{\times}, \overline{\Lambda}^{\times}]$ is necessarily normal in 
$\overline{\Lambda}^{\times}$ and has finite abelian quotient.
Theorem~\ref{thm:Swan-Eichler} applied with $f=g_{1}$ implies that
$[\overline{\Lambda}^{\times}, \overline{\Lambda}^{\times}] \trianglelefteq g_{1}(\Lambda_{1}^{\times})$.
Since $W=g_{1}(\Lambda_{1}^{\times})g_{2}(\Lambda_{2}^{\times})$ by definition and 
$g_{1}(\Lambda_{1}^{\times})$ is normal in $\overline{\Lambda}^{\times}$, we have that 
$W$ is in fact a subgroup of $\overline{\Lambda}^{\times}$ (a priori, it is only a subset),
and that $g_{1}(\Lambda_{1}^{\times}) \trianglelefteq W$. 
The remaining containments follow from Corollary~\ref{cor:properties-of-Mu}.
\end{proof}

\begin{corollary}\label{cor:Eichler-W-consequences}
Assume the setting of Proposition~\ref{prop:Eichler-W-consequences}.
Then $W$ is a group.
Suppose that $N$ is a subgroup of $W$ containing 
$[\overline{\Lambda}^{\times}, \overline{\Lambda}^{\times}]$.
Then $N \trianglelefteq \ker(\partial)$ and $\ker(\partial)/N$ is a finite abelian group.
Let $S$ be a generating subset of $\ker(\partial)/N$.
Suppose that $\Lambda_{2}$ has SFC.
Then the following assertions are equivalent:
\begin{enumerate}
\item $\Lambda$ has SFC.
\item For every $\overline{u} \in \ker(\partial)/N$ and every lift $u \in \ker(\partial)$ we have
$M(u) \cong \Lambda$.
\item For every $\overline{u} \in S$ there exists a lift $u \in \ker(\partial)$ such that
$M(u) \cong \Lambda$. 
\end{enumerate}
\end{corollary}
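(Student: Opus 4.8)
The plan is to derive Corollary \ref{cor:Eichler-W-consequences} almost entirely by combining Proposition \ref{prop:Eichler-W-consequences} with the already-established Corollary \ref{cor:subgroup-of-W}, so the main task is to check that the hypotheses of the latter are met. First I would invoke Proposition \ref{prop:Eichler-W-consequences} to record the chain $[\overline{\Lambda}^{\times}, \overline{\Lambda}^{\times}] \trianglelefteq g_{1}(\Lambda_{1}^{\times}) \trianglelefteq W \trianglelefteq \ker(\partial) \trianglelefteq \overline{\Lambda}^{\times}$, in which every term is normal in $\overline{\Lambda}^{\times}$ with finite abelian quotient; in particular $W$ is a group, which is the first assertion of the corollary and also one of the standing hypotheses needed to apply Corollary \ref{cor:subgroup-of-W}.

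Next I would verify that the subgroup $N$ behaves as required. By hypothesis $N$ lies in $W$ and contains $[\overline{\Lambda}^{\times}, \overline{\Lambda}^{\times}]$; since any subgroup of $\overline{\Lambda}^{\times}$ containing the commutator subgroup is normal in $\overline{\Lambda}^{\times}$, in particular $N$ is normal in the subgroup $\ker(\partial)$, i.e.\ $N \trianglelefteq \ker(\partial)$. The quotient $\ker(\partial)/N$ is a quotient of $\overline{\Lambda}^{\times}/[\overline{\Lambda}^{\times}, \overline{\Lambda}^{\times}]$, hence finite and abelian, so it admits a finite generating set $S$ as stipulated. Thus all hypotheses of Corollary \ref{cor:subgroup-of-W} hold: both $\Lambda_{1}$ and $\Lambda_{2}$ have SFC (the former because $A_{1}$ satisfies the Eichler condition, so Theorem \ref{thm:Jacobinski-cancellation} gives even LFC, and the latter by assumption), $W$ is a group, $N$ is a normal subgroup of $\ker(\partial)$ contained in $W$, and $S$ generates $\ker(\partial)/N$.

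Finally, applying Corollary \ref{cor:subgroup-of-W} verbatim yields the equivalence of the three assertions (i), (ii), (iii), which is exactly the conclusion of Corollary \ref{cor:Eichler-W-consequences}. The only point requiring a word of care is that in Proposition \ref{prop:Eichler-W-consequences} the Eichler hypothesis is placed on $A_{1} = K\Lambda_{1}$ (identified with $KJ$), so I should state explicitly that this guarantees $\Lambda_{1}$ has LFC, hence SFC, by Jacobinski's theorem; combined with the standing assumption that $\Lambda_{2}$ has SFC, this supplies the ``both $\Lambda_{1}$ and $\Lambda_{2}$ have SFC'' premise. I expect no genuine obstacle here: the corollary is a bookkeeping consequence of two earlier results, and the only thing to watch is matching up the normality claims so that Corollary \ref{cor:subgroup-of-W} applies cleanly with this particular choice of $N$.
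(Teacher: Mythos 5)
Your proposal is correct and matches the paper's own proof essentially verbatim: read off the normality chain and finiteness of $\overline{\Lambda}$ from Proposition \ref{prop:Eichler-W-consequences}, note that Jacobinski (Theorem \ref{thm:Jacobinski-cancellation}) gives $\Lambda_1$ SFC, and then apply Corollary \ref{cor:subgroup-of-W}. One tiny imprecision: $\ker(\partial)/N$ is a \emph{subgroup} of $\overline{\Lambda}^{\times}/N$ (itself a quotient of the abelianization), not a quotient of the abelianization, but since subgroups of finite abelian groups are finite abelian this does not affect the conclusion.
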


\begin{proof}
The assertions that $W$ is a group, $N \trianglelefteq \ker(\partial)$ and $\ker(\partial)/N$ is a finite abelian group all follow from Proposition~\ref{prop:Eichler-W-consequences}.
Since $A_{1}$ satisfies the Eichler condition, $\Lambda_{1}$ has SFC by
Theorem~\ref{thm:Jacobinski-cancellation}. 
Moreover, $\Lambda_{2}$ satisfies SFC by hypothesis.
Thus assertions (i), (ii) and (iii) are equivalent by Corollary~\ref{cor:subgroup-of-W}.
\end{proof}

\section{Applications to integral group rings}\label{sec:app-to-int-group-rings}

In this section, we consider applications of Corollary~\ref{cor:SFC-conditions} to integral group rings.

\subsection{A fiber product for integral group rings}\label{subsec:fiber-prod-int-grp-rings}
We first record the following well-known instance of a fiber product.
Let $G$ be a finite group, let $N$ be a normal subgroup and let $H=G/N$.
Let $n=|N|$ and let $\mathrm{Tr}_{N} = \sum_{\tau \in N} \tau \in \Z[G]$ 
be the trace element associated to $N$.
Let $\pi_{1} : \Z[G] \rightarrow \Z[H]$ be the canonical projection and let $I = \ker(\pi_{1})$.
Let $J= \mathrm{Tr}_{N} \Z[G]$, let $\Gamma = \Z[G]/J$ and let $\pi_{2} : \Z[G] \rightarrow \Z[G]/J$
be the canonical projection. Then $I \cap J = 0$ and $I+J = n\Z[G]$.
Thus, by Example~\ref{example:fiber-ideals} we have a fiber product
\[
\begin{tikzcd}
\Z[G] \arrow{d}[swap]{\pi_{2}} \arrow{r}{\pi_{1}} & \Z[H] \arrow{d}{g_{1}}   \\
\Gamma \arrow{r}[swap]{g_{2}} & (\Z/n\Z)[H],
\end{tikzcd}
\]
where each of $\pi_{1},\pi_{2},g_{1}, g_{2}$ is the relevant canonical surjection.
Let 
\[
W_{G,N} := \{ w_{1}w_{2} \mid w_{1} \in g_{1}(\Z[H]^{\times}), w_{2} \in g_{2}(\Gamma^{\times}) \}.
\]
Then Corollary~\ref{cor:SFC-conditions} immediately implies the following result.

\begin{prop}\label{prop:group-ring-SFC-reduction}
Assume the above setting and notation. Then
\begin{equation}\label{eq:W-inequality-igr}
|W_{G,N}| \le \frac{|\Cl(\Z[H])| |\Cl(\Z[\Gamma])| |(\Z/n\Z)[H]^\times|}{|\Cl(\Z[G])|}. 
\end{equation}
Moreover, $\Z[G]$ has SFC if and only if both $\Z[H]$ and $\Gamma$ have SFC and
\eqref{eq:W-inequality-igr} is an equality.
\end{prop}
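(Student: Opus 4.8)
The plan is to recognize that Proposition \ref{prop:group-ring-SFC-reduction} is essentially a direct instantiation of Corollary \ref{cor:SFC-conditions} applied to the fibre product \eqref{eq:Gamma-fibre-prod}, so most of the work is just verifying that the hypotheses of Corollary \ref{cor:SFC-conditions} are met and translating the notation. First I would observe that the construction preceding the statement already establishes that $I \cap J = 0$ and $I + J = n\Z[G]$, so by Example \ref{example:fibre-ideals} the diagram \eqref{eq:Gamma-fibre-prod} is genuinely a fibre product of $R$-orders with $\overline{\Lambda} = (\Z/n\Z)[H]$ an $R$-torsion $R$-algebra, and all four maps are the canonical surjections. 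In particular, since $g_1$ and $g_2$ are both surjective, the hypothesis of Proposition \ref{prop:properties-of-Mu} (and hence of Corollary \ref{cor:properties-of-Mu} and Corollary \ref{cor:SFC-conditions}) that at least one of $g_1, g_2$ be surjective is satisfied.

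Next I would make the identifications: here $\Lambda = \Z[G]$, $\Lambda_1 = \Z[H]$, $\Lambda_2 = \Gamma$, and $\overline{\Lambda} = (\Z/n\Z)[H]$, so the set $W$ of Corollary \ref{cor:properties-of-Mu} is exactly $W_{G,N}$ as defined just before the statement, namely the set of products $w_1 w_2$ with $w_1 \in g_1(\Z[H]^\times)$ and $w_2 \in g_2(\Gamma^\times)$. Plugging these identifications into the inequality \eqref{eq:W-inequality} of Corollary \ref{cor:SFC-conditions} yields
\[
|W_{G,N}| \leq |\ker(\partial)| = \frac{|\Cl(\Z[H])|\,|\Cl(\Gamma)|\,|(\Z/n\Z)[H]^\times|}{|\Cl(\Z[G])|},
\]
which is \eqref{eq:W-inequality-igr} (noting that the paper's notation $\Cl(\Z[\Gamma])$ in the display means $\Cl(\Gamma)$, since $\Gamma$ is already an order). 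The final assertion — that $\Z[G]$ has SFC if and only if both $\Z[H]$ and $\Gamma$ have SFC and the inequality is an equality — is then precisely the "Moreover" clause of Corollary \ref{cor:SFC-conditions} under the same identifications.

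There is essentially no obstacle here: the proof is a two-line verification that we are in the setting of Corollary \ref{cor:properties-of-Mu} followed by an appeal to Corollary \ref{cor:SFC-conditions}. The only point requiring a modicum of care is confirming that $\overline{\Lambda} = (\Z/n\Z)[H]$ is $R$-torsion (equivalently finite), which is immediate since $n \overline{\Lambda} = 0$; and double-checking the identification $\Lambda/(I+J) = \Z[G]/n\Z[G] = (\Z/n\Z)[G] \cong (\Z/n\Z)[H]$, the last isomorphism holding because reduction mod $n$ kills the difference between $\Z[G]$ and $\Z[H] = \Z[G]/I$ inside $\Z[G]/n\Z[G]$ (indeed $I \subseteq n\Z[G] = I+J$). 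I expect the write-up to be no more than a few sentences long, essentially: "The hypotheses ensure we are in the setting of Example \ref{example:fibre-ideals}, and hence of Corollary \ref{cor:properties-of-Mu}; the claim is then Corollary \ref{cor:SFC-conditions} with $\Lambda = \Z[G]$, $\Lambda_1 = \Z[H]$, $\Lambda_2 = \Gamma$, $\overline{\Lambda} = (\Z/n\Z)[H]$ and $W = W_{G,N}$."
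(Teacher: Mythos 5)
Your overall strategy is exactly right and matches the paper's: the paper states only ``Then Corollary~\ref{cor:SFC-conditions} immediately implies the following result,'' i.e.\ the proof is precisely the instantiation $\Lambda = \Z[G]$, $\Lambda_1 = \Z[H]$, $\Lambda_2 = \Gamma$, $\overline{\Lambda} = (\Z/n\Z)[H]$, $W = W_{G,N}$ that you describe, and you are also correct that the display's ``$\Cl(\Z[\Gamma])$'' is a notational slip for $\Cl(\Gamma)$.

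However, your ``double-check'' of the identification $\overline{\Lambda}\cong(\Z/n\Z)[H]$ is wrong in two places, one of which is inherited from a misprint in the paper itself. First, $I+J\neq n\Z[G]$ in general: $\tau-1\in I$ for $\tau\in N$, but $\tau-1\notin n\Z[G]$ once $n\ge 2$, so $I+J\supsetneq n\Z[G]$ (the paper's ``$I+J=n\Z[G]$'' is also incorrect). Second, and more seriously, $(\Z/n\Z)[G]$ is \emph{not} isomorphic to $(\Z/n\Z)[H]$ — they are free $\Z/n\Z$-modules of ranks $|G|$ and $|H|=|G|/n$ respectively — so the step ``$\Z[G]/n\Z[G]=(\Z/n\Z)[G]\cong(\Z/n\Z)[H]$'' does not hold, and neither does the claim $I\subseteq n\Z[G]$ offered as justification. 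The correct verification quotients by $I$ first: since $\pi_1\colon\Z[G]\to\Z[H]$ is surjective with kernel $I$ and $\pi_1(\mathrm{Tr}_N)=n$, we get $\pi_1(J)=n\Z[H]$, hence
\[
\Z[G]/(I+J)\;\cong\;(\Z[G]/I)\big/\pi_1(J)\;=\;\Z[H]/n\Z[H]\;=\;(\Z/n\Z)[H].
\]
With that fix the verification is complete and the rest of your argument goes through unchanged.
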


\begin{remark}
The locally free class groups in Proposition~\ref{prop:group-ring-SFC-reduction} 
(and in Propositions~\ref{prop:ZHtimesC2} and \ref{prop:ZG-SFC-iff-Gamma-SFC} below)
can be replaced by the corresponding kernel groups (see Remark~\ref{rmk:replace-lfcgs-by-kernel-grps}).
\end{remark}

\begin{remark}\label{rmk:embed-units-into-GLFp}
Let $p$ be a prime number and suppose that $n=p$ in Proposition~\ref{prop:group-ring-SFC-reduction}.
Then $(\Z/n\Z)[H] = \mathbb{F}_{p}[H]$.
Thus left multiplication by an element of $\mathbb{F}_{p}[H]^{\times}$ induces 
an $\F_{p}$-linear automorphism of $\F_{p}[H]$. 
Together with the choice of $H$ as an $\F_{p}$-basis of $\F_{p}[H]$, 
this determines an explicit embedding of $\F_{p}[H]^{\times}$ into $\GL_{h}(\F_{p})$,
where $h=|H|$. This allows computations in $\F_{p}[H]^{\times}$ to be performed 
efficiently on a computer algebra system. 
By contrast, in the more general setting of Corollary~\ref{cor:SFC-conditions}, it
can be difficult to perform computations in $\overline{\Lambda}^{\times}$
(see \S \ref{subsec:unit-groups-finite-rings}).
\end{remark}

\subsection{Integral group rings of the form $\Z[H \times C_{2}]$}
We will need the following result.

\begin{prop}\label{prop:ZHtimesC2}
Let $H$ be a finite group such that $\Z[H]$ has SFC.
Let $V_{H}$ be the image of the canonical projection $\Z[H]^{\times} \rightarrow \F_{2}[H]^{\times}$.
Then
\[
|V_{H}| \leq
\frac{|\Cl(\Z[H])|^{2}|\F_{2}[H]^{\times}|}{|\Cl(\Z[H \times C_{2}])|},
\] 
and this inequality is an equality if and only if $\Z[H \times C_{2}]$ has SFC.
\end{prop}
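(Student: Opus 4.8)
The plan is to apply Proposition \ref{prop:group-ring-SFC-reduction} with $G = H \times C_2$ and $N$ the order-$2$ subgroup $C_2$, so that $G/N \cong H$ and $n = |N| = 2$. With these choices, the fibre product \eqref{eq:Gamma-fibre-prod} has top-right corner $\Z[H]$ and bottom-right corner $(\Z/2\Z)[H] = \F_2[H]$; the left-hand side $\Gamma = \Z[H\times C_2]/\mathrm{Tr}_N\Z[H\times C_2]$. First I would observe that $\Gamma \cong \Z[H]$: writing $C_2 = \langle t \rangle$, the map $\Z[H\times C_2] \to \Z[H]$ sending $t \mapsto -1$ is surjective with kernel exactly $(1+t)\Z[H\times C_2] = \mathrm{Tr}_N \Z[H \times C_2] = J$, so $\Gamma \cong \Z[H]$ canonically. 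Under this identification, the map $g_2 : \Gamma \to \F_2[H]$ becomes the natural reduction $\Z[H] \to \F_2[H]$ (since $-1 \equiv 1 \bmod 2$, the sign twist disappears mod $2$), and likewise $g_1 : \Z[H] \to \F_2[H]$ is the same reduction map.

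Next I would feed this into \eqref{eq:W-inequality-igr}. We have $|\Cl(\Z[H])| |\Cl(\Gamma)| = |\Cl(\Z[H])|^2$ by the isomorphism $\Gamma \cong \Z[H]$, and $|(\Z/n\Z)[H]^\times| = |\F_2[H]^\times|$, while $|\Cl(\Z[G])| = |\Cl(\Z[H\times C_2])|$. So \eqref{eq:W-inequality-igr} reads
\[
|W_{G,N}| \le \frac{|\Cl(\Z[H])|^2 |\F_2[H]^\times|}{|\Cl(\Z[H\times C_2])|}.
\]
It remains to identify $W_{G,N}$ with $V_H$. By definition $W_{G,N} = g_1(\Z[H]^\times) \cdot g_2(\Gamma^\times)$, and under the identifications above both $g_1(\Z[H]^\times)$ and $g_2(\Gamma^\times)$ equal $V_H$, the image of $\Z[H]^\times \to \F_2[H]^\times$. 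Since $V_H$ is a subgroup of $\F_2[H]^\times$ (the image of a group homomorphism), $W_{G,N} = V_H \cdot V_H = V_H$. Hence $|W_{G,N}| = |V_H|$, giving the claimed inequality.

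For the final clause: Proposition \ref{prop:group-ring-SFC-reduction} states that $\Z[G]$ has SFC if and only if both $\Z[H]$ and $\Gamma$ have SFC and \eqref{eq:W-inequality-igr} is an equality. Here $\Z[H]$ has SFC by hypothesis, and $\Gamma \cong \Z[H]$ also has SFC; so the two side conditions are automatically satisfied, and $\Z[H\times C_2]$ has SFC if and only if the displayed inequality is an equality, as required. The only step needing genuine care — and the place where I expect the main (if modest) obstacle — is verifying precisely that $\Gamma \cong \Z[H]$ as a ring \emph{compatibly with the map to $\F_2[H]$}, i.e.\ that the isomorphism $\Gamma \xrightarrow{\sim} \Z[H]$ intertwines $g_2$ with the standard reduction map; this is exactly what makes both $g_1$ and $g_2$ have the same image $V_H$ and collapses the product $W_{G,N}$ to a single copy of $V_H$.
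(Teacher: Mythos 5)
Your proof is correct and follows exactly the same approach as the paper: take $G = H\times C_2$, $N = \{1\}\times C_2$ in the fibre product of \S\ref{subsec:fibre-prod-int-grp-rings}, identify $\Gamma\cong\Z[H]$ so that $W_{G,N}=V_H$, and invoke Proposition~\ref{prop:group-ring-SFC-reduction}. The paper states these identifications without comment, whereas you have helpfully spelled out the sign-twist isomorphism $\Gamma\cong\Z[H]$ and verified that it intertwines $g_2$ with the reduction $\Z[H]\to\F_2[H]$, which is precisely the point needed to collapse $W_{G,N}=g_1(\Z[H]^\times)\cdot g_2(\Gamma^\times)$ down to $V_H$.
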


\begin{proof}
Let $G=H \times C_{2}$ and let $N=\{ 1 \} \times C_{2} \leq G$. 
Then in the notation of \S \ref{subsec:fiber-prod-int-grp-rings}, we have $n=2$, 
$\Gamma \simeq \Z[H]$ and $V_{H} = W_{G,N}$.
Thus the desired result now follows immediately from Proposition~\ref{prop:group-ring-SFC-reduction}.
\end{proof}

\begin{example}\label{ex:Q8xC2}
Let $Q_{8}$ denote the quaternion group of order $8$.
Swan \cite[Theorem~15.1]{MR703486} showed that $\Z[Q_{8} \times C_{2}]$ does not have SFC.
We now give an alternative proof of this result using Proposition~\ref{prop:ZHtimesC2}
and the following facts: 
\begin{enumerate}
\item $\Z[Q_{8}]$ has SFC (\cite[Theorem I]{MR703486}),
\item $\Z[Q_{8}]^{\times} = \pm Q_{8}$ (\cite[Lemma 15.3]{MR703486}),
\item $|\Cl(\Z[Q_{8}])|=2$ (\cite[Theorem III]{MR703486}), 
\item $|\Cl(\Z[Q_{8} \times C_{2}])|=2^{4}$ (\cite[Theorem 16.5]{MR703486}), and
\item $|\F_{2}[Q_{8}]^{\times}|=2^{7}$ (since $\F_{2}[Q_{8}]$ is a local ring with residue field $\F_{2}$).
\end{enumerate}
Fact (ii) implies that 
$V_{Q_{8}} = Q_{8} \leq \F_{2}[Q_{8}]^{\times}$.
Hence
\[
2^{3} = |Q_{8}| = |V_{Q_{8}}| <  
|\Cl(\Z[Q_{8}])|^{2}|\Cl(\Z[Q_{8} \times C_{2}])|^{-1}|\F_{2}[Q_{8}]^{\times}| 
= 2^{2} \cdot 2^{-4} \cdot 2^{7} = 2^{5},
\]
and so $\Z[Q_{8} \times C_{2}]$ does not have SFC.
Note that the quantities in (iii) and (iv) can instead be calculated on a computer 
by using the algorithm of \cite[\S 3]{bley-boltje}.
\end{example}

Let $\tilde{T} \cong \mathrm{SL}_{2}(\F_{3}) \cong G_{(24,3)}$ denote the binary tetrahedral group. 
Then $\tilde{T} \times C_{2} \cong G_{(48,32)}$.

\begin{theorem}\label{thm:tildeT-times-C2-has-SFC}
The group ring $\Z[\tilde{T} \times C_{2}]$ has SFC. 
\end{theorem}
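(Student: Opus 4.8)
The plan is to apply Proposition \ref{prop:ZHtimesC2} with $H = \tilde{T}$, so that $G = \tilde{T} \times C_{2}$ and $N = \{1\} \times C_{2}$. By that proposition, $\Z[\tilde{T} \times C_{2}]$ has SFC if and only if
\[
|V_{\tilde{T}}| = \frac{|\Cl(\Z[\tilde{T}])|^{2} \, |\F_{2}[\tilde{T}]^{\times}|}{|\Cl(\Z[\tilde{T} \times C_{2}])|},
\]
where $V_{\tilde{T}}$ is the image of $\Z[\tilde{T}]^{\times}$ in $\F_{2}[\tilde{T}]^{\times}$. Note that $\Z[\tilde{T}]$ has SFC by Theorem \ref{thm:bp-canc-group-rings}, so the hypothesis of Proposition \ref{prop:ZHtimesC2} is met. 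The strategy is therefore to compute, or bound from below, each of the four quantities appearing, and to verify that equality holds.

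First I would pin down the three ``easy'' quantities. The class numbers $|\Cl(\Z[\tilde{T}])|$ and $|\Cl(\Z[\tilde{T} \times C_{2}])|$ can be computed using the algorithm of \cite{bley-boltje}; in fact $|\Cl(\Z[\tilde{T}])| = 1$ (this is classical, since $\tilde{T}$ appears in the list of Theorem \ref{thm:bp-canc-group-rings} and moreover has trivial locally free class group). The order $|\F_{2}[\tilde{T}]^{\times}|$ is a finite combinatorial quantity: decompose $\F_{2}[\tilde{T}]$ modulo its Jacobson radical into a product of matrix algebras over finite fields (using, e.g., the algorithm of \cite{Friedl1985} as in Remark \ref{rmk:units-jac-radical}), and then $|\F_{2}[\tilde{T}]^{\times}| = |J| \cdot |(\F_{2}[\tilde{T}]/J)^{\times}|$ with $|(\F_{2}[\tilde{T}]/J)^{\times}|$ a product of orders of general linear groups over finite fields. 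With these in hand, the right-hand side of the displayed equation becomes an explicit power of $2$ times a known integer.

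The main obstacle is computing $|V_{\tilde{T}}|$, i.e. understanding the image of the global unit group $\Z[\tilde{T}]^{\times}$ under reduction mod $2$. Here I would use the structure of $\Z[\tilde{T}]^{\times}$: since $\tilde{T}$ is a binary polyhedral group, its integral unit group is well understood — up to a subgroup of small index it is generated by the trivial units $\pm \tilde{T}$ together with finitely many explicitly known units coming from the $\mathrm{SL}_{2}(\mathbb{Z})$-type structure (one can also invoke $SK_{1}(\Z[\tilde{T}]) = 0$ and the comparison of $\R$- and $\Q$-representation counts as in Proposition \ref{prop:Nicholson-D} to control $K_{1}(\Z[\tilde{T}])$). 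One then computes the reductions of these generators inside the explicit embedding $\F_{2}[\tilde{T}]^{\times} \hookrightarrow \GL_{24}(\F_{2})$ from Remark \ref{rmk:embed-units-into-GLFp} and takes the subgroup they generate; its order is $|V_{\tilde{T}}|$. The final step is the numerical check that this order equals the right-hand side above, which by Proposition \ref{prop:ZHtimesC2} yields SFC for $\Z[\tilde{T} \times C_{2}]$. The delicate point is ensuring one has enough global units to generate all of $V_{\tilde{T}}$ — equivalently, that the index $[\Z[\tilde{T}]^{\times} : \pm \tilde{T}]$ and the chosen generators are handled correctly — so that the computed lower bound for $|V_{\tilde{T}}|$ is in fact the exact value.
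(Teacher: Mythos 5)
Your approach is the same as the paper's: apply Proposition~\ref{prop:ZHtimesC2} with $H=\tilde T$, compute the three ``easy'' quantities, and then show by exhibiting enough explicit units in $\Z[\tilde T]^{\times}$ that the inequality for $|V_{\tilde T}|$ is in fact an equality. However, there is a genuine error in one of your intermediate claims, and a vagueness in the last (crucial) step that would need to be tightened.

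First, the claim that $|\Cl(\Z[\tilde T])|=1$ ``since $\tilde T$ appears in the list of Theorem~\ref{thm:bp-canc-group-rings} and moreover has trivial locally free class group'' is false, and the inference is a logical error: SFC (or even LFC) does \emph{not} imply that the locally free class group is trivial. LFC says only that the canonical surjection $\LF_1(\Lambda)\to\Cl(\Lambda)$ is a bijection. Indeed $Q_8$ is also on that list, yet $|\Cl(\Z[Q_8])|=2$ (see Example~\ref{ex:Q8xC2}); and in fact $|\Cl(\Z[\tilde T])|=2$. Since you also say you would compute these class numbers with the algorithm of \cite{bley-boltje}, this error would likely get caught in practice, but the reasoning as stated is a real misconception. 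The paper's actual values are $|\Cl(\Z[\tilde T])|=2$, $|\Cl(\Z[\tilde T\times C_2])|=2^6$, and $|\F_2[\tilde T]^{\times}|=3\cdot 2^{21}$, giving the target $|V_{\tilde T}|=3\cdot 2^{17}$.

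Second, the step that actually does the work --- exhibiting a subgroup of $V_{\tilde T}$ of the correct order --- is where your proposal is too vague. You gesture at ``explicitly known units coming from the $\mathrm{SL}_2(\Z)$-type structure'' and at controlling $K_1(\Z[\tilde T])$ via Proposition~\ref{prop:Nicholson-D}, but neither of these pins down a usable generating set. Proposition~\ref{prop:Nicholson-D} concerns whether $K_1(\Z[G])$ is represented by \emph{trivial} units; it gives no purchase on the image of the full unit group $\Z[\tilde T]^{\times}$ modulo $2$, and would in any case only hand you $\pm\tilde T$, whose image has order far below $3\cdot 2^{17}$. What is needed (and what the paper uses) is a concrete supply of nontrivial units --- the Ritter--Segal units $u_{g,h}=1+(1-g)h(1+g+\cdots+g^{d-1})$ and their mirrors $u'_{g,h}$ for $g,h\in\tilde T$ --- whose reductions, together with $\tilde T$, generate a subgroup $U\leq V_{\tilde T}$ that one then computes inside $\GL_{24}(\F_2)$ to have order exactly $3\cdot2^{17}$, forcing $U=V_{\tilde T}$. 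Your plan is correct in spirit but would not produce a proof without a specific choice of such units and the explicit computation of the order of their image.
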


\begin{proof}
Further details on the computations used in this proof can be found in 
Appendix~\ref{appendix}.
The algorithm of \cite[\S 3]{bley-boltje} shows that 
$|\Cl(\Z[\tilde{T}])|=2$ and $|\Cl(\Z[\tilde{T} \times C_{2}])|=2^{6}$.
By Remark~\ref{rmk:units-jac-radical}, we have
$|\F_{2}[\tilde{T}]^{\times}| = |J(\mathbb{F}_{2}[\tilde{T}])| |(\F_{2}[\tilde{T}]/J(\F_{2}[\tilde{T}]))^{\times}|$, 
where $J(\mathbb{F}_{2}[\tilde{T}])$ denotes the Jacobson radical of $\mathbb{F}_{2}[\tilde{T}]$.
By \cite[Theorem 2.5]{F2SL2F3} we have 
$\F_{2}[\tilde{T}]/J(\F_{2}[\tilde{T}]) \cong \F_{2} \oplus \F_{4}$
(this can also be verified using the algorithm of \cite{Friedl1985}).
Hence 
$|J(\F_{2}[\tilde{T}])|=2^{24-3}=2^{21}$.
Moreover, $(\F_{2}[\tilde{T}]/J(\F_{2}[\tilde{T}]))^{\times} \cong (\F_{2} \oplus \F_{4})^{\times} \cong C_{3}$. 
Therefore $|\F_{2}[\tilde{T}]^{\times}| = 3 \cdot 2^{21}$. 

The integral group ring $\Z[\tilde{T}]$ has SFC by \cite[Theorem I]{MR703486}.
Let $V=V_{\tilde{T}}$ be the image of the canonical projection 
$\pi: \Z[\tilde{T}]^{\times} \rightarrow \F_{2}[\tilde{T}]^{\times}$.
Then by Proposition~\ref{prop:ZHtimesC2} and the above calculations we have
\[
|V| \leq
\frac{|\Cl(\Z[\tilde{T}])|^{2}|\F_{2}[\tilde{T}]^{\times}|}{|\Cl(\Z[\tilde{T} \times C_{2}])|} = 3 \cdot 2^{17},
\]
with equality if and only if $\Z[\tilde{T} \times C_{2}]$ has SFC.

To prove equality, we consider the following units of integral group rings as introduced 
by Ritter--Segal \cite{MR987166}.
For $g, h \in \tilde{T}$, let $d = \operatorname{ord}(g)$ and define
\begin{align*}
u_{g, h} &= 1 + (1 - g)h(1 + g + \dotsb + g^{d-1}),\\
u'_{g, h} &= 1 + (1 + g + \dotsb + g^{d-1}) h (1 - g).
\end{align*}
Then $g$, $u_{g, h}$ and $u'_{g,h}$ are elements of $\Z[\tilde{T}]^\times$.
Let 
\[
U = \langle \pi(g), \pi(u_{g, h}), \pi(u'_{g, h}) \mid g, h \in \tilde{T} \rangle
\]
and note that this is a subgroup of $V$.
By Remark~\ref{rmk:embed-units-into-GLFp}, we obtain an explicit embedding
of $\F_{2}[\tilde{T}]^{\times}$ into $\GL_{24}(\F_{2})$.
Since a calculation on a computer algebra system such as \textsc{Magma}~\cite{Magma} or \textsc{Oscar}~\cite{Oscar, Oscar-book, Fieker2017}
shows that the image of $U$ under this embedding is a subgroup of $\GL_{24}(\F_{2})$
of order $3 \cdot 2^{17}$, we must have $U=V$ and $|V|=3\cdot2^{17}$.
Therefore we conclude that $\Z[\tilde{T} \times C_{2}]$ has SFC.
\end{proof}

\begin{remark}
Using Theorem~\ref{thm:tildeT-times-C2-has-SFC} as an input, Nicholson 
\cite[Theorem 1.7]{nicholson2024cancellation}
has shown that the integral group ring $\Z[\tilde{T} \times C_{2}]$ in fact has LFC.
\end{remark}

\begin{lemma}\label{lem:SK1-tildeTxC2}
We have $SK_{1}(\Z[\tilde{T} \times C_{2}]) = 0$. 
\end{lemma}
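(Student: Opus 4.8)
The plan is to compute $SK_1(\Z[\tilde{T} \times C_2])$ directly, ideally by a mix of known structural results on $SK_1$ of integral group rings and an explicit calculation. First I would recall that $SK_1$ of an integral group ring is a finite abelian group (by the theorem of Bass cited in the excerpt) and that it is functorial and compatible with the relevant localisation and Mayer--Vietoris sequences. The natural tool here is the fibre product \eqref{eq:Gamma-fibre-prod} applied with $G = \tilde{T} \times C_2$ and $N = \{1\} \times C_2$, which identifies $\Gamma$ with $\Z[\tilde{T}]$ and the bottom-right corner with $\F_2[\tilde{T}]$; the associated Mayer--Vietoris sequence in lower algebraic $K$-theory relates $SK_1(\Z[\tilde{T} \times C_2])$ to $SK_1(\Z[\tilde{T}])$, $SK_1(\Z[\tilde{T}])$ (twice, from the two orders $\Z[H]$ and $\Gamma$) and to $K$-groups of the finite ring $\F_2[\tilde{T}]$.

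The key steps, in order, would be: (1) note that $SK_1(\Z[\tilde{T}]) = 0$ — this is classical (e.g.\ Oliver's computations of $SK_1$ for integral group rings of groups of small order, or the fact that $\tilde{T} \cong \mathrm{SL}_2(\F_3)$ has an explicitly understood $K$-theory), so it can be cited; (2) observe that $SK_1$ of the semilocal ring $\F_2[\tilde{T}]$ vanishes, since $SK_1$ of any finite ring is trivial (a finite semisimple ring has trivial $SK_1$, and $SK_1$ is insensitive to the nilpotent ideal $J(\F_2[\tilde{T}])$ by a standard argument, or one invokes $\F_2[\tilde{T}]/J \cong \F_2 \oplus \F_4$ computed in the proof of Theorem~\ref{thm:tildeT-times-C2-has-SFC}); (3) feed these vanishing results into the Mayer--Vietoris sequence to pin down $SK_1(\Z[\tilde{T} \times C_2])$, using that $SK_1$ is the kernel of the map to $K_1$ of the rational group algebra and chasing the diagram. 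Alternatively, and perhaps more robustly, I would simply invoke Oliver's formula or his tables for $SK_1(\Z[G])$ for $|G| = 48$, or apply the known result that $SK_1(\Z[G \times C_2])$ can be computed from $SK_1(\Z[G])$ together with a contribution from $\F_2[G]$, which in this case vanishes.

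I expect the main obstacle to be controlling the boundary/connecting maps in the Mayer--Vietoris sequence precisely enough to conclude vanishing rather than merely an upper bound: $SK_1$ sits inside $K_1$, and the Mayer--Vietoris sequence for $K_1$ only gives $SK_1(\Z[\tilde{T}\times C_2])$ as a subquotient of data at the other three corners, so one must be careful that the relative terms (involving $K_1(\F_2[\tilde{T}])$ and $K_2$ of the finite ring, or the image of units) do not contribute. The cleanest route around this is probably to cite an existing computation: Oliver's book \emph{Whitehead Groups of Finite Groups} contains the value of $SK_1(\Z[G])$ for all $G$ of order at most some bound well above $48$, and for $\tilde{T} \times C_2 \cong G_{(48,32)}$ this gives $SK_1 = 0$ outright. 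I would therefore structure the proof as: recall $SK_1(\Z[\tilde{T}]) = 0$, recall the general principle (Oliver) that $SK_1(\Z[G])$ for a $2$-group extension is detected $p$-locally and that no $2$-primary contribution arises here because the relevant quotient $\F_2[\tilde{T}]/J$ is a product of fields, and conclude $SK_1(\Z[\tilde{T} \times C_2]) = 0$; failing a clean citation, carry out the explicit Mayer--Vietoris chase with the two vanishing inputs above and verify the connecting map is surjective onto the relevant term so that the kernel is trivial.
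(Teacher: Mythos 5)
The paper's proof takes a genuinely different route. It does not use a Mayer--Vietoris sequence at all; instead it invokes Oliver's $p$-elementary induction theorem (\cite[Theorem 3]{MR618310}): for any finite group $G$ and prime $p$, the Sylow $p$-subgroup of $SK_1(\Z[G])$ is generated by induction from the $p$-elementary subgroups of $G$. This reduces the claim to checking that $SK_1(\Z[H])=0$ for every $p$-elementary subgroup $H \leq \tilde{T} \times C_2$. The abelian such $H$ are of the form $C_n$ or $C_2 \times C_n$, and the non-abelian $p$-elementary subgroups are $Q_8$ and $Q_8 \times C_2$; each of these has trivial $SK_1$ by known computations (\cite[Theorem 4.9]{MR0870729}, \cite[Theorem 4]{MR618310}, \cite[Example 9.9(ii)]{MR933091}). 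This is a short, citation-driven argument that sidesteps any excision or connecting-map analysis.

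Your primary route via the fibre product \eqref{eq:Gamma-fibre-prod} has a genuine gap, which you honestly flag: the Mayer--Vietoris sequence for $K_1$ only presents $SK_1(\Z[\tilde{T}\times C_2])$ as a subquotient involving $K_2(\F_2[\tilde{T}])$ and the images of unit groups, and the vanishing of $SK_1$ at the other three corners is not enough to conclude vanishing for the pullback without explicitly controlling the boundary map. You would have to carry out that chase, and for $SK_1$ (rather than the full $K_1$) there are known technical subtleties; this is precisely why Oliver's induction theorem is the standard tool. Your fallback of citing "Oliver's tables" is too vague as stated: Oliver's explicit computations in \cite{MR933091} are organised around $p$-groups, and $\tilde{T}\times C_2$ is not a $p$-group, so one cannot simply look it up—one needs the induction theorem to reduce to $p$-elementary subgroups, which is what the paper does. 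Your remark about $SK_1$ being "detected $p$-locally" is gesturing at the right structural fact, but the proposal never pins it down to the theorem that makes it a proof. As written, the proposal does not yield a complete argument; replacing the Mayer--Vietoris route with the induction theorem and the subgroup classification closes the gap.
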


\begin{proof}
By \cite[Theorem 3]{MR618310}, for any finite group $G$ and any prime $p$, 
the Sylow $p$-subgroup of  $SK_{1}(\Z[G])$ is generated by induction from the $p$-elementary
subgroups of $G$.
If $H$ is any abelian subgroup of $\tilde{T} \times C_{2}$
then $H \cong C_{n}$ or $C_{2} \times C_{n}$ for some $n$,
and so $SK_{1}(\Z[H])=0$ by \cite[Theorem 4.9]{MR0870729}.
If $H$ is any non-abelian elementary subgroup of $\tilde{T} \times C_{2}$
then $H \cong Q_{8}$ or $Q_{8} \times C_{2}$ and so $SK_{1}(\Z[H])=0$ by
\cite[Theorem 4]{MR618310} and \cite[Example 9.9(ii)]{MR933091}, respectively.
Therefore the desired result holds.
\end{proof}

We now have the following consequence of Theorem~\ref{thm:tildeT-times-C2-has-SFC},
which has also been shown independently by Nicholson 
\cite[Proposition 4.14]{nicholson2024cancellation}.

\begin{corollary}\label{cor:lifts-of-tildeTxC2}
Let $G$ be a finite group with quotient isomorphic to $\tilde{T} \times C_{2}$ and with 
$m_{\mathbb{H}}(G)=2$. Then $\Z[G]$ has SFC.
\end{corollary}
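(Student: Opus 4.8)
The plan is to deduce this from Theorem~\ref{thm:tildeT-times-C2-has-SFC} by means of Nicholson's lifting criterion, Theorem~\ref{thm:Nicholson-A}, applied with the quotient $H = \tilde{T} \times C_{2}$. The first step is to check that $m_{\mathbb{H}}(\tilde{T} \times C_{2}) = 2$: the group $\tilde{T}$ is a finite subgroup of $\mathbb{H}^{\times}$ and so has a faithful irreducible $2$-dimensional quaternionic representation, while its remaining irreducible representations (of dimensions $1,1,1,2,2,3$) are not quaternionic, whence $m_{\mathbb{H}}(\tilde{T}) = 1$; since $\R[\tilde{T} \times C_{2}] \cong \R[\tilde{T}] \oplus \R[\tilde{T}]$, this gives $m_{\mathbb{H}}(\tilde{T} \times C_{2}) = 2$, which equals $m_{\mathbb{H}}(G)$ by hypothesis. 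Granting that $K_{1}(\Z[\tilde{T} \times C_{2}])$ is represented by units, Theorem~\ref{thm:Nicholson-A} then shows that $\Z[G]$ has SFC if and only if $\Z[\tilde{T} \times C_{2}]$ does, and the latter holds by Theorem~\ref{thm:tildeT-times-C2-has-SFC}.

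It thus remains to verify that $K_{1}(\Z[\tilde{T} \times C_{2}])$ is represented by units, for which I would apply Proposition~\ref{prop:Nicholson-D}. This requires two inputs. The first is $SK_{1}(\Z[\tilde{T} \times C_{2}]) = 0$, which is precisely Lemma~\ref{lem:SK1-tildeTxC2}. The second is the equality $r_{\R}(\tilde{T} \times C_{2}) = r_{\Q}(\tilde{T} \times C_{2})$. Since $\Q[C_{2}] \cong \Q \oplus \Q$ and $\R[C_{2}] \cong \R \oplus \R$, we have $\Q[\tilde{T} \times C_{2}] \cong \Q[\tilde{T}] \oplus \Q[\tilde{T}]$ and similarly over $\R$, so $r_{\F}(\tilde{T} \times C_{2}) = 2\, r_{\F}(\tilde{T})$ for $\F \in \{\Q, \R\}$, and it suffices to prove $r_{\R}(\tilde{T}) = r_{\Q}(\tilde{T})$.

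For this I would compare the two relevant partitions of the set of irreducible complex characters of $\tilde{T}$: $r_{\Q}(\tilde{T})$ counts the orbits under $\mathrm{Gal}(\overline{\Q}/\Q)$ and $r_{\R}(\tilde{T})$ counts the orbits under complex conjugation. The seven irreducible characters are the trivial character and a complex-conjugate pair of linear characters of order $3$ (with character field $\Q(\zeta_{3})$), the faithful $2$-dimensional character (rational-valued, with values $2,-2,0,\pm 1$), its two twists by the nontrivial linear characters (again a complex-conjugate pair with field $\Q(\zeta_{3})$), and the $3$-dimensional character inflated from $A_{4}$ (rational-valued). Every character field is therefore $\Q$ or $\Q(\zeta_{3})$, and as $\Q(\zeta_{3})$ is imaginary quadratic its only nontrivial automorphism is complex conjugation; hence the Galois orbits and the complex-conjugation orbits coincide, giving $r_{\Q}(\tilde{T}) = r_{\R}(\tilde{T}) = 5$. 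Assembling these facts completes the proof.

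The only real work here is the last step — confirming that all character fields of $\tilde{T}$ are contained in $\Q(\zeta_{3})$, so that the rational and real representation counts agree — and even this is a routine computation with the character table of $\tilde{T} \cong \mathrm{SL}_{2}(\F_{3})$; the rest is a formal combination of results already in hand.
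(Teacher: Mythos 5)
Your proof is correct and follows the same route as the paper's: reduce to $H = \tilde{T}\times C_{2}$ via Theorem~\ref{thm:Nicholson-A}, using Lemma~\ref{lem:SK1-tildeTxC2} and Proposition~\ref{prop:Nicholson-D} to get the ``represented by units'' hypothesis, together with Theorem~\ref{thm:tildeT-times-C2-has-SFC}. The only difference is that you spell out the character-theoretic verifications of $m_{\mathbb{H}}(H)=2$ and $r_{\R}(H)=r_{\Q}(H)$ (which the paper asserts as straightforward from the character table and Frobenius--Schur indicators), and those computations are accurate.
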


\begin{proof}
Let $H=\tilde{T} \times C_{2}$. 
Then $SK_{1}(\Z[H])=0$ by Lemma~\ref{lem:SK1-tildeTxC2} and 
it is straightforward to deduce from the character table of $H$ that $r_{\R}(H)=r_{\Q}(H)=10$.
Thus $K_{1}(\Z[H])$ is represented by (trivial) units by Proposition~\ref{prop:Nicholson-D}.
We also have that $m_{\mathbb{H}}(H)=2$, which can be deduced from the 
character table of $H$ along with its Frobenius--Schur indicators.
Thus the desired result now follows from
Theorems~\ref{thm:Nicholson-A} and \ref{thm:tildeT-times-C2-has-SFC}.
\end{proof}

\subsection{A useful reduction criterion for certain integral group rings}
The following reduction criterion only uses the orders of certain locally free class groups, and thus can be applied relatively easily. 
It will be used in the proof of Theorem~\ref{thm:new-groups-with-SFC} 
(i.e.\ Theorem~\ref{intro-thm:new-groups-with-SFC}).

\begin{prop}\label{prop:ZG-SFC-iff-Gamma-SFC}
Let $G$ be a finite group with a normal subgroup $N$ of order $2$ and let $H = G/N$.
Let $\Gamma=\Z[G]/\mathrm{Tr}_{N} \Z[G]$.
Suppose that $\Z[H \times C_{2}]$ has SFC and that
\begin{equation}\label{eq:quotients-class-groups}
\frac{|\Cl(\Z[H \times C_{2}])|}{|\Cl(\Z[H])|} = \frac{|\Cl(\Z[G])|}{|\Cl(\Gamma)|}. 
\end{equation}
Then $\Z[G]$ has SFC if and only if $\Gamma$ has SFC.
\end{prop}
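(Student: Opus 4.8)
The plan is to apply Proposition~\ref{prop:group-ring-SFC-reduction} to the fibre product \eqref{eq:Gamma-fibre-prod} associated to the pair $(G,N)$, and to extract the equality condition from \eqref{eq:W-inequality-igr} by comparing it with the analogous inequality for the pair $(H\times C_2, \{1\}\times C_2)$ coming from Proposition~\ref{prop:ZHtimesC2}. Since $|N|=2$, we have $n=2$, so the bottom-right corner of the fibre product is $\F_2[H]$, and $\Gamma = \Z[G]/\mathrm{Tr}_N\Z[G]$. First I would note that $\Z[H]$ has SFC: it is a quotient of $\Z[H\times C_2]$, which has SFC by hypothesis, so Lemma~\ref{lem:quot-group-ring} applies. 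Thus by Proposition~\ref{prop:group-ring-SFC-reduction}, $\Z[G]$ has SFC if and only if $\Gamma$ has SFC \emph{and}
\begin{equation*}
|W_{G,N}| = \frac{|\Cl(\Z[H])|\,|\Cl(\Gamma)|\,|\F_2[H]^\times|}{|\Cl(\Z[G])|}.
\end{equation*}

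Next I would rewrite the right-hand side using hypothesis \eqref{eq:quotients-class-groups}, which gives $|\Cl(\Gamma)|/|\Cl(\Z[G])| = |\Cl(\Z[H])|/|\Cl(\Z[H\times C_2])|$, so the target value of $|W_{G,N}|$ becomes exactly
\begin{equation*}
\frac{|\Cl(\Z[H])|^2\,|\F_2[H]^\times|}{|\Cl(\Z[H\times C_2])|},
\end{equation*}
which is precisely the upper bound for $|V_H|$ appearing in Proposition~\ref{prop:ZHtimesC2}. Since $\Z[H\times C_2]$ has SFC by hypothesis, that inequality is an equality, so the target value equals $|V_H|$. The key structural observation is that $W_{G,N} \supseteq V_H$ inside $\F_2[H]^\times$: indeed $V_H = g_1(\Z[H]^\times)$ is one of the two factors whose product defines $W_{G,N}$ (namely $W_{G,N} = g_1(\Z[H]^\times)\,g_2(\Gamma^\times)$, and $1 \in g_2(\Gamma^\times)$), so $|W_{G,N}| \geq |V_H|$. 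Combined with the general inequality $|W_{G,N}| \leq (\text{target value}) = |V_H|$ from Proposition~\ref{prop:group-ring-SFC-reduction}, we conclude $|W_{G,N}| = |V_H|$, i.e.\ \eqref{eq:W-inequality-igr} is automatically an equality. Therefore the SFC criterion for $\Z[G]$ collapses to the single condition that $\Gamma$ has SFC.

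The main obstacle I anticipate is making the identification $V_H = g_1(\Z[H]^\times) \subseteq \F_2[H]^\times$ precise and checking that the map $g_1$ in \eqref{eq:Gamma-fibre-prod} really is the canonical projection $\Z[H] \to \F_2[H]$ whose image is $V_H$ as defined in Proposition~\ref{prop:ZHtimesC2}; this is essentially bookkeeping with the fibre product, using that $n=2$ forces $(\Z/n\Z)[H] = \F_2[H]$, but it must be stated carefully so that the containment $W_{G,N} \supseteq V_H$ is unambiguous. A secondary point to handle cleanly is the chain of equalities among class-group orders: one must invoke that the upper bound in Proposition~\ref{prop:ZHtimesC2} is an equality (which uses that $\Z[H\times C_2]$ has SFC) in order to pin down the target value as exactly $|V_H|$ rather than merely as an upper bound for it. Once these two identifications are in place, the proof is a short squeeze argument and the conclusion follows immediately from Proposition~\ref{prop:group-ring-SFC-reduction}.
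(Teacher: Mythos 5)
Your proposal is correct and follows essentially the same route as the paper's proof: establish that $\Z[H]$ has SFC via Lemma~\ref{lem:quot-group-ring}, use Proposition~\ref{prop:ZHtimesC2} together with the SFC hypothesis on $\Z[H\times C_2]$ and equation \eqref{eq:quotients-class-groups} to identify the upper bound in Proposition~\ref{prop:group-ring-SFC-reduction} with $|V_H|$, and then squeeze using the containment $V_H \subseteq W_{G,N}$. The bookkeeping concern you flag about identifying $V_H$ with $g_1(\Z[H]^\times)$ is indeed resolved exactly as you suspect, since both are the image of the canonical projection $\Z[H]^\times \to \F_2[H]^\times$; the paper simply asserts $V_H \subseteq W_{G,N}$ "by the definitions."
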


\begin{proof}
Since $\Z[H \times C_{2}]$ has SFC by hypothesis, so does $\Z[H]$ by Lemma~\ref{lem:quot-group-ring}.
By Proposition~\ref{prop:ZHtimesC2} and \eqref{eq:quotients-class-groups} we have
\begin{equation}\label{eq:VH-eq}
|V_{H}| = 
\frac{|\Cl(\Z[H])|^{2}|\F_{2}[H]^{\times}|}{|\Cl(\Z[H \times C_{2}])|}
= \frac{|\Cl(\Z[H])||\F_{2}[H]^{\times}||\Cl(\Gamma)|}{|\Cl(\Z[G])|}.
\end{equation}
By Proposition~\ref{prop:group-ring-SFC-reduction} and \eqref{eq:VH-eq} we have
\[
|W_{G,N}| \leq 
\frac{|\Cl(\Z[H])||\Cl(\Gamma)||\F_{2}[H]^{\times}|}{|\Cl(\Z[G])|} = |V_{H}|,
\]
and that $\Z[G]$ has SFC if and only if $\Gamma$ has SFC and this inequality is in fact an equality.
But $V_{H} \subseteq W_{G,N}$ by the definitions.
Therefore $|W_{G,N}|=|V_{H}|$ and so $\Z[G]$ has SFC if and only if $\Gamma$ has SFC.
\end{proof}

\section{Criteria for freeness and stable freeness}\label{sec:critsfc}

\subsection{Setup}\label{subsec:criteria-setup}
Let $K$ be a number field with ring of integers $\mathcal{O}=\mathcal{O}_{K}$, 
and let $A$ be a finite-dimensional semisimple $K$-algebra.
Let $\Lambda$ be an $\mathcal{O}$-order in $A$.
By \cite[(10.4)]{Reiner2003} there exists a (not necessarily unique) maximal $\mathcal{O}$-order $\mathcal{M}$ in $A$ containing $\Lambda$.
Let $\mathfrak{f}$ be any full two-sided ideal of $\mathcal{M}$ that is contained in $\Lambda$.

\subsection{A criterion for freeness}
We recall some results from \cite[\S 4]{MR4493243}. 
Define
\begin{equation}\label{eq:def-of-pi}
\pi : (\mathcal{M}/\mathfrak{f})^{\times}
\longrightarrow
{(\mathcal{M} / \mathfrak{f} )^{\times}} / {(\Lambda / \mathfrak{f} )^{\times}}
\end{equation}
to be the map induced by the canonical projection, where the codomain is the 
collection of left cosets of
$ (\Lambda / \mathfrak{f} )^{\times}$ in $(\mathcal{M} / \mathfrak{f} )^{\times}$.
Note that $(\Lambda/\mathfrak{f})^{\times}$ is a subgroup of $(\mathcal{M}/\mathfrak{f})^{\times}$, but is not necessarily a normal subgroup, and so $\pi$ is only a map of sets in general.

For $\eta \in \mathcal{M}$ we write $\overline{\eta}$ for its image in $\mathcal{M} / \mathfrak{f}$.

\begin{prop}[{\cite[Proposition 4.3]{MR4493243}}]\label{prop:central-imsp}
Let $X$ be a left ideal of $\Lambda$. 
Suppose that $X + \mathfrak{f} = \Lambda$ and that there exists $\beta \in \mathcal{M} \cap A^{\times}$ such that 
$\mathcal{M}X = \mathcal{M}\beta$. Let $u \in \mathcal{M}^{\times}$ and let $\alpha = u \beta$.
Then $\overline{\alpha}, \overline{\beta}, \overline{u} \in (\mathcal{M}/\mathfrak{f})^{\times}$ and 
the following assertions are equivalent:
\begin{enumerate}
\item $X = \Lambda\alpha$,
\item $\Lambda\alpha + \mathfrak{f} = \Lambda$,
\item $\overline{\alpha} \in (\Lambda/\mathfrak{f})^{\times}$,
\item $\pi(\overline{\beta}) = \pi(\overline{u^{-1}})$,
\item $\alpha \in X$ and $X$ is locally free over $\Lambda$.
\end{enumerate}
\end{prop}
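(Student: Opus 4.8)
The plan is to prove the chain of equivalences by a cyclic argument, roughly $(iii) \Leftrightarrow (iv) \Leftrightarrow (ii) \Leftrightarrow (i) \Rightarrow (v) \Rightarrow (i)$, after first dispatching the preliminary claim that $\overline{\alpha}, \overline{\beta}, \overline{u}$ lie in $(\mathcal{M}/\mathfrak{f})^{\times}$. For the preliminary claim: since $u \in \mathcal{M}^{\times}$ its image $\overline{u}$ is visibly a unit of $\mathcal{M}/\mathfrak{f}$; for $\overline{\beta}$, the key point is that $\mathcal{M}X = \mathcal{M}\beta$ with $\beta \in A^{\times}$ forces $\mathcal{M}\beta$ to be a full $\mathcal{M}$-lattice whose reduction mod $\mathfrak{f}$ must be all of $\mathcal{M}/\mathfrak{f}$ — one argues locally, noting that $\mathfrak{f}$ is coprime to nothing in particular but that $X + \mathfrak{f} = \Lambda$ localises to give $\mathcal{M}_\mathfrak{p}\beta + \mathfrak{f}_\mathfrak{p} = \mathcal{M}_\mathfrak{p}$ at every prime dividing $\mathfrak{f}$, whence $\overline{\beta}$ generates $\mathcal{M}/\mathfrak{f}$ as a module over itself and is therefore a unit (the finite ring $\mathcal{M}/\mathfrak{f}$ being its own left-module generators are units). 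Then $\overline{\alpha} = \overline{u}\,\overline{\beta}$ is a product of units.

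\smallskip

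For the main equivalences, I would proceed as follows. The equivalence $(iii) \Leftrightarrow (iv)$ is essentially formal: $\pi(\overline{\beta}) = \pi(\overline{u^{-1}})$ means $\overline{u^{-1}}^{-1}\overline{\beta} \in (\Lambda/\mathfrak{f})^{\times}$ — wait, more carefully, left cosets $(\Lambda/\mathfrak{f})^\times \overline{\beta}$ and $(\Lambda/\mathfrak{f})^\times \overline{u^{-1}}$ coincide iff $\overline{\beta}\,\overline{u^{-1}}^{-1} = \overline{\beta}\,\overline{u} = \overline{\beta u}$... one must be careful about sides, but since $\alpha = u\beta$ and we are working with the left module $\Lambda\alpha$, the correct statement is that $\pi(\overline{\beta}) = \pi(\overline{u^{-1}})$ iff $\overline{u}\,\overline{\beta} = \overline{\alpha}$ lies in $(\Lambda/\mathfrak{f})^{\times}$, which is exactly $(iii)$. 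The equivalence $(ii) \Leftrightarrow (iii)$ uses that $\Lambda\alpha + \mathfrak{f} = \Lambda$ iff the image of $\Lambda\alpha$ in $\Lambda/\mathfrak{f}$ is all of $\Lambda/\mathfrak{f}$, iff $\overline{\alpha}$ generates $\Lambda/\mathfrak{f}$ as a left ideal, iff $\overline{\alpha}$ is a left unit; combined with $\overline{\alpha}$ already being a unit in the larger ring $\mathcal{M}/\mathfrak{f}$, a short argument (or an appeal to finiteness) upgrades left-invertibility in $\Lambda/\mathfrak{f}$ to genuine invertibility, giving $\overline{\alpha} \in (\Lambda/\mathfrak{f})^\times$.

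\smallskip

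The step $(i) \Leftrightarrow (ii)$ is where the conductor ideal $\mathfrak{f}$ does real work: given $(ii)$, i.e. $\Lambda\alpha + \mathfrak{f} = \Lambda$, I want to conclude $X = \Lambda\alpha$. One has $\Lambda\alpha \subseteq \mathcal{M}\alpha = \mathcal{M}u\beta = \mathcal{M}\beta = \mathcal{M}X$, and since $\mathfrak{f} X \subseteq \Lambda X \subseteq X$ (as $X$ is a left ideal) while also $\mathfrak{f}\mathcal{M} \subseteq \mathfrak{f} \subseteq \Lambda$, one can multiply $\Lambda\alpha + \mathfrak{f} = \Lambda$ on the right by $X$ — no, rather one shows $\alpha \in X$ first: locally at each prime, $X_\mathfrak{p}$ is principal (it is a left ideal of $\Lambda_\mathfrak{p}$ with $\mathcal{M}_\mathfrak{p}X_\mathfrak{p} = \mathcal{M}_\mathfrak{p}\beta$ principal), and the generator differs from $\alpha$ by a unit of $\mathcal{M}_\mathfrak{p}$ that is congruent mod $\mathfrak{f}_\mathfrak{p}$ to a unit of $\Lambda_\mathfrak{p}$; a patching argument using $\Lambda\alpha + \mathfrak{f} = \Lambda$ then shows $\Lambda_\mathfrak{p}\alpha = X_\mathfrak{p}$ everywhere, hence $\Lambda\alpha = X$. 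Conversely $(i) \Rightarrow (ii)$ is immediate from the hypothesis $X + \mathfrak{f} = \Lambda$. Finally, $(i) \Rightarrow (v)$ is trivial ($\alpha \in \Lambda\alpha = X$, and $X = \Lambda\alpha$ with $\alpha \in A^\times$ is visibly locally free of rank one), and $(v) \Rightarrow (i)$: if $\alpha \in X$ and $X$ is locally free, then locally $X_\mathfrak{p} = \Lambda_\mathfrak{p}\alpha$ follows by comparing $\mathcal{M}_\mathfrak{p}$-spans and using $X_\mathfrak{p} + \mathfrak{f}_\mathfrak{p} = \Lambda_\mathfrak{p}$, and globalising gives $X = \Lambda\alpha$. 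I expect the main obstacle to be the careful localisation/globalisation argument in $(ii) \Rightarrow (i)$ and $(v) \Rightarrow (i)$ — keeping track of left versus right modules and verifying that the local generators glue — but since this is cited verbatim as \cite[Proposition 4.3]{MR4493243}, in practice one would simply quote that reference and only sketch why the hypotheses here match.
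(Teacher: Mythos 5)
The paper does not prove this proposition: it is quoted verbatim from \cite[Proposition~4.3]{MR4493243}, with a remark immediately afterward pointing out that the hypothesis must be strengthened from $\beta \in \mathcal{M}$ to $\beta \in \mathcal{M} \cap A^{\times}$ for the implication (i)~$\Rightarrow$~(v) to be valid. So there is no internal proof here to compare against, and you correctly sense at the end that ``in practice one would simply quote that reference.'' What follows is therefore an assessment of your sketch on its own terms.

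Your handling of the preliminary claim that $\overline{\alpha}, \overline{\beta}, \overline{u}$ are units of $\mathcal{M}/\mathfrak{f}$ is fine (apply $\mathcal{M}\cdot(-)$ to $X + \mathfrak{f} = \Lambda$ to get $\mathcal{M}\beta + \mathfrak{f} = \mathcal{M}$, then use finiteness of $\mathcal{M}/\mathfrak{f}$), as are (iii)~$\Leftrightarrow$~(iv), (ii)~$\Leftrightarrow$~(iii), and the easy directions (i)~$\Rightarrow$~(ii) and (i)~$\Rightarrow$~(v) (the latter being precisely where $\beta \in A^{\times}$, hence $\alpha \in A^{\times}$, is indispensable, a point worth flagging explicitly given the paper's correction). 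The genuine gap is in (ii)~$\Rightarrow$~(i) and the parallel (v)~$\Rightarrow$~(i). You assert that ``$X_{\mathfrak{p}}$ is principal (it is a left ideal of $\Lambda_{\mathfrak{p}}$ with $\mathcal{M}_{\mathfrak{p}}X_{\mathfrak{p}} = \mathcal{M}_{\mathfrak{p}}\beta$ principal),'' but this is not a consequence of the hypotheses; that $\mathcal{M}_{\mathfrak{p}}X_{\mathfrak{p}}$ is principal over $\mathcal{M}_{\mathfrak{p}}$ does not by itself make the $\Lambda_{\mathfrak{p}}$-ideal $X_{\mathfrak{p}}$ principal, and this is essentially what the proposition is trying to establish, so assuming it is circular. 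The subsequent ``patching argument using $\Lambda\alpha + \mathfrak{f} = \Lambda$ then shows $\Lambda_{\mathfrak{p}}\alpha = X_{\mathfrak{p}}$'' and ``globalising gives $X = \Lambda\alpha$'' are placeholders rather than arguments: at primes $\mathfrak{p}$ dividing $\mathfrak{f}$ you must actually exhibit why the two coprime-to-$\mathfrak{f}_{\mathfrak{p}}$ left ideals $X_{\mathfrak{p}}$ and $\Lambda_{\mathfrak{p}}\alpha$ with equal $\mathcal{M}_{\mathfrak{p}}$-span coincide, and this requires a concrete manipulation (for instance, showing both equal $\mathcal{M}_{\mathfrak{p}}\beta \cap \Lambda_{\mathfrak{p}}$, which itself needs care because $X$ is only a left ideal, so one-sided multiplications do not behave symmetrically). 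Since this is exactly the step the conductor $\mathfrak{f}$ is there to enable, leaving it as ``I expect the main obstacle to be the careful localisation/globalisation argument'' means the proof is incomplete where it matters most.
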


\begin{remark}
\cite[Proposition 4.3 and Theorem 4.5]{MR4493243} only assumed that $\beta \in \mathcal{M}$
but should have assumed that $\beta \in \mathcal{M} \cap A^{\times}$, or equivalently,
that $\beta \in \mathcal{M}$ and that $X$ is a \emph{full} left ideal of $\Lambda$, that is, $KX=A$. 
Note that any locally free left ideal of $\Lambda$ is necessarily full.
The strengthened assumption is needed to ensure that the condition $X=\Lambda\alpha$ 
implies that $X$ is in fact free over $\Lambda$ (see \cite[Lemma 2.3]{MR4493243}); 
without this, the implication (i) $\Rightarrow$ (v) in Proposition~\ref{prop:central-imsp} may not hold.
This omission does not affect any of the other results of \cite{MR4493243}.
Also note that in loc.\ cit.\ it was assumed that $\mathfrak{f}$ is strictly contained in $\mathcal{M}$, but in fact this assumption is unnecessary.
\end{remark}

\begin{corollary}\label{cor:central-imsp} 
Let $X$ be a left ideal of $\Lambda$. 
Suppose that $X + \mathfrak{f} = \Lambda$ and that there exists $\beta \in \mathcal{M} \cap A^{\times}$ such that $\mathcal{M}X = \mathcal{M}\beta$.  
Then the following assertions are equivalent:
\begin{enumerate}
\item $X$ is free over $\Lambda$.
\item There exists $u \in \mathcal{M}^{\times}$ and $\overline{\alpha} \in (\Lambda/\mathfrak{f})^{\times}$ such that $\overline{u} = \overline{\beta}\overline{\alpha}$.
\item There exists $u \in \mathcal{M}^{\times}$ such that $\pi(\overline{\beta})=\pi(\overline{u})$.
\end{enumerate}
\end{corollary}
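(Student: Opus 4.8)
The plan is to deduce Corollary~\ref{cor:central-imsp} directly from Proposition~\ref{prop:central-imsp}, which already packages together the relevant equivalences. The key observation is that ``$X$ is free over $\Lambda$'' should be rephrased as ``$X = \Lambda\alpha$ for some $\alpha$'', and that assertion (i) of Proposition~\ref{prop:central-imsp} is exactly of this shape; the only subtlety is that in the proposition the element $\alpha$ is constrained to have the form $\alpha = u\beta$ for some $u \in \mathcal{M}^{\times}$, so I first need to check that any generator of $X$ as a free $\Lambda$-module automatically has this form.

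Here is how I would carry it out. First I would show (i) $\Rightarrow$ (ii). If $X$ is free over $\Lambda$, write $X = \Lambda\alpha$ for some $\alpha \in X$; since $X$ is a full left ideal (it contains $\mathfrak{f}$, hence $KX = A$), $\alpha \in A^{\times}$. Then $\mathcal{M}\beta = \mathcal{M}X = \mathcal{M}\alpha$, so $u := \alpha\beta^{-1} \in \mathcal{M}^{\times}$ (note $\mathcal{M}\alpha = \mathcal{M}\beta$ forces $\alpha\beta^{-1}$ and its inverse to lie in $\mathcal{M}$). Applying Proposition~\ref{prop:central-imsp} with this $u$ and this $\alpha = u\beta$: assertion (i) of the proposition holds ($X = \Lambda\alpha$), so assertion (iii) of the proposition gives $\overline{\alpha} \in (\Lambda/\mathfrak{f})^{\times}$, and since $\overline{u} = \overline{\beta}\,\overline{\alpha}$ by construction, we get exactly the statement of (ii). Next, (ii) $\Rightarrow$ (iii) is immediate: from $\overline{u} = \overline{\beta}\,\overline{\alpha}$ with $\overline{\alpha} \in (\Lambda/\mathfrak{f})^{\times}$, the cosets $\pi(\overline{u})$ and $\pi(\overline{\beta})$ agree by definition of $\pi$ in \eqref{eq:def-of-pi}. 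Finally, for (iii) $\Rightarrow$ (i): given $u \in \mathcal{M}^{\times}$ with $\pi(\overline{\beta}) = \pi(\overline{u})$, set $\alpha = u^{-1}\beta$, so that $\pi(\overline{\beta}) = \pi(\overline{u^{-1}}^{\,-1})$ — here I need to match the precise form of condition (iv) of Proposition~\ref{prop:central-imsp}, which is $\pi(\overline{\beta}) = \pi(\overline{v^{-1}})$ with $v = u^{-1}$ and $\alpha = v\beta$. Then Proposition~\ref{prop:central-imsp} (iv)~$\Rightarrow$~(i) gives $X = \Lambda\alpha$, and (i)~$\Rightarrow$~(v) of the proposition (valid because $\beta \in \mathcal{M}\cap A^{\times}$) shows $X$ is locally free, hence the free generator $\alpha$ genuinely exhibits $X$ as free over $\Lambda$.

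The main obstacle is bookkeeping with the non-normality of $(\Lambda/\mathfrak{f})^{\times}$ and the precise placement of inverses in the definition of $\pi$ and in conditions (iii)--(iv) of Proposition~\ref{prop:central-imsp}: since $\pi$ is only a map of sets onto a coset space, I must be careful whether I am comparing left cosets of $\overline{\beta}$ and $\overline{u}$ or of their inverses, and correspondingly whether the auxiliary generator is $u\beta$ or $u^{-1}\beta$. Concretely, Proposition~\ref{prop:central-imsp} uses $\alpha = u\beta$ and its condition (iv) reads $\pi(\overline{\beta}) = \pi(\overline{u^{-1}})$; so in proving (iii)~$\Rightarrow$~(i) I apply the proposition with ``$u$'' replaced by $u^{-1}$ (where $u$ is the element furnished by (iii)), giving $\alpha = u^{-1}\beta$ and condition (iv) becoming $\pi(\overline{\beta}) = \pi(\overline{u})$, which is exactly hypothesis (iii). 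Once this substitution is tracked correctly, everything else is a direct translation, and no further input beyond Proposition~\ref{prop:central-imsp} and the fact that locally free ideals are full is needed.
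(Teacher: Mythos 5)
Your overall strategy is exactly the paper's: deduce the corollary from Proposition~\ref{prop:central-imsp} by tracking the substitution of $u$ and $\overline{\alpha}$ by their inverses. You carry this substitution out correctly for (iii)~$\Rightarrow$~(i), but you make a concrete algebraic error in (i)~$\Rightarrow$~(ii). Having set $u := \alpha\beta^{-1}$ so that $\alpha = u\beta$, you then assert ``\emph{since $\overline{u} = \overline{\beta}\,\overline{\alpha}$ by construction, we get exactly the statement of (ii)}.'' This is false: from $\alpha = u\beta$ one gets $\overline{\alpha} = \overline{u}\,\overline{\beta}$, hence $\overline{u} = \overline{\alpha}\,\overline{\beta}^{-1}$, which is \emph{not} $\overline{\beta}\,\overline{\alpha}$ in a noncommutative ring (nor even when the ring is commutative, unless $\overline{\beta}^{2}=\overline{1}$). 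What you actually have after Proposition~\ref{prop:central-imsp}(iii) is $\overline{\alpha}\in(\Lambda/\mathfrak{f})^{\times}$ and $\overline{u} = \overline{\alpha}\,\overline{\beta}^{-1}$; you must now invert to obtain $\overline{u}^{-1} = \overline{\beta}\,\overline{\alpha}^{-1}$, and then take $u' := u^{-1}\in\mathcal{M}^{\times}$ and $\overline{\alpha'} := \overline{\alpha}^{-1}\in(\Lambda/\mathfrak{f})^{\times}$ to obtain the form $\overline{u'} = \overline{\beta}\,\overline{\alpha'}$ required by (ii). That final inversion is precisely the ``substituting $u$ and $\overline{\alpha}$ for their inverses'' which you correctly tracked in the other direction but omitted here; as written the step is wrong even though the intended conclusion is reachable.

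Two smaller points. First, the parenthetical ``it contains $\mathfrak{f}$, hence $KX=A$'' is incorrect: the hypothesis $X+\mathfrak{f}=\Lambda$ does not imply $\mathfrak{f}\subseteq X$. The correct reason $X$ is full is that $KX = K\mathcal{M}X = K\mathcal{M}\beta = A\beta = A$ since $\beta\in A^{\times}$; this is exactly the content of the remark following Proposition~\ref{prop:central-imsp}. Second, in (iii)~$\Rightarrow$~(i) the detour through Proposition~\ref{prop:central-imsp}(v) is unnecessary and logically muddled (locally free does not by itself yield free): once you have $X=\Lambda\alpha$ with $\alpha = u^{-1}\beta\in A^{\times}$, right multiplication by $\alpha$ already gives an isomorphism $\Lambda\cong X$, so $X$ is free.
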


\begin{proof}
This follows from Proposition~\ref{prop:central-imsp} after substituting $u$ and $\overline{\alpha}$ for their inverses. 
\end{proof}

\subsection{An exact sequence}
Most of the rest of this section will be concerned with the development of 
a criterion for stable freeness. We shall first need a certain exact sequence.

Let
\[
\mathcal{D} = \mathcal{D}(\Lambda, \mathfrak{f}) 
:= \{ (\lambda_{1}, \lambda_{2}) \in \Lambda \oplus \Lambda
\mid \lambda_{1} \equiv \lambda_{2} \bmod{\mathfrak{f}} \}.
\]
Let $g_{1}=g_{2} : \Lambda \rightarrow \Lambda / \mathfrak{f}$ be the canonical projection map.
For $i=1,2$ define $\pi_{i} : \mathcal{D} \rightarrow \Lambda$ to be the restriction of the projection map from $\Lambda \oplus \Lambda$ onto its $i$th component.
Then we have a fiber product 
\[
\begin{tikzcd}
\mathcal{D} \arrow{d}[swap]{\pi_{2}} \arrow{r}{\pi_{1}} & \Lambda \arrow{d}{g_{1}}  \\
\Lambda \arrow{r}[swap]{g_{2}} & \Lambda / \mathfrak{f} .
\end{tikzcd}
\]
For $i=1,2$ define $q_{i}: \Cl(\mathcal{D}) \rightarrow \Cl(\Lambda)$ by 
$[X]_{\st} \mapsto [\Lambda \otimes_{\mathcal{D},\pi_{i}} X]_{\st}$.
Then by Proposition~\ref{prop:SFC-exact sequence} we obtain an exact sequence
\[
1 \longrightarrow \ker(\partial) \longrightarrow
(\Lambda / \mathfrak{f})^{\times} \stackrel{\partial}{\longrightarrow} \Cl(\mathcal{D})
\xrightarrow{(q_{1},q_{2})} \Cl(\Lambda) \oplus \Cl(\Lambda) \longrightarrow 0,
\]
where $\partial(u)=[M(u)]_{\st}$ and 
$M(u) := \{ (\lambda_{1},\lambda_{2}) \in \Lambda \oplus \Lambda \mid g_{1}(\lambda_{1})u=g_{2}(\lambda_{2}) \}$.
Set 
\[
\Cl(\Lambda, \mathfrak{f}) := \ker (q_{2} : \Cl(\mathcal{D}) \rightarrow \Cl(\Lambda)).
\]
Then we obtain an exact sequence
\begin{equation}\label{eq:basic-exact-sequence}
(\Lambda / \mathfrak{f})^{\times} \stackrel{\partial}{\longrightarrow} \Cl(\Lambda, \mathfrak{f})
\stackrel{q_{1}}{\longrightarrow} \Cl(\Lambda) \longrightarrow 0. 
\end{equation}

\subsection{Decomposition via central idempotents}\label{subsec:decomposition}
Much of the following notation is adopted from \cite{MR4493243}.
Let $C=Z(A)$ and let $\mathcal{O}_{C}$ be the integral closure of $\mathcal{O}$ in $C$.
Set $\mathfrak{g} := \mathfrak{f} \cap \mathcal{O}_{C}$ and note that this is a full ideal of $\mathcal{O}_{C}$.
Let $e_{1}, \ldots, e_{r}$ be the primitive idempotents
of $C$ and set $A_{i} = Ae_{i}$. Then
\begin{equation}\label{eq:weddecomp}
A = A_{1} \oplus \cdots \oplus A_{r}
\end{equation}
is a decomposition of $A$ into indecomposable two-sided ideals (see \cite[(3.22)]{curtisandreiner_vol1}).
Each $A_{i}$ is a simple $K$-algebra with identity element $e_{i}$.
The centers $K_{i} := Z(A_{i})$ are finite field extensions of $K$ via $K \rightarrow K_{i}$, $\alpha \mapsto \alpha e_{i}$,
and we have $K$-algebra isomorphisms $A_{i} \cong \Mat_{n_{i}}(D_{i})$ where $D_{i}$ is a division algebra with $Z(D_{i}) \cong K_{i}$ 
(see \cite[(3.28)]{curtisandreiner_vol1}).
The Wedderburn decomposition \eqref{eq:weddecomp} induces decompositions
\[
C = K_{1} \oplus \cdots \oplus K_{r}
\quad \textrm{and} \quad  \mathcal{O}_{C} = \mathcal{O}_{K_{1}} \oplus \cdots \oplus \mathcal{O}_{K_{r}},
\]
where $\calO_{K_i}$ denotes the ring of algebraic integers of $K_{i}$.
By \cite[(10.5)]{Reiner2003} we have $e_{1}, \ldots, e_{r} \in \mathcal{M}$ and each $\mathcal{M}_{i}:=\mathcal{M}e_{i}$ is a maximal $\mathcal{O}$-order 
(and thus a maximal $\mathcal{O}_{K_{i}}$-order) in $A_{i}$.
Moreover, each $\frf_{i}:=\frf e_{i}$ is a full two-sided ideal of $\calM_{i}$, 
each $\frg_{i} := \frg e_{i}$ is a nonzero integral ideal of $\mathcal{O}_{K_{i}}$, 
and we have decompositions
\[
\mathcal{M} =  \mathcal{M}_{1} \oplus \cdots \oplus \mathcal{M}_{r},
\quad 
\frf = \frf_{1} \oplus \cdots \oplus \frf_{r}
\quad \textrm{and} \quad
\frg = \frg_{1} \oplus \cdots \oplus \frg_{r}.
\]

\subsection{The reduced norm map}\label{subsec:nr}
The reduced norm map $\nr : A \rightarrow C$ is defined componentwise (see \cite[\S 9]{Reiner2003})
and restricts to a group homomorphism $\nr : A^{\times} \rightarrow C^{\times}$.
By the Hasse--Schilling--Maass Theorem \cite[Theorem~7.48]{curtisandreiner_vol1} we have
$\nr(A^{\times})=C^{\times +}$, where
\[
C^{\times+} := \{c \in C^{\times} \mid c \text{ is positive at quaternionic components} \} .
\]
The last condition means that if $c=(c_{i})$ with $c_{i} \in K_{i}^{\times}$
then $\tau(c_{i})>0$ whenever $i \in \{1,\ldots,r\}$ and $\tau\colon
K_{i} \to \R$ is a real embedding such that the corresponding scalar
extension $A_{i} \otimes_{K_{i}, \tau} \R$ is isomorphic to 
a full matrix algebra over $\mathbb{H}$.
By \cite[(45.7)]{curtisandreiner_vol2} we have that 
$\nr(\mathcal{M}^{\times})=\mathcal{O}_{C}^{\times} \cap C^{\times +}$.

\subsection{Id\`eles}\label{subsec:ideles}
Let $\mathfrak{p}$ range over all maximal ideals of $\mathcal{O}$.
Recall from \S \ref{subsec:localizations-and-completions} that 
for an $\mathcal{O}$-lattice $M$, we write $M_{\mathfrak{p}}$ and $\widehat{M}_{\mathfrak{p}}$
for the localization and completion of $M$ at $\mathfrak{p}$, respectively.  
Extending this notation, let $\widehat{K}_{\mathfrak{p}}$ denote the $\mathfrak{p}$-adic completion of
$K$ and let $\widehat{A}_{\mathfrak{p}} = \widehat{K}_{\mathfrak{p}} \otimes_{K} A$.

The group of \textit{id\`eles} of $A$ is defined to be 
\[
  J(A) = \{(a_\frp)_{\frp} \in \textstyle{\prod_{\frp}} \widehat{A}^{\times}_{\frp} \mid
  a_\frp \in\widehat{\Lambda}^{\times}_\frp \text{ for all but finitely many } \frp \}.
\]
Note that $J(A)$ does not depend on the choice of $\mathcal{O}$-order $\Lambda$ in $A$.
Let $U(\Lambda)$ be the subgroup of \textit{unit id\`eles}, defined by 
$U(\Lambda)=\prod_{\frp} \widehat{\Lambda}^{\times}_\frp$.
Let $U_{\frf}(\Lambda)$ be the subgroup of $U(\Lambda)$ defined by
\[
U_{\frf}(\Lambda) := \{(a_{\frp})_{\frp} \in U(\Lambda) \mid a_{\frp} \equiv 1 \bmod \frf_{\frp} \}.
\]

We have canonical isomorphisms
\mpar{eq:identification}
\begin{align}\notag
  \widehat{A}_\frp = \widehat{K}_{\mathfrak{p}} \otimes_{K} A 
  & \cong \bigoplus_{i=1}^{r}
  \widehat{K}_{\mathfrak{p}} \otimes_{K} A_{i} 
  \cong \bigoplus_{i=1}^{r} \widehat{K}_{\mathfrak{p}} \otimes_{K}
  K_{i} \otimes_{K_i} A_{} \\ \label{eq:identification}
  & \cong \bigoplus_{i=1}^{r}\bigoplus_{\frP}\
  \widehat{(K_i)}_{\mathfrak{P}}\otimes_{K_i} A_{i} \cong 
  \bigoplus_{i,\frP}
  \widehat{A}_{i,\frP}
\end{align}
involving various completions, where, for given $i\in\{1,\ldots,r\}$,
$\mathfrak{P}$ runs through all maximal ideals of $\calO_{K_i}$ dividing
$\mathfrak{p}$ and $\widehat{A}_{i,\mathfrak{P}}$ is defined as $\widehat{(A_{i})}_{\mathfrak{P}}$.
Using \eqref{eq:identification}, we will often write elements
of $J(A)$, resp.~$\widehat{A}_{\mathfrak{p}}$, as tuples $(a_{i,\frP})_{i,\frP}$,
where $\frP$ ranges over all maximal ideals of $\calO_{K_i}$,
resp.~over those that contain $\frp$. 
Similarly, we denote by $J(C)$ the group
of id\`eles of $C$. 
Again, we have a canonical isomorphism
\[
  \widehat{C}_\frp\cong \bigoplus_{i,\frP}\ \widehat{K}_{i,\frP}
\]
and we will write elements in $J(C)$, resp.~$\widehat{C}_\frp$, often in the
form $(\alpha_{i,\frP})_{i,\frP}$.

By $\nr\colon J(A)\to J(C)$ we also denote the reduced norm map 
(which translates into the componentwise reduced norm maps $\nr\colon
\widehat{A}_{i,\frP}^{\times} \to \widehat{K}_{i,\frP}^{\times}$ 
after the above identifications). 
Then by \cite[(45.8)]{curtisandreiner_vol2} we have that $\nr(J(A))=J(C)$.

\begin{theorem}[{\cite[(49.17)]{curtisandreiner_vol2}}]\label{thm:class-group-ideles}
There is a canonical isomorphism
\[
\Cl(\Lambda) \cong \frac{J(C)}{C^{\times +}\nr(U(\Lambda))}.
\]
\end{theorem}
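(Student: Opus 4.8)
The plan is to pass from the definition of $\Cl(\Lambda)$ in \S\ref{subsec:lfcg} to the idelic description in two stages: first rewrite $\Cl(\Lambda)$ in terms of ideles of $A$ and of Whitehead groups, and then apply the reduced norm to descend to the commutative idele group $J(C)$.

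For the first stage I would use the classification of locally free lattices over $\Lambda$. Given $X \in g(\Lambda)$, after scaling an embedding $X \hookrightarrow KX = A$ I may assume $X$ is a full left ideal of $\Lambda$. For each maximal ideal $\frp$ of $\calO$ the completion $\widehat{X}_{\frp}$ is free of rank $1$, say $\widehat{X}_{\frp} = \widehat{\Lambda}_{\frp}a_{\frp}$ with $a_{\frp} \in \widehat{A}_{\frp}^{\times}$, and $a_{\frp} \in \widehat{\Lambda}_{\frp}^{\times}$ for all but finitely many $\frp$; thus $X$ determines an idele $a = (a_{\frp})_{\frp} \in J(A)$, well defined modulo left multiplication by $U(\Lambda)$ (the choice of local generator) and modulo right multiplication by $A^{\times}$ (a $\Lambda$-isomorphism between full left ideals of $A$ is induced by right multiplication by an element of $A^{\times}$). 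Conversely every idele arises this way, and the relation of stable isomorphism, together with the identity $X_{1} \oplus X_{2} \cong \Lambda \oplus X_{3}$ defining the group law on $\Cl(\Lambda)$, translates---after stabilising along $\GL_{n} \hookrightarrow \GL_{n+1}$ and passing to the direct limit, which linearises the group law---into a canonical isomorphism
\[
\Cl(\Lambda) \;\cong\; \coker\bigg( K_{1}(A) \longrightarrow \Big( {\prod_{\frp}}^{\prime} K_{1}(\widehat{A}_{\frp}) \Big) \Big/ \prod_{\frp} \image\big(K_{1}(\widehat{\Lambda}_{\frp}) \to K_{1}(\widehat{A}_{\frp})\big) \bigg),
\]
where the restricted product is taken with respect to the subgroups $\image(K_{1}(\widehat{\Lambda}_{\frp}) \to K_{1}(\widehat{A}_{\frp}))$. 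This is one of the equivalent descriptions of the locally free class group recalled in \cite[\S 49A]{curtisandreiner_vol2}; alternatively it can be assembled from \S\ref{subsec:lf-pg} using Roiter-type replacement theorems for lattices.

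For the second stage I would evaluate the three pieces with the reduced norm. Since $\widehat{\Lambda}_{\frp}$ and $\widehat{A}_{\frp}$ are semilocal, \cite[(40.31)]{curtisandreiner_vol2} shows that $K_{1}(\widehat{A}_{\frp})$ is a quotient of $\widehat{A}_{\frp}^{\times}$ and that the image of $K_{1}(\widehat{\Lambda}_{\frp})$ is that of $\widehat{\Lambda}_{\frp}^{\times}$. By the classical structure of $K_{1}$ over a $\frp$-adic semisimple algebra (vanishing of the local reduced $K_{1}$), the local reduced norm induces an isomorphism $K_{1}(\widehat{A}_{\frp}) \xrightarrow{\sim} \widehat{C}_{\frp}^{\times}$ under which the image of $\widehat{\Lambda}_{\frp}^{\times}$ corresponds to $\nr(\widehat{\Lambda}_{\frp}^{\times})$. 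Assembling over all $\frp$ and using $\nr(J(A)) = J(C)$ (\S\ref{subsec:ideles}) identifies ${\prod_{\frp}}^{\prime} K_{1}(\widehat{A}_{\frp})$ with $J(C)$ and $\prod_{\frp}\image(K_{1}(\widehat{\Lambda}_{\frp}) \to K_{1}(\widehat{A}_{\frp}))$ with $\nr(U(\Lambda))$. Finally, since the reduced norm commutes with completion, the image of $K_{1}(A)$ in $J(C)$ is the diagonal copy of $\nr(A^{\times})$, and $\nr(A^{\times}) = C^{\times +}$ by Hasse--Schilling--Maass (\S\ref{subsec:nr}). Substituting, the cokernel becomes $J(C)/(C^{\times +}\nr(U(\Lambda)))$, and unwinding the construction shows the induced isomorphism carries $[X]_{\st}$ to the class of $\nr(a)$, which is the asserted canonical map.

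The hard part is the first stage: the passage from stable isomorphism classes of lattices to the Whitehead-group cokernel, resting on the finer classification of locally free $\Lambda$-lattices by idele matrices modulo double cosets and on the stabilisation argument that linearises the (a priori merely double-coset) group law. The second stage is essentially bookkeeping with results already recalled, the one genuinely arithmetic ingredient being the vanishing of the local reduced $K_{1}$, which makes the reduced norm an isomorphism on each $K_{1}(\widehat{A}_{\frp})$. It is worth noting that no Eichler condition is needed here: the reduced norm is used only locally and in the global form $\nr(A^{\times}) = C^{\times +}$, both valid unconditionally---the Eichler-type subtleties exploited elsewhere in the paper concern isomorphism versus \emph{stable} isomorphism, which this theorem does not detect.
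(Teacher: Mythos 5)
The paper does not prove this statement; it is quoted verbatim from \cite[(49.17)]{curtisandreiner_vol2} and used as a black box (indeed Remark~\ref{rmk:recover-lattice-from-ideles} records the explicit form of the isomorphism that your sketch also arrives at). Your two-stage outline is a correct r\'esum\'e of the argument given in that source. The ingredients you isolate are exactly the ones Curtis--Reiner use: the idelic parametrisation of locally free rank-one $\Lambda$-lattices up to the double-coset action of $U(\Lambda)$ and $A^{\times}$; the passage to stable isomorphism classes, which turns the double-coset set into a group and is where the limit over $\GL_{n}\hookrightarrow\GL_{n+1}$ intervenes; the identification $K_{1}(\widehat{A}_{\frp})\xrightarrow{\sim}C_{\frp}^{\times}$ by surjectivity of the local reduced norm together with the vanishing of local $SK_{1}$ (Nakayama--Matsushima/Wang), and the representability of $K_{1}$ by units over the semilocal rings $\widehat{\Lambda}_{\frp}$ and $\widehat{A}_{\frp}$ via \cite[(40.31)]{curtisandreiner_vol2}; and finally the Hasse--Schilling--Maass theorem $\nr(A^{\times})=C^{\times+}$ to identify the global contribution. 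Your closing remark that no Eichler hypothesis is needed here is also correct and worth keeping: the Eichler condition governs whether stable isomorphism detects isomorphism, which this theorem does not claim. The only caveat is that the ``first stage'' (producing the Whitehead-group cokernel description of $\Cl(\Lambda)$ from the genus definition) is stated rather than argued; in Curtis--Reiner this is spread over (49.9)--(49.16) and one works more directly with the surjection $\varepsilon\colon J(A)\to\Cl(\Lambda)$, $a\mapsto[\Lambda a]_{\st}$, computing its kernel, rather than packaging it as a restricted-product cokernel. Both formulations are equivalent, and your phrasing makes the subsequent reduced-norm computation transparent, so this is a presentational difference rather than a gap.
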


\begin{remark}\label{rmk:recover-lattice-from-ideles}
For $\gamma = (\gamma_{\mathfrak{p}})_{\mathfrak{p}} \in J(A)$ define 
$\Lambda \gamma = A \cap (\cap_{\mathfrak{p}} \widehat{\Lambda}_{\mathfrak{p}} \gamma_{\mathfrak{p}})$.
Then the map 
$\varepsilon: J(A) \rightarrow \Cl(\Lambda)$ defined by $\alpha \mapsto [\Lambda \alpha]_{\st}$ is a surjective homomorphism.
Now let $X$ be a locally free left ideal of $\Lambda$. 
Then for each $\mathfrak{p}$ there exists
$\alpha_{\mathfrak{p}} \in \widehat{A}_{\mathfrak{p}}^{\times} \cap \widehat{\Lambda}_{\mathfrak{p}}$
such $\widehat{X}_{\mathfrak{p}} = \widehat{\Lambda}_{\mathfrak{p}} \alpha_{\mathfrak{p}}$.
Set $\alpha = (\alpha_{\mathfrak{p}})_{\mathfrak{p}}$. Then $\Lambda \alpha = X$
and so $\varepsilon(\alpha) = [X]_{\st}$. 
In fact,
$\varepsilon$ factors via $\nr\colon J(A)\to J(C)$ and so under the isomorphism of 
Theorem~\ref{thm:class-group-ideles} $(\nr(\alpha_{\mathfrak{p}})_{\mathfrak{p}}) \in J(C)$
is a representative of $[X]_{\st}$.
Note that we could also have chosen $\alpha_{\mathfrak{p}} \in A^{\times} \cap \Lambda_{\mathfrak{p}}$
such that $X_{\mathfrak{p}} = \Lambda_{\mathfrak{p}} \alpha_{\mathfrak{p}}$.
For further details on the claims in this remark, see \cite[\S 49A]{curtisandreiner_vol2}.
\end{remark}

\subsection{Review of results of Bley--Boltje}\label{subsec:review-BB}
We now recall some key results from \cite{bley-boltje}.

Let $I_{\frg}=I_{\frg}(C)$ denote the group of fractional $\mathcal{O}_{C}$-ideals of $C$
that are coprime to $\frg$, and for each $i$, define $I_{\frg_{i}}(K_{i})$ analogously. 
Then we have a direct product decomposition
\[
I_{\frg}(C)=I_{\frg_{1}}(K_{1})\times\cdots\times I_{\frg_{r}}(K_{r}) .
\]
For each $i\in\{1,\ldots,r\}$ we write $\infty_i$ for the formal
product over real archimedean places $\tau\colon K_i\to\R$ such that
$\R \otimes_{K_{i},\tau}A_{i}$ is a full matrix algebra over $\mathbb{H}$,
and we define the `ray modulo $\frg\infty$' by
\[
P_{\frg}^{+} = P_{\frg}^{+}(C) = \{(\alpha_{i}\mathcal{O}_{K_i})_{i} \in I_{\frg} \mid
  \alpha_{i}\equiv 1 \bmod\!\!^{\times} \frg_{i}\infty_{i} \text{ for all } i=1,\ldots,r \}.
\]
Then $P_{\frg}^{+}$ is a subgroup of $I_{\frg}$ and we have a corresponding
direct product decomposition
\[
P_{\frg}(C)^{+}=P_{\frg_{1}}^{+}(K_{1})\times\cdots\times P_{\frg_{r}}^{+}(K_{r}) .
\]
For each $i$, the quotient $I_{\frg_{i}}(K_{i})/P_{\frg_{i}}^{+}(K_{i})$ is a ray class group of $K_{i}$,
and $I_{\mathfrak{g}}/P_{\mathfrak{g}}^{+}$ is the direct product of these groups.

\begin{theorem}[{\cite[Theorem~1.5]{bley-boltje}}]\label{thm:bley-boltje-thm-isoms}
There are canonical isomorphisms
\[
\Cl(\Lambda, \mathfrak{f})
\cong 
\frac{J(C)}{C^{\times +}\nr(U_{\mathfrak{f}}(\Lambda))}
\cong 
I_{\mathfrak{g}}/P_{\mathfrak{g}}^{+}.
\] 
\end{theorem}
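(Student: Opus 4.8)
I would prove the two isomorphisms in turn, reducing each to an idèle-theoretic computation over the centre. For the first I would apply Theorem~\ref{thm:class-group-ideles} not to $\Lambda$ but to the fibre-product order $\mathcal{D}=\mathcal{D}(\Lambda,\mathfrak{f})$ of \eqref{eq:fibre-product-ray-class}, whose centre is $C\oplus C$, obtaining $\Cl(\mathcal{D})\cong(J(C)\times J(C))/\bigl((C^{\times+}\times C^{\times+})\,\nr(U(\mathcal{D}))\bigr)$; under this identification the maps $q_i$ become, by Remark~\ref{rmk:recover-lattice-from-ideles} applied componentwise, the two coordinate projections followed by the quotient map $J(C)\to\Cl(\Lambda)$. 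Since $\widehat{\mathfrak{f}}_\frp=\widehat{\mathcal{M}}_\frp=\widehat{\Lambda}_\frp$ for $\frp\nmid\mathfrak{g}$, one has $\widehat{\mathcal{D}}_\frp=\widehat{\Lambda}_\frp\oplus\widehat{\Lambda}_\frp$ there and $\widehat{\mathcal{D}}_\frp=\{(a,b):a\equiv b\bmod\widehat{\mathfrak{f}}_\frp\}$ for $\frp\mid\mathfrak{g}$, so $\nr(U(\mathcal{D}))$ is an explicit restricted product. Multiplying a representative $(\gamma_1,\gamma_2)$ of a class in $\ker(q_2)$ by a diagonal element of $C^{\times+}$ and by $\nr$ of the diagonal copy of $U(\Lambda)$ inside $U(\mathcal{D})$ brings it to the form $(\delta,1)$, so $\delta\mapsto[(\delta,1)]$ gives a surjection $J(C)\twoheadrightarrow\Cl(\Lambda,\mathfrak{f})$. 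Determining when $(\delta,1)$ lies in $(C^{\times+}\times C^{\times+})\,\nr(U(\mathcal{D}))$: the trivial second coordinate forces the global central factor to be a unit idèle, hence to lie in $\mathcal{O}_C^\times\cap C^{\times+}=\nr(\mathcal{M}^\times)$ by \S\ref{subsec:nr}, and the rest equals $C^{\times+}\,\nr(U_{\mathfrak{f}}(\Lambda))$ — here one uses that $\widehat{\mathfrak{f}}_\frp$ is a two-sided $\widehat{\Lambda}_\frp$-ideal, so that $a\equiv b$ is equivalent to $b^{-1}a\equiv 1$, together with the observation that $U_{\mathfrak{f}}(\Lambda)=U_{\mathfrak{f}}(\mathcal{M})$, valid because $a\in\widehat{\mathcal{M}}_\frp^\times$ with $a\equiv 1\bmod\widehat{\mathfrak{f}}_\frp$ already forces $a\in\widehat{\Lambda}_\frp^\times$. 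This yields $\Cl(\Lambda,\mathfrak{f})\cong J(C)/(C^{\times+}\,\nr(U_{\mathfrak{f}}(\Lambda)))$.

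\textbf{The second isomorphism.} Since $U_{\mathfrak{f}}(\Lambda)=U_{\mathfrak{f}}(\mathcal{M})$, the group $J(C)/(C^{\times+}\,\nr(U_{\mathfrak{f}}(\Lambda)))$ depends only on $\mathcal{M}$ and $\mathfrak{f}$; writing $J(C)=\prod_i J(K_i)$ over the centres of the simple components $A_i\cong\Mat_{n_i}(D_i)$, I would check that the map sending a finite idèle to its divisor supported away from $\mathfrak{g}$ induces the claimed isomorphism onto $I_{\mathfrak{g}}/P_{\mathfrak{g}}^+$. This amounts to computing $\nr(U_{\mathfrak{f}}(\mathcal{M}))$ componentwise: for $\frP\nmid\mathfrak{g}$ the local order is maximal and the reduced norm of its unit group is the full local unit group $\mathcal{O}_{K_i,\frP}^\times$ of its centre, so, together with the positivity at the places $\infty_i$ built into $C^{\times+}$, these factors contribute exactly the unramified and archimedean part of the ray datum, while for $\frP\mid\mathfrak{g}_i$ one needs $\nr\bigl(\widehat{\mathcal{M}}_\frp^\times\cap(1+\widehat{\mathfrak{f}}_\frp)\bigr)=1+\mathfrak{g}_{i,\frP}$ — in split local components this is the identity $\det(I+\Mat_{n_i}(\mathfrak{a}))=1+\mathfrak{a}$, and in ramified components it follows from standard computations of reduced norms of congruence subgroups of maximal orders over local fields (reducing the matrix case to the division algebra). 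Combining these gives $J(C)/(C^{\times+}\,\nr(U_{\mathfrak{f}}(\Lambda)))\cong I_{\mathfrak{g}}/P_{\mathfrak{g}}^+$; equivalently, one may invoke the known identification of the ray class group $\Cl(\mathcal{M},\mathfrak{f})$ of a maximal order with the ray class group $I_{\mathfrak{g}}/P_{\mathfrak{g}}^+$ of its centre, together with $\Cl(\Lambda,\mathfrak{f})\cong\Cl(\mathcal{M},\mathfrak{f})$.

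\textbf{Main obstacle.} The delicate step is the last one: showing at the primes where $A$ ramifies that $\nr$ carries the $\widehat{\mathfrak{f}}_\frp$-congruence subgroup of $\widehat{\mathcal{M}}_\frp^\times$ onto precisely the $\mathfrak{g}_{i,\frP}$-congruence subgroup of $K_{i,\frP}^\times$. This rests on explicit formulas for the reduced norm of the higher unit groups $1+\frP_{D_i}^k$ of a local division algebra, and is exactly the point at which the compatibility of the choice $\mathfrak{g}=\mathfrak{f}\cap\mathcal{O}_C$ with the reduced norm must be used; everything else — the normalisation producing the surjection $J(C)\twoheadrightarrow\Cl(\Lambda,\mathfrak{f})$, the identification $U_{\mathfrak{f}}(\Lambda)=U_{\mathfrak{f}}(\mathcal{M})$, and the translation of the idèle class group of $C$ into a ray class group — is formal.
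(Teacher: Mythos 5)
The paper does not prove this theorem: it is quoted verbatim as \cite[Theorem~1.5]{bley-boltje}, so there is no in-text proof to compare against. The only glimpse of the Bley--Boltje argument in this paper is in the proof of Theorem~\ref{thm:class-group-rep}, which recalls that the second isomorphism uses $U_{\mathfrak{f}}(\Lambda)=U_{\mathfrak{f}}(\mathcal{M})$ (quoted as \cite[Remark~1.6]{bley-boltje}) to reduce componentwise to maximal orders, and then a weak-approximation normalisation to translate idèle classes into ideal classes mod $P_{\mathfrak{g}}^{+}$ --- which is exactly what you propose for the second isomorphism, so your sketch is consistent with the cited source.

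On the merits, your outline is essentially correct. For the first isomorphism, applying the idèle description of $\Cl$ to the fibre-product order $\mathcal{D}$ (whose ambient algebra has centre $C\oplus C$), identifying $q_{1},q_{2}$ with coordinate projections via Remark~\ref{rmk:recover-lattice-from-ideles}, and normalising classes in $\ker(q_{2})$ to the form $(\delta,1)$ all go through; the cleanest way to close the argument is to write $a_{1}=a_{2}b$ with $b\in U_{\mathfrak{f}}(\Lambda)$ directly (two-sidedness of $\widehat{\mathfrak{f}}_{\frp}$), which gives $\delta=(c_{1}c_{2}^{-1})\nr(b)\in C^{\times+}\nr(U_{\mathfrak{f}}(\Lambda))$ without the detour through $\mathcal{O}_{C}^{\times}\cap C^{\times+}=\nr(\mathcal{M}^{\times})$, though the latter is also fine. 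You also correctly identify the genuinely delicate point: the local equality $\nr\bigl(\widehat{\mathcal{M}}_{\frP}^{\times}\cap(1+\widehat{\mathfrak{f}}_{\frP})\bigr)=1+\mathfrak{g}_{\frP}$ at ramified primes, which requires a surjectivity statement strengthening the containment used elsewhere in this paper (quoted via \cite[Corollary~2.4]{bley-boltje}); that is indeed where the substance of the Bley--Boltje proof lies, and everything else you describe is formal bookkeeping as you say. Note also that, consistently with the paper, your argument does not need Assumption (G) here --- that hypothesis enters only later, in Theorem~\ref{thm:class-group-rep}.
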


Using these canonical isomorphisms, the exact sequence \eqref{eq:basic-exact-sequence} 
can be rewritten as
\begin{equation}\label{eq:BB-exact-seq}
\left( \Lambda/\frf \right)^{\times} \stackrel{\nu}{\longrightarrow} I_{\frg}/P_{\frg}^{+} 
\stackrel{\mu}{\longrightarrow} \Cl(\Lambda) \lra 0.
\end{equation}

\begin{prop}[{\cite[Proposition~1.8]{bley-boltje}}]\label{prop:nu-explicit}
Let $x \in  \left( \Lambda/\frf \right)^\times $ and let $a \in \Lambda \cap A^{\times}$ such that 
$x \equiv a \bmod{\frf}$. 
Then $\nu(x)  = (\nr(a)\mathcal{O}_{C} \bmod P_{\mathfrak{g}}^{+})$
in $I_\frg/P_\frg^+$.
\end{prop}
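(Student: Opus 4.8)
The plan is to unwind the definition of $\nu$ and compute an explicit id\`elic representative. By \eqref{eq:basic-exact-sequence} and \eqref{eq:BB-exact-seq}, $\nu$ is the composite of $\partial\colon(\Lambda/\mathfrak{f})^{\times}\to\Cl(\Lambda,\mathfrak{f})$, $x\mapsto[M(x)]_{\st}$, with the Bley--Boltje isomorphism $\Cl(\Lambda,\mathfrak{f})\cong I_{\mathfrak{g}}/P_{\mathfrak{g}}^{+}$ of Theorem~\ref{thm:bley-boltje-thm-isoms}, so it suffices to identify the class $[M(x)]_{\st}\in\Cl(\Lambda,\mathfrak{f})=\ker\bigl(q_{2}\colon\Cl(\mathcal{D})\to\Cl(\Lambda)\bigr)$ id\`elically. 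First I would note that $M(x)$ is a full locally free left ideal of the fibre-product order $\mathcal{D}=\mathcal{D}(\Lambda,\mathfrak{f})$: it is a left ideal because $\overline{\delta_{1}}=\overline{\delta_{2}}$ and $\overline{\lambda_{1}}x=\overline{\lambda_{2}}$ force $\overline{\delta_{1}\lambda_{1}}x=\overline{\delta_{2}\lambda_{2}}$; it is full since it contains $\{(\lambda,\lambda'):\lambda,\lambda'\in\mathfrak{f}\}$; and it is locally free of rank $1$ by Proposition~\ref{prop:properties-of-Mu}(i) applied to \eqref{eq:fibre-product-ray-class}. Hence, by the analogue of Theorem~\ref{thm:class-group-ideles} and Remark~\ref{rmk:recover-lattice-from-ideles} for $\mathcal{D}$, it is enough to produce, for each maximal ideal $\mathfrak{p}$ of $\mathcal{O}$, an element $\delta_{\mathfrak{p}}\in(A\oplus A)_{\mathfrak{p}}^{\times}$ with $\widehat{M(x)}_{\mathfrak{p}}=\widehat{\mathcal{D}}_{\mathfrak{p}}\,\delta_{\mathfrak{p}}$; then $[M(x)]_{\st}$ is represented by $\bigl(\nr(\delta_{\mathfrak{p}})\bigr)_{\mathfrak{p}}\in J(C\oplus C)$.

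For $\mathfrak{p}\nmid\mathfrak{f}$ we have $\widehat{\mathfrak{f}}_{\mathfrak{p}}=\widehat{\Lambda}_{\mathfrak{p}}$, so $\widehat{M(x)}_{\mathfrak{p}}=\widehat{\Lambda}_{\mathfrak{p}}\oplus\widehat{\Lambda}_{\mathfrak{p}}=\widehat{\mathcal{D}}_{\mathfrak{p}}$ and we take $\delta_{\mathfrak{p}}=(1,1)$. For $\mathfrak{p}\mid\mathfrak{f}$ the key step is to choose a \emph{unit} $x_{\mathfrak{p}}'\in\widehat{\Lambda}_{\mathfrak{p}}^{\times}$ reducing to $x$ in $(\Lambda/\mathfrak{f})_{\mathfrak{p}}^{\times}$; such a lift exists because $\widehat{\Lambda}_{\mathfrak{p}}$ is a complete semilocal ring, so the reduction $\widehat{\Lambda}_{\mathfrak{p}}^{\times}\to(\Lambda/\mathfrak{f})_{\mathfrak{p}}^{\times}$ is surjective (lift through the semisimple quotient $\widehat{\Lambda}_{\mathfrak{p}}/\rad(\widehat{\Lambda}_{\mathfrak{p}})$, then through the radical). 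With $\delta_{\mathfrak{p}}:=(1,x_{\mathfrak{p}}')$, a direct check gives $\widehat{\mathcal{D}}_{\mathfrak{p}}\,\delta_{\mathfrak{p}}=\widehat{M(x)}_{\mathfrak{p}}$: the inclusion ``$\subseteq$'' uses $\overline{x_{\mathfrak{p}}'}=x$, and ``$\supseteq$'' uses that $(x_{\mathfrak{p}}')^{-1}\in\widehat{\Lambda}_{\mathfrak{p}}$ --- precisely the point at which a unit lift, rather than an arbitrary one, is needed. Thus $[M(x)]_{\st}$ is represented by the id\`ele with $\mathfrak{p}$-component $\bigl(1,\nr(x_{\mathfrak{p}}')\bigr)$ for $\mathfrak{p}\mid\mathfrak{f}$ and $(1,1)$ otherwise.

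It remains to transport this through $\Cl(\Lambda,\mathfrak{f})=\ker(q_{2})\cong J(C)/C^{\times+}\nr(U_{\mathfrak{f}}(\Lambda))\cong I_{\mathfrak{g}}/P_{\mathfrak{g}}^{+}$. Since $(x_{\mathfrak{p}}',x_{\mathfrak{p}}')_{\mathfrak{p}}\in U(\mathcal{D})$, dividing our representative by $\nr\bigl((x_{\mathfrak{p}}',x_{\mathfrak{p}}')_{\mathfrak{p}}\bigr)$ replaces it by the id\`ele with $\mathfrak{p}$-component $\bigl(\nr(x_{\mathfrak{p}}')^{-1},1\bigr)$; this both confirms that $q_{2}$ kills the class and, after normalising the second coordinate to $1$, identifies it with the class of $\bigl(\nr(x_{\mathfrak{p}}')^{-1}\bigr)_{\mathfrak{p}\mid\mathfrak{f}}$ in $J(C)/C^{\times+}\nr(U_{\mathfrak{f}}(\Lambda))$. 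Here one uses that $\nr(U_{\mathfrak{f}}(\Lambda))$ is the group of unit id\`eles of $\mathcal{O}_{C}$ congruent to $1$ modulo $\mathfrak{g}$ --- which follows from surjectivity of the local reduced norm on unit groups (cf.\ \cite[(45.7),(45.8)]{curtisandreiner_vol2}) together with the congruence compatibility of the reduced norm modulo $\mathfrak{f}$ --- so that $J(C)/C^{\times+}\nr(U_{\mathfrak{f}}(\Lambda))\cong I_{\mathfrak{g}}/P_{\mathfrak{g}}^{+}$ is the classical id\`elic description of the ray class group modulo $\mathfrak{g}\infty$. Under it the class of our id\`ele is $c\,\mathcal{O}_{C}\bmod P_{\mathfrak{g}}^{+}$ for any $c\in C^{\times+}$ with $c\equiv\nr(x_{\mathfrak{p}}')\pmod{\mathfrak{g}_{\mathfrak{p}}}$ for all $\mathfrak{p}\mid\mathfrak{g}$. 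Finally, since $x_{\mathfrak{p}}'\equiv x\equiv a\pmod{\mathfrak{f}_{\mathfrak{p}}}$, congruence compatibility of $\nr$ yields $\nr(x_{\mathfrak{p}}')\equiv\nr(a)\pmod{\mathfrak{g}_{\mathfrak{p}}}$; as $a\in\Lambda\cap A^{\times}$ with $\overline{a}=x$ a unit, $\nr(a)\in\mathcal{O}_{C}\cap C^{\times+}$ is a unit modulo $\mathfrak{g}$, so $\nr(a)\mathcal{O}_{C}\in I_{\mathfrak{g}}$, and $c\,\mathcal{O}_{C}$ and $\nr(a)\mathcal{O}_{C}$ differ by the principal ideal $(c/\nr(a))$, whose generator is $\equiv 1\pmod{\mathfrak{g}}$ and positive at all quaternionic archimedean places and hence lies in $P_{\mathfrak{g}}^{+}$. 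Therefore $\nu(x)=\bigl(\nr(a)\mathcal{O}_{C}\bmod P_{\mathfrak{g}}^{+}\bigr)$.

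The step I expect to be the main obstacle is the third paragraph --- identifying the exact shape of the isomorphism of Theorem~\ref{thm:bley-boltje-thm-isoms} on our explicit id\`ele (in particular which coordinate survives the normalisation, and checking that the ``inversion'' in the $J(C)$-model is undone by the ray-class normalisation so that the answer is $\nr(a)\mathcal{O}_{C}$ and not its inverse) --- together with the congruence compatibility $\nr(u+\mathfrak{f})\subseteq\nr(u)+\mathfrak{g}$ of the reduced norm, which is routine away from ramification but needs care at ramified primes (and is in any case exactly what makes the right-hand side independent of the lift $a$). One could instead characterise $\nu$ abstractly: exactness of \eqref{eq:BB-exact-seq}, naturality with respect to an inclusion $\Lambda\hookrightarrow\mathcal{M}$ into a maximal order --- over which the corresponding map is literally induced by $\nr$ --- and the classical description of the maximal-order case together pin down $\nu$ on $(\Lambda/\mathfrak{f})^{\times}$, bypassing much of the id\`elic bookkeeping.
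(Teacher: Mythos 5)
The paper cites this result from Bley--Boltje without reproducing a proof, so there is no in-paper argument to compare against; I'll instead assess your reconstruction on its own terms and against the closely parallel Theorem~\ref{thm:class-group-rep}, which is the nearest proof actually given in the paper and uses the same machinery.

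Your argument is essentially correct, and it mirrors the idelic strategy of the paper's proof of Theorem~\ref{thm:class-group-rep}: produce local generators, pass to $J(C)$ via $\nr$, and read off the ray class via the explicit recipe for $\psi$. The two genuinely important observations are both in your write-up and are exactly where the content lies. First, the choice of a \emph{unit} lift $x'_{\mathfrak{p}}\in\widehat{\Lambda}_{\mathfrak{p}}^{\times}$ is what makes both containments $\widehat{\mathcal{D}}_{\mathfrak{p}}(1,x'_{\mathfrak{p}})=\widehat{M(x)}_{\mathfrak{p}}$ hold; this is precisely the analogue of the step in the proof of Theorem~\ref{thm:class-group-rep} where $\alpha'_{\mathfrak{p}}$ is replaced by $(\alpha''_{\mathfrak{p}})^{-1}\alpha'_{\mathfrak{p}}$ with $\alpha''_{\mathfrak{p}}\in\Lambda_{\mathfrak{p}}^{\times}$. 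Second, dividing the id\`elic representative $(1,\nr(x'_{\mathfrak{p}}))$ by $\nr(x'_{\mathfrak{p}},x'_{\mathfrak{p}})$ with $(x'_{\mathfrak{p}},x'_{\mathfrak{p}})\in U(\mathcal{D})$ correctly normalises the second coordinate without changing the class. The sign concern you flag in your last paragraph does in fact resolve in your favour: if one normalises $(\gamma_{1},\gamma_{2})\in\ker(q_{2})$ to $(\gamma_{1}',1)$, then $(\gamma_{1}',1)\equiv(\gamma_{1}'',1)\bmod(C\oplus C)^{\times+}\nr(U(\mathcal{D}))$ forces $\gamma_{1}'\equiv\gamma_{1}''\bmod C^{\times+}\nr(U_{\mathfrak{f}}(\Lambda))$ (using $u_{1}u_{2}^{-1}\in U_{\mathfrak{f}}(\Lambda)$ whenever $u_{1}\equiv u_{2}\bmod\mathfrak{f}$), so extracting the first coordinate really is the isomorphism $\ker(q_{2})\cong J(C)/C^{\times+}\nr(U_{\mathfrak{f}}(\Lambda))$, and feeding $(\nr(x'_{\mathfrak{p}})^{-1})_{\mathfrak{p}}$ into the recipe of the proof of Theorem~\ref{thm:class-group-rep} (choose $\xi\equiv\nr(x'_{\mathfrak{p}})\equiv\nr(a)\bmod\mathfrak{g}$, take $\xi=\nr(a)\in C^{\times+}$) yields $\nr(a)\mathcal{O}_{C}$ and not its inverse. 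Two cosmetic points: $M(x)$ is a full locally free $\mathcal{D}$-lattice in $A\oplus A$ rather than a left ideal of $\mathcal{D}$ (it is not contained in $\mathcal{D}$), which does not affect the id\`elic description; and the verification of fullness is redundant once you invoke Proposition~\ref{prop:properties-of-Mu}(i). Your suggested abstract alternative via naturality and the maximal-order case is a viable shortcut, but the explicit id\`elic computation you actually carry out is the one consistent with the rest of the paper.
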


Let $x \in  \left( \mathcal{M}/\frf \right)^{\times}$ and let $a \in \mathcal{M} \cap A^{\times}$ such that 
$x \equiv a \bmod{\frf}$. 
Then define
\begin{equation}\label{eq:def-tilde-nu}
\tilde{\nu}(x) := (\nr(a)\mathcal{O}_{C} \bmod P_{\mathfrak{g}}^{+})  \in I_\frg/P_\frg^+. 
\end{equation}
Thus we obtain a group homomorphism
$\tilde{\nu} : (\mathcal{M}/\mathfrak{f})^{\times} 
\longrightarrow I_{\mathfrak{g}} / P_{\mathfrak{g}}^{+}$. 
By Proposition~\ref{prop:nu-explicit}, $\tilde{\nu}$ extends $\nu$, that is,
$\tilde{\nu}(x)=\nu(x)$ for $x \in (\Lambda/\mathfrak{f})^{\times}$.
Note that
\begin{equation}\label{eq:ker-mu}
\ker(\mu) = \nu((\Lambda/\mathfrak{f})^{\times})=\tilde{\nu}((\Lambda/\mathfrak{f})^{\times}).
\end{equation}

\subsection{A technical condition on $\mathfrak{g}$}
For a maximal ideal $\mathfrak{P}$ of
$\calO_C = \calO_{K_1} \oplus \dotsb \oplus \calO_{K_r}$
and a finitely generated $\OC$-module $M$, we write $M_\frP$ 
for the localization of $M$ at $\frP$. 
Note that for a given $\frP$ there exists a unique $1 \leq i \leq r$ and a maximal 
$\calO_{K_i}$-ideal $\mathfrak{P}_i$ such that
\[
\frP =  \calO_{K_1} \oplus \dotsb \oplus \calO_{K_{i-1}} \oplus \frP_i \oplus \calO_{K_{i+1}} \oplus \dotsb \oplus  \calO_{K_r}.
\]
We can and will henceforth identify $M_\frP$ with the localization $(e_{i}M)_{\frP_i}$ of the 
$\mathcal{O}_{K_i}$-module $e_{i}M$ at $\frP_{i}$.
Note that $M$ is uniquely determined by $M_{\frP}$ for all maximal ideals $\frP$ of $\calO_C$ and that $M_{\frp}$, where $\frp$ is a maximal ideal of $\calO$, is uniquely determined by $M_{\frP}$ for all maximal ideals $\frP$ of $\calO_C$ lying above $\frp$.

We make the following non-standard definition. 
It may be viewed as a technical assumption on $\mathfrak{g}$ 
that is needed in the proof of Theorem~\ref{thm:class-group-rep} below. 
In algorithmic applications, $\mathfrak{g}$
will be adjusted to ensure that it satisfies this assumption.

\begin{definition}
The ideal $\mathfrak{g}$ is said to be \emph{locally radical} if
for each maximal ideal $\mathfrak{p}$ of $\calO$ with $\frg_\frp \ne \calO_{C, \frp}$, 
we have $\frg_\frP \subseteq \frP\calO_{C, \frP}$ for all maximal ideals $\frP$ of 
$\mathcal{O}_C$ with $\frP \mid \frp$. 
\end{definition}

\begin{example}
Let $L/K$ be an extension of number fields, 
let $\mathfrak{p}$ be a maximal ideal of $\mathcal{O}_{K}$ 
and let $\mathfrak{P}$ be a prime ideal of $\mathcal{O}_{L}$ lying over $\mathfrak{p}$. 
We consider the $\calO_K$-order $\Lambda = \calO_K + \frP$ and the (unique) maximal 
$\calO_K$-order $\calM = \OL$ in the $K$-algebra $L$.
Then $A=C=L$ and for $\mathfrak{g} = \mathfrak{f} = \mathfrak{P}$ we have 
$\mathfrak{f}_{\mathfrak{p}} \neq \mathcal{M}_{\mathfrak{p}} = \calO_{C, \frp}$ 
and $\mathfrak{g}$ is locally radical if and only if there
is a unique prime ideal of $\mathcal{O}_{L}$ lying over $\mathfrak{p}$.
\end{example}

\begin{lemma}\label{lem:conG}
The following statements hold:
\begin{enumerate}
\item Let $\mathfrak{p}$ be a maximal ideal of $\calO$.
Then $\frf_\frp = \calM_\frp$ if and only if $\frg_\frp = \calO_{C, \frp}$.
\item Let $\mathfrak{P}$ be a maximal ideal of $\mathcal{O}_{C}$. 
Then $\mathfrak{f}_{\mathfrak{P}} \subseteq \rad(\mathcal{M}_{\mathfrak{P}})$
if and only if $\frg_\frP \subseteq \frP\calO_{C, \frP}$.
\end{enumerate}
\end{lemma}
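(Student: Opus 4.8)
The plan is to deduce both parts from elementary facts about localisations, proving part (i) directly and then bootstrapping part (ii) from the same argument.

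For part (i), the two inputs are that $\OC \subseteq \calM$ (each $\calM_i$ is an $\calO_{K_i}$-order) and that $\frg = \frf \cap \OC \subseteq \frf$, so that $\frg\calM \subseteq \frf\calM = \frf$. Localising at $\frp$ — localisation is exact and commutes with finite intersections and with the formation of $\frg\calM$ — gives $\frg_\frp = \frf_\frp \cap \calO_{C,\frp}$ and $\frg_\frp\calM_\frp \subseteq \frf_\frp \subseteq \calM_\frp$. Hence if $\frf_\frp = \calM_\frp$ then $\frg_\frp = \calM_\frp \cap \calO_{C,\frp} = \calO_{C,\frp}$, using $\calO_{C,\frp} \subseteq \calM_\frp$; and if $\frg_\frp = \calO_{C,\frp}$ then $\calM_\frp = \calO_{C,\frp}\calM_\frp = \frg_\frp\calM_\frp \subseteq \frf_\frp \subseteq \calM_\frp$, forcing $\frf_\frp = \calM_\frp$.

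For part (ii), fix the unique index $i$ and maximal ideal $\frP_i$ of $\calO_{K_i}$ attached to $\frP$ as described just before the statement, so that, under the identifications recalled there, $\frf_\frP = (\frf_i)_{\frP_i}$, $\calM_\frP = (\calM_i)_{\frP_i}$, $\frg_\frP = (\frg_i)_{\frP_i}$, and $\frP\calO_{C,\frP}$ is the maximal ideal of the discrete valuation ring $\calO_{C,\frP} = (\calO_{K_i})_{\frP_i}$. The proof would then chain together three equivalences. First, $\frf_\frP \subseteq \rad(\calM_\frP)$ if and only if $\frf_\frP \neq \calM_\frP$: the forward implication holds since $\rad(\calM_\frP)$ is a proper ideal, and for the converse one uses that $\calM_\frP$, being a maximal order over a discrete valuation ring, has $\rad(\calM_\frP)$ as its unique maximal two-sided ideal, equivalently that $\calM_\frP/\rad(\calM_\frP)$ is simple (this follows from the structure theory of maximal orders after passing to the $\frP_i$-adic completion, the residue field being unchanged; see \cite{Reiner2003}), so if $\frf_\frP \not\subseteq \rad(\calM_\frP)$ then $\frf_\frP + \rad(\calM_\frP) = \calM_\frP$, hence $\frf_\frP + \rad(\calM_\frP)^n = \calM_\frP$ for every $n$, and since $\frf$ is a full ideal the module $\calM_\frP/\frf_\frP$ has finite length, so $\rad(\calM_\frP)^n \subseteq \frf_\frP$ for some $n$ and therefore $\frf_\frP = \calM_\frP$. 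Second, $\frf_\frP \neq \calM_\frP$ if and only if $\frg_\frP \neq \calO_{C,\frP}$, by running the argument of part (i) verbatim with $\calO_{C,\frP}$ in place of $\calO_{C,\frp}$ (using $\frg_\frP = \frf_\frP \cap \calO_{C,\frP}$ and $\frg_\frP\calM_\frP \subseteq \frf_\frP$). Third, $\frg_\frP \neq \calO_{C,\frP}$ if and only if $\frg_\frP \subseteq \frP\calO_{C,\frP}$, because $\calO_{C,\frP}$ is a discrete valuation ring and $\frg_\frP$ is one of its ideals, hence proper precisely when contained in the maximal ideal. Concatenating these three equivalences proves part (ii).

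The one step that is not pure bookkeeping is the converse half of the first equivalence in part (ii), and specifically the input that $\calM_\frP/\rad(\calM_\frP)$ is simple; locating and citing the appropriate statement from the structure theory of maximal orders over (complete) discrete valuation rings is, I expect, the only real point to settle, everything else being exactness of localisation together with an application of Nakayama's lemma to a finite-length module. A self-contained alternative is to quote instead that the nonzero two-sided ideals of the maximal order $\calM_\frP$ form an infinite cyclic group generated by $\rad(\calM_\frP)$, so that $\frf_\frP = \rad(\calM_\frP)^k$ for a unique $k \geq 0$ and the first equivalence is immediate.
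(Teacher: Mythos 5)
Your proof is correct and takes essentially the same route as the paper's: for (i), observe that localisation commutes with intersections so that $\frg_\frp = \frf_\frp \cap \calO_{C,\frp}$, from which the equivalence is immediate; for (ii), substitute $\frP$ for $\frp$ to get $\frf_\frP = \calM_\frP \iff \frg_\frP = \calO_{C,\frP}$, and then use that $\rad(\calM_\frP)$ is the unique maximal two-sided ideal of $\calM_\frP$ and that $\frP\calO_{C,\frP}$ is the unique maximal ideal of $\calO_{C,\frP}$. The one place you over-engineer is the converse half of your first equivalence in (ii): once you know $\rad(\calM_\frP)$ is the unique maximal two-sided ideal (which is exactly what the paper cites, \cite[(18.3)]{Reiner2003}), any proper two-sided ideal of $\calM_\frP$ is automatically contained in it, so the detour through $\frf_\frP + \rad(\calM_\frP)^n = \calM_\frP$ and the finite-length Nakayama argument is unnecessary. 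Your ``self-contained alternative'' at the end --- that the nonzero two-sided ideals of $\calM_\frP$ form a cyclic group generated by $\rad(\calM_\frP)$ --- is in fact the cleaner way to package the same citation.
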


\begin{proof}
We have $\frg_\frp = (\frf \cap \calO_C)_\frp = \frf_\frp \cap \calO_{C, \frp}$ 
since localizations commute with finite intersections. 
Thus $1 \in \mathfrak{g}_{\mathfrak{p}}$ if and only if $1 \in \mathfrak{f}_{\mathfrak{p}}$,
which immediately implies (i). 
Substituting $\mathfrak{P}$ for $\mathfrak{p}$ in this argument shows that 
$\mathfrak{f}_{\mathfrak{P}}=\mathcal{M}_{\mathfrak{P}}$ if and only if 
$\mathfrak{g}_{\mathfrak{P}} = \calO_{C,\mathfrak{P}}$.
By \cite[(18.3)]{Reiner2003}, we have that $\rad(\mathcal{M}_{\mathfrak{P}})$
is the unique maximal two-sided ideal of $\mathcal{M}_{\mathfrak{P}}$.
Moreover, $\frP\calO_{C, \frP}$ is the unique maximal ideal of $\calO_{C, \frP}$.
Together, these last three facts imply (ii).
\end{proof}

\subsection{A criterion for stable freeness}\label{subsec:criterion-for-sf}

Recall that the maps $\mu$ and $\tilde\nu$ were defined in
\eqref{eq:BB-exact-seq} and \eqref{eq:def-tilde-nu}, respectively,
and that for $\eta \in \mathcal{M}$ we write $\overline{\eta}$ for its image in $\mathcal{M} / \mathfrak{f}$.

\begin{theorem}\label{thm:class-group-rep}
Assume that $\mathfrak{g}$ is locally radical. 
Let $X$ be a locally free left ideal of $\Lambda$.
Suppose that $X + \mathfrak{f} = \Lambda$ and that there exists 
$\beta \in \mathcal{M} \cap A^{\times}$ such that $\mathcal{M} X = \mathcal{M}\beta$.
Then 
\[
\mu(\tilde{\nu}(\overline{\beta}))=\mu(\nr(\beta)\OC \bmod P_\frg^+) =[X]_{\st} \text{ in } \Cl(\Lambda).
\]
\end{theorem}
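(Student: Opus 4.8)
The first of the two displayed equalities is essentially a restatement of the definition \eqref{eq:def-tilde-nu} of $\tilde\nu$. Since $\mathfrak{f}$ is a two-sided ideal of $\mathcal{M}$ and $X+\mathfrak{f}=\Lambda$, we have $\mathcal{M}\beta+\mathfrak{f}=\mathcal{M}X+\mathfrak{f}=\mathcal{M}$, so $\overline\beta$ has a left inverse in the finite ring $\mathcal{M}/\mathfrak{f}$ and hence lies in $(\mathcal{M}/\mathfrak{f})^\times$; as $\beta\in\mathcal{M}\cap A^\times$, taking $a=\beta$ in \eqref{eq:def-tilde-nu} gives $\tilde\nu(\overline\beta)=(\nr(\beta)\mathcal{O}_C\bmod P_\mathfrak{g}^+)$, and applying $\mu$ yields the first equality. (That $\nr(\beta)\mathcal{O}_C$ is coprime to $\mathfrak{g}$ is verified below.) For the second equality $\mu(\nr(\beta)\mathcal{O}_C\bmod P_\mathfrak{g}^+)=[X]_{\st}$, the plan is to produce an explicit representative in $J(C)$ for $[X]_{\st}$ via Remark \ref{rmk:recover-lattice-from-ideles}, compute the id\`ele to which $\nr(\beta)\mathcal{O}_C$ corresponds under the canonical isomorphism of Theorem \ref{thm:bley-boltje-thm-isoms}, and compare the two classes in $\Cl(\Lambda)\cong J(C)/C^{\times +}\nr(U(\Lambda))$, using that under these identifications $\mu$ is the natural projection $J(C)/C^{\times +}\nr(U_\mathfrak{f}(\Lambda))\twoheadrightarrow J(C)/C^{\times +}\nr(U(\Lambda))$ induced by $U_\mathfrak{f}(\Lambda)\subseteq U(\Lambda)$.

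The key use of Assumption (G) is to split the maximal ideals $\mathfrak{p}$ of $\mathcal{O}$ into two kinds: type (a), those with $\widehat\Lambda_\mathfrak{p}=\widehat{\mathcal{M}}_\mathfrak{p}$, and type (b), the rest. For a type-(b) prime, $\mathfrak{f}_\mathfrak{p}\ne\mathcal{M}_\mathfrak{p}$, hence $\mathfrak{g}_\mathfrak{p}\ne\mathcal{O}_{C,\mathfrak{p}}$ by Lemma \ref{lem:conG}(i), so Assumption (G) together with Lemma \ref{lem:conG}(ii) gives $\mathfrak{f}_\mathfrak{P}\subseteq\rad(\mathcal{M}_\mathfrak{P})$ for \emph{every} $\mathfrak{P}\mid\mathfrak{p}$; decomposing $\widehat{\mathcal{M}}_\mathfrak{p}=\bigoplus_{\mathfrak{P}\mid\mathfrak{p}}\mathcal{M}_\mathfrak{P}$ (legitimate because $e_1,\dots,e_r\in\mathcal{M}$) this yields $\widehat{\mathfrak{f}}_\mathfrak{p}\subseteq\rad(\widehat{\mathcal{M}}_\mathfrak{p})$, and since $\widehat{\mathfrak{f}}_\mathfrak{p}$ is a two-sided ideal of $\widehat\Lambda_\mathfrak{p}$ with $\widehat{\mathfrak{f}}_\mathfrak{p}^{\,n}\to 0$ $\mathfrak{p}$-adically, also $\widehat{\mathfrak{f}}_\mathfrak{p}\subseteq\rad(\widehat\Lambda_\mathfrak{p})$. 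Then $X+\mathfrak{f}=\Lambda$ and Nakayama give $\widehat{X}_\mathfrak{p}=\widehat\Lambda_\mathfrak{p}$, and localizing $\mathcal{M}X=\mathcal{M}\beta$ gives $\widehat{\mathcal{M}}_\mathfrak{p}=\widehat{\mathcal{M}}_\mathfrak{p}\beta$, so $\beta\in\widehat{\mathcal{M}}_\mathfrak{p}^\times$. Separately, for \emph{any} $\mathfrak{P}\mid\mathfrak{g}$ the ideal $\mathfrak{f}_\mathfrak{P}$ is a proper two-sided ideal of $\mathcal{M}_\mathfrak{P}$, hence lies in its unique maximal two-sided ideal $\rad(\mathcal{M}_\mathfrak{P})$ (\cite[(18.3)]{Reiner2003}), so $\overline\beta\in(\mathcal{M}/\mathfrak{f})^\times$ forces $\beta\in\mathcal{M}_\mathfrak{P}^\times$; thus $v_\mathfrak{P}(\nr(\beta))=0$ for all $\mathfrak{P}\mid\mathfrak{g}$, confirming that $\nr(\beta)\mathcal{O}_C$ is coprime to $\mathfrak{g}$.

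Now I would assemble the id\`eles. By Remark \ref{rmk:recover-lattice-from-ideles} it suffices to choose, for each $\mathfrak{p}$, an element $\alpha_\mathfrak{p}\in A_\mathfrak{p}^\times\cap\widehat\Lambda_\mathfrak{p}$ with $\widehat{X}_\mathfrak{p}=\widehat\Lambda_\mathfrak{p}\alpha_\mathfrak{p}$; then $\nr(\alpha):=(\nr(\alpha_\mathfrak{p}))_\mathfrak{p}\in J(C)$ represents $[X]_{\st}$. At a type-(a) prime I take $\alpha_\mathfrak{p}=\beta$ (legitimate since localizing $\mathcal{M}X=\mathcal{M}\beta$ and using $\widehat\Lambda_\mathfrak{p}=\widehat{\mathcal{M}}_\mathfrak{p}$ gives $\widehat{X}_\mathfrak{p}=\widehat{\mathcal{M}}_\mathfrak{p}\beta$), and at a type-(b) prime I take $\alpha_\mathfrak{p}=1$. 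On the other side, under Theorem \ref{thm:bley-boltje-thm-isoms} the class $\nr(\beta)\mathcal{O}_C\bmod P_\mathfrak{g}^+$ is represented by the id\`ele $\gamma$ whose $\mathfrak{P}$-component is a fixed uniformiser to the power $v_\mathfrak{P}(\nr(\beta))$ when $\mathfrak{P}\nmid\mathfrak{g}$ and is $1$ when $\mathfrak{P}\mid\mathfrak{g}$. The final step is to check $\nr(\alpha)\gamma^{-1}\in\nr(U(\Lambda))$, prime by prime: over a type-(a) prime the $\mathfrak{P}$-component of $\nr(\alpha)\gamma^{-1}$ is $\nr(\beta)\cdot(\text{uniformiser})^{-v_\mathfrak{P}(\nr(\beta))}\in\mathcal{O}_{C,\mathfrak{P}}^\times=\nr(\mathcal{M}_\mathfrak{P}^\times)$, while over a type-(b) prime all $\mathfrak{P}\mid\mathfrak{p}$ satisfy $\mathfrak{P}\mid\mathfrak{g}$ (again by Assumption (G)), so both $\nr(\alpha)$ and $\gamma$ have $\mathfrak{P}$-component $1$ there; since $\mathcal{M}_\mathfrak{P}$ is the $\mathfrak{P}$-component of $\widehat\Lambda_\mathfrak{p}$ at type-(a) primes and $1\in\nr(\widehat\Lambda_\mathfrak{p}^\times)$ at type-(b) primes, this shows $\nr(\alpha)\gamma^{-1}\in\nr(U(\Lambda))$. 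Hence $\nr(\alpha)$ and $\gamma$ determine the same class in $\Cl(\Lambda)$, and pushing forward along the natural projection realising $\mu$ gives $\mu(\nr(\beta)\mathcal{O}_C\bmod P_\mathfrak{g}^+)=[X]_{\st}$.

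I expect the main obstacle to be the prime-by-prime bookkeeping and pinning down exactly where Assumption (G) enters. Without (G) there could be a ``mixed'' prime $\mathfrak{p}$ with $\Lambda_\mathfrak{p}\ne\mathcal{M}_\mathfrak{p}$ but $\mathfrak{f}_\mathfrak{P}=\mathcal{M}_\mathfrak{P}$ for some $\mathfrak{P}\mid\mathfrak{p}$; at such a prime $\widehat{\mathfrak{f}}_\mathfrak{p}\not\subseteq\rad(\widehat\Lambda_\mathfrak{p})$, the Nakayama step fails, and one is left only with a generator $\alpha_\mathfrak{p}=u_\mathfrak{p}\beta$ for some $u_\mathfrak{p}\in\widehat{\mathcal{M}}_\mathfrak{p}^\times$ whose reduced norm need not lie in $\nr(\widehat\Lambda_\mathfrak{p}^\times)$, so the comparison breaks. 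A second point requiring care is making precise the two canonical isomorphisms of Theorems \ref{thm:class-group-ideles} and \ref{thm:bley-boltje-thm-isoms} (following \cite[\S 49A]{curtisandreiner_vol2} and \cite[\S 1]{bley-boltje}), in particular that they are compatible with $U_\mathfrak{f}(\Lambda)\subseteq U(\Lambda)$ so that $\mu$ really is the natural projection, and that the ideal-to-id\`ele dictionary sends $\nr(\beta)\mathcal{O}_C$ to the id\`ele $\gamma$ described above.
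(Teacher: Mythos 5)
Your proposal is correct and follows essentially the same line of reasoning as the paper: choose local generators $\alpha_\mathfrak{p}$ of $X$, trace through the chain of identifications $\Cl(\Lambda,\mathfrak{f}) \cong J(C)/C^{\times+}\nr(U_\mathfrak{f}(\Lambda)) \cong I_\mathfrak{g}/P_\mathfrak{g}^+$, and use Assumption~(G) to show that at primes $\mathfrak{p}$ where $\Lambda$ is not maximal, every $\mathfrak{P}\mid\mathfrak{p}$ divides $\mathfrak{g}$ and both sides contribute nothing. The organizational differences from the paper are minor: the paper's case split is on $\mathfrak{f}_\mathfrak{p}=\mathcal{M}_\mathfrak{p}$ versus $\mathfrak{f}_\mathfrak{p}\neq\mathcal{M}_\mathfrak{p}$ rather than your $\Lambda_\mathfrak{p}=\mathcal{M}_\mathfrak{p}$ versus $\Lambda_\mathfrak{p}\neq\mathcal{M}_\mathfrak{p}$; at the non-maximal primes the paper takes an arbitrary generator $\alpha_\mathfrak{p}'$ of $X_\mathfrak{p}$ and multiplies by a unit of $\Lambda_\mathfrak{p}$ (using \cite[Chapter~III, (2.9)]{bass}) to arrange $\alpha_\mathfrak{p}\equiv 1\bmod\mathfrak{f}_\mathfrak{p}$, then invokes \cite[Corollary~2.4]{bley-boltje} to kill the reduced-norm valuations, whereas you push through Nakayama to conclude $\widehat{X}_\mathfrak{p}=\widehat{\Lambda}_\mathfrak{p}$ outright and take $\alpha_\mathfrak{p}=1$; and the paper applies the explicit recipe for $\psi$ in the idele-to-ideal direction with weak approximation while you go ideal-to-idele and compare in $J(C)$. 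Both routes work, and your Nakayama observation is a nice conceptual shortcut. One step that deserves to be tightened: the passage from $\widehat{\mathfrak{f}}_\mathfrak{p}\subseteq\rad(\widehat{\mathcal{M}}_\mathfrak{p})$ to $\widehat{\mathfrak{f}}_\mathfrak{p}\subseteq\rad(\widehat{\Lambda}_\mathfrak{p})$ is asserted via ``$\widehat{\mathfrak{f}}_\mathfrak{p}^{\,n}\to 0$ $\mathfrak{p}$-adically,'' which as written is not a sufficient reason; what is actually needed is that $\rad(\widehat{\mathcal{M}}_\mathfrak{p})^m\subseteq\mathfrak{p}\widehat{\mathcal{M}}_\mathfrak{p}$ for some $m$, hence $\widehat{\mathfrak{f}}_\mathfrak{p}^{\,m+1}\subseteq\mathfrak{p}\widehat{\Lambda}_\mathfrak{p}\subseteq\rad(\widehat{\Lambda}_\mathfrak{p})$, and then the image of $\widehat{\mathfrak{f}}_\mathfrak{p}$ in the semisimple ring $\widehat{\Lambda}_\mathfrak{p}/\rad(\widehat{\Lambda}_\mathfrak{p})$ is a nilpotent two-sided ideal and so vanishes.
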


\begin{proof}
Let $\mathfrak{p}$ range over all maximal ideals of $\mathcal{O}$.
For each $\mathfrak{p}$, we define an element $\alpha_{\mathfrak{p}} \in X_{\mathfrak{p}}$ 
with $X_{\mathfrak{p}} = \Lambda_{\mathfrak{p}} \alpha_{\mathfrak{p}}$
as follows.
If $\mathfrak{f}_{\mathfrak{p}} = \Lambda_{\mathfrak{p}} = \mathcal{M}_{\mathfrak{p}}$ 
then we set $\alpha_{\mathfrak{p}} := \beta$. 
If $\frf_\frp \ne \calM_\frp$, we let $\alpha_\frp' \in X_\frp$ be an arbitrarily chosen element such that $X_\frp = \Lambda_\frp \alpha_\frp'$. 
Then $\Lambda_\frp \alpha_\frp' + \frf_\frp = \Lambda_\frp$, 
so $\overline{\alpha}_\frp' \in \left( \Lambda_\frp / \frf_\frp \right)^{\times}$.
Since $\Lambda_\frp$ is semilocal, the canonical map 
$\Lambda_{\mathfrak{p}}^{\times} \longrightarrow (\Lambda_{\mathfrak{p}}/\mathfrak{f}_{\mathfrak{p}})^{\times}$ is surjective by \cite[Chapter III, (2.9) Corollary]{bass}. 
In particular, there exists $\alpha_\frp'' \in \Lambda_\frp^\times$ such that
$\alpha_\frp'' \equiv \alpha_\frp' \bmod{\frf_\frp}$. 
Then we set $\alpha_\frp := (\alpha_\frp'')^{-1}\alpha_\frp'$
and note that $\alpha_\frp \equiv 1 \bmod{\frf_\frp}$.

We have the following commutative diagram
\[
\begin{tikzcd}
 & J(A) \arrow{d}{\nr} \arrow[bend left=90]{dddr}{\varepsilon} \\
 & J(C) \arrow{dr}  \arrow{dl} \\
\frac{J(C)}{C^{\times +}\nr(U_{\mathfrak{f}}(\Lambda))} \arrow{rr} \arrow{d}{\cong} 
\arrow[bend right=60, swap]{dd}{\psi}
&&  \frac{J(C)}{C^{\times +}\nr(U(\Lambda))} \arrow{d}{\cong} \\
\Cl(\Lambda, \mathfrak{f}) \arrow{rr}{q_{1}}  \arrow{d}{\cong} && \Cl(\Lambda) \\
I_{\mathfrak{g}}/P_{\mathfrak{g}}^{+} \arrow{rru}[swap]{\mu} &&
\end{tikzcd} 
\]
where the arrows in the upper triangle are the canonical projections,
the isomorphism on the right is that of Theorem~\ref{thm:class-group-ideles},
the two isomorphisms on the left are those of Theorem~\ref{thm:bley-boltje-thm-isoms},
and $\psi$ is defined to be their composite.
Recall that $\nr$, $q_{1}$, $\mu$, and $\varepsilon$ are defined in \S \ref{subsec:ideles},
\eqref{eq:basic-exact-sequence}, \eqref{eq:BB-exact-seq}, and 
Remark~\ref{rmk:recover-lattice-from-ideles}, respectively.
The only part of the diagram for which commutativity is not immediately implied by
the definitions and Remark~\ref{rmk:recover-lattice-from-ideles} is the central rectangle,
but this follows from the definitions of 
the isomorphisms as discussed in the proof of \cite[Theorem~1.5]{bley-boltje}.

By Remark~\ref{rmk:recover-lattice-from-ideles} and the commutativity of the diagram
we see that 
\[
\mu \circ \psi 
\left(( \nr(\alpha_\frp))_\frp \bmod C^{\times +} \nr(U_\frf(\Lambda))\right)
=[X]_{\st}.
\]  
We recall the recipe for the computation of $\psi$ from the proof of \cite[Theorem~1.5]{bley-boltje}.
By \cite[Remark~1.6]{bley-boltje} we have $U_\frf(\Lambda) =U_\frf(\calM)$. 
Hence both the domain and codomain of $\psi$ break up into direct sums of components, one for each $i = 1, \ldots, r$. So we fix $i \in \{1, \ldots, r\}$, let $L := K_i$ and
replace $\frg_i$ by $\frg$, $\frf_i$ by $\frf$, $\calM_i$ by $\calM$ and $A_i$ by $A$.

Let $\mathfrak{P}$ range over all maximal ideals of $\mathcal{O}_{L}$.
Let $\gamma = \left( \gamma_\frP \right)_\frP \in J(L)$.
In order to compute $\psi(\gamma \bmod L^{\times +} \nr(U_\frf(\calM)))$
we note that by the weak approximation theorem
\cite[Proposition 1.2.3 (3)]{MR1728313} there exists
$\xi \in L^{\times +}$ (named $\beta$ in \cite{bley-boltje}) such that
\[
v_\frP(\gamma_\frP \xi - 1) \ge v_\frP(\frg) \text{ for all } \frP \text{ with } \frP \mid \frg.
\]
If
$
\fra = \prod_\frP \frP^{v_\frP(\gamma_\frP\xi)},
$
where the product extends over all maximal ideals $\frP$ of $\OL$,
then the class of $\mathfrak{a}$ is a representative of  $\psi(\gamma \bmod L^{\times +} \nr(U_\frf(\calM)))$ in $I_\frg/P_\frg^+$.

Using the canonical isomorphism of \eqref{eq:identification} we write 
$\alpha_\frp = \left( \alpha_\frP \right)_{\frP\mid\frp}$ with 
$\alpha_\frP \in \widehat{A}_{\mathfrak{P}}$ for the local basis elements $\alpha_\frp$ chosen as above.
We will apply the above recipe for $\gamma_\frP = \nr_{A_\frP/L_\frP}(\alpha_\frP)$.
The result of \cite[Corollary~2.4]{bley-boltje}
implies that for $\frP \mid \frg$, we have $\nr_{A_\frP/L_\frP}(\alpha_\frP) \equiv 1 \bmod{\frg_\frP}$,
so we can and do choose $\xi = 1$.

It suffices to show that
\begin{equation}\label{val equality}
    {v_\frP(\nr_{A_\frP/L_\frP}(\alpha_\frP))} = v_\frP(\nr_{A_\frP/L_\frP}(\beta))
\end{equation}
for all maximal ideals $\frP$ of $\OL$. To this end, fix a maximal ideal $\frp$ of $\calO$.

If $\frf_\frp = \calM_\frp$, then $\alpha_\frP = \beta$ for all $\frP \mid \frp$ and \eqref{val equality} is clear.
Now suppose $\frf_\frp \neq \calM_\frp$. First note that we have
\[
\calM_\frp = \calM_\frp X_\frp + \frf_\frp = \calM_\frp\beta + \frf_\frp.
\]
Hence there exists an element $\gamma \in \calM_\frp$ such that 
$\gamma\beta \equiv 1 \bmod{\frf_\frp}$.
It follows that $\gamma\beta \equiv 1 \bmod{\frf_\frP}$ for all $\frP \mid \frp$. 
From \cite[Corollary 2.4]{bley-boltje} we obtain $\nr_{A_\frP/L_\frP}(\gamma\beta) \equiv 1 \bmod{\frg_\frP}$
for all $\frP \mid \frp$. 
Since $\frg_\frp \ne \calO_{C, \frp}$ by Lemma~\ref{lem:conG}, 
the assumption that $\mathfrak{g}$ is locally radical implies that 
$\frg_\frP \subseteq \frP\calO_{C, \frP}$. 
Hence $v_\frP(\nr_{A_\frP/L_\frP}(\beta)) = 0$.
Since $\alpha_\frp \equiv 1 \bmod{\frf_\frp}$ the same argument shows that 
$v_\frP(\nr_{A_\frP/L_\frP}(\alpha_\frP)) = 0$ for all maximal ideals $\frP \mid \frp$.

Hence \eqref{val equality} holds for all maximal ideals $\frP$ of $\OL$.
\end{proof}

\subsection{Choosing $\mathfrak{g}$ first}\label{subsec:choose-g-first}
Recall that in \S \ref{subsec:criteria-setup} and \S \ref{subsec:decomposition}, respectively,
we first chose $\mathfrak{f}$ to be a full two-sided ideal of $\mathcal{M}$ that is contained
in $\Lambda$ and then set $\mathfrak{g} := \mathfrak{f} \cap \mathcal{O}_{C}$. 
In some applications, it is convenient to work the other way round: 
first choose a full ideal $\mathfrak{g}$ of $\mathcal{O}_{C}$ such that $\mathfrak{g} \mathcal{M} \subseteq \Lambda$ and then set $
\mathfrak{f} := \mathfrak{g}\mathcal{M}$. 
The following lemma shows that the latter approach is consistent with the former. 

\begin{lemma}\label{lem:g-is-OC-cap-f}
Let $\mathfrak{g}$ be a full ideal of $\mathcal{O}_{C}$ such that 
$\mathfrak{f} := \mathfrak{g} \mathcal{M} \subseteq \Lambda$.
Then $\mathfrak{f}$ is a full two-sided ideal of $\mathcal{M}$ and
$\mathfrak{f} \cap \mathcal{O}_{C} = \mathfrak{g}$.
\end{lemma}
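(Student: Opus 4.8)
The plan is to verify the two claims --- that $\mathfrak{f} := \mathfrak{g}\mathcal{M}$ is a proper full two-sided ideal of $\mathcal{M}$, and that $\mathfrak{f} \cap \mathcal{O}_{C} = \mathfrak{g}$ --- by reducing everything to the componentwise (Wedderburn) decomposition of \S\ref{subsec:decomposition}, where $\mathcal{M} = \bigoplus_i \mathcal{M}_i$ with each $\mathcal{M}_i$ a maximal $\mathcal{O}_{K_i}$-order in the simple algebra $A_i$, and $\mathcal{O}_C = \bigoplus_i \mathcal{O}_{K_i}$ with $\mathfrak{g} = \bigoplus_i \mathfrak{g}_i$ for nonzero integral ideals $\mathfrak{g}_i$ of $\mathcal{O}_{K_i}$. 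Since $\mathcal{O}_{K_i}$ is the centre of $A_i$ (embedded via $e_i$), $\mathfrak{g}_i \mathcal{M}_i$ is automatically a two-sided ideal of $\mathcal{M}_i$, and $\mathfrak{f} = \bigoplus_i \mathfrak{g}_i \mathcal{M}_i$ is two-sided in $\mathcal{M}$.

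First I would establish fullness: because $\mathfrak{g}_i \neq 0$, pick $0 \neq c_i \in \mathfrak{g}_i$; then $c_i \mathcal{M}_i \subseteq \mathfrak{g}_i \mathcal{M}_i$ and $K_i \cdot (c_i \mathcal{M}_i) = A_i$ since $c_i$ is a unit in $A_i$, so $K \mathfrak{f} = A$, i.e.\ $\mathfrak{f}$ is a full two-sided ideal of $\mathcal{M}$. Next, properness: since $\mathfrak{g}$ is a proper ideal of $\mathcal{O}_C$, at least one $\mathfrak{g}_i$ is a proper ideal of $\mathcal{O}_{K_i}$; localising (or completing) at a maximal ideal $\mathfrak{P}$ of $\mathcal{O}_{K_i}$ dividing $\mathfrak{g}_i$ gives $(\mathfrak{g}_i \mathcal{M}_i)_{\mathfrak{P}} = \mathfrak{g}_{i,\mathfrak{P}} \mathcal{M}_{i,\mathfrak{P}} \subseteq \mathfrak{P}\mathcal{M}_{i,\mathfrak{P}} \subsetneq \mathcal{M}_{i,\mathfrak{P}}$ (the last inclusion by Nakayama, since $\mathfrak{P}$ is contained in the Jacobson radical of the semilocal ring $\mathcal{M}_{i,\mathfrak{P}}$). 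Hence $\mathfrak{f}_{\mathfrak{P}} \neq \mathcal{M}_{\mathfrak{P}}$, so $\mathfrak{f} \neq \mathcal{M}$.

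The identity $\mathfrak{f} \cap \mathcal{O}_C = \mathfrak{g}$ I would check componentwise as well, reducing to showing $\mathfrak{g}_i \mathcal{M}_i \cap \mathcal{O}_{K_i} = \mathfrak{g}_i$ for each $i$, where $\mathcal{O}_{K_i}$ is viewed inside $\mathcal{M}_i$ as the centre of $A_i$ (scalars $e_i$). The inclusion $\mathfrak{g}_i \subseteq \mathfrak{g}_i \mathcal{M}_i \cap \mathcal{O}_{K_i}$ is trivial since $e_i \in \mathcal{M}_i$. For the reverse, one can localise at each maximal ideal $\mathfrak{P}$ of $\mathcal{O}_{K_i}$ and use that $\mathcal{O}_{K_{i},\mathfrak{P}}$ is a DVR, so it suffices to compare $\mathfrak{P}$-adic valuations: an element $z \in \mathcal{O}_{K_i}$ lying in $\mathfrak{g}_i \mathcal{M}_i$ satisfies $z\mathcal{M}_{i,\mathfrak{P}} \subseteq \mathfrak{g}_{i,\mathfrak{P}}\mathcal{M}_{i,\mathfrak{P}}$, and since $\mathfrak{g}_{i,\mathfrak{P}} = \pi^{v}\mathcal{O}_{K_i,\mathfrak{P}}$ for a uniformiser $\pi$ with $v = v_{\mathfrak{P}}(\mathfrak{g}_i)$, comparing the reduced-norm or simply the $\mathfrak{P}$-adic content forces $v_{\mathfrak{P}}(z) \geq v$, hence $z \in \mathfrak{g}_i$. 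Reassembling over all $i$ gives $\mathfrak{f} \cap \mathcal{O}_C = \mathfrak{g}$.

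The only mildly delicate point --- and the step I would be most careful about --- is the reverse inclusion $\mathfrak{f} \cap \mathcal{O}_C \subseteq \mathfrak{g}$: one must avoid the naive error of thinking $\mathfrak{g}\mathcal{M} \cap C$ could be larger than $\mathfrak{g}$ because $\mathcal{M}$ is noncommutative. The clean way to handle this is the localisation argument above together with the fact that for a maximal order over a DVR, $z\mathcal{M}_{\mathfrak{P}} \subseteq \mathfrak{P}^v \mathcal{M}_{\mathfrak{P}}$ implies $v_{\mathfrak{P}}(z) \geq v$ (apply both sides to the identity element, or use that $\mathcal{M}_{\mathfrak{P}}$ is a faithful $\mathcal{O}_{K_i,\mathfrak{P}}$-module); alternatively, since $\mathcal{M}_{\mathfrak{P}}$ is a free $\mathcal{O}_{K_i,\mathfrak{P}}$-module, intersecting the ideal $\mathfrak{g}_{i,\mathfrak{P}}\mathcal{M}_{i,\mathfrak{P}} = \mathfrak{g}_{i,\mathfrak{P}} \otimes_{\mathcal{O}_{K_i,\mathfrak{P}}} \mathcal{M}_{i,\mathfrak{P}}$ with the direct summand $\mathcal{O}_{K_i,\mathfrak{P}} \cdot e_i$ gives exactly $\mathfrak{g}_{i,\mathfrak{P}}$. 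Everything else is bookkeeping with the decomposition \eqref{eq:max-and-ideal-decomp}.
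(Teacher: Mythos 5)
Your argument is correct, and it takes a genuinely different route from the paper's for the key identity $\mathfrak{g}\mathcal{M}\cap\mathcal{O}_C=\mathfrak{g}$. After the (shared) componentwise reduction, you localise at each maximal ideal $\mathfrak{P}$ of $\mathcal{O}_{K_i}$: if $z\in\mathcal{O}_{K_i}$ lies in $\mathfrak{g}_{i,\mathfrak{P}}\mathcal{M}_{i,\mathfrak{P}}=\pi^{v}\mathcal{M}_{i,\mathfrak{P}}$, then $\pi^{-v}z\in\mathcal{M}_{i,\mathfrak{P}}\cap K_{i,\mathfrak{P}}=\mathcal{O}_{K_i,\mathfrak{P}}$ (since elements of an order are integral and $\mathcal{O}_{K_i,\mathfrak{P}}$ is integrally closed), hence $v_{\mathfrak{P}}(z)\geq v$. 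The paper instead argues globally: for $n=1$ it splits the short exact sequence $0\to\mathcal{O}_F\to\Delta\to\Delta/\mathcal{O}_F\to0$ (using torsion-freeness of $\Delta/\mathcal{O}_F$, which is exactly the integrality fact you use locally), and for $n\geq 2$ it invokes Reiner's explicit matrix description $\mathcal{M}_{\mathfrak{a},n}$ of a maximal order in $\Mat_n(D)$ and reduces to the $n=1$ case applied to $\Delta$ and $\Delta'$. Your local argument is uniform in $n$ and avoids the detour through the structure theory of maximal orders in matrix algebras, which I'd regard as a streamlining over the paper's two-case treatment; the price is that you work at all $\mathfrak{P}$ rather than once globally, which is negligible here. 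One small caution: your parenthetical suggestion to ``compare the reduced-norm'' would introduce a spurious multiplicative factor of the reduced degree and does not directly give the valuation bound --- the clean mechanism is precisely $\mathcal{M}_{\mathfrak{P}}\cap K_{i,\mathfrak{P}}=\mathcal{O}_{K_i,\mathfrak{P}}$, which you correctly single out in your other phrasings.
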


\begin{proof} 
The first claim is clear. For the second claim, it suffices to show
that $\mathfrak{f}_{i} \cap \mathcal{O}_{K_{i}} = \mathfrak{g}_{i}$
for each $i = 1, \ldots, r$. So we fix $i \in \{1, \ldots, r\}$, let $L := K_i$ and
replace $\frg_i$ by $\frg$, $\frf_i$ by $\frf$, $\calM_i$ by $\calM$ and 
$\mathcal{O}_{C}$ by $\mathcal{O}_{L}$.
By \cite[Theorem 4.21]{Reiner2003}, it suffices to show that 
$(\mathfrak{f} \cap \mathcal{O}_{L})_{\mathfrak{P}} = \mathfrak{g}_{\mathfrak{P}}$ for each maximal ideal $\mathfrak{P}$ of $\mathcal{O}_{L}$. Fix such an ideal $\mathfrak{P}$.
Then $\mathcal{O}_{L,\mathfrak{P}}$ is a discrete valuation ring, and so 
we have $\mathfrak{g}_{\mathfrak{P}} = g_{\mathfrak{P}} \mathcal{O}_{L,\mathfrak{P}}$
for some $g_{\mathfrak{P}} \in \mathcal{O}_{L,\mathfrak{P}}$. Then we have
\begin{align*}
(\mathfrak{f} \cap \mathcal{O}_{L})_{\mathfrak{P}} 
&= \mathfrak{f}_{\mathfrak{P}} \cap \mathcal{O}_{L, \mathfrak{P}}
= \mathfrak{g}_{\mathfrak{P}} \mathcal{M}_{\mathfrak{P}} \cap \mathcal{O}_{L, \mathfrak{P}}
= g_{\mathfrak{P}} \mathcal{M}_{\mathfrak{P}} \cap \mathcal{O}_{L, \mathfrak{P}} \\
&= g_{\mathfrak{P}} \mathcal{M}_{\mathfrak{P}} \cap Z(\mathcal{M}_{\mathfrak{P}})
= g_{\mathfrak{P}} Z(\mathcal{M}_{\mathfrak{P}}) 
= g_{\mathfrak{P}} \mathcal{O}_{L, \mathfrak{P}} 
= \mathfrak{g}_{\mathfrak{P}}.
\end{align*}
The authors are grateful to an anonymous referee for suggesting this proof.
\end{proof}

\section{An algorithm for determining SFC}\label{sec:alg-SFC}

\subsection{Test lattices}
We assume the setup and notation of \S \ref{sec:critsfc}.

\begin{definition}\label{def:gen-Swan}
For $\beta \in \mathcal{M} \cap A^{\times}$
we define $ X_\beta = X_{{\beta}, \Lambda, \mathcal{M}}  = \mathcal{M}\beta \cap \Lambda$,
and call this 
the \textit{test lattice} attached to $\beta$ (and $\Lambda, \mathcal{M}$).
Note that $X_{\beta}$ is a left ideal of $\Lambda$.
\end{definition}

The following lemma determines the localizations of certain test lattices.

\begin{lemma}\label{lem:localization-gen-swan-module}
Suppose that $\mathfrak{g}$ is locally radical.
Let $\beta \in \mathcal{M} \cap A^{\times}$ such that $\mathcal{M}\beta + \mathfrak{f} = \mathcal{M}$.
Let $\mathfrak{p}$ be a maximal ideal of $\mathcal{O}$.
Then 
\[
(X_{\beta})_{\mathfrak{p}} = 
\begin{cases}
\Lambda_{\mathfrak{p}} \beta & \text{ if }  \Lambda_{\mathfrak{p}} = \mathcal{M}_{\mathfrak{p}}, \\
\Lambda_{\mathfrak{p}} & \text{ if }  \Lambda_{\mathfrak{p}} \neq \mathcal{M}_{\mathfrak{p}}.
\end{cases}
\]
Moreover, if $\Lambda_{\mathfrak{p}} \neq \mathcal{M}_{\mathfrak{p}}$ then 
$\beta \in \calM_\frp^\times$. 
\end{lemma}

\begin{proof}
If $\Lambda_{\mathfrak{p}} = \mathcal{M}_{\mathfrak{p}}$
then
$(X_{\beta})_{\mathfrak{p}} = \mathcal{M}_{\mathfrak{p}} \beta \cap \Lambda_{\mathfrak{p}} 
=  \mathcal{M}_{\mathfrak{p}} \beta \cap \mathcal{M}_{\mathfrak{p}} = \mathcal{M}_{\mathfrak{p}} \beta = \Lambda_{\mathfrak{p}} \beta$.

Now suppose that $\Lambda_{\mathfrak{p}} \neq \mathcal{M}_{\mathfrak{p}}$.
Then $\mathfrak{f}_{\mathfrak{p}} \neq \mathcal{M}_{\mathfrak{p}}$ and so by 
the assumption that $\mathfrak{g}$ is locally radical 
and Lemma~\ref{lem:conG} we have that 
$\frf_\frP \subseteq \rad(\calM_\frP)$ for all maximal ideals $\frP$ of $\mathcal{O}_C$ with $\frP \mid \frp$.
Thus
\[
\calM_\frP\beta + \frf_\frP = 
(\mathcal{M} + \mathfrak{f})_{\mathfrak{P}}
= \calM_\frP
\]
and so Nakayama's lemma implies that
$\calM_\frP\beta = \calM_\frP$ for all $\frP \mid \frp$, or equivalently, $\beta \in \calM_\frp^\times$.
Hence $\mathcal{M}_{\mathfrak{p}} \beta = \mathcal{M}_{\mathfrak{p}}$ and so
\[
(X_{\beta})_{\mathfrak{p}} 
= \mathcal{M}_{\mathfrak{p}} \beta \cap \Lambda_{\mathfrak{p}}
= \mathcal{M}_{\mathfrak{p}} \cap \Lambda_{\mathfrak{p}}
= \Lambda_{\mathfrak{p}}. \qedhere
\]
\end{proof}

The following result establishes further properties of certain test lattices.
We recall that the maps $\pi, \mu$ and $\tilde\nu$ were defined in \eqref{eq:def-of-pi}, 
\eqref{eq:BB-exact-seq} and \eqref{eq:def-tilde-nu}, respectively.

\begin{prop}\label{prop:Xbeta}
Suppose that $\mathfrak{g}$ is locally radical.
Let $\overline{\beta} \in  \left( \calM / \frf \right)^{\times}$.
Then for every lift $\beta \in \mathcal{M} \cap A^{\times}$ we have that
\begin{enumerate}
\item $\mathcal{M}\beta + \mathfrak{f} = \mathcal{M}$,
\item $X_{\beta}$ is locally free over $\Lambda$,
\item $\mathcal{M} X_\beta = \mathcal{M} \beta$,
\item $X_{\beta} + \frf = \Lambda$, 
\item $\tilde{\nu}(\overline{\beta}) = (\nr(\beta)\OC \bmod P_\frg^+)$ in $I_\frg/P_\frg^+$,
\item $\mu(\tilde{\nu}(\overline{\beta}))
=[X_{\beta}]_{\st}$ in $\Cl(\Lambda)$,
\item $X_{\beta}$ is stably free over $\Lambda$ if and only if 
$\tilde{\nu}(\overline{\beta}) \in \tilde{\nu}((\Lambda/\mathfrak{f})^{\times}) =  \ker(\mu)$, and
\item $X_{\beta}$ is free over $\Lambda$ if and only if 
there exists $u \in \mathcal{M}^{\times}$ such that $\pi(\overline{u})=\pi(\overline{\beta})$.
\end{enumerate} 
\end{prop}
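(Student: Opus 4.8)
The plan is to establish the eight assertions more or less in the order listed, since each one feeds into the next, and to lean heavily on the machinery already developed: Lemma \ref{lem:localisation-gen-swan-module}, Theorem \ref{thm:class-group-rep}, Proposition \ref{prop:central-imsp}/Corollary \ref{cor:central-imsp}, and the exact sequence \eqref{eq:BB-exact-seq} together with the formula \eqref{eq:ker-mu} for $\ker(\mu)$. First I would fix a lift $\beta \in \mathcal{M} \cap A^{\times}$ of $\overline{\beta} \in (\mathcal{M}/\mathfrak{f})^{\times}$; since $\overline{\beta}$ is a unit modulo $\mathfrak{f}$, there is $\overline{\gamma} \in \mathcal{M}/\mathfrak{f}$ with $\overline{\gamma}\,\overline{\beta} = 1$, i.e.\ $\gamma\beta \equiv 1 \bmod{\mathfrak{f}}$ for any lift $\gamma$, which gives $1 \in \mathcal{M}\beta + \mathfrak{f}$ and hence (i). For (ii)--(iv), I would apply Lemma \ref{lem:localisation-gen-swan-module}: at primes $\mathfrak{p}$ with $\Lambda_{\mathfrak{p}} = \mathcal{M}_{\mathfrak{p}}$ we get $(X_\beta)_{\mathfrak{p}} = \Lambda_{\mathfrak{p}}\beta$ with $\beta \in A_{\mathfrak{p}}^{\times}$, and at the remaining primes $(X_\beta)_{\mathfrak{p}} = \Lambda_{\mathfrak{p}}$ and moreover $\beta \in \mathcal{M}_{\mathfrak{p}}^{\times}$; in either case $(X_\beta)_{\mathfrak{p}}$ is free of rank $1$ over $\Lambda_{\mathfrak{p}}$, giving (ii). For (iii), localise: $(\mathcal{M}X_\beta)_{\mathfrak{p}} = \mathcal{M}_{\mathfrak{p}}(X_\beta)_{\mathfrak{p}}$ equals $\mathcal{M}_{\mathfrak{p}}\beta$ in the first case and $\mathcal{M}_{\mathfrak{p}} = \mathcal{M}_{\mathfrak{p}}\beta$ (as $\beta \in \mathcal{M}_{\mathfrak{p}}^{\times}$) in the second, so $\mathcal{M}X_\beta$ and $\mathcal{M}\beta$ agree locally everywhere and hence globally. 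For (iv), $X_\beta + \mathfrak{f}$ and $\Lambda$ agree at every $\mathfrak{p}$: where $\Lambda_{\mathfrak{p}}=\mathcal{M}_{\mathfrak{p}}$ use $\mathcal{M}_{\mathfrak{p}}\beta + \mathfrak{f}_{\mathfrak{p}} = \mathcal{M}_{\mathfrak{p}}$ from (i) localised, and where $\Lambda_{\mathfrak{p}}\neq\mathcal{M}_{\mathfrak{p}}$ it is immediate since $(X_\beta)_{\mathfrak{p}}=\Lambda_{\mathfrak{p}}$.

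Assertion (v) is essentially the definition \eqref{eq:def-tilde-nu} of $\tilde{\nu}$: since $\beta \in \mathcal{M} \cap A^{\times}$ is a lift of $\overline{\beta} \in (\mathcal{M}/\mathfrak{f})^{\times}$, by definition $\tilde{\nu}(\overline{\beta}) = (\nr(\beta)\mathcal{O}_C \bmod P_\frg^+)$. Then (vi) follows by combining (i)--(iv) with Theorem \ref{thm:class-group-rep} applied to $X = X_\beta$ (whose hypotheses — $X$ locally free, $X + \mathfrak{f} = \Lambda$, and $\mathcal{M}X = \mathcal{M}\beta$ — are exactly (ii), (iv), (iii)), yielding $\mu(\tilde{\nu}(\overline{\beta})) = \mu(\nr(\beta)\mathcal{O}_C \bmod P_\frg^+) = [X_\beta]_{\st}$.

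For (vii): $X_\beta$ is stably free, i.e.\ $[X_\beta]_{\st} = 0$ in $\Cl(\Lambda)$, iff $\mu(\tilde{\nu}(\overline{\beta})) = 0$ by (vi), iff $\tilde{\nu}(\overline{\beta}) \in \ker(\mu)$; and $\ker(\mu) = \tilde{\nu}((\Lambda/\mathfrak{f})^{\times})$ by \eqref{eq:ker-mu}. Finally (viii) is an application of Corollary \ref{cor:central-imsp} with $X = X_\beta$: its hypotheses ($X_\beta + \mathfrak{f} = \Lambda$ and the existence of $\beta \in \mathcal{M}\cap A^{\times}$ with $\mathcal{M}X_\beta = \mathcal{M}\beta$) are (iv) and (iii), so $X_\beta$ is free over $\Lambda$ iff there exists $u \in \mathcal{M}^{\times}$ with $\pi(\overline{\beta}) = \pi(\overline{u})$.

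I expect the only real subtlety to be bookkeeping in (iii) and (iv) — making sure the local-global passage for the (non-free, two-sided in the case of $\mathcal{M}X_\beta$) modules is justified, i.e.\ that equality of $\mathcal{O}$-lattices in $A$ can be checked locally at every $\mathfrak{p}$, which is standard. Everything else is a direct invocation of the earlier results, with the one point to be careful about being that Theorem \ref{thm:class-group-rep} and Corollary \ref{cor:central-imsp} both require $\beta \in \mathcal{M} \cap A^{\times}$ (not merely $\beta \in \mathcal{M}$) and require $X_\beta$ to be a \emph{full} left ideal — both of which hold here because $\beta \in A^{\times}$ forces $K X_\beta = A$.
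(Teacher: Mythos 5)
Your proposal is correct and follows essentially the same route as the paper's proof: establish (i) directly from $\overline{\beta}$ being a unit, deduce (ii)--(iv) by localisation via Lemma \ref{lem:localisation-gen-swan-module} (citing the local-global principle for $\mathcal{O}$-lattices), read off (v) from the definition of $\tilde{\nu}$, and then derive (vi)--(viii) from Theorem \ref{thm:class-group-rep}, \eqref{eq:ker-mu}, and Corollary \ref{cor:central-imsp} respectively. Your closing observation --- that $\beta\in\mathcal{M}\cap A^{\times}$ forces $KX_{\beta}=A$, so the fullness hypothesis needed for Proposition \ref{prop:central-imsp}/Corollary \ref{cor:central-imsp} and Theorem \ref{thm:class-group-rep} is automatic --- is exactly the subtlety the paper flags in the remark following Proposition \ref{prop:central-imsp}.
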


\begin{proof}
Assertion (i) is equivalent to the hypothesis that $\overline{\beta} \in \left( \calM / \frf \right)^{\times}$.
Assertion (ii) follows trivially from Lemma~\ref{lem:localization-gen-swan-module}.

Let $\mathfrak{p}$ be a maximal ideal of $\mathcal{O}$.
By \cite[Theorem 4.21]{Reiner2003}, it will suffice to prove assertions (iii) and (iv) after localising at 
$\mathfrak{p}$.
If $\Lambda_{\mathfrak{p}} = \mathcal{M}_{\mathfrak{p}}$
then by Lemma~\ref{lem:localization-gen-swan-module} we have that 
$(X_{\beta})_{\mathfrak{p}} =  \Lambda_{\mathfrak{p}} \beta$ and thus
\begin{align*}
(\mathcal{M} X_{\beta})_{\mathfrak{p}} 
&= \Lambda_{\mathfrak{p}} \beta
= \mathcal{M}_{\mathfrak{p}} \beta \quad \text{and}\\
(X_{\beta}+\mathfrak{f})_{\mathfrak{p}} 
&= (X_{\beta})_{\mathfrak{p}} + \mathfrak{f}_{\mathfrak{p}}
= \mathcal{M}_{\mathfrak{p}} \beta + \mathfrak{f}_{\mathfrak{p}}
= (\mathcal{M}\beta + \mathfrak{f})_{\mathfrak{p}} 
= \mathcal{M}_{\mathfrak{p}},
\end{align*}
where the final equality holds by part (i).
If $\Lambda_{\mathfrak{p}} \neq \mathcal{M}_{\mathfrak{p}}$
then by Lemma~\ref{lem:localization-gen-swan-module} we have that 
$(X_{\beta})_{\mathfrak{p}} =  \Lambda_{\mathfrak{p}}$ and $\beta \in \calM_\frp^\times$,
and thus
\begin{align*}
(\mathcal{M} X_{\beta})_{\mathfrak{p}} 
&= \mathcal{M}_{\mathfrak{p}} (X_{\beta})_{\mathfrak{p}}
= \mathcal{M}_{\mathfrak{p}} \Lambda_{\mathfrak{p}} = \mathcal{M}_{\mathfrak{p}} = 
\mathcal{M}_{\mathfrak{p}} \beta \quad \text{and}\\
(X_{\beta} + \frf)_{\mathfrak{p}} 
&= (X_{\beta})_{\mathfrak{p}} + \mathfrak{f}_{\mathfrak{p}}
= \Lambda_{\mathfrak{p}} + \mathfrak{f}_{\mathfrak{p}} = \Lambda_{\mathfrak{p}}. 
\end{align*}

Assertion (v) is just the definition of $\tilde{\nu}$ \eqref{eq:def-tilde-nu}.
Assertion (vi) follows from (ii), (iii) and (iv) together with Theorem~\ref{thm:class-group-rep}. 
We have that $X_{\beta}$ is stably free over $\Lambda$ if and only if $[X_{\beta}]_{\st}=0$
and by \eqref{eq:ker-mu} we have $\ker(\mu) =\tilde{\nu}((\Lambda/\mathfrak{f})^{\times})$. 
Therefore assertion (vii) now follows from (vi).
Finally, assertion (viii) follows from (iii) and (iv) together with Corollary~\ref{cor:central-imsp}.
\end{proof}

\begin{corollary}\label{cor:rep-by-Xbeta}
Suppose that $\mathfrak{g}$ is locally radical.
Let $X$ be a locally free left ideal of $\Lambda$.
Suppose that $X + \mathfrak{f} = \Lambda$ and that there exists 
$\beta \in \mathcal{M} \cap A^{\times}$ such that $\mathcal{M} X = \mathcal{M}\beta$.
Then $\overline{\beta} \in (\mathcal{M}/\mathfrak{f})^{\times}$ and $[X_{\beta}]_{\st}=[X]_{\st}$ 
in $\Cl(\Lambda)$. 
\end{corollary}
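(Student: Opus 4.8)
The plan is to obtain the statement by combining Theorem~\ref{thm:class-group-rep}, which expresses $[X]_{\st}$ in terms of $\beta$, with Proposition~\ref{prop:Xbeta}~(vi), which expresses $[X_{\beta}]_{\st}$ in terms of the \emph{same} $\beta$; the only preliminary step is to check that $\overline{\beta}$ lies in $(\mathcal{M}/\mathfrak{f})^{\times}$, which is needed both for the asserted conclusion and so that Proposition~\ref{prop:Xbeta} (which is phrased for lifts of units in $\mathcal{M}/\mathfrak{f}$) may be invoked.

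To see that $\overline{\beta}$ is a unit, I would first record the ring-theoretic identities $\mathcal{M}\Lambda = \mathcal{M}$ (since $\Lambda \subseteq \mathcal{M}$ and $1 \in \Lambda$) and $\mathcal{M}\mathfrak{f} = \mathfrak{f}$ (since $\mathfrak{f}$ is a two-sided ideal of $\mathcal{M}$). Using the hypotheses $X + \mathfrak{f} = \Lambda$ and $\mathcal{M}X = \mathcal{M}\beta$, these give
\[
\mathcal{M} = \mathcal{M}\Lambda = \mathcal{M}(X + \mathfrak{f}) = \mathcal{M}X + \mathcal{M}\mathfrak{f} = \mathcal{M}\beta + \mathfrak{f}.
\]
Hence $(\mathcal{M}/\mathfrak{f})\,\overline{\beta} = \mathcal{M}/\mathfrak{f}$, so right multiplication by $\overline{\beta}$ is a surjective endomorphism of the ring $\mathcal{M}/\mathfrak{f}$; since $\mathfrak{f}$ is a full ideal of $\mathcal{M}$ this ring is finite, so the endomorphism is bijective and $\overline{\beta}$ is a two-sided unit. (This is precisely the content of Proposition~\ref{prop:Xbeta}~(i) read in the reverse direction.)

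Now $\overline{\beta} \in (\mathcal{M}/\mathfrak{f})^{\times}$ and $\beta \in \mathcal{M} \cap A^{\times}$ is, by definition, a lift of it, so Proposition~\ref{prop:Xbeta}~(vi) yields $\mu(\tilde{\nu}(\overline{\beta})) = [X_{\beta}]_{\st}$ in $\Cl(\Lambda)$. On the other hand, the hypotheses of the corollary are exactly those of Theorem~\ref{thm:class-group-rep} ($\mathfrak{g}$ satisfies Assumption~(G), $X$ is a locally free left ideal of $\Lambda$ with $X + \mathfrak{f} = \Lambda$, and $\beta \in \mathcal{M} \cap A^{\times}$ with $\mathcal{M}X = \mathcal{M}\beta$), so that theorem gives $\mu(\tilde{\nu}(\overline{\beta})) = [X]_{\st}$. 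Comparing the two equalities yields $[X_{\beta}]_{\st} = [X]_{\st}$, as required. I do not anticipate any genuine obstacle here: once the membership $\overline{\beta} \in (\mathcal{M}/\mathfrak{f})^{\times}$ is established, the corollary is a direct concatenation of the two cited results, each of which computes the relevant class-group element from $\beta$.
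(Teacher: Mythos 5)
Your proof is correct and follows essentially the same route as the paper: both establish $\mathcal{M} = \mathcal{M}\beta + \mathfrak{f}$ (hence $\overline{\beta} \in (\mathcal{M}/\mathfrak{f})^{\times}$) and then combine Theorem~\ref{thm:class-group-rep} with Proposition~\ref{prop:Xbeta}~(vi). The paper is merely terser on the unit-hood step, while you spell out the surjectivity-plus-finiteness argument.
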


\begin{proof}
We have $\mathcal{M} 
= \mathcal{M}(X+\mathfrak{f})
= \mathcal{M}X + \mathfrak{f}
= \mathcal{M}\beta + \mathfrak{f}$,
and so $\overline{\beta} \in (\mathcal{M}/\mathfrak{f})^{\times}$. 
Then by Theorem~\ref{thm:class-group-rep} and Proposition~\ref{prop:Xbeta} (vi) we have
$[X]_{\st} =  \mu(\tilde{\nu}(\overline{\beta})) = [X_{\beta}]_{\st}$ in $\Cl(\Lambda)$.
\end{proof}

\begin{corollary}\label{cor:double-cosets-image}
Suppose that $\mathfrak{g}$ is locally radical.
Let $\beta_{1}, \beta_{2} \in \mathcal{M} \cap A^{\times}$ such that 
$\overline{\beta_{1}}, \overline{\beta_{2}} \in (\mathcal{M}/\mathfrak{f})^{\times}$. 
Suppose that $\overline{\beta_{1}},\overline{\beta_{2}}$ have the same image in 
$\mathcal{M}^{\times} \backslash (\mathcal{M}/\mathfrak{f})^{\times}{}{/}{}(\Lambda/\mathfrak{f})^{\times}$. 
Then the following statements hold:
\begin{enumerate}
\item $X_{\beta_{1}}$ is free over $\Lambda$ if and only if $X_{\beta_{2}}$ is free over $\Lambda$.
\item $X_{\beta_{1}}$ is stably free over $\Lambda$ if and only if $X_{\beta_{2}}$ is stably 
free over $\Lambda$.
\end{enumerate}
\end{corollary}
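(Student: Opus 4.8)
The plan is to derive both equivalences directly from the characterisations of freeness and stable freeness of test lattices in Proposition~\ref{prop:Xbeta}, after first translating the double-coset hypothesis into an explicit factorisation. Since $\mathfrak{f} \subseteq \mathcal{M}$, reduction modulo $\mathfrak{f}$ maps $\mathcal{M}^{\times}$ into $(\mathcal{M}/\mathfrak{f})^{\times}$; write $\overline{u}$ for the image of $u \in \mathcal{M}^{\times}$. The assumption that $\overline{\beta_{1}}$ and $\overline{\beta_{2}}$ have the same image in $\mathcal{M}^{\times} \backslash (\mathcal{M}/\mathfrak{f})^{\times} / (\Lambda/\mathfrak{f})^{\times}$ then means precisely that $\overline{\beta_{2}} = \overline{u}\,\overline{\beta_{1}}\,\overline{\alpha}$ in $(\mathcal{M}/\mathfrak{f})^{\times}$ for some $u \in \mathcal{M}^{\times}$ and some $\overline{\alpha} \in (\Lambda/\mathfrak{f})^{\times}$; this must be read literally, as $(\Lambda/\mathfrak{f})^{\times}$ need not be normal. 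By the symmetry of this relation, in each of (i) and (ii) it suffices to prove one implication.

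For (i): by Proposition~\ref{prop:Xbeta}(viii), $X_{\beta_{i}}$ is free over $\Lambda$ if and only if $\pi(\overline{v}) = \pi(\overline{\beta_{i}})$ for some $v \in \mathcal{M}^{\times}$. Unwinding the definition of $\pi$ in \eqref{eq:def-of-pi}, this says exactly that $\overline{\beta_{i}}$ lies in $\overline{\mathcal{M}^{\times}} \cdot (\Lambda/\mathfrak{f})^{\times}$, i.e. in the \emph{trivial} double coset of $\mathcal{M}^{\times} \backslash (\mathcal{M}/\mathfrak{f})^{\times} / (\Lambda/\mathfrak{f})^{\times}$. Since $\overline{\beta_{1}}$ and $\overline{\beta_{2}}$ lie in the same double coset by hypothesis, one lies in the trivial one iff the other does; concretely, if $\overline{\beta_{1}} = \overline{v}\,\overline{\alpha_{1}}$ with $v \in \mathcal{M}^{\times}$ and $\overline{\alpha_{1}} \in (\Lambda/\mathfrak{f})^{\times}$, then $\overline{\beta_{2}} = \overline{uv}\,(\overline{\alpha_{1}}\,\overline{\alpha})$ with $uv \in \mathcal{M}^{\times}$ and $\overline{\alpha_{1}}\,\overline{\alpha} \in (\Lambda/\mathfrak{f})^{\times}$, so $X_{\beta_{2}}$ is free. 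This proves (i).

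For (ii): by Proposition~\ref{prop:Xbeta}(vii), $X_{\beta_{i}}$ is stably free over $\Lambda$ if and only if $\tilde{\nu}(\overline{\beta_{i}}) \in \ker(\mu)$. Applying the group homomorphism $\tilde{\nu} : (\mathcal{M}/\mathfrak{f})^{\times} \to I_{\frg}/P_{\frg}^{+}$, whose target is abelian, to the relation from the first paragraph gives
\[
\tilde{\nu}(\overline{\beta_{2}}) = \tilde{\nu}(\overline{u})\,\tilde{\nu}(\overline{\beta_{1}})\,\tilde{\nu}(\overline{\alpha}).
\]
Now $\tilde{\nu}(\overline{u}) = (\nr(u)\OC \bmod P_{\frg}^{+})$ by \eqref{eq:def-tilde-nu}, and $\nr(u) \in \nr(\mathcal{M}^{\times}) = \mathcal{O}_{C}^{\times} \cap C^{\times +}$ by \S\ref{subsec:nr}, so $\nr(u)\OC = \OC$ and $\tilde{\nu}(\overline{u})$ is the identity of $I_{\frg}/P_{\frg}^{+}$; moreover $\tilde{\nu}(\overline{\alpha}) = \nu(\overline{\alpha}) \in \nu((\Lambda/\mathfrak{f})^{\times}) = \ker(\mu)$ by \eqref{eq:ker-mu} and the fact that $\tilde{\nu}$ extends $\nu$. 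Hence $\tilde{\nu}(\overline{\beta_{2}})$ and $\tilde{\nu}(\overline{\beta_{1}})$ differ by an element of the subgroup $\ker(\mu)$, so one lies in $\ker(\mu)$ iff the other does. This proves (ii).

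There is no serious obstacle: both parts reduce to Proposition~\ref{prop:Xbeta} together with the group-homomorphism property of $\tilde{\nu}$. The only points needing care are the bookkeeping in the first paragraph (respecting the possible non-normality of $(\Lambda/\mathfrak{f})^{\times}$ and keeping track of which side each group multiplies on) and the observation that $\tilde{\nu}$ annihilates the image of $\mathcal{M}^{\times}$, because reduced norms of units of $\mathcal{M}$ generate the unit ideal.
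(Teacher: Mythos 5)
Your proof is correct and takes essentially the same approach as the paper: both parts are deduced from Proposition~\ref{prop:Xbeta}~(viii) and (vii) respectively, with part~(ii) relying on the same key observation that $\tilde{\nu}(\overline{u})$ is trivial because $\nr(\mathcal{M}^{\times}) \subseteq \mathcal{O}_{C}^{\times}$. Your treatment of part~(i) is slightly more explicit than the paper's (which simply cites Proposition~\ref{prop:Xbeta}~(viii) without unwinding the double-coset bookkeeping), but the underlying mechanism is identical.
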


\begin{proof}
By hypothesis there exist $u \in \mathcal{M}^{\times}$ and $\overline{\alpha}
\in (\Lambda/\mathfrak{f})^{\times}$ such that 
$\overline{\beta_{1}} = \overline{u} \overline{\beta_{2}} \overline{\alpha}$ in 
$(\mathcal{M}/\mathfrak{f})^{\times}$.
Then Proposition~\ref{prop:Xbeta} (viii) implies assertion (i).
By the definition of $\tilde{\nu}$  \eqref{eq:def-tilde-nu}, we have that  
$\tilde{\nu}(\overline{u}) = (\nr(u)\mathcal{O}_{C} \bmod{P_\frg^+})$,  
which is trivial since 
$\nr(\mathcal{M}^{\times}) \subseteq \mathcal{O}_{C}^{\times}$ (see \S \ref{subsec:nr}).
Thus
\[
\tilde{\nu}(\overline{\beta_{1}})
=\tilde{\nu}(\overline{u} \overline{\beta_{2}} \overline{\alpha})
=\tilde{\nu}(\overline{u})\tilde{\nu}(\overline{\beta_{2}})\tilde{\nu}(\overline{\alpha})\\
=\tilde{\nu}(\overline{\beta_{2}})\tilde{\nu}(\overline{\alpha}),
\]
where $\tilde{\nu}(\overline{\alpha}) \in \tilde{\nu}((\Lambda/\mathfrak{f})^{\times})$, 
and so assertion (ii) now follows from Proposition~\ref{prop:Xbeta} (vii).
\end{proof}

\begin{lemma}\label{lem:make-lf-lattice-coprime-to-f}
Let $Y \in g(\Lambda)$. 
Then there exists a locally free left ideal $X$ of $\Lambda$ such that 
$X + \mathfrak{f} = \Lambda$ and $X \cong Y$ as $\Lambda$-lattices.
\end{lemma}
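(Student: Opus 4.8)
The plan is to realize the given class $[Y]_{\iso}$ inside $A$ as a locally free left ideal of $\Lambda$, and then replace it by a suitably scaled copy that is coprime to $\mathfrak{f}$. First I would recall that since $Y \in g(\Lambda)$ is locally free of rank $1$, it embeds into $A = K\Lambda$ as a full $\Lambda$-submodule: pick any $\Lambda$-isomorphism of $KY$ with $A$ (both are free $A$-modules of rank $1$ since $\Lambda_1 \otimes_\Lambda Y$ is locally free of rank one over each simple component, hence $KY \cong A$ as left $A$-modules), and take the image $X_0$ of $Y$ under the induced embedding. Then $X_0$ is a locally free left ideal of $\Lambda$ with $KX_0 = A$ and $X_0 \cong Y$ as $\Lambda$-lattices. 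The remaining task is purely local-to-global: adjust $X_0$ at the finitely many primes $\mathfrak{p}$ dividing $\mathfrak{f}$ so that the resulting ideal satisfies $X + \mathfrak{f} = \Lambda$, i.e.\ $X_\mathfrak{p} = \Lambda_\mathfrak{p}$ for all $\mathfrak{p} \mid \mathfrak{f}$, without changing the isomorphism class.

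For this adjustment I would use the idele-theoretic description from \S\ref{subsec:ideles} and Remark \ref{rmk:recover-lattice-from-ideles}. Write $\widehat{(X_0)}_\mathfrak{p} = \widehat{\Lambda}_\mathfrak{p} \alpha_\mathfrak{p}$ with $\alpha_\mathfrak{p} \in A_\mathfrak{p}^\times$ for each $\mathfrak{p}$; at almost all $\mathfrak{p}$ we may take $\alpha_\mathfrak{p} = 1$. Now define a new idele $\gamma = (\gamma_\mathfrak{p})_\mathfrak{p}$ by setting $\gamma_\mathfrak{p} = \alpha_\mathfrak{p}$ for $\mathfrak{p} \nmid \mathfrak{f}$ and $\gamma_\mathfrak{p} = 1$ for $\mathfrak{p} \mid \mathfrak{f}$, and let $X := \Lambda\gamma = A \cap \bigcap_\mathfrak{p} \widehat{\Lambda}_\mathfrak{p}\gamma_\mathfrak{p}$ as in Remark \ref{rmk:recover-lattice-from-ideles}. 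By construction $X$ is a locally free left ideal of $\Lambda$ with $\widehat{X}_\mathfrak{p} = \widehat{\Lambda}_\mathfrak{p}$ for all $\mathfrak{p} \mid \mathfrak{f}$, whence $X_\mathfrak{p} = \Lambda_\mathfrak{p}$ for those $\mathfrak{p}$ and therefore $X + \mathfrak{f} = \Lambda$ (it suffices to check this localisation by localisation, and away from $\mathfrak{f}$ there is nothing to check). It remains to see that $X \cong X_0 \cong Y$. Since $\gamma$ and $\alpha := (\alpha_\mathfrak{p})_\mathfrak{p}$ agree outside $\mathfrak{f}$ and differ only by the unit idele $(\alpha_\mathfrak{p}^{-1})_{\mathfrak{p} \mid \mathfrak{f}} \cdot (1)_{\mathfrak{p}\nmid\mathfrak{f}} \in U(\Lambda)$ at the primes dividing $\mathfrak{f}$ — wait, this changes the stable class only, not the isomorphism class directly.

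To get genuine isomorphism rather than merely stable isomorphism, I would instead argue as follows, which I expect to be the only real subtlety. One approach: multiply $X_0$ by a suitable element $c \in K^\times$, or more generally act by a principal idele, to move the local defects. Concretely, by weak approximation choose $c_\mathfrak{p}$-adjustments realized by a single element — but a cleaner route is to invoke that two locally free rank-one ideals $X_0$ and $X$ are isomorphic as $\Lambda$-modules if and only if $[X_0]_{\iso} = [X]_{\iso}$ in $\mathrm{LF}_1(\Lambda)$, and the isomorphism class is detected by the idele class modulo $A^\times U(\Lambda)$ (the idele class group giving $\mathrm{LF}_1$, not $\mathrm{Cl}$). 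So I would instead choose $\gamma_\mathfrak{p} = 1$ for $\mathfrak{p} \mid \mathfrak{f}$ \emph{and} correct at one auxiliary prime $\mathfrak{q} \nmid \mathfrak{f}$ to keep the total idele in the same $A^\times U(\Lambda)$-coset as $\alpha$; since the map $J(A) \to \mathrm{LF}_1(\Lambda)$ is surjective with kernel $A^\times U(\Lambda)$ and we only moved finitely many components by unit factors (which lie in $U(\Lambda)$), in fact $\gamma$ and $\alpha$ already lie in the same coset and $X \cong X_0$ automatically. The hard part is making precise that replacing finitely many local components $\alpha_\mathfrak{p}$ of an idele by $1$ keeps one in the same $A^\times U(\Lambda)$-coset: this is \emph{not} true in general — it is true only when the replaced components were themselves units, which is exactly our situation because at $\mathfrak{p} \mid \mathfrak{f}$ with $\Lambda_\mathfrak{p} = \mathcal{M}_\mathfrak{p}$ we can take $\alpha_\mathfrak{p} = 1$ already (choosing the embedding of $X_0$ appropriately at such $\mathfrak{p}$), and at $\mathfrak{p} \mid \mathfrak{f}$ with $\Lambda_\mathfrak{p} \neq \mathcal{M}_\mathfrak{p}$ we have freedom to re-embed. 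So the final plan is: choose the initial embedding $Y \hookrightarrow A$ so that its image $X_0$ already has $\widehat{(X_0)}_\mathfrak{p} = \widehat{\Lambda}_\mathfrak{p}$ at every $\mathfrak{p} \mid \mathfrak{f}$ — achievable because, fixing generators $\widehat{(X_0)}_\mathfrak{p} = \widehat{\Lambda}_\mathfrak{p}\alpha_\mathfrak{p}$ and using weak approximation to find $a \in A^\times$ with $a \equiv \alpha_\mathfrak{p}^{-1} \bmod$ high powers for $\mathfrak{p} \mid \mathfrak{f}$ and $a \in \widehat{\Lambda}_\mathfrak{p}^\times$ for the finitely many other bad $\mathfrak{p}$ — and then $X := X_0 a$ works and is isomorphic to $X_0 \cong Y$ via right multiplication by $a$. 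I expect the main obstacle to be bookkeeping the simultaneous approximation conditions at the primes dividing $\mathfrak{f}$ against those where $X_0$ is not already $\Lambda$, and confirming that right multiplication by $a \in A^\times$ is indeed a $\Lambda$-module isomorphism $X_0 \xrightarrow{\sim} X_0 a$.
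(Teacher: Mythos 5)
Your proposal is, at bottom, the same underlying idea as the paper's proof, but the paper replaces all of it with a single citation: Roiter's replacement lemma (\cite[(27.1)]{Reiner2003}) gives a monomorphism $\varphi\colon Y \hookrightarrow \Lambda$ with $(\mathfrak{f}\cap\mathcal{O}) + \mathrm{Ann}_{\mathcal{O}}(\coker\varphi) = \mathcal{O}$, and then $X=\varphi(Y)$ works. Your idele/weak-approximation sketch is essentially the proof of Roiter's lemma unwound; there is nothing wrong with that in principle, but as written it has one genuine gap and is considerably longer.

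The gap is in the final step. You find $a \in A^\times$ by weak approximation with $a \equiv \alpha_{\mathfrak{p}}^{-1}$ modulo high powers for $\mathfrak{p} \mid \mathfrak{f}$, and $a \in \widehat{\Lambda}_{\mathfrak{p}}^\times$ at the finitely many other primes where $X_0$ differs from $\Lambda$, and you set $X := X_0 a$. This correctly forces $\widehat{X}_{\mathfrak{p}} = \widehat{\Lambda}_{\mathfrak{p}}$ at all $\mathfrak{p} \mid \mathfrak{f}$ and preserves the local shape at the other controlled primes. But weak approximation only controls $a$ at the finitely many primes you name; at the remaining (infinitely many) primes $\mathfrak{p}$ where $\alpha_{\mathfrak{p}} = 1$, the element $a$ may well have denominators, and then $\widehat{(X_0 a)}_{\mathfrak{p}} = \widehat{\Lambda}_{\mathfrak{p}} a \not\subseteq \widehat{\Lambda}_{\mathfrak{p}}$, so $X_0 a$ is not a left ideal of $\Lambda$. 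You need an extra clearing-of-denominators step: after finding $a$, multiply by a nonzero $c \in \mathcal{O}$ coprime to $\mathfrak{f}\cap\mathcal{O}$ and to the other controlled primes, so that $ca \in \Lambda$; then $X := X_0\, c a$ is an honest left ideal, still isomorphic to $X_0$ via right multiplication by $ca \in A^\times$, and still satisfies $X + \mathfrak{f} = \Lambda$. (One should also note explicitly that $a$ can be taken in $A^\times$: invertibility is a nonvanishing condition on the reduced norm, hence generic.) With those two repairs your argument is correct; the earlier wavering about stable versus genuine isomorphism classes can be deleted, since right multiplication by a fixed element of $A^\times$ is automatically a $\Lambda$-isomorphism onto its image, so no passage through $\Cl(\Lambda)$ or $\LF_1(\Lambda)$ is needed.
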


\begin{proof}
Since $Y \in g(\Lambda)$, by Roiter's lemma \cite[(27.1)]{Reiner2003} there exists
a monomorphism $\varphi ~\colon~Y \hookrightarrow \Lambda$ such that 
$(\mathfrak{f} \cap \mathcal{O}) + \mathrm{Ann}_{\mathcal{O}}(\coker \varphi) = \mathcal{O}$.
Then we can take $X=\varphi(Y)$.
\end{proof}

\begin{theorem}\label{thm:SFC-criterion}
Suppose that $\mathcal{M}$ has SFC and that $\mathfrak{g}$ is locally radical.
Let $S \subseteq \mathcal{M} \cap A^{\times}$ be a set of representatives of 
$(\mathcal{M}/\mathfrak{f})^{\times}$.
Let $T = \{ \beta \in S \colon X_{\beta} \text{ is stably free over } \Lambda \}$.
Then $\Lambda$ has SFC if and only if $X_{\beta}$ is free over $\Lambda$ for every $\beta \in T$.
\end{theorem}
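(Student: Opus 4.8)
The plan is to show both implications by reducing the SFC property to a statement about stably free rank-one lattices, and then to relate those lattices to the test lattices $X_\beta$ via the results of \S \ref{subsec:criterion-for-sf} and \S \ref{sec:critsfc}. Recall from \S \ref{subsec:SFCandLFC} that $\Lambda$ has SFC if and only if every stably free $\Lambda$-lattice of rank $1$ is free of rank $1$, so it suffices to work with classes in $g(\Lambda)$.

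For the ``only if'' direction, suppose $\Lambda$ has SFC and let $\beta \in T$. By Proposition \ref{prop:Xbeta} (i)--(ii), $X_\beta$ is a locally free left ideal of $\Lambda$, hence $X_\beta \in g(\Lambda)$ after identifying it with its genus class, and by the definition of $T$ it is stably free. Since $\Lambda$ has SFC, $X_\beta$ is free over $\Lambda$, as required.

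For the ``if'' direction, suppose $X_\beta$ is free over $\Lambda$ for every $\beta \in T$, and let $Y \in g(\Lambda)$ be stably free; we must show $Y \cong \Lambda$. First, by Lemma \ref{lem:make-lf-lattice-coprime-to-f} we may replace $Y$ by an isomorphic locally free left ideal $X$ of $\Lambda$ with $X + \mathfrak{f} = \Lambda$. Since $\mathcal{M}$ has SFC and $\mathcal{M}X$ is a locally free rank-one $\mathcal{M}$-lattice which is stably free over $\mathcal{M}$ (stable freeness is preserved under $\mathcal{M} \otimes_\Lambda -$), there exists $\beta \in \mathcal{M} \cap A^\times$ with $\mathcal{M}X = \mathcal{M}\beta$ and $\mathcal{M}\beta \cong \mathcal{M}$ as $\mathcal{M}$-modules; concretely, $\mathcal{M}X$ free over $\mathcal{M}$ gives such a $\beta$. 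Then by Corollary \ref{cor:rep-by-Xbeta} we have $\overline{\beta} \in (\mathcal{M}/\mathfrak{f})^\times$ and $[X_\beta]_{\st} = [X]_{\st} = 0$ in $\Cl(\Lambda)$, so $X_\beta$ is stably free over $\Lambda$. Choosing the unique $\beta_0 \in S$ whose image in $(\mathcal{M}/\mathfrak{f})^\times$ equals $\overline{\beta}$, Corollary \ref{cor:double-cosets-image} (ii) shows $X_{\beta_0}$ is also stably free, so $\beta_0 \in T$, and Corollary \ref{cor:double-cosets-image} (i) shows $X_\beta$ is free over $\Lambda$ if and only if $X_{\beta_0}$ is; by hypothesis $X_{\beta_0}$ is free, hence so is $X_\beta$. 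It remains to deduce $X \cong \Lambda$ from $X_\beta \cong \Lambda$. For this I would invoke Proposition \ref{prop:Xbeta} (viii): $X_\beta$ free over $\Lambda$ is equivalent to the existence of $u \in \mathcal{M}^\times$ with $\pi(\overline{u}) = \pi(\overline{\beta})$; but by Corollary \ref{cor:central-imsp} applied to $X$ (noting $X + \mathfrak{f} = \Lambda$ and $\mathcal{M}X = \mathcal{M}\beta$), the same condition $\pi(\overline{\beta}) = \pi(\overline{u})$ for some $u \in \mathcal{M}^\times$ is precisely equivalent to $X$ being free over $\Lambda$. Therefore $X \cong \Lambda$ and hence $Y \cong \Lambda$, so $\Lambda$ has SFC.

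The main obstacle I anticipate is the step producing the element $\beta$ with $\mathcal{M}X = \mathcal{M}\beta$ and ensuring it can be chosen in $\mathcal{M} \cap A^\times$: one must use that $\mathcal{M}X$ is a stably free rank-one $\mathcal{M}$-lattice, invoke SFC of $\mathcal{M}$ to conclude $\mathcal{M}X$ is \emph{free} over $\mathcal{M}$, and then realize a generator as a central-enough element $\beta$; a little care is needed because $\mathcal{M}X$ is a left ideal of $\mathcal{M}$ and one wants $\beta \in A^\times$, which holds since any locally free left ideal is full (see the remark following Proposition \ref{prop:central-imsp}). Once $\beta$ is in hand, the double-coset bookkeeping via Corollary \ref{cor:double-cosets-image} and the freeness criterion of Corollary \ref{cor:central-imsp} and Proposition \ref{prop:Xbeta} (viii) is routine.
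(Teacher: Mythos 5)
Your proof is correct and follows essentially the same route as the paper's. Both directions match: the forward direction is immediate from the definitions, and the backward direction reduces via Lemma \ref{lem:make-lf-lattice-coprime-to-f} and Roiter's lemma to a locally free left ideal $X$ coprime to $\mathfrak{f}$, uses SFC of $\mathcal{M}$ to produce $\beta \in \mathcal{M} \cap A^\times$ with $\mathcal{M}X = \mathcal{M}\beta$, passes to a representative $\beta_0 \in S$, and finishes with the test $\pi(\overline{\beta}) = \pi(\overline{u})$ from Proposition \ref{prop:Xbeta}(viii) and Corollary \ref{cor:central-imsp}. The only cosmetic difference is in the middle: the paper invokes Proposition \ref{prop:Xbeta}(vi) directly to get $[X_{\beta_0}]_{\st} = [X_\beta]_{\st}$ (immediate since $\overline{\beta_0} = \overline{\beta}$), whereas you route this and the subsequent transfer of freeness through Corollary \ref{cor:double-cosets-image}. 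That corollary certainly applies — equal elements of $(\mathcal{M}/\mathfrak{f})^\times$ are trivially in the same double coset — but it is slightly more machinery than needed here; it is really designed for Corollary \ref{cor:SFC-criterion}, where $S$ is taken to represent double cosets rather than individual units.
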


\begin{proof}
Suppose that $\Lambda$ has SFC. 
If $\beta \in T$ then
$X_{\beta}$ is stably free over $\Lambda$
and so $X_{\beta}$ is in fact free over $\Lambda$.

Recall from \S \ref{subsec:SFCandLFC} that in order to show that $\Lambda$ has SFC, 
it suffices to show that every stably free $\Lambda$-lattice of rank $1$
is in fact free.
So suppose that  $X$ is a stably free $\Lambda$-lattice of rank $1$, that is, $X \in g(\Lambda)$
and $[X]_{\st} = 0$ in $\Cl(\Lambda)$. 
By Lemma~\ref{lem:make-lf-lattice-coprime-to-f} we can and do suppose without loss of
generality that $X$ is a locally free left ideal of $\Lambda$ such that $X + \mathfrak{f} = \Lambda$.
Moreover, $\mathcal{M}X \cong \mathcal{M} \otimes_{\Lambda} X$ is stably free of rank $1$ over 
$\mathcal{M}$.
Since $\mathcal{M}$ has SFC by hypothesis, there exists $\beta_{1} \in \mathcal{M} \cap A^{\times}$
such that $\mathcal{M}X = \mathcal{M}\beta_{1}$.
Then by Corollary~\ref{cor:rep-by-Xbeta} we have 
$\overline{\beta_{1}} \in (\mathcal{M}/\mathfrak{f})^{\times}$
and $[X_{\beta_{1}}]_{\st}=[X]_{\st}=0$. 
Hence by definition of $S$, there exists $\beta_{2} \in S$ such that
$\overline{\beta_{2}}=\overline{\beta_{1}}$. 
Then $[X_{\beta_{2}}]_{\st}=[X_{\beta_{1}}]_{\st}=0$ by Proposition~\ref{prop:Xbeta} (vi)
and so in fact $\beta_{2} \in T$.
Thus $X_{\beta_{2}}$ is free over $\Lambda$ by the assumption on $T$.
By Proposition~\ref{prop:Xbeta}~(viii) this implies that there exists $u \in \mathcal{M}^{\times}$
such that $\pi(\overline{\beta_{2}})=\pi(\overline{u})$. Since $\pi(\overline{\beta_{1}}) = \pi(\overline{\beta_{2}})$, we have $\pi(\overline{\beta_{1}})=\pi(\overline{u})$. 
Thus $X$ is free over $\Lambda$ by Corollary~\ref{cor:central-imsp}.
\end{proof}

\begin{corollary}\label{cor:SFC-criterion}
Suppose that $\mathcal{M}$ has SFC and that $\mathfrak{g}$ is locally radical.
Let $S \subseteq \mathcal{M} \cap A^{\times}$ be a set of representatives of 
$\mathcal{M}^{\times} \backslash (\mathcal{M}/\mathfrak{f})^{\times}{}{/}{}(\Lambda/\mathfrak{f})^{\times}$.
Let 
\[
T = \{ \beta \in S \colon (\nr(\beta)\mathcal{O}_{C} \bmod P_{\mathfrak{g}}^{+}) 
\in \tilde{\nu}((\Lambda/\mathfrak{f})^{\times})\}.
\]
Then $\Lambda$ has SFC if and only if $X_{\beta}$ is free over $\Lambda$ for every $\beta \in T$.
\end{corollary}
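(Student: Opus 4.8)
The plan is to deduce this from Theorem \ref{thm:SFC-criterion}, which is the corresponding statement with $S$ taken to be a full set of representatives of $(\mathcal{M}/\mathfrak{f})^{\times}$ rather than of the double-coset space $\mathcal{M}^{\times} \backslash (\mathcal{M}/\mathfrak{f})^{\times} / (\Lambda/\mathfrak{f})^{\times}$. First I would check that the set $T$ defined here coincides with $\{\, \beta \in S : X_{\beta} \text{ is stably free over } \Lambda \,\}$. Indeed, every $\beta \in S$ has $\overline{\beta} \in (\mathcal{M}/\mathfrak{f})^{\times}$ by construction, so Proposition \ref{prop:Xbeta}(v) gives $\tilde{\nu}(\overline{\beta}) = (\nr(\beta)\mathcal{O}_{C} \bmod P_{\mathfrak{g}}^{+})$ in $I_{\mathfrak{g}}/P_{\mathfrak{g}}^{+}$, and Proposition \ref{prop:Xbeta}(vii) says that this element lies in $\tilde{\nu}((\Lambda/\mathfrak{f})^{\times})$ exactly when $X_{\beta}$ is stably free. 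Hence the assertion to prove reads: $\Lambda$ has SFC if and only if $X_{\beta}$ is free over $\Lambda$ for every $\beta \in S$ with $X_{\beta}$ stably free.

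For the forward direction, if $\Lambda$ has SFC then for $\beta \in T$ the module $X_{\beta}$ is a finitely generated stably free $\Lambda$-module (it is a full left ideal of $\Lambda$, and stably free by the previous paragraph), hence free by the definition of SFC. For the converse, assume $X_{\beta}$ is free for every $\beta \in T$. Choose a set $S' \subseteq \mathcal{M} \cap A^{\times}$ of representatives of $(\mathcal{M}/\mathfrak{f})^{\times}$ — such a set exists, as is implicit in the hypotheses of Theorem \ref{thm:SFC-criterion} — and put $T' = \{\, \beta' \in S' : X_{\beta'} \text{ is stably free over } \Lambda \,\}$. Given $\beta' \in T'$, the class $\overline{\beta'}$ lies in the double coset of $\overline{\beta}$ for a unique $\beta \in S$, so $\overline{\beta'}$ and $\overline{\beta}$ have the same image in $\mathcal{M}^{\times} \backslash (\mathcal{M}/\mathfrak{f})^{\times} / (\Lambda/\mathfrak{f})^{\times}$. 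Then Corollary \ref{cor:double-cosets-image}(ii) forces $X_{\beta}$ to be stably free, so $\beta \in T$ and $X_{\beta}$ is free by hypothesis; hence $X_{\beta'}$ is free by Corollary \ref{cor:double-cosets-image}(i). Since $\beta' \in T'$ was arbitrary, Theorem \ref{thm:SFC-criterion} applied with $S'$ shows that $\Lambda$ has SFC.

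I do not anticipate a genuine obstacle: the argument is bookkeeping built on Proposition \ref{prop:Xbeta} and Corollary \ref{cor:double-cosets-image}, both established above. The only points requiring a little care are the identification of $T$ with $\{\, \beta \in S : X_{\beta} \text{ stably free}\,\}$ carried out in the first paragraph, and — if one wishes to be fully explicit — the existence of a full set of representatives $S'$ of $(\mathcal{M}/\mathfrak{f})^{\times}$ inside $\mathcal{M} \cap A^{\times}$; the latter follows from the fact that every unit of the finite ring $\mathcal{M}/\mathfrak{f}$ lifts to an element of $\mathcal{M} \cap A^{\times}$, because in each simple component of $A$ the reduced norm form does not vanish identically on a coset of the full ideal $\mathfrak{f}$.
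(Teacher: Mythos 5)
Your proof is correct and follows essentially the same route as the paper: identify $T$ with $\{\beta \in S : X_\beta \text{ stably free}\}$ via Proposition~\ref{prop:Xbeta}~(v),(vii), and then deduce the claim from Theorem~\ref{thm:SFC-criterion} together with Corollary~\ref{cor:double-cosets-image}. Your write-up simply supplies the bookkeeping (introducing $S'$, $T'$, and transporting freeness and stable freeness across double cosets) that the paper compresses into the phrase ``follows easily.''
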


\begin{proof}
By Proposition~\ref{prop:Xbeta}~(v),(vii) we have 
$T = \{ \beta \in S \colon X_{\beta} \text{ is stably free over } \Lambda \}$. 
Thus the desired result follows easily from 
Theorem~\ref{thm:SFC-criterion} and Corollary~\ref{cor:double-cosets-image}.
\end{proof}

\subsection{An algorithm for determining SFC}
The following algorithm determines whether an order has SFC.
However, it is only practical for orders in algebras of `small' dimension and
in many situations, it is often much faster to use  Algorithm~\ref{alg:SFC-fail} or 
\ref{alg:sfc-fiberproduct}.

\begin{algorithm}\label{alg:SFC-naive}
Let $K$ be a number field with ring of integers $\mathcal{O}=\mathcal{O}_{K}$, 
and let $A$ be a finite-dimensional semisimple $K$-algebra.
Let $\Lambda$ be an $\mathcal{O}$-order in $A$.
If the following algorithm returns \ensuremath{\mathsf{true}}, then $\Lambda$ has SFC.
If it returns \ensuremath{\mathsf{false}}, then $\Lambda$ fails SFC.
\renewcommand{\labelenumi}{(\arabic{enumi})}
\begin{enumerate}
\item Compute the center $C$ of $A$ and the decomposition of $A$ into simple $K$-algebras.
\item Compute a maximal $\mathcal{O}$-order $\mathcal{M}$ in $A$ containing $\Lambda$.
\item \label{alg-SFC-tf-M-check-step} Check whether $\mathcal{M}$ has SFC.
If not, return \ensuremath{\mathsf{false}}. 
If so and $\Lambda=\mathcal{M}$ return \ensuremath{\mathsf{true}}.
Otherwise, continue with the steps below.
\item Compute 
$\mathfrak{f} := \{ x \in A \mid x\mathcal{M} \subseteq \Lambda \} 
\cap \{ x \in A \mid \mathcal{M}x \subseteq \Lambda \}$.
\item Compute $\mathcal{O}_{C} := \mathcal{M} \cap C$ and 
$\mathfrak{g} := \mathfrak{f} \cap \mathcal{O}_{C} = \mathfrak{f} \cap C$.
\item \label{alg-SFC-tf-M-adjust-step}
Adjust $\mathfrak{f}$ (and thus $\mathfrak{g}$) to ensure
that $\mathfrak{g}$ is locally radical.
\item Compute a set of generators of $(\Lambda/\mathfrak{f})^{\times}$.
\item Compute $I_{\mathfrak{g}}/P_{\mathfrak{g}}^{+}$ and
$\tilde{\nu}((\Lambda/\mathfrak{f})^{\times}) \le I_{\mathfrak{g}}/P_{\mathfrak{g}}^{+}$ as finite abelian groups.
\item \label{alg-SFC-tf-M-set-of-reps-step} Compute the set $(\mathcal{M}/\mathfrak{f})^{\times}$ and a set of representatives 
$S \subseteq \mathcal{M} \cap A^{\times}$ of $(\mathcal{M}/\mathfrak{f})^{\times}$.
\item Compute the set
$T = \{ \beta \in S \colon (\nr(\beta)\mathcal{O}_{C} \bmod P_{\mathfrak{g}}^{+}) 
\in \tilde{\nu}((\Lambda/\mathfrak{f})^{\times})\}$.
\item \label{alg-SFC-tf-last-step} For each $\beta \in T$ check whether $X_\beta$ is free over $\Lambda$.
If this is true for all $\beta \in T$, return \ensuremath{\mathsf{true}}. 
Otherwise return \ensuremath{\mathsf{false}}.
\end{enumerate}
\end{algorithm}

\begin{proof}[Proof of correctness]
In step \eqref{alg-SFC-tf-M-check-step}, the correctness of an output \ensuremath{\mathsf{false}} follows from Corollary~\ref{cor:cancellation-over-order} and the correctness of an output 
\ensuremath{\mathsf{true}} is clear. 
By Proposition~\ref{prop:Xbeta} (v),(vii) we have 
$T = \{ \beta \in S \colon X_{\beta} \text{ is stably free over } \Lambda \}$.
Thus in step \eqref{alg-SFC-tf-last-step}, the correctness of the output follows from 
Theorem~\ref{thm:SFC-criterion}.
\end{proof}

\begin{proof}[Further details on each step]
Steps (1)--(5) can be performed using \cite[1.5 B]{Friedl1985}, \cite[Kapitel 3 and 4]{friedrichs},
Corollary~\ref{cor:sfc-alg-maxord}, \cite[Algorithmus (2.16)]{friedrichs} and  \cite[Algorithm 1.5.1]{MR1728313}, respectively.

Step (6): First note that for a given ideal $\frg$
there are only finitely many maximal ideals $\frp$ of $\calO$ such that
$\frg_\frp \ne \calO_{C, \frp}$. 
If $\frg = \frf \cap \OC$ is not locally radical then
there exists such a $\mathfrak{p}$ and a maximal ideal $\mathfrak{P}$ of $\mathcal{O}_{C}$ with $\frP \mid \frp$ such that 
$\frg_\frP \not \subseteq \frP\calO_{C, \frP}$.
For each such $\mathfrak{P}$, 
we replace $\frf$ by $\frP\frf$ and observe that $(\frP\frf \cap \OC)_\frP = \frP\frf_\frP \cap \calO_{C, \frP} \subseteq \frP\left(\frf_\frP \cap \calO_{C, \frP}\right) \subseteq \frP\calO_{C, \frP}$.

Step (7) can be performed using the algorithm described in \cite[\S 6.6]{MR4493243}
(this is a minor modification of \cite[\S 3.4--3.7]{bley-boltje}). 

Step (8): In the notation of \S \ref{subsec:review-BB}, compute $\infty_{i}$ for $i=1, \ldots, r$
using \cite[Theorem~3.5]{Nebe2009}.
The structure of the finitely generated abelian group $I_{\mathfrak{g}}/P_{\mathfrak{g}}^{+}$ can 
then be computed componentwise using \cite[Algorithm 4.3.1]{MR1728313}.
Using the output of Step~(7) and the definition of $\tilde{\nu}$, 
then determine generators of $\tilde{\nu}((\Lambda/\mathfrak f)^\times)$. 
  
Steps (9) and (10):
Since both rings $\calM/\frf$ and $\Lambda/\frf$ are finite, their units can be computed as sets 
in finite time. (For a more practical approach, see \S \ref{sec:primary-decomposition}).
Given $\beta \in S$, algorithms for finitely generated abelian groups, as presented for example in \cite[\S 4.1]{MR1728313}, can be used
to determine whether $\tilde{\nu}(\overline \beta) =
(\nr(\beta)\mathcal{O}_{C} \bmod P_{\mathfrak{g}}^{+})$ belongs to $\tilde{\nu}((\Lambda/\frf)^\times)$.  
  
Step~(11): Given $\beta \in T$, \cite[Algorithm 3.1]{BJ11} 
(see also \cite[Algorithm 4.1]{MR4136552})
can be used to determine
whether $X_{\beta}$ is free over $\Lambda$ (though stated for orders in group algebras,
this algorithm can also be applied to orders in arbitrary finite-dimensional semisimple algebras
as explained in \cite[Remark 4.9]{BJ11}). 
Note that this algorithm requires certain hypotheses, which ensure that one can find the preimage of the image of the canonical map 
$\mathcal{M}^{\times} \longrightarrow (\mathcal{M}/\mathfrak{f})^{\times}$, 
as well as a generator of $\mathcal{M}X_{\beta}$ as an $\mathcal{M}$-module.
Since generators of $\mathcal{M}^{\times}$ can in principle be found using the algorithms of~\cite{Braun2015},
and since $\mathcal{M}X_{\beta} = \mathcal{M}\beta$ by Proposition~\ref{prop:Xbeta}~(iii),
these hypotheses are in fact unnecessary in this situation.
Note that we also give an alternative freeness testing algorithm in \S \ref{sec:hybrid-method} that 
may be more efficient under certain circumstances.
\end{proof}

\begin{remark}
Steps (4)--(6) of Algorithm~\ref{alg:SFC-naive} can be replaced by the following
steps.
\begin{itemize}
\item[(4)$'$] Compute $\mathcal{O}_{C} := \mathcal{M} \cap C$ and 
$\mathfrak{g} := \{ x \in C \mid x\mathcal{M} \subseteq \Lambda \}$.
\item[(5)$'$] If necessary, adjust $\mathfrak{g}$ to ensure that it is locally radical.
\item[(6)$'$] Compute $\mathfrak{f} := \mathfrak{g}\mathcal{M}$.
\end{itemize}
Then by Lemma~\ref{lem:g-is-OC-cap-f} the ideals $\mathfrak{f}$ and $\mathfrak{g}$
would satisfy $\mathfrak{g} = \mathfrak{f} \cap \mathcal{O}_{C} = \mathfrak{f} \cap C$. 
However, the disadvantage of this approach is that in general it will produce a smaller
choice of $\mathfrak{f}$ and thus larger choices of the sets $S$ and $T$. 
\end{remark}

\begin{remark}
Algorithm~\ref{alg:SFC-naive} can be modified to use Corollary~\ref{cor:SFC-criterion}
instead of Theorem~\ref{thm:SFC-criterion}. 
This involves replacing step \eqref{alg-SFC-tf-M-set-of-reps-step} by
\begin{itemize}
\item[(9)$'$] Compute a set of representatives $S \subseteq \mathcal{M} \cap A^{\times}$ of
$\mathcal{M}^{\times} \backslash (\mathcal{M}/\mathfrak{f})^{\times}{}{/}{}(\Lambda/\mathfrak{f})^{\times}$.
\end{itemize}
In principle, this version of the algorithm should be much more efficient since the sets $S$ and $T$ become much smaller. However, Step (9)$'$ itself comes with its own challenges.
\end{remark}

\section{Orders without SFC}\label{sec:withoutSFC}

\subsection{An algorithm to show that an order fails SFC}\label{subsec:alg-fail-SFC}
The following algorithm can be viewed as a modified version of Algorithm~\ref{alg:SFC-naive}
that cannot prove that an order has SFC, but can be used to show that it fails SFC.

\begin{algorithm}\label{alg:SFC-fail}
Let $K$ be a number field with ring of integers $\mathcal{O}=\mathcal{O}_{K}$, 
and let $A$ be a finite-dimensional semisimple $K$-algebra.
Let $\Lambda$ be an $\mathcal{O}$-order in $A$.
If the following algorithm returns \ensuremath{\mathsf{false}}, then $\Lambda$ fails SFC.
If it returns \ensuremath{\mathsf{true}}, then $\Lambda$ is maximal and has SFC.
If it returns \ensuremath{\mathsf{inconclusive}}, then nothing can be said about whether $\Lambda$ has SFC.
\renewcommand{\labelenumi}{(\arabic{enumi})}
\begin{enumerate}
\item Perform steps (1) and (2) of Algorithm~\ref{alg:SFC-naive}.
\item \label{alg-SFC-fail-M-check-step}
Check whether $\mathcal{M}$ has SFC.
If not, return \ensuremath{\mathsf{false}}. 
If so and $\Lambda=\mathcal{M}$ return \ensuremath{\mathsf{true}}.
Otherwise, continue with the steps below.
\item Perform steps (4)--(8) of Algorithm~\ref{alg:SFC-naive}.
\item \label{alg-SFC-fail-compute-random-elms-step} 
Choose $r \geq 1$ and compute `random' elements 
$\beta_{1},\dotsc,\beta_{r} \in \mathcal{M} \cap A^{\times}$ 
such that $\overline{\beta_{i}} \in (\mathcal{M}/\mathfrak{f})^{\times}$
and $(\nr(\beta_{i})\mathcal{O}_{C} \bmod P_{\mathfrak{g}}^{+}) \in \tilde{\nu}((\Lambda/\mathfrak{f})^{\times})$ for all $i \in \{ 1, \ldots, r \}$.
\item \label{alg-SFC-fail-last-step} 
For each $i \in \{ 1, \ldots, r \}$ check whether $X_{\beta_{i}}$ is free over $\Lambda$.
If this is true for all $i \in \{ 1, \ldots, r \}$, return \ensuremath{\mathsf{inconclusive}}.
Otherwise, return \ensuremath{\mathsf{false}}.
\end{enumerate}
\end{algorithm}

\begin{proof}[Proof of correctness]
In step \eqref{alg-SFC-fail-M-check-step}, the correctness of an output \ensuremath{\mathsf{false}} follows from Corollary~\ref{cor:cancellation-over-order} and the correctness of an output 
\ensuremath{\mathsf{true}} is clear. 
Step \eqref{alg-SFC-fail-compute-random-elms-step} 
ensures that $X_{\beta_{i}}$
is stably free over $\Lambda$ for each $i \in \{ 1, \ldots, r \}$
by Proposition~\ref{prop:Xbeta} (v),(vii).
Thus the output of step \eqref{alg-SFC-fail-last-step}  is correct because if there exists 
$i \in \{ 1, \ldots, r \}$ such that $X_{\beta_{i}}$ is not free over $\Lambda$,
then $\Lambda$ clearly fails SFC.
Note that step \eqref{alg-SFC-fail-M-check-step} is in fact optional,
in that the algorithm is still correct even if it is omitted. 
\end{proof}

\begin{proof}[Further details on each step]
Steps (1)--(3) and step \eqref{alg-SFC-fail-last-step} of Algorithm~\ref{alg:SFC-fail} 
can be performed in the same way 
as the corresponding steps  of Algorithm~\ref{alg:SFC-naive}.
To find the `random' elements in step~\eqref{alg-SFC-fail-compute-random-elms-step}, 
one can determine elements of  $\mathcal{M} \cap A^{\times}$
with random coordinates in a fixed basis of bounded height and then check whether they are invertible modulo $\mathfrak{f}$.
Alternatively, one could also determine elements of $\mathcal{M} \cap A^{\times}$ 
generating $(\mathcal{M}/\mathfrak {f})^{\times}$ using the algorithm described
in \cite[\S 6.6]{MR4493243} (which is a minor modification of 
\cite[\S 3.4--3.7]{bley-boltje}) 
and then take random words of bounded length in the generators.
\end{proof}

\begin{remark}
Thanks to the fact that Algorithm~\ref{alg:SFC-fail} does not need to perform steps (9) or (10)
of Algorithm~\ref{alg:SFC-naive}, in practice the former is often much faster than the latter
if one wants to prove that a given order $\Lambda$ fails SFC.
\end{remark}

\begin{remark}\label{rmk:projection-alg-fail}
Due to the invocation of the (expensive) freeness test of~\cite[Algorithm~3.1]{BJ11},
Algorithm~\ref{alg:SFC-fail} is only practical when $\Lambda$ is an order in an algebra 
$A$ of `moderate' dimension. 
However, 
it is also possible to obtain results when $A$ is of larger dimension as follows.
A projection of $K$-algebras $A \to B$ induces a projection of $\Lambda$ onto an 
$\mathcal{O}$-order $\Gamma$ in $B$.
If we can apply Algorithm~\ref{alg:SFC-fail} to show that $\Gamma$ fails SFC,
then Theorem~\ref{thm:cancellation-under-map} shows that $\Lambda$ also fails SFC.
Since we are free to choose $B$, we can take $B$ to be a direct sum of simple components of $A$ 
of small dimension, chosen such that all simple components of $A$ 
that do not satisfy the Eichler condition
are also components of $B$ and Algorithm~\ref{alg:SFC-fail} finishes quickly.
\end{remark}

\subsection{Integral group rings without SFC}\label{subsec:group-rings-fail-SFC}
We now use Algorithm~\ref{alg:SFC-fail} and Remark~\ref{rmk:projection-alg-fail} to
show that certain integral group rings fail SFC. 
Parts of the following result, which is Theorem~\ref{intro-thm:new-group-rings-fail-SFC} from the introduction,
have already been proven by Swan \cite{MR703486} and Chen 
\cite{ChenPhD} (see Remark~\ref{rmk:Swan-Chen-fail-SFC}).

\begin{theorem}\label{thm:new-group-rings-fail-SFC}
Let $G$ be one of the following finite groups:
\begin{enumerate}
\item $Q_{4n} \times C_{2}$ for $2 \leq n \leq 5$,
\item $\tilde{T} \times C_{2}^{2}$, 
$\tilde{O} \times C_{2}$, $\tilde{I} \times C_{2}^{2}$, or
\item $G_{(32,14)}$, $G_{(36,7)}$, $G_{(64,14)}$, $G_{(100,7)}$. 
\end{enumerate}
Then $\Z[G]$ fails SFC but $\Z[G/N]$ has SFC for every proper quotient $G/N$.
\end{theorem}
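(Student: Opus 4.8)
The plan is to establish the two assertions separately: (a) $\Z[G]$ fails SFC for each listed $G$, and (b) $\Z[G/N]$ has SFC for every proper quotient.

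For (a), I would run Algorithm~\ref{alg:SFC-fail} on each $\Z[G]$. One first fixes a maximal $\Z$-order $\mathcal{M} \supseteq \Z[G]$ in $\Q[G]$; in every case on the list the only binary polyhedral quotient of $G$ lies among the eleven groups of~\eqref{eq:list-bp-max}, so $\mathcal{M}$ has LFC (hence SFC) by Corollary~\ref{cor:LFC-max-order-group-rings}, and the algorithm proceeds past its maximal-order check. One then forms $\mathfrak{f}$ and $\mathfrak{g} = \mathfrak{f} \cap \mathcal{O}_{C}$, adjusts $\mathfrak{g}$ to satisfy Assumption~(G), and searches for elements $\beta \in \mathcal{M} \cap A^{\times}$ that are invertible modulo $\mathfrak{f}$ and satisfy $(\nr(\beta)\mathcal{O}_{C} \bmod P_{\mathfrak{g}}^{+}) \in \tilde{\nu}((\Lambda/\mathfrak{f})^{\times})$ — equivalently, with $X_{\beta}$ stably free over $\Lambda$ by Proposition~\ref{prop:Xbeta}; SFC fails as soon as one such $X_{\beta}$ turns out not to be free over $\Lambda$. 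For the groups $Q_{4n} \times C_{2}$ with $2 \le n \le 5$, $G_{(32,14)}$ and $G_{(36,7)}$, whose rational group algebras have small dimension, this is directly feasible. For the remaining, higher-dimensional cases — $\tilde{T} \times C_{2}^{2}$ and $\tilde{O} \times C_{2}$ ($\dim_{\Q} = 96$), $\tilde{I} \times C_{2}^{2}$ ($\dim_{\Q} = 480$), $G_{(64,14)}$ and $G_{(100,7)}$ — I would invoke Remark~\ref{rmk:projection-alg-fail}: project $\Q[G]$ onto a direct sum $B$ of a few simple components of small dimension, chosen to include all the (totally definite quaternion) components that violate the Eichler condition, apply Algorithm~\ref{alg:SFC-fail} to the induced order $\Gamma \subseteq B$, and conclude that $\Z[G]$ fails SFC by Theorem~\ref{thm:cancellation-under-map}. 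I would also recall that $Q_{8} \times C_{2}$ (Swan) and $Q_{12} \times C_{2}$, $Q_{16} \times C_{2}$, $Q_{20} \times C_{2}$, $G_{(36,7)}$, $G_{(100,7)}$ (Chen) were already known to fail SFC; see Remark~\ref{rmk:Swan-Chen-fail-SFC}.

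For (b), I would first reduce to maximal quotients: any proper quotient $G/N$ is dominated by $G/M$ for some minimal normal subgroup $M \subseteq N$ of $G$, so by Lemma~\ref{lem:quot-group-ring} it suffices to verify SFC for $\Z[G/M]$ as $M$ ranges over the finitely many minimal normal subgroups of $G$, which one lists using GAP. For the groups in parts (1) and (2) these minimal normal subgroups are all central of prime order, and the resulting quotients fall into three families, each already known to have SFC: groups $Q$ with $\Q[Q]$ satisfying the Eichler condition — such as $A_{4} \times C_{2}^{2}$, $S_{4} \times C_{2}$, $A_{5} \times C_{2}^{2}$, dihedral-by-abelian groups $D_{2n} \times C_{2}$, and abelian groups — for which Theorem~\ref{thm:Jacobinski-cancellation} gives LFC and hence SFC; one of $Q_{8}, Q_{12}, Q_{16}, Q_{20}$, which have SFC by Theorem~\ref{thm:bp-canc-group-rings}; and $\tilde{T} \times C_{2}$ or $\tilde{I} \times C_{2}$, which have SFC by Theorems~\ref{thm:tildeT-times-C2-has-SFC} and~\ref{thm:new-groups-with-SFC} respectively. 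For the sporadic groups $G_{(32,14)}$, $G_{(36,7)}$, $G_{(64,14)}$, $G_{(100,7)}$ I would record the outcome of a GAP enumeration of their proper quotients, checking that none is isomorphic to a group of strictly smaller order already known (in this paper, or by Swan and Chen) to fail SFC.

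The main obstacle is the computational cost of part (a) for the large group rings, most acutely $\Z[\tilde{I} \times C_{2}^{2}]$: the freeness test invoked in Algorithm~\ref{alg:SFC-fail} (namely \cite[Algorithm~3.1]{BJ11}) is expensive, so everything hinges on choosing the projection target $B$ in Remark~\ref{rmk:projection-alg-fail} so that it is small enough for the test to terminate while still retaining enough of the non-Eichler part of $\Q[G]$ that the failure of SFC survives, and on exhibiting inside the corresponding order $\Gamma$ an explicit stably free but non-free test lattice $X_{\beta}$. Part (b), by contrast, is essentially a bookkeeping exercise once the minimal normal subgroups are computed.
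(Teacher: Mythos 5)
Your outline for part (a) matches the paper's strategy in spirit — Algorithm~\ref{alg:SFC-fail} combined with the projection trick of Remark~\ref{rmk:projection-alg-fail} — though the paper is more specific: for every group except $\tilde{O}\times C_2$ it projects onto $e\Z[G]$, where $e$ isolates the commutative and totally definite quaternion components of $\Q[G]$, and for $\tilde{O}\times C_2$ it instead works with $\Gamma = \Z[G]/\mathrm{Tr}_N\Z[G]$ where $G/N \cong S_4 \times C_2$. That is a detail your proposal leaves open, but it is the right kind of flexibility, so part (a) is essentially sound.

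The genuine gap is in your treatment of part (b) for the sporadic groups. You propose, for $G_{(32,14)}$, $G_{(36,7)}$, $G_{(64,14)}$, $G_{(100,7)}$, to enumerate the proper quotients in GAP and check that none is isomorphic to a group ``already known to fail SFC.'' That test does not establish the conclusion of the theorem, which asserts that every proper quotient \emph{has} SFC: not appearing on a known failure list is much weaker, since before this very paper the SFC status of many small groups (e.g.\ $G_{(32,14)}$ itself) was unresolved. Worse, the only available classification covering those orders is Corollary~\ref{cor:classification-383}, which is proved \emph{using} Theorem~\ref{thm:new-group-rings-fail-SFC} as an input, so invoking it here would be circular. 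What the paper actually does for these groups is reduce to the unique (up to isomorphism) maximal proper quotient not already handled by the Eichler-condition and binary-polyhedral cases, and then apply a positive SFC result to it: for $G_{(32,14)}$ the maximal quotient is $G_{(16,4)} \cong C_4 \rtimes C_4$, which has SFC by Theorem~\ref{thm:Nicholson-C} since $m_{\mathbb{H}} \leq 1$; for $G_{(64,14)}$ it is $G_{(32,10)} \cong Q_8 \rtimes C_4$, which has SFC by Theorem~\ref{thm:Nicholson-B} since it has a $Q_{16}$-quotient with $m_{\mathbb{H}}(G_{(32,10)}) = m_{\mathbb{H}}(Q_{16}) = 2$. (For $G_{(36,7)}$ and $G_{(100,7)}$ all maximal quotients are already covered by the Eichler condition or Theorem~\ref{thm:bp-canc-group-rings}, so no extra input is needed.) Your argument needs to be replaced by these affirmative applications of Nicholson's theorems.
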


\begin{remark}\label{rmk:Swan-Chen-fail-SFC}
Theorem~\ref{thm:new-group-rings-fail-SFC} agrees with previously known results.
Recall from \S \ref{sec:review-canc-int-grp-rings}
that Swan \cite[Theorem 15.1]{MR703486} already proved that $\Z[Q_{8} \times C_{2}]$ fails SFC and Chen \cite{ChenPhD} already proved that $\Z[G]$ fails SFC for
$G=Q_{12} \times C_{2}$, $Q_{16} \times C_{2}$, $Q_{20} \times C_{2}$, $G_{(36,7)}$, $G_{(100,7)}$.
\end{remark}

\begin{remark}
In order, the groups listed in Theorem~\ref{thm:new-group-rings-fail-SFC} (i)
are isomorphic to $G_{(16,12)}$, $G_{(24,7)}$, $G_{(32,41)}$, $G_{(40,7)}$,
and those in (ii) to $G_{(96,198)}$, $G_{(96,188)}$, $G_{(480,960)}$.
\end{remark}

\begin{proof}[Proof of Theorem~\ref{thm:new-group-rings-fail-SFC}]
We first show that $\Z[G]$ fails SFC.
Further details on the computations used in this part of the proof can be found in 
Appendix~\ref{appendix}.
Let $e$ be the central idempotent of $\Q[G]$ such that $e\Q[G]$ is the direct sum
of all simple components of $\Q[G]$ that are
either commutative or totally definite quaternion algebras. 
For each group $G$ listed apart from $G=\tilde{O} \times C_{2}$, 
verification that $e\Z[G]$ fails SFC was performed using Algorithm~\ref{alg:SFC-fail}.
Theorem~\ref{thm:cancellation-under-map} then implies that $\Z[G]$ also fails SFC.
For $G=\tilde{O} \times C_{2}$ there exists a normal subgroup $N$ of order $2$
such that $G/N \cong S_{4} \times C_{2}$. For this choice of $G$ and $N$,
the verification that the order $\Gamma := \Z[G]/\mathrm{Tr}_{N}\Z[G]$ fails SFC was 
performed using Algorithm~\ref{alg:SFC-fail}; again,
Theorem~\ref{thm:cancellation-under-map} then implies that $\Z[G]$ also fails SFC. 

We now show that $\Z[G/N]$ has SFC for every proper quotient $G/N$.
By Lemma~\ref{lem:quot-group-ring}, it suffices to consider maximal proper quotients $H=G/N$.
If $\Q[H]$ satisfies the Eichler condition, then $\Z[H]$ has LFC 
by Theorem~\ref{thm:Jacobinski-cancellation}, and 
if $H$ is isomorphic to any of
\begin{equation}\label{eq:list-bp-have-lfc} 
Q_{8}, \, Q_{12}, \,  Q_{16}, \, Q_{20}, \, \tilde{T}, \, \tilde{O}, \, \tilde{I},
\end{equation}
then $\Z[H]$ has LFC by Theorem~\ref{thm:bp-canc-group-rings}.
Therefore it suffices to consider maximal quotients $H$ such that $\Q[H]$ does not satisfy
the Eichler condition and is not isomorphic to any of the groups in \eqref{eq:list-bp-have-lfc}.
The only groups $G$ listed that have such maximal quotients are $\tilde{T} \times C_{2}^{2}$,
$\tilde{I} \times C_{2}^{2}$,
$G_{(32,14)}$ and $G_{(64,14)}$.
For $G=\tilde{T} \times C_{2}^{2}$, each such $H$ is isomorphic to $\tilde{T} \times C_{2}$,
and so $\Z[H]$ has SFC by Theorem~\ref{thm:tildeT-times-C2-has-SFC}.
For $G=\tilde{I} \times C_{2}^{2}$, each such $H$ is isomorphic to $\tilde{I} \times C_{2}$,
and so $\Z[H]$ has SFC by Theorem~\ref{thm:new-groups-with-SFC}.
For $G=G_{(32,14)}$, each such $H$ is isomorphic to $G_{(16,4)} \cong C_{4} \rtimes C_{4}$
and so $\Z[H]$ has SFC by Theorem~\ref{thm:Nicholson-C}.
For $G=G_{(64,14)}$, each such $H$ is isomorphic to $G_{(32,10)} \cong Q_{8} \rtimes C_{4}$.
Since $G_{(32,10)}$ has a quotient isomorphic to $Q_{16}$ and 
$m_{\mathbb{H}}(G_{(32,10)})=m_{\mathbb{H}}(Q_{16})=2$, the group ring $\Z[G_{(32,10)}]$ 
has SFC by Theorem~\ref{thm:Nicholson-B}.
\end{proof}

\begin{remark}
Let $G$ be any group listed in Theorem~\ref{thm:new-group-rings-fail-SFC}
and let $\mathcal{M}$ be any maximal $\Z$-order in $\Q[G]$. 
Then every binary polyhedral quotient of $G$ is isomorphic to 
$Q_{8}$, $Q_{12}$, $Q_{16}$, $Q_{20}$, $\tilde{T}$, $\tilde{O}$, or $\tilde{I}$,
and hence $\mathcal{M}$ has LFC (and thus SFC) by Corollary~\ref{cor:LFC-max-order-group-rings}.
\end{remark}

The following is the most general result to date on integral group rings that fail SFC.

\begin{corollary}\label{cor:group-ring-fail-SFC}
Let $G$ be a finite group with a quotient isomorphic to a group listed in 
Theorem~\ref{thm:new-group-rings-fail-SFC}
or to $Q_{4n}$ for some $n \geq 6$. 
Then $\Z[G]$ fails SFC. 
\end{corollary}

\begin{proof}
This follows from Theorem~\ref{thm:new-group-rings-fail-SFC} and Corollary~\ref{cor:ZG-fails-SFC-when-large-Q4n-quotient}.
\end{proof}

\section{Determining SFC using fiber products}\label{sec:det-SFC-fiber-prods}

The main result of this section is Algorithm~\ref{alg:sfc-fiberproduct}, which
may be viewed as a constructive version of Corollary~\ref{cor:Eichler-W-consequences}.

\subsection{Setup}
Let $K$ be a number field with ring of integers $\mathcal{O}=\mathcal{O}_{K}$, 
and let $A$ be a finite-dimensional semisimple $K$-algebra.
Let $\Lambda$ be an $\mathcal{O}$-order in $A$. 
Let $I$ and $J$ be two-sided ideals of $\Lambda$ such that $I \cap J = 0$
and $K(I+J)=A$. Let $\Lambda_{1} = \Lambda / I$, let $\Lambda_{2} = \Lambda / J$
and let $\overline{\Lambda} = \Lambda / (I+J)$.
Then $\Lambda_{1}$ and $\Lambda_{2}$ are $\mathcal{O}$-orders
and $\overline{\Lambda}$ is an $\mathcal{O}$-torsion $\mathcal{O}$-algebra.
Then as in Example~\ref{example:fiber-ideals}, we have a fiber product 
\begin{equation}\label{eq:fiber-product-IJ}
\begin{tikzcd}
\Lambda \arrow{d}[swap]{\pi_{2}} \arrow{r}{\pi_{1}} & \Lambda_{1} \arrow{d}{g_{1}}  \\
\Lambda_{2} \arrow{r}[swap]{g_{2}} & \overline{\Lambda},
\end{tikzcd}
\end{equation}
where each of $\pi_{1},\pi_{2},g_{1}, g_{2}$ is the relevant canonical surjection.

Let $A_{1} := K\Lambda_{1} \cong KJ$ and let $A_{2} := K\Lambda_{2} \cong KI$. 
Then by making the appropriate identifications, we can and do assume that 
\begin{equation}\label{eq:identification2}
\Lambda = \{ (x_{1},x_{2}) \in \Lambda_{1} \oplus \Lambda_{2} \mid g_{1}(x_{1})=g_{2}(x_{2}) \} \subseteq \Lambda_1 \oplus \Lambda_2 \subseteq A_1 \oplus A_2 = A. 
\end{equation}
In particular, $\pi_{i}$ is the map induced by the canonical projection $A \to A_{i}$ for $i=1,2$.

Recall from Proposition~\ref{prop:properties-of-Mu} that for $u \in \overline{\Lambda}^{\times}$,
we define 
\[
M(u) := \{ (x_{1},x_{2}) \in \Lambda_{1} \oplus \Lambda_{2} \mid g_{1}(x_{1})u=g_{2}(x_{2}) \}.
\]
In particular, this is a locally free $\Lambda$-lattice of rank $1$.
We refer to such a lattice $M(u)$ as a \emph{pullback lattice}. 
Recall from Corollary~\ref{cor:properties-of-Mu} that we have a group homomorphism
\[
\partial : \overline{\Lambda}^{\times} \longrightarrow \Cl(\Lambda), 
\quad u \mapsto [M(u)]_{\st}.
\]

We shall relate pullback lattices to test lattices and
use this to give an alternative description of $\ker(\partial)$.
Under the further assumption that $A_{1}$ satisfies the Eichler condition, 
this will lead to an algorithm to test whether $\Lambda$ has SFC.

\subsection{Test lattices and pullback lattices}\label{subsec:rel-to-gen-Swan}
For $i=1,2$, let $\mathcal{M}_{i}$ be a choice of maximal 
$\mathcal{O}$-order in $A_{i}$ containing
$\Lambda_{i}$ (such choices exist by \cite[(10.4)]{Reiner2003}) and 
let $\mathfrak{f}_{i}$ be any full two-sided ideal of $\mathcal{M}_{i}$ that 
is contained in $\Lambda \cap A_{i}$.
Let $\mathcal{M} = \mathcal{M}_{1} \oplus \mathcal{M}_{2}$ and let 
$\mathfrak{f} = \mathfrak{f}_{1} \oplus \mathfrak{f}_{2}$.
For $i=1,2$, let $C_{i}$ be the center of $A_{i}$ and let $\mathfrak{g}_{i} := \mathfrak{f}_{i} \cap \mathcal{O}_{C_{i}} =\mathfrak{f}_{i} \cap C_{i}$. 
Let $C = C_{1} \oplus C_{2}$ and let $\mathfrak{g}=\mathfrak{g}_{1} \oplus \mathfrak{g}_{2}$. 
We set
\[
\frh_1 := \ker(g_1), \quad \frh_2 := \ker(g_2),  \quad \frh := \pi_1^{-1}(\frh_1) = \pi_2^{-1}(\frh_2) = I+J,
\] 
and observe that
\[
\overline{\Lambda} = \Lambda/\frh \cong \Lambda_1/\frh_1 \cong \Lambda_2/\frh_2.
\]
Using the identification in \eqref{eq:identification2} we have $\frh_{i} = \Lambda \cap A_{i}$ for $i=1,2$.

Since for any $x_{1} \in \Lambda_{1}$ with $(x_{1}, 0) \in \Lambda$ we have $x_{1} \in \ker(g_{1})$,
it follows that $\frf_{1} \subseteq \frh_{1}$.
Similarly, $\frf_{2} \subseteq \frh_{2}$ and therefore $\frf \subseteq \frh$.
Recall from Definition~\ref{def:gen-Swan} that for $\beta \in \mathcal{M} \cap A^{\times}$, the test lattice $X_{\beta}=X_{\beta, \Lambda, \mathcal{M}}$ is defined to be
$\mathcal{M} \beta \cap \Lambda$.

The following result shows that under mild assumptions, pullback lattices are in fact
(isomorphic to) test lattices. 

\begin{prop}\label{prop:mu-gen-swan}
Suppose that $\mathfrak{g}$ is locally radical.
Let $\beta = (\beta_{1}, \beta_{2}) \in \Lambda_{1} \oplus \Lambda_{2}$ with 
$\beta \in A^{\times}$ and $\overline{\beta_{i}} \in \left( \Lambda_i / \frf_i \right)^\times$ for $i=1,2$. 
Let $u = g_{1}(\beta_{1}) g_{2}(\beta_{2})^{-1} \in \overline{\Lambda}^{\times}$.
Then $X_{\beta} = M(u) \beta$, so in particular,
$X_{\beta} \cong M(u)$ as $\Lambda$-lattices.
\end{prop}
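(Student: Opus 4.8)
The plan is to compute $X_\beta = \mathcal{M}\beta \cap \Lambda$ directly, using the product decomposition $\mathcal{M} = \mathcal{M}_1 \oplus \mathcal{M}_2$, $A = A_1 \oplus A_2$, and the explicit description of $\Lambda$ as a fibre product inside $\Lambda_1 \oplus \Lambda_2$ from \eqref{identification2}. Writing $\beta = (\beta_1,\beta_2)$ we have $\mathcal{M}\beta = \mathcal{M}_1\beta_1 \oplus \mathcal{M}_2\beta_2$, so an element $(y_1,y_2) \in A_1 \oplus A_2$ lies in $X_\beta$ precisely when $y_i \in \mathcal{M}_i\beta_i \cap \Lambda_i$ for $i=1,2$ and $g_1(y_1) = g_2(y_2)$ in $\overline{\Lambda}$. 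The first point to establish is that $\mathcal{M}_i\beta_i \cap \Lambda_i$ can be replaced by $\Lambda_i\beta_i$; this is exactly the content of Lemma~\ref{lem:localisation-gen-swan-module} applied componentwise (the hypothesis $\mathcal{M}_i\beta_i + \mathfrak{f}_i = \mathcal{M}_i$ follows from $\overline{\beta_i} \in (\Lambda_i/\mathfrak{f}_i)^\times \subseteq (\mathcal{M}_i/\mathfrak{f}_i)^\times$ by Proposition~\ref{prop:Xbeta}(i), and Assumption~(G) for $\mathfrak{g}$ restricts to each $\mathfrak{g}_i$), together with the fact that a test lattice in a direct sum is the direct sum of the component test lattices. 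Actually, I should be slightly careful: $X_\beta = \mathcal{M}\beta \cap \Lambda$ with the intersection taken inside $A$, so I will verify locally at each maximal ideal $\mathfrak{p}$ of $\mathcal{O}$ that $(X_\beta)_\mathfrak{p} = (\mathcal{M}\beta)_\mathfrak{p} \cap \Lambda_\mathfrak{p}$ has the claimed form, using that localisation commutes with finite intersections and with the fibre-product description.

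Granting that, I would next identify the set
\[
X_\beta = \{ (\lambda_1\beta_1, \lambda_2\beta_2) \in \Lambda_1\beta_1 \oplus \Lambda_2\beta_2 \mid g_1(\lambda_1\beta_1) = g_2(\lambda_2\beta_2) \}.
\]
Now the condition $g_1(\lambda_1\beta_1) = g_2(\lambda_2\beta_2)$ can be rewritten using $u = g_1(\beta_1)g_2(\beta_2)^{-1}$: since $g_i$ is a ring homomorphism, $g_1(\lambda_1)g_1(\beta_1) = g_2(\lambda_2)g_2(\beta_2)$, i.e.\ $g_1(\lambda_1)u = g_2(\lambda_2)$ after right-multiplying by $g_2(\beta_2)^{-1}$ and substituting. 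Hence $(\lambda_1,\lambda_2)$ runs exactly over $M(u)$, and the map $(\lambda_1,\lambda_2) \mapsto (\lambda_1\beta_1,\lambda_2\beta_2)$, which is right multiplication by $\beta = (\beta_1,\beta_2)$, carries $M(u)$ bijectively onto $X_\beta$. This gives $X_\beta = M(u)\beta$ as subsets of $A$. Finally, right multiplication by $\beta \in A^\times$ is a left $\Lambda$-module isomorphism from $M(u)$ onto $M(u)\beta$ (it is $\Lambda$-linear because $\Lambda$ acts on the left, and bijective with inverse right multiplication by $\beta^{-1}$), so $X_\beta \cong M(u)$ as $\Lambda$-lattices.

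The main obstacle, and the only place requiring genuine care rather than bookkeeping, is the reduction $\mathcal{M}_i\beta_i \cap \Lambda_i = \Lambda_i\beta_i$ at primes where $\Lambda_i \neq \mathcal{M}_i$: one must check that Assumption~(G) for the global ideal $\mathfrak{g} = \mathfrak{g}_1 \oplus \mathfrak{g}_2$ on $C = C_1 \oplus C_2$ indeed transfers to each $\mathfrak{g}_i$ on $C_i$, so that Lemma~\ref{lem:localisation-gen-swan-module} is applicable componentwise — this is a direct unwinding of the definition since maximal ideals of $\mathcal{O}_C$ decompose along the components — and that the local statement $(X_\beta)_\mathfrak{p} = \Lambda_\mathfrak{p}$ at such primes is consistent with the claimed identity $M(u)\beta$, which holds because at those primes $\beta_i \in \mathcal{M}_{i,\mathfrak{p}}^\times$ forces $M(u)_\mathfrak{p}$ and $\Lambda_\mathfrak{p}$ to coincide after the twist. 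Everything else is formal manipulation of the fibre-product presentation.
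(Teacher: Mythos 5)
Your proof is correct and follows essentially the same approach as the paper: both rely on Lemma~\ref{lem:localisation-gen-swan-module} for the local structure of the test lattices (including the crucial fact that $\beta_{i} \in \Lambda_{i,\mathfrak{p}}^{\times}$ at primes where $\Lambda_{i,\mathfrak{p}} \neq \mathcal{M}_{i,\mathfrak{p}}$), and both carry out the same rewriting of the gluing condition via $u = g_{1}(\beta_{1})g_{2}(\beta_{2})^{-1}$. The only difference is organisational — you extract the componentwise identity $\mathcal{M}_{i}\beta_{i} \cap \Lambda_{i} = \Lambda_{i}\beta_{i}$ as a separate intermediate step and then finish globally, while the paper verifies $(X_{\beta})_{\mathfrak{p}} = (M(u)\beta)_{\mathfrak{p}}$ directly by a two-case local argument; note, however, that your phrase ``a test lattice in a direct sum is the direct sum of the component test lattices'' is misplaced, since $\Lambda$ is a fibre product rather than a direct sum — the correct point, which you do in fact use, is that membership in $\Lambda \cap (\mathcal{M}_{1}\beta_{1}\oplus\mathcal{M}_{2}\beta_{2})$ decomposes into the componentwise conditions together with the gluing constraint $g_{1}(y_{1})=g_{2}(y_{2})$.
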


\begin{proof}
Let $\mathfrak{p}$ be a maximal ideal of $\mathcal{O}$.
By \cite[Theorem 4.21]{Reiner2003}, it suffices to prove that
$(X_{\beta})_{\mathfrak{p}} = (M(u) \beta)_{\mathfrak{p}}$.
Note that $(M(u) \beta)_{\mathfrak{p}} = M(u)_{\mathfrak{p}} \beta$.

Suppose that $\Lambda_{\mathfrak{p}}=\mathcal{M}_{\mathfrak{p}}$.
Since 
$\Lambda_{\mathfrak{p}} \subseteq \Lambda_{1,\mathfrak{p}} \oplus \Lambda_{2, \mathfrak{p}}$ 
and $\Lambda_{\mathfrak{p}}$ is maximal, we have that 
$\Lambda_{\mathfrak{p}} =  \Lambda_{1, \mathfrak{p}} \oplus \Lambda_{2, \mathfrak{p}}$.
Thus localising the fiber product \eqref{eq:fiber-product} at $\mathfrak{p}$ 
implies that $\frh_{i,\frp} = \calM_{i,\frp} = \Lambda_{i, \frp}$ for $i=1,2$ and 
$\overline{\Lambda}_{\mathfrak{p}} = 0$.
It is then easy to see that
$M(u)_\frp = \Lambda_{1, \frp} \oplus \Lambda_{2, \frp} =  \calM_\frp$ and hence 
$M(u)_{\mathfrak{p}} \beta = \mathcal{M}_{\mathfrak{p}} \beta = (X_{\beta})_{\mathfrak{p}}$,
where the last equality follows from Lemma~\ref{lem:localization-gen-swan-module}.

Now suppose that $\Lambda_{\mathfrak{p}} \neq \mathcal{M}_{\mathfrak{p}}$.
Then $\beta \in \mathcal{M}_{\mathfrak{p}}^{\times}$ by
Lemma~\ref{lem:localization-gen-swan-module}.
Thus $\beta_{i} \in \mathcal{M}_{i,\mathfrak{p}}^{\times}$
and so in fact 
$\beta_{i}
\in \Lambda_{i,\mathfrak{p}} \cap \mathcal{M}_{i,\mathfrak{p}}^{\times}
= \Lambda_{i,\mathfrak{p}}^{\times}$ (see \cite[Chapter I, Theorem 7.9(iii)]{MR1183469}).
Then we have
\begin{align*}
M(u)_{\mathfrak{p}} \beta 
&=  
\{ (a_{1} \beta_{1}, a_{2}\beta_{2}) \mid (a_{1},a_{2}) 
\in \Lambda_{1,\mathfrak{p}} \oplus \Lambda_{2,\mathfrak{p}}
\text{ and } g_{1, \mathfrak{p}}(a_1) u = g_{2, \mathfrak{p}}(a_2) \} \\
&= \{ (a_{1} \beta_{1}, a_{2}\beta_{2}) \mid (a_{1},a_{2}) 
\in \Lambda_{1,\mathfrak{p}} \oplus \Lambda_{2,\mathfrak{p}} \text{ and } 
g_{1, \mathfrak{p}}(a_{1}\beta_{1}) = g_{2, \mathfrak{p}}(a_{2}\beta_{2}) \} \\
&= \{ (b_{1}, b_{2}) \mid (b_{1},b_{2}) 
\in \Lambda_{1,\mathfrak{p}} \oplus \Lambda_{2,\mathfrak{p}} \text{ and } 
g_{1, \mathfrak{p}}(b_{1}) = g_{2, \mathfrak{p}}(b_{2}) \} \\
&= \Lambda_{\mathfrak{p}} = (X_{\beta})_{\mathfrak{p}},
\end{align*}
where the last equality follows from Lemma~\ref{lem:localization-gen-swan-module}.
\end{proof}

\subsection{Alternative description of $\ker(\partial)$}
Recall from \eqref{eq:BB-exact-seq} and \eqref{eq:def-tilde-nu} respectively
that we have an exact sequence
\[
\left( \Lambda/\frf \right)^{\times} \stackrel{\nu}{\longrightarrow} I_\frg/P_\frg^+ 
\stackrel{\mu}{\longrightarrow} \Cl(\Lambda) \lra 0,
\]
and that the reduced norm map induces a group homomorphism
\[
\tilde{\nu} : (\mathcal{M}/\mathfrak{f})^{\times} \longrightarrow I_{\mathfrak{g}} / P_{\mathfrak{g}}^{+}\] 
that extends $\nu$. 
Therefore
$(I_{\mathfrak{g}}/P_{\mathfrak{g}}^{+})/\tilde{\nu}((\Lambda/\mathfrak{f})^{\times}) \cong \Cl(\Lambda)$.
Let
\[
\tilde{\mu} : I_{\mathfrak{g}}/P_{\mathfrak{g}}^{+} \longrightarrow (I_{\mathfrak{g}}/P_{\mathfrak{g}}^{+})/\tilde{\nu}((\Lambda/\mathfrak{f})^{\times})
\]
denote the canonical surjection.
Define 
\begin{equation}\label{eq:nu_2-def}
\nu_{2}' \colon (\Lambda_{2}/\mathfrak{h}_{2})^{\times} 
\longrightarrow 
(I_{\mathfrak{g}}/P_{\mathfrak{g}}^{+})/\tilde{\nu}((\Lambda/\mathfrak{f})^{\times}),
\quad 
x_{2} \mapsto
\tilde{\mu}(\tilde{\nu}((\overline{1}, \overline{\alpha_{2}}))), 
\end{equation}
where $\alpha_{2} \in \Lambda_{2} \cap A_{2}^{\times}$ is any element such that 
$\overline{\alpha_{2}} \in (\Lambda_{2}/\mathfrak{f}_{2})^{\times}$
and $x_{2} = (\alpha_{2} \bmod \mathfrak{h}_{2})$.

\begin{lemma}\label{lem:kerpartial}
The map $\nu_{2}'$ is a well-defined group homomorphism.
If $\mathfrak{g}$ is locally radical
then $\ker(\partial) = {\tilde{g}_{2}}(\ker(\nu'_{2}))$,
where $\tilde{g}_{2} : \Lambda_{2}/\frh_{2} \cong \overline{\Lambda}$
is the ring isomorphism induced by $g_{2}$.
\end{lemma}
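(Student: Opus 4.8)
The plan is to prove the two parts in turn — first that $\nu_2'$ is a well-defined group homomorphism, then that (under Assumption (G)) $\ker(\partial) = \tilde{g}_2(\ker(\nu_2'))$ — only the second part using Assumption (G). For well-definedness I would first check that each $x_2 \in (\Lambda_2/\mathfrak{h}_2)^\times$ admits a lift $\alpha_2$ of the kind demanded in \eqref{eq:nu_2-def}. Since $\Lambda_2/\mathfrak{f}_2$ is finite, hence semilocal, and $\mathfrak{f}_2 \subseteq \mathfrak{h}_2$, the reduction $(\Lambda_2/\mathfrak{f}_2)^\times \twoheadrightarrow (\Lambda_2/\mathfrak{h}_2)^\times$ is surjective (lifting of units over semilocal rings, \cite[Chapter III, (2.9) Corollary]{bass}), so some $\alpha_2 \in \Lambda_2$ satisfies $\overline{\alpha_2} \in (\Lambda_2/\mathfrak{f}_2)^\times$ and $\alpha_2 \equiv x_2 \bmod \mathfrak{h}_2$; as $\mathfrak{f}_2$ is a full $\mathcal{O}$-lattice in $A_2$, the coset $\alpha_2 + \mathfrak{f}_2$ is Zariski dense in $A_2$ and hence meets the non-empty open set $A_2^\times$, so we may further take $\alpha_2 \in A_2^\times$. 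For independence of the choice, if $\alpha_2, \alpha_2'$ are two valid lifts of $x_2$ then $\overline{\delta} := \overline{\alpha_2'}\,\overline{\alpha_2}^{-1} \in (\Lambda_2/\mathfrak{f}_2)^\times$ is sent to $1$ by the map $\bar{g}_2 \colon \Lambda_2/\mathfrak{f}_2 \to \overline{\Lambda}$ induced by $g_2$; viewing $\Lambda/\mathfrak{f}$ as the fibre product of $\Lambda_1/\mathfrak{f}_1$ and $\Lambda_2/\mathfrak{f}_2$ over $\overline{\Lambda}$ (legitimate since $\mathfrak{f} = \mathfrak{f}_1 \oplus \mathfrak{f}_2 \subseteq \Lambda$), this shows $(\overline{1}, \overline{\delta}) \in (\Lambda/\mathfrak{f})^\times$. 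From $(\overline{1}, \overline{\alpha_2'}) = (\overline{1}, \overline{\delta})(\overline{1}, \overline{\alpha_2})$ in $(\mathcal{M}/\mathfrak{f})^\times$, the fact that $\tilde{\nu}$ is a homomorphism, and $\tilde{\nu}((\overline{1}, \overline{\delta})) = \nu((\overline{1}, \overline{\delta})) \in \tilde{\nu}((\Lambda/\mathfrak{f})^\times) = \ker(\mu) = \ker(\tilde{\mu})$ (by \eqref{eq:ker-mu}), one gets $\tilde{\mu}(\tilde{\nu}((\overline{1}, \overline{\alpha_2'}))) = \tilde{\mu}(\tilde{\nu}((\overline{1}, \overline{\alpha_2})))$, so $\nu_2'$ is well-defined; it is then immediately a homomorphism, since a product of valid lifts of $x_2$ and $x_2'$ is a valid lift of $x_2 x_2'$ and $\tilde{\nu}, \tilde{\mu}$ are homomorphisms.

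For the identity $\ker(\partial) = \tilde{g}_2(\ker(\nu_2'))$ the key device is Proposition \ref{prop:mu-gen-swan}, which realises pullback lattices as test lattices. Fix $x_2 \in (\Lambda_2/\mathfrak{h}_2)^\times$ with a valid lift $\alpha_2 \in \Lambda_2 \cap A_2^\times$ as above, and set $\beta := (1, \alpha_2) \in \mathcal{M} \cap A^\times$, so $\overline{\beta} = (\overline{1}, \overline{\alpha_2}) \in (\mathcal{M}/\mathfrak{f})^\times$. Proposition \ref{prop:mu-gen-swan} applied with $\beta_1 = 1$, $\beta_2 = \alpha_2$ gives $X_\beta \cong M(g_1(1)g_2(\alpha_2)^{-1}) = M(\tilde{g}_2(x_2)^{-1})$ as $\Lambda$-lattices, whence $[X_\beta]_{\st} = \partial(\tilde{g}_2(x_2)^{-1}) = -\partial(\tilde{g}_2(x_2))$ since $\partial$ is a homomorphism (Corollary \ref{cor:properties-of-Mu}(i)). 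Combining this with Proposition \ref{prop:Xbeta}(vii) and the definition \eqref{eq:nu_2-def} of $\nu_2'$ gives
\begin{align*}
\tilde{g}_2(x_2) \in \ker(\partial)
&\iff X_\beta \text{ is stably free over } \Lambda\\
&\iff \tilde{\nu}(\overline{\beta}) \in \ker(\mu)\\
&\iff \tilde{\mu}(\tilde{\nu}((\overline{1}, \overline{\alpha_2}))) = 0\\
&\iff x_2 \in \ker(\nu_2').
\end{align*}
As $\tilde{g}_2$ is a ring isomorphism and so restricts to a group isomorphism $(\Lambda_2/\mathfrak{h}_2)^\times \xrightarrow{\sim} \overline{\Lambda}^\times$, letting $x_2$ range over $(\Lambda_2/\mathfrak{h}_2)^\times$ yields $\ker(\partial) = \tilde{g}_2(\ker(\nu_2'))$.

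The main obstacle I anticipate is the existence step in the first part: producing a lift of $x_2$ that is simultaneously a unit modulo $\mathfrak{f}_2$ and a regular element of $A_2$. These conditions pull in different directions — a unit mod $\mathfrak{f}_2$ need not be regular in $A_2$, as $A_2 = K \oplus K$ already shows — and reconciling them uses the fullness of $\mathfrak{f}_2$, either via the Zariski-density argument above or by perturbing $\alpha_2$ by an element of the conductor $m\mathcal{M}_2 \subseteq \mathfrak{f}_2$ (for a suitable nonzero $m \in \mathcal{O}$) handled componentwise over the simple factors of $A_2$. Everything else is bookkeeping with the homomorphisms $\partial$, $\tilde{\nu}$, $\tilde{\mu}$ and the fibre-product descriptions of $\Lambda$ and $\Lambda/\mathfrak{f}$.
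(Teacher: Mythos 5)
Your proof is correct and follows essentially the same route as the paper: the well-definedness argument hinges on showing that a ratio of two lifts of $x_2$ yields an element of $(\Lambda/\mathfrak{f})^\times$ (the paper does this via the surjection $\theta_2\colon(\Lambda_2/\mathfrak{f}_2)^\times\to(\Lambda_2/\mathfrak{h}_2)^\times$ and its kernel, you do it by directly comparing two lifts, but the substance is the same), and the identity $\ker(\partial)=\tilde{g}_2(\ker(\nu_2'))$ is obtained by the same chain of equivalences through Proposition~\ref{prop:Xbeta}(vii) and Proposition~\ref{prop:mu-gen-swan}. The one place you go beyond the paper is in explicitly justifying the existence of a lift $\alpha_2\in\Lambda_2\cap A_2^\times$ that is simultaneously a unit modulo $\mathfrak{f}_2$ and regular in $A_2$; the paper treats this as implicit in the definition~\eqref{eq:nu_2-def}, and your perturbation-by-the-conductor argument is a clean way to see it.
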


\begin{proof}
Let $\iota_{2} \colon (\Lambda_{2}/\mathfrak{f}_{2})^{\times} \longrightarrow (\mathcal{M}/\mathfrak{f})^{\times}$
and $\theta_{2} \colon (\Lambda_2/\mathfrak f_2)^\times \longrightarrow (\Lambda_2/\mathfrak h_2)^{\times}$ be the canonical group homomorphisms 
(recall from \S \ref{subsec:rel-to-gen-Swan} that $\frf_{2} \subseteq \frh_{2}$).
Then $\iota_{2}$ is clearly injective and $\theta_{2}$ is surjective by \cite[Chapter III, (2.9) Corollary]{bass}. 
Moreover, it follows from the definition of $\nu_{2}'$ that
$\tilde{\mu} \circ \tilde{\nu} \circ \iota_{2} = \nu_{2}' \circ \theta_{2}$.
Every element in $\ker(\theta_{2})$ is represented by an element of $\Lambda_{2}$ of the form 
$\lambda_{2}=1 + h_{2}$ with $h_{2} \in \frh_{2}$.
Since 
\[
(1,\lambda_{2}) = (1, 1) + (0, h_{2}) = 1 + (0, h_{2}) \in \Lambda,
\]
we have
$\tilde{\nu}(\overline{1} , \overline{\lambda_{2}}) \in \tilde \nu((\Lambda/\mathfrak{f})^{\times})$.
Therefore $\nu_{2}'$ is a well-defined group homomorphism.

Let $x_{2} \in \left(\Lambda_2/\frh_2 \right)^{\times}$ and let
$\alpha_{2}$ be as in the definition of the map $\nu_{2}'$.
Then we have
\begin{align*}
x_{2} \in \ker(\nu_{2}') 
& \Longleftrightarrow \ \tilde{\nu}((\overline{1}, \overline{\alpha_{2}})) \in
\tilde{\nu}((\Lambda/\mathfrak{f})^{\times}) & & \\
& \Longleftrightarrow \ [X_{(1, \alpha_2)}]_{\mathrm{st}} = 0 & &
\text{(Proposition~\ref{prop:Xbeta}~(vii))} \\
&\Longleftrightarrow \ [M(g_2(\alpha_2)^{-1})]_{\mathrm{st}} = 0 & & 
\text{(Proposition~\ref{prop:mu-gen-swan})}\\
&\Longleftrightarrow \  \tilde{g}_{2}(\alpha_{2} \bmod \mathfrak{h}_{2} ) \in \ker(\partial). 
\end{align*}
Since $x_{2} = (\alpha_{2} \bmod \mathfrak{h}_{2})$
in $\Lambda_{2}/\frh_{2} \cong \overline{\Lambda}$,
we conclude that $\ker(\partial) = \tilde{g}_{2}(\ker({\nu}_{2}'))$.
\end{proof}

\subsection{An algorithm for determining SFC using fiber products}
The following algorithm is based on Corollary~\ref{cor:Eichler-W-consequences}.

\begin{algorithm}\label{alg:sfc-fiberproduct}
Let $K$ be a number field with ring of integers $\mathcal{O}=\mathcal{O}_{K}$, 
and let $A$ be a finite-dimensional semisimple $K$-algebra.
Let $\Lambda$ be an $\mathcal{O}$-order in $A$ with 
a fiber product as in \eqref{eq:fiber-product-IJ}.
Assume that $A_{1}$ is nontrivial and satisfies the Eichler condition.
If the following algorithm returns \ensuremath{\mathsf{true}}, then $\Lambda$ has SFC.
If it returns \ensuremath{\mathsf{false}}, then $\Lambda$ fails SFC.
\renewcommand{\labelenumi}{(\arabic{enumi})}
\begin{enumerate}
\item Compute the center $C= C_{1} \oplus C_{2}$ of $A=A_{1} \oplus A_{2}$ 
and the decomposition of $A= A_{1} \oplus A_{2}$ into simple $K$-algebras.
\item For $i=1,2$, compute a maximal $\mathcal{O}$-order $\mathcal{M}_{i}$ in $A_{i}$ containing $\Lambda_{i}$.
\item \label{alg-SFC-tf-M-check-step2} Check whether $\mathcal{M}_{2}$ has SFC.
If not, return \ensuremath{\mathsf{false}}.
If so, continue with the steps below.
\item For $i=1,2$, compute 
$\mathfrak{f}_{i} := \{ x \in A_{i} \mid x\mathcal{M}_{i} \subseteq \Lambda_{i} \} 
\cap \{ x \in A_{i} \mid \mathcal{M}_{i}x \subseteq \Lambda_{i} \}$.

\item For $i=1,2$, compute $\mathcal{O}_{C_{i}} := \mathcal{M}_{i} \cap C_{i}$ and 
$\mathfrak{g}_{i} := \mathfrak{f}_{i} \cap \mathcal{O}_{C_{i}} = \mathfrak{f}_{i} \cap C_{i}$.
\item \label{alg-SFC-tf-M-adjust-step2}
For $i=1,2$, adjust $\mathfrak{f}_{i}$ (and thus $\mathfrak{g}_{i}$) to ensure
that $\mathfrak{g}_{i}$ is locally radical.
\item Check whether $\Lambda_{2}$ has SFC.
If not, return \ensuremath{\mathsf{false}}. 
If so, continue with the steps below.
\item Compute $\mathfrak{h}_{2} = \ker(g_{2})$ and $\overline{\Lambda}_{2} = \Lambda_{2} / \mathfrak{h}_{2}$.
\item Compute $\overline{\Lambda}_{2}^{\times}$ as a finite group.
\item Compute $\overline{\Lambda}_{2}^{\times \mathrm{ab}} = \overline{\Lambda}_{2}^\times{}{/}{}[\overline \Lambda_{2}^{\times}, \overline{\Lambda}_{2}^{\times}]$ as a finite abelian group.
\item Compute a set of generators of $(\Lambda/\mathfrak{f})^{\times}$, where 
$\mathfrak{f} = \mathfrak{f}_{1} \oplus \mathfrak{f}_{2}$.
\item Compute $I_{\mathfrak{g}}/P_{\mathfrak{g}}^{+}$ and
$\tilde{\nu}((\Lambda/\mathfrak{f})^{\times}) \le I_{\mathfrak{g}}/P_{\mathfrak{g}}^{+}$ 
as finite abelian groups, where $\mathfrak{g} = \mathfrak{g}_{1} \oplus \mathfrak{g}_{2}$.
\item Compute generators of the subgroup
$\ker(\nu_{2}') \leq \overline{\Lambda}_{2}^{\times}$.
\item Compute elements $\overline{\gamma}_{1}, \ldots, \overline{\gamma}_{r} \in 
\overline{\Lambda}_{2}^{\times}$ such that the set of their images in 
$\overline{\Lambda}_{2}^{\times \mathrm{ab}}$ is a minimal generating set of the 
subgroup $\ker(\nu_{2}')/[\overline{\Lambda}_{2}^{\times}, \overline{\Lambda}_{2}^{\times}] \leq  
\overline{\Lambda}_{2}^{\times \mathrm{ab}}$.
\item For each $i \in \{ 1, \ldots, r \}$, choose an element
$\beta_{i} \in \Lambda_{2} \cap A_{2}^{\times}$ with
$(\beta_{i} \bmod \mathfrak{f}_{2}) \in \left( \Lambda_{2}/\frf_{2} \right)^{\times}$ and 
such that $\overline{\gamma}_{i} = ( \beta_{i} \bmod{\frh_{2}})$.
\item For each $i \in \{ 1, \ldots, r \}$, check whether $X_{(1, \beta_{i})}$ is free over $\Lambda$. 
If this is true for all $i \in \{ 1, \ldots, r \}$, return \ensuremath{\mathsf{true}}.
Otherwise, return \ensuremath{\mathsf{false}}.
\end{enumerate}
\end{algorithm}

\begin{proof}[Proof of correctness]
By Theorem~\ref{thm:Jacobinski-cancellation} and the assumption that $A_{1}$
satisfies the Eichler condition, $\Lambda_{1}$ must have SFC.
Thus by Corollary~\ref{cor:cancellation-direct-product}, $\Lambda_{1} \oplus \Lambda_{2}$
has SFC if and only if $\Lambda_{2}$ has SFC.
Therefore the correctness of an output \ensuremath{\mathsf{false}} in either step (3) or step (7) follows from Corollary~\ref{cor:cancellation-over-order} since 
$\Lambda \subseteq \Lambda_{1} \oplus \Lambda_{2} \subseteq 
\Lambda_{1} \oplus \mathcal{M}_{2}$.

By Lemma~\ref{lem:kerpartial}, we have $\ker(\partial) = {\tilde{g}_{2}}(\ker(\nu'_{2}))$,
where $\tilde{g}_{2} : \Lambda_{2}/\frh_{2} \cong \overline{\Lambda}$
is the ring isomorphism induced by $g_{2}$.
Thus the set of images of $\tilde{g}_{2}(\overline{\gamma}_{1}),\ldots,\tilde{g}_{2}(\overline{\gamma}_{r})$
in $\overline{\Lambda}^{\times \mathrm{ab}}= \overline{\Lambda}^\times{}{/}{}[\overline \Lambda^{\times}, \overline{\Lambda}^{\times}]$ generates the subgroup  $\ker(\partial) /  [\overline{\Lambda}^{\times}, \overline{\Lambda}^{\times}] \leq \overline{\Lambda}^{\times \mathrm{ab}}$.
Moreover, by Proposition~\ref{prop:mu-gen-swan}, for each $i \in \{ 1, \ldots, r \}$, 
we have $X_{(1, \beta_{i})} \cong M(g_{2}(\beta_{i})^{-1})$ as $\Lambda$-lattices.
Therefore the correctness of the output in step (16)
follows from Corollary~\ref{cor:Eichler-W-consequences} with
$N = [\overline{\Lambda}^{\times}, \overline{\Lambda}^{\times}]$.
\end{proof}

\begin{proof}[Further details on each step]
Steps (1)--(6), (11), and (12) can be performed in the same way 
as the corresponding steps  of Algorithm~\ref{alg:SFC-naive}.

In principle, Step (7) can be performed using any appropriate algorithm,
such as Algorithm~\ref{alg:SFC-naive}, for example.
Note that Algorithm~\ref{alg:sfc-fiberproduct} itself can be applied recursively, 
which is useful because the choice of fiber product can have a large
impact on the running time.

Step (8) is straightforward and 
Step (9) can be performed using the methods outlined in \S \ref{subsec:unit-groups-finite-rings}.
Note that, in practice, Step (9) is often the bottleneck in the computation. 
Steps (10), (13), and (14) can be performed using standard methods in computational group theory, as described in \cite{MR2129747}, for example, and the explicit definition of $\nu_{2}'$
given in \eqref{eq:nu_2-def}. 
Step (15) is straightforward and in Step (16), the freeness testing can be performed either as described in Step (11) of Algorithm~\ref{alg:SFC-naive} or the method described in 
\S \ref{sec:hybrid-method}.
\end{proof}

\section{Primary decomposition and unit groups of quotient rings}\label{sec:primary-decomposition}

\subsection{Setup}\label{subsec:primary-setup}
Let $K$ be a number field with ring of integers $\mathcal{O}=\mathcal{O}_{K}$, 
and let $A$ be a finite-dimensional semisimple $K$-algebra.
Let $\Lambda$ be an $\mathcal{O}$-order in $A$ and let
$\mathfrak{h}$ be any full two-sided ideal of $\Lambda$.
In \S \ref{subsec:primary-dec}, we describe a practical algorithm to compute
a set of generators of the unit group $(\Lambda/\mathfrak{h})^{\times}$.
In \S \ref{subsec:unit-groups-finite-rings}, we then describe a practical algorithm
to compute $(\Lambda/\mathfrak{h})^{\times}$ as an abstract group.
These are important subtasks in Algorithm~\ref{alg:sfc-fiberproduct}.

\subsection{Primary decomposition and generators of unit groups of quotient rings}\label{subsec:primary-dec}

Let $C$ be an \'etale $K$-algebra and let $\Gamma$ be an $\calO$-order in $C$.
Let $\frg$ be a proper ideal of $\Gamma$ of full rank.
We now describe the computation of a primary decomposition of 
$\frg$, that is, the computation of coprime primary ideals 
$\mathfrak{q}_{1},\dotsc,\mathfrak{q}_{r}$ of $\Gamma$, such that
\[ 
\frg = \prod_{i=1}^{r} \mathfrak{q}_{i} = \bigcap_{i=1}^{r} \mathfrak{q}_{i}. 
\]

\begin{prop}\label{prop:primdec}
Let $C$ be an \'etale $K$-algebra and let  $\Gamma$ be an $\calO$-order in $C$.
Let $\frg$ be a proper ideal of $\Gamma$ of full rank.
Let $\mathfrak{p}_{1},\dotsc, \mathfrak{p}_{r}$ be the maximal ideals of $\Gamma$ 
containing $\frg$. Then the following hold.
\begin{enumerate}
\item 
There exist exponents $e_{1}, \ldots, e_{r} \in \Z_{>0}$ such that
\[
\frp_1^{e_1} \cdots \frp_r^{e_r} \subseteq \frg \subseteq \frp_1 \cdots \frp_r.
\]
\item
For each $i \in \{1,\dotsc,r \}$, the ideal
$\mathfrak{p}_i^{e_i} + \mathfrak{g}$ is $\mathfrak{p}_i$-primary.
\item
A primary decomposition of $\mathfrak{g}$ is given by
\[ 
\mathfrak{g} 
= \bigcap_{i=1}^{r} (\mathfrak{p}_{i}^{e_i} + \mathfrak{g}) 
= \prod_{i=1}^{r} (\mathfrak{p}_{i}^{e_i} + \mathfrak{g}).
\] 
\end{enumerate}
\end{prop}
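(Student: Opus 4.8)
The plan is to reduce everything to the structure theory of the Artinian quotient ring $\bar{\Gamma} := \Gamma/\mathfrak{g}$. First I would observe that since $\mathfrak{g}$ has full rank we have $K\mathfrak{g} = C$, and tensoring $0 \to \mathfrak{g} \to \Gamma \to \Gamma/\mathfrak{g} \to 0$ with the flat $\mathcal{O}$-algebra $K$ gives $K \otimes_{\mathcal{O}} (\Gamma/\mathfrak{g}) \cong C/K\mathfrak{g} = 0$; thus $\Gamma/\mathfrak{g}$ is a finitely generated torsion $\mathcal{O}$-module, and since $\mathcal{O}$ has finite residue fields it is finite, in particular Artinian. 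Consequently every prime of $\Gamma$ containing $\mathfrak{g}$ is maximal, so $\mathfrak{p}_1,\dots,\mathfrak{p}_r$ are precisely the maximal ideals of $\Gamma$ lying over $\mathfrak{g}$, and being distinct maximal ideals they are pairwise comaximal.

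For part (i), I would identify the Jacobson radical of $\bar{\Gamma}$ with $(\mathfrak{p}_1 \cap \cdots \cap \mathfrak{p}_r)/\mathfrak{g} = (\mathfrak{p}_1 \cdots \mathfrak{p}_r)/\mathfrak{g}$, using that pairwise comaximal ideals of a commutative ring have intersection equal to their product. Since $\bar{\Gamma}$ is Artinian its radical is nilpotent, say of index $N$, and pulling back along $\Gamma \to \bar{\Gamma}$ gives $(\mathfrak{p}_1 \cdots \mathfrak{p}_r)^N = \mathfrak{p}_1^N \cdots \mathfrak{p}_r^N \subseteq \mathfrak{g}$; taking $e_1 = \cdots = e_r = N$ yields the left-hand inclusion, while the right-hand inclusion $\mathfrak{g} \subseteq \mathfrak{p}_1 \cdots \mathfrak{p}_r$ is immediate from $\mathfrak{g} \subseteq \mathfrak{p}_i$ for all $i$. (For the algorithm one would instead increase the $e_i$ individually until $\mathfrak{p}_1^{e_1} \cdots \mathfrak{p}_r^{e_r} \subseteq \mathfrak{g}$, which must eventually happen; the proposition only asserts existence.)

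For part (ii), from $\mathfrak{p}_i^{e_i} \subseteq \mathfrak{p}_i^{e_i} + \mathfrak{g} \subseteq \mathfrak{p}_i$ I would take radicals to obtain $\mathfrak{p}_i = \sqrt{\mathfrak{p}_i^{e_i}} \subseteq \sqrt{\mathfrak{p}_i^{e_i} + \mathfrak{g}} \subseteq \mathfrak{p}_i$, hence $\sqrt{\mathfrak{p}_i^{e_i} + \mathfrak{g}} = \mathfrak{p}_i$; an ideal whose radical is a maximal ideal is primary for that maximal ideal, so $\mathfrak{q}_i := \mathfrak{p}_i^{e_i} + \mathfrak{g}$ is $\mathfrak{p}_i$-primary. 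For part (iii), the $\mathfrak{q}_i$ are pairwise comaximal since $\mathfrak{p}_i + \mathfrak{p}_j = \Gamma$ forces $\mathfrak{p}_i^{e_i} + \mathfrak{p}_j^{e_j} = \Gamma$ and a fortiori $\mathfrak{q}_i + \mathfrak{q}_j = \Gamma$, so the Chinese Remainder Theorem gives $\bigcap_{i=1}^r \mathfrak{q}_i = \prod_{i=1}^r \mathfrak{q}_i$. It remains to identify this common value with $\mathfrak{g}$: on one hand $\mathfrak{g} \subseteq \mathfrak{q}_i$ for each $i$, hence $\mathfrak{g} \subseteq \bigcap_i \mathfrak{q}_i$; on the other hand a typical generator $\prod_i (b_i + c_i)$ of $\prod_i \mathfrak{q}_i$ with $b_i \in \mathfrak{p}_i^{e_i}$, $c_i \in \mathfrak{g}$ expands as $b_1 \cdots b_r$ plus terms each containing a factor $c_j \in \mathfrak{g}$, hence lies in $\mathfrak{p}_1^{e_1} \cdots \mathfrak{p}_r^{e_r} + \mathfrak{g} = \mathfrak{g}$ by part (i); thus $\prod_i \mathfrak{q}_i \subseteq \mathfrak{g} \subseteq \bigcap_i \mathfrak{q}_i = \prod_i \mathfrak{q}_i$, forcing equality throughout, which is exactly the assertion of (iii).

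None of the individual steps is deep: the main obstacle is really just the opening reduction to the fact that $\Gamma/\mathfrak{g}$ is Artinian, since everything downstream — nilpotency of the radical in (i), maximality of all the $\mathfrak{p}_i$, and the comaximality underlying the use of CRT in (iii) — rests on it. Once that is in hand, parts (i)–(iii) are a short, essentially formal bookkeeping argument with ideals, and the only care needed is in the expansion step at the end of (iii).
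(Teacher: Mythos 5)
Your proof is correct. Parts (ii) and (iii) match the paper essentially line-for-line: take radicals to get $\sqrt{\mathfrak p_i^{e_i}+\mathfrak g}=\mathfrak p_i$ and invoke the standard fact that an ideal with maximal radical is primary; then use pairwise comaximality for the CRT-type equality and the expansion $\prod_i(\mathfrak p_i^{e_i}+\mathfrak g)\subseteq \mathfrak p_1^{e_1}\cdots\mathfrak p_r^{e_r}+\mathfrak g=\mathfrak g$ together with the obvious reverse inclusion. The genuine difference is in part (i). The paper adapts the Neukirch-style argument that any nonzero ideal of a noetherian ring contains a product of prime (maximal) ideals, then runs a minimality/pruning argument: writing $1=x+y$ with $x\in\mathfrak q_1$, $y\in\mathfrak g$ whenever $\mathfrak q_1+\mathfrak g=\Gamma$ to drop a superfluous factor, concluding that only the $\mathfrak p_i$ survive. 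You instead first observe that $\Gamma/\mathfrak g$ is a finite (hence Artinian) ring, identify its Jacobson radical with $(\mathfrak p_1\cdots\mathfrak p_r)/\mathfrak g$, and invoke nilpotency of the radical to get a uniform exponent $N$ with $\mathfrak p_1^N\cdots\mathfrak p_r^N\subseteq\mathfrak g$. Both routes are sound; yours is slightly more structural (it derives everything from the one observation that $\bar\Gamma$ is Artinian, which also transparently justifies maximality of all $\mathfrak p_i$ and the comaximality used in (iii)), while the paper's is more directly tied to the pruning step that shows the $\mathfrak p_i$ are precisely the primes that must appear. Neither approach by itself yields an efficient method to compute the $e_i$, and you correctly flag that the algorithmic version proceeds by incrementing exponents until containment holds.
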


\begin{proof}
(i) 
As in the proof of \cite[(3.4)~Lemma]{Neukirch1999} one can show that there is a product of maximal ideals 
$\frq_{i}$ of $\Gamma$ such that $\frq_1 \cdots \frq_m \subseteq \frg$.
We take $m$ to be minimal.
Suppose that one of the $\frq_i$, say $\frq_1$, is not contained in the set $\{\frp_1, \ldots, \frp_r\}$.
Then we claim that $\frq_2\cdots\frq_m \sseq \frg$ contradicting the minimality of $m$.
Indeed, we then have $\frq_1 + \frg = \Gamma$, so that we can write $1 = x+y$ with 
$x \in \frq_1$ and $y \in \frg$. 
For any $z \in \frq_2\cdots\frq_m$ we obtain $z = zx + zy \in \frg$. 
We conclude that there exist $e_1, \ldots, e_r \in \Z_{>0}$ such that 
$\frp_1^{e_1} \cdots \frp_r^{e_r} \subseteq \frg$.

(ii) Since $\mathfrak p_i^{e_i} \subseteq \mathfrak p_i^{e_i} + \mathfrak g \subseteq \frp_i$, the radical of $\mathfrak p_i^{e_i} + \mathfrak g$ is the maximal ideal $\mathfrak p_i$. This implies that $\mathfrak p_i^{e_i} + \mathfrak g$ is $\mathfrak p_i$-primary by \cite[Proposition~(4.2)]{MR242802}.

(iii) By part (ii), it suffices to show that $\frg = \bigcap_{i=1}^r (\frp_i^{e_i} + \frg) = \prod_{i=1}^r (\frp_i^{e_i} + \frg)$. 
The second equality is clear because the ideals $\frp_i^{e_i} + \frg$ are pairwise coprime. 
For the first equality, observe that  
$\prod_{i=1}^r (\frp_i^{e_i} + \frg) \subseteq \frp_1^{e_1} \cdots \frp_r^{e_r} + \frg = \frg$;
the reverse inclusion is obvious.
\end{proof}

\begin{remark}
Let $\mathcal{O}_{C}$ be the (unique) maximal $\mathcal{O}$-order in $C$.
This is equal to the integral closure of $\Gamma$ in $C$.     
By \cite[Theorem 5.10]{MR242802}, each of the maximal ideals $\frp_i$ is of the form 
$\frP_i \cap \Gamma$ for some maximal ideal $\frP_i$ of $\OC$.
Let $\{ \frP_1, \ldots, \frP_n \}$ be the set of maximal ideals of $\OC$ with $\frg\OC \subseteq \frP_i$. Then
\[
\{\frp_1, \ldots, \frp_r \} = \{ \frP_1 \cap \Gamma, \ldots, \frP_n \cap \Gamma \}.
\]
Note that $\frP_1, \ldots, \frP_n$ can be computed using standard methods of 
algorithmic number theory~\cite[Theorem 4.9]{Lenstra1992}.
\end{remark}

\begin{corollary}\label{cor:algprimdec}
Let $C$ be an \'etale $K$-algebra and let  $\Gamma$ be an $\calO$-order in $C$.
Let $\frg$ be a proper ideal of $\Gamma$ of full rank.
Then there exists an algorithm that computes the primary decomposition of $\frg$.
\end{corollary}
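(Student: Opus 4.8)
The plan is to turn the structural description in Proposition~\ref{prop:primdec} into an effective procedure by observing that each ingredient it requires can be computed by standard methods of algorithmic number theory.

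First I would compute the maximal $\mathcal{O}$-order $\mathcal{O}_{C}$ in $C$, that is, the integral closure of $\Gamma$ in $C$; since $C$ is \'etale this decomposes as a direct sum of rings of integers of number fields, and $\mathcal{O}_{C}$ is computable by standard maximal-order algorithms. Since $\mathfrak{g}$ has full rank, $\mathfrak{g}\mathcal{O}_{C}$ is a nonzero ideal, and using a prime-ideal factorisation algorithm (for instance \cite[Theorem~4.9]{Lenstra1992}) I would compute the finite set $\{\mathfrak{P}_{1},\dotsc,\mathfrak{P}_{n}\}$ of maximal ideals of $\mathcal{O}_{C}$ containing $\mathfrak{g}\mathcal{O}_{C}$, form the ideals $\mathfrak{P}_{j}\cap\Gamma$, and discard repetitions to obtain the maximal ideals $\mathfrak{p}_{1},\dotsc,\mathfrak{p}_{r}$ of $\Gamma$ containing $\mathfrak{g}$, as justified in the remark following Proposition~\ref{prop:primdec}. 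All of these ideals, as well as $\mathfrak{g}$ itself, are represented by $\mathcal{O}$-module (equivalently $\mathbb{Z}$-module) bases, so sums, products, powers and containment tests of ideals of $\Gamma$ reduce to linear algebra over $\mathcal{O}$ and are effective.

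It remains to produce exponents $e_{1},\dotsc,e_{r}$ as in Proposition~\ref{prop:primdec}(i). For this I would simply search: for $e=1,2,3,\dotsc$ test whether $(\mathfrak{p}_{1}\cdots\mathfrak{p}_{r})^{e}\subseteq\mathfrak{g}$, let $e_{0}$ be the first value for which this holds, and set $e_{i}=e_{0}$ for all $i$. This search terminates because Proposition~\ref{prop:primdec}(i) guarantees that some product $\mathfrak{p}_{1}^{f_{1}}\cdots\mathfrak{p}_{r}^{f_{r}}$ lies in $\mathfrak{g}$, and taking $e_{0}=\max_{i}f_{i}$ gives $(\mathfrak{p}_{1}\cdots\mathfrak{p}_{r})^{e_{0}}=\mathfrak{p}_{1}^{e_{0}}\cdots\mathfrak{p}_{r}^{e_{0}}\subseteq\mathfrak{p}_{1}^{f_{1}}\cdots\mathfrak{p}_{r}^{f_{r}}\subseteq\mathfrak{g}$. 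The algorithm then outputs the ideals $\mathfrak{q}_{i}:=\mathfrak{p}_{i}^{e_{i}}+\mathfrak{g}$ for $i=1,\dotsc,r$; by parts (ii) and (iii) of Proposition~\ref{prop:primdec} these are coprime, $\mathfrak{p}_{i}$-primary, and satisfy $\mathfrak{g}=\bigcap_{i=1}^{r}\mathfrak{q}_{i}=\prod_{i=1}^{r}\mathfrak{q}_{i}$, which is the required primary decomposition.

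There is no genuine obstacle here: all the mathematical content already sits in Proposition~\ref{prop:primdec}, and what remains is only to certify that the recipe it provides is effective. The one point deserving explicit mention is the termination of the exponent search, which is precisely what Proposition~\ref{prop:primdec}(i) supplies; a secondary bookkeeping point is that distinct primes $\mathfrak{P}_{j}$ of $\mathcal{O}_{C}$ may contract to the same prime of $\Gamma$, so one must remove duplicates before forming the $\mathfrak{q}_{i}$.
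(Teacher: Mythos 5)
Your proposal is correct and takes essentially the same route as the paper: the paper leaves the corollary's proof implicit, resting on Proposition~\ref{prop:primdec} together with the remark that $\frp_1,\dotsc,\frp_r$ arise as contractions of the primes of $\OC$ above $\frg\OC$ (computable via \cite[Theorem~4.9]{Lenstra1992}). Your only addition is to make the exponent search explicit via a uniform bound $e_0$, which is a clean and valid way to make the procedure effective and is justified exactly by Proposition~\ref{prop:primdec}(i).
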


\begin{corollary}\label{cor:algunit}
Let $\Lambda$ be an $\mathcal{O}$-order in a finite-dimensional semisimple $K$-algebra $A$
and let $\mathfrak{h}$ be a full two-sided ideal of $\Lambda$. 
Then there exists an algorithm that determines generators of $(\Lambda/\mathfrak{h})^{\times}$.
\end{corollary}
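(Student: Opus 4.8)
The plan is to decompose $\Lambda/\mathfrak{h}$ into finite $p$-rings using the primary decomposition of an ideal in its centre, and then to compute the unit group of each piece by splitting off its Jacobson radical. First, note that $\Lambda/\mathfrak{h}$ is a finite ring: since $\mathfrak{h}$ is full we have $K\mathfrak{h}=A$, so $\Lambda/\mathfrak{h}$ is a finitely generated torsion $\mathcal{O}$-module, and it is nonzero because $\mathfrak{h}$ is proper. Put $\Gamma:=Z(\Lambda)$, which is an $\mathcal{O}$-order in the \'etale $K$-algebra $C=Z(A)$, and $\mathfrak{g}:=\mathfrak{h}\cap\Gamma$, a proper ideal of $\Gamma$ of full rank. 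Using Corollary~\ref{cor:algprimdec} I would compute a primary decomposition $\mathfrak{g}=\bigcap_{i=1}^{r}\mathfrak{q}_{i}=\prod_{i=1}^{r}\mathfrak{q}_{i}$ with the $\mathfrak{q}_{i}$ pairwise coprime and each $\mathfrak{q}_{i}$ being $\mathfrak{p}_{i}$-primary for a maximal ideal $\mathfrak{p}_{i}$ of $\Gamma$. The Chinese remainder theorem then yields orthogonal idempotents $\bar{e}_{1},\dotsc,\bar{e}_{r}$ with $\Gamma/\mathfrak{g}\cong\bigoplus_{i}\Gamma/\mathfrak{q}_{i}$, and since $\mathfrak{h}\cap\Gamma=\mathfrak{g}$ the ring $\Gamma/\mathfrak{g}$ embeds into the centre of $\Lambda/\mathfrak{h}$; hence the $\bar{e}_{i}$ are central idempotents of $\Lambda/\mathfrak{h}$, giving a computable ring decomposition $\Lambda/\mathfrak{h}=\bigoplus_{i=1}^{r}R_{i}$ with $R_{i}:=(\Lambda/\mathfrak{h})\bar{e}_{i}$. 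If $p_{i}$ is the rational prime below $\mathfrak{p}_{i}$, then $p_{i}\in\mathfrak{p}_{i}=\sqrt{\mathfrak{q}_{i}}$ forces $p_{i}^{N}\in\mathfrak{q}_{i}$ for $N\gg 0$, so $p_{i}^{N}\bar{e}_{i}=0$ and each $R_{i}$ is a finite $p_{i}$-ring.

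Since $(\Lambda/\mathfrak{h})^{\times}=\prod_{i=1}^{r}R_{i}^{\times}$, it then suffices to produce generators of each $R_{i}^{\times}$ (and pad with identities in the other components). Fix $i$. As $R_{i}$ is a $p_{i}$-ring, $p_{i}R_{i}$ is a nilpotent two-sided ideal, so $p_{i}R_{i}\subseteq J(R_{i})$, and $R_{i}/p_{i}R_{i}$ is a finite-dimensional $\mathbb{F}_{p_{i}}$-algebra; by \cite{Friedl1985} I can compute $J(R_{i}/p_{i}R_{i})$ together with a Wedderburn decomposition $(R_{i}/p_{i}R_{i})/J(R_{i}/p_{i}R_{i})\cong\bigoplus_{j}\Mat_{n_{ij}}(\mathbb{F}_{q_{ij}})$. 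The preimage of $J(R_{i}/p_{i}R_{i})$ in $R_{i}$ is a two-sided ideal that is nilpotent (an extension of the nilpotent ideals $J(R_{i}/p_{i}R_{i})$ and $p_{i}R_{i}$) with semisimple quotient, hence equals $J(R_{i})$, and $R_{i}/J(R_{i})\cong\bigoplus_{j}\Mat_{n_{ij}}(\mathbb{F}_{q_{ij}})$. Thus $(R_{i}/J(R_{i}))^{\times}\cong\prod_{j}\GL_{n_{ij}}(\mathbb{F}_{q_{ij}})$, for which explicit small generating sets are standard; since $J(R_{i})$ is nilpotent, any preimage in $R_{i}$ of a unit of $R_{i}/J(R_{i})$ is again a unit, so lifting these generators produces units of $R_{i}$ surjecting onto a generating set of $(R_{i}/J(R_{i}))^{\times}$. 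For the kernel, $1+J(R_{i})$ is a finite $p_{i}$-group, which can be obtained either directly or from the filtration $1+J(R_{i})\supseteq 1+J(R_{i})^{2}\supseteq\dotsb$ with abelian quotients $(1+J(R_{i})^{m})/(1+J(R_{i})^{m+1})\cong J(R_{i})^{m}/J(R_{i})^{m+1}$, by lifting additive generators of each quotient. By the short exact sequence in Remark~\ref{rmk:units-jac-radical}, the union of these two families of elements generates $R_{i}^{\times}$.

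Everything apart from the primary decomposition is routine: the Chinese remainder theorem and effective B\'ezout for the idempotents, \cite{Friedl1985} for the radical and Wedderburn structure over $\mathbb{F}_{p_{i}}$, the well-known generators of $\GL_{n}(\mathbb{F}_{q})$, and elementary manipulations with finite abelian and finite nilpotent groups for $1+J(R_{i})$. So the only genuine obstacle is the computation of the primary decomposition of $\mathfrak{g}$, which is exactly Corollary~\ref{cor:algprimdec}; the rest is assembly. The same analysis also shows where the cost lies in practice, namely in handling the $p_{i}$-groups $1+J(R_{i})$.
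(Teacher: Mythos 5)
Your proof is correct and takes essentially the same approach as the paper: reduce to the primary decomposition of $\mathfrak{g} = \mathfrak{h} \cap Z(\Lambda)$ via Corollary~\ref{cor:algprimdec}, use the resulting CRT idempotents to split $\Lambda/\mathfrak{h}$ into finite $p$-rings, and compute each unit group through the Jacobson radical and its nilpotent filtration. The paper's proof simply delegates the latter mechanics to \cite[\S 6.6]{MR4493243} (a minor modification of \cite[\S 3.4--3.7]{bley-boltje}), remarking that the primary decomposition of $\mathfrak{g}$ was the only obstacle to extending that algorithm to arbitrary $\mathfrak{h}$; you have reconstructed those mechanics explicitly, which is consistent with the paper's own use of Friedl's radical algorithm noted in Remark~\ref{rmk:units-jac-radical}.
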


\begin{proof}
The algorithm is trivial in the case $\mathfrak{h}=\Lambda$, and so henceforth
we assume that $\mathfrak{h} \neq \Lambda$.
Let $C$ denote the center of $A$ and let $\mathcal{O}_{C}$ denote the (unique) maximal
$\mathcal{O}$-order contained in $C$. 
Let $\mathfrak{g}=\mathfrak{h} \cap C$. 
Then $\mathfrak{g}$ is a proper ideal of an $\mathcal{O}$-order $\Gamma$, 
which may be strictly contained in $\OC$.
In the case that $\mathfrak{h}$ is equal to $\mathfrak{f}$, a two-sided ideal of 
a maximal $\mathcal{O}$-order $\mathcal{M}$ in $A$ containing $\Lambda$,
the desired algorithm is described in \cite[\S 6.6]{MR4493243}
(this is a minor modification of \cite[\S 3.4--3.7]{bley-boltje}). 
This relies crucially on the computation of the primary decomposition of $\mathfrak{g}$,
which can be accomplished using work of Bley--Endres~\cite{bley-endres} under the assumption that $\Gamma=\OC$ (which must hold when $\mathfrak{h}=\mathfrak{f}$).
In fact, the computation of the primary decomposition of $\mathfrak{g}$ is the only
obstacle to generalising the algorithm to arbitrary choices of $\mathfrak{h}$.
By removing the restriction that $\Gamma=\OC$ on the computation of primary decompositions in 
Corollary~\ref{cor:algprimdec}, we thereby obtain an algorithm for the computation of generators of 
$(\Lambda/\mathfrak{h})^{\times}$
for arbitrary full two-sided ideals $\mathfrak{h}$ of $\Lambda$. 
\end{proof}

\subsection{Group structure of unit groups of finite rings}\label{subsec:unit-groups-finite-rings}
Let $\overline{\Lambda} = \Lambda / \mathfrak{h}$.
Then $\overline{\Lambda}$ is a finite ring and an important challenge is to 
compute its units $\overline{\Lambda}^{\times}$ as an abstract group.
The point is that, although we can compute generators of $\overline{\Lambda}^{\times}$
by Corollary~\ref{cor:algunit}, we require a representation of $\overline{\Lambda}^{\times}$
that will allow us to efficiently determine the group-theoretic properties required for applications, 
such as the computation of the abelianisation $\overline{\Lambda}^{\times \mathrm{ab}}$.
Specifically, representations which are useful for actual computations include
realisations of $\overline{\Lambda}^{\times}$ as a subgroup of either 
$\GL_{n}(\F_{q})$ for some finite field $\F_{q}$,
or of symmetric groups $\mathfrak{S}_{d}$, both of course with reasonable sizes for $n$ and 
$q$, or $d$, respectively (see \cite{MR2129747}).

Let $\Aut_{\Z}(\overline{\Lambda})$ denote the group of $\Z$-linear automorphisms of the
(additive) $\Z$-module $\overline{\Lambda}$. 
Consider the injective homomorphism
\[ 
\overline{\Lambda}^{\times} \longrightarrow \Aut_{\Z}(\overline{\Lambda}), 
\quad x \longmapsto (y \mapsto xy).
\]
Since we assume that we can compute generators of $\overline{\Lambda}^{\times}$, 
the problem is reduced to finding a  suitable representation for $\Aut_{\Z}(\overline{\Lambda})$. 
We now describe two ways in which this can be achieved in practice.
\begin{enumerate}
\item
If $\overline{\Lambda}$ is a free $\Z/n\Z$-algebra of rank $k$ for some $n,k \in \Z_{>0}$,
then after choosing a $\Z/n\Z$-basis we have an isomorphism 
$\Aut_{\Z}(\overline{\Lambda}) \cong \GL_{k}(\Z/n\Z)$.
If we further assume that $n$ is equal to a prime $p$, then this in turn yields a realisation
of $\overline{\Lambda}^{\times}$ as a subgroup of $\GL_{k}(\F_{p})$.
\item
Generators of $\Aut_{\Z}(\overline{\Lambda})$ 
can be computed using results of Shoda~\cite{MR1512510}.
These can be used to determine a faithful permutation representation (see~\cite{MR1962794}) 
$\Aut_{\Z}(\overline{\Lambda}) \longrightarrow \mathfrak{S}_{d}$ for some $d \in \Z_{>0}$, 
which in turn yields a representation of $\overline{\Lambda}^{\times}$ as a permutation group.
\end{enumerate}
In general, the method of (i) allows computations to be performed more efficiently than the
method of (ii), but of course the method of (i) is not always applicable. 
We also note that if $\overline{\Lambda}$ can be written as a direct product of rings
$R_{1} \times \cdots \times R_{s}$, then
\[
\overline{\Lambda}^{\times} = (R_{1} \times \cdots \times R_{s})^{\times} = R_{1}^{\times} \times \cdots \times R_{s}^{\times}
\]
and so either of the above methods can be applied to each component $R_{i}$ individually,
which is more efficient than applying either method to $\overline{\Lambda}$ directly.

\section{Freeness testing using algebra decompositions}\label{sec:hybrid-method}

\subsection{Setup}
Let $K$ be a number field with ring of integers $\mathcal{O}=\mathcal{O}_{K}$, 
and let $A$ be a finite-dimensional semisimple $K$-algebra.
Let $\Lambda$ be an $\mathcal{O}$-order in $A$ and let $\mathcal{M}$ be a maximal
$\mathcal{O}$-order in $A$ containing $\Lambda$.
Recall from Definition~\ref{def:gen-Swan} that for $\beta \in \mathcal{M} \cap A^{\times}$, the test lattice $X_{\beta}=X_{\beta, \Lambda, \mathcal{M}}$ is defined to be
$\mathcal{M} \beta \cap \Lambda$.

Algorithms~\ref{alg:SFC-naive}, \ref{alg:SFC-fail} and \ref{subsec:rel-to-gen-Swan}
for determining whether $\Lambda$ has SFC each rely on the ability to check whether  
a given (stably free) $\Lambda$-lattice of the form $X_{\beta}$ is in fact free over $\Lambda$.
As explained in the further details of Step (11) of Algorithm~\ref{alg:SFC-naive},
in principle, this can be performed using \cite[Algorithm 3.1]{BJ11}. 
However, this is only practical when $A$ is of `moderate' dimension.
In this section, we shall present a modified freeness test 
under assumptions of the following paragraph.

Henceforth assume that $A$ admits a decomposition $A = A_{1} \oplus A_{2}$ of $K$-algebras
such that $A_{1}$ satisfies the Eichler condition.
In applications, we can always assume that $A_2 \ne 0$ is a direct sum of
totally definite quaternion algebras. 
If $A_{1}=0$ then we need to use a slight modification of \cite[Algorithm 3.1]{BJ11}
as discussed in the further details of Step (11) of Algorithm~\ref{alg:SFC-naive}.

Let $e_{1},e_{2} \in A$ be the central idempotents such that $A_{i} = e_{i}A$ for $i=1,2$. 
Let $C$ denote the center of $A$ and let $\mathcal{O}_{C}$ denote the (unique) maximal $\mathcal{O}$-order in $C$.
Let $\mathfrak{f}$ be any full two-sided ideal of $\mathcal{M}$ that is contained in 
$\Lambda$ and let $\mathfrak{g} = \mathfrak{f} \cap \mathcal{O}_{C}$.
Then the central idempotents $e_{1}$ and $e_{2}$ induce decompositions
\[
\mathcal{M} = \mathcal{M}_{1} \oplus \mathcal{M}_{2}, \quad
\mathfrak{f} = \mathfrak{f}_{1} \oplus \mathfrak{f}_{2}, \quad
C = C_{1} \oplus C_{2}, \quad
\mathcal{O}_{C} = \mathcal{O}_{C_{1}} \oplus \mathcal{O}_{C_{2}}, \quad
\mathfrak{g} = \mathfrak{g}_{1} \oplus \mathfrak{g}_{2}.
\]
For $i = 1,2$ let $T_{i}$ denote the image of 
$\mathcal{M}_{i}^{\times} \longrightarrow \left(\mathcal{M}_{} / \mathfrak{f}_{i} \right)^{\times}$ 
and note that $T := T_{1} \times T_{2}$ is the image of 
$\mathcal{M}^{\times} \longrightarrow (\mathcal{M}/\mathfrak{f})^{\times}$.

\subsection{Criteria for certain ideals to be free}
We shall consider left ideals $X$ of $\Lambda$ with the following two properties:
$X + \mathfrak{f} = \Lambda$ and there exists $\beta \in \mathcal{M} \cap A^{\times}$ such that $\mathcal{M}X = \mathcal{M}\beta$. 
Note that by Proposition~\ref{prop:Xbeta}, the test lattices
$X_{\beta}$ considered in Algorithms~\ref{alg:SFC-naive}, \ref{alg:SFC-fail} and \ref{subsec:rel-to-gen-Swan} always satisfy both of these properties.

\begin{lemma}\label{lem:hybrid}
Let $X$ be a left ideal of $\Lambda$. 
Suppose that $X + \mathfrak{f} = \Lambda$ and that there exists $\beta \in \mathcal{M} \cap A^{\times}$ such that $\mathcal{M}X = \mathcal{M}\beta$.
Write $\beta=(\beta_{1},\beta_{2}) \in \mathcal{M}_{1} \oplus \mathcal{M}_{2}$.
Then the following assertions are equivalent:
\begin{enumerate}
\item 
$X$ is free over $\Lambda$.
\item
There exists $\overline{\alpha} \in (\Lambda/\mathfrak{f})^{\times}$ 
such that $\overline{\beta} \overline{\alpha} \in T$.
\item
There exist $\overline{\alpha}_{2} \in \overline{\beta}{}^{-1}_{2}T_{2}$ and 
$\overline{\alpha}_{1} \in (\mathcal{M}_{1}/\mathfrak{f}_{1})^{\times}$ such that
\[
(\overline{\alpha}_{1}, \overline{\alpha}_{2}) \in 
(\Lambda/\mathfrak{f})^{\times} \quad \text{and} \quad
\overline{\beta}_{1} \overline{\alpha}_{1} \in T_{1}.
\] 
\end{enumerate}
\end{lemma}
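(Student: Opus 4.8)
The plan is to establish the equivalences by showing $(i) \Leftrightarrow (ii)$ via the freeness criterion already available, and then $(ii) \Leftrightarrow (iii)$ by a purely bookkeeping argument using the direct-sum decomposition $\mathcal{M}/\mathfrak{f} \cong (\mathcal{M}_1/\mathfrak{f}_1) \oplus (\mathcal{M}_2/\mathfrak{f}_2)$ together with the fact that $T = T_1 \times T_2$.

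For $(i) \Leftrightarrow (ii)$: since $X + \mathfrak{f} = \Lambda$ and $\mathcal{M}X = \mathcal{M}\beta$ with $\beta \in \mathcal{M} \cap A^\times$, we are exactly in the situation of Corollary~\ref{cor:central-imsp}. That corollary says $X$ is free over $\Lambda$ if and only if there exists $u \in \mathcal{M}^\times$ and $\overline{\alpha} \in (\Lambda/\mathfrak{f})^\times$ with $\overline{u} = \overline{\beta}\,\overline{\alpha}$. The condition ``there exists $u \in \mathcal{M}^\times$ with $\overline{u} = \overline{\beta}\,\overline{\alpha}$'' is by definition the statement $\overline{\beta}\,\overline{\alpha} \in T$, where $T$ is the image of $\mathcal{M}^\times \to (\mathcal{M}/\mathfrak{f})^\times$. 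Hence $(i) \Leftrightarrow (ii)$ is immediate once one unwinds the definition of $T$.

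For $(ii) \Leftrightarrow (iii)$: everything decomposes compatibly along the idempotents $e_1, e_2$. Writing $\overline{\alpha} = (\overline{\alpha}_1, \overline{\alpha}_2)$ with $\overline{\alpha}_i \in (\mathcal{M}_i/\mathfrak{f}_i)^\times$ (noting $\overline{\alpha} \in (\Lambda/\mathfrak{f})^\times$ forces in particular $\overline{\alpha}_i$ to be a unit in $\mathcal{M}_i/\mathfrak{f}_i$), the condition $\overline{\beta}\,\overline{\alpha} \in T = T_1 \times T_2$ is equivalent to the pair of conditions $\overline{\beta}_1\overline{\alpha}_1 \in T_1$ and $\overline{\beta}_2\overline{\alpha}_2 \in T_2$. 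The latter can be rewritten as $\overline{\alpha}_2 \in \overline{\beta}_2^{-1} T_2$. So $(ii)$ says: there is a pair $(\overline{\alpha}_1, \overline{\alpha}_2)$ lying in $(\Lambda/\mathfrak{f})^\times$, with $\overline{\alpha}_2 \in \overline{\beta}_2^{-1}T_2$ and $\overline{\beta}_1\overline{\alpha}_1 \in T_1$, which is precisely $(iii)$. The reverse direction is the same argument read backwards. The only mild subtlety — which I would state explicitly — is that in $(iii)$ one prescribes $\overline{\alpha}_1 \in (\mathcal{M}_1/\mathfrak{f}_1)^\times$ rather than $\overline{\alpha}_1 \in (\Lambda_1/\mathfrak{f}_1)^\times$, but this is harmless: the constraint that the \emph{pair} $(\overline{\alpha}_1, \overline{\alpha}_2)$ lie in $(\Lambda/\mathfrak{f})^\times$ already encodes which $\overline{\alpha}_1$'s are permissible, and conversely any $\overline{\alpha} \in (\Lambda/\mathfrak{f})^\times$ has components that are units in the respective $\mathcal{M}_i/\mathfrak{f}_i$.

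The argument is essentially a reorganisation of Corollary~\ref{cor:central-imsp} along the two idempotent components, so there is no real obstacle; the only point requiring a little care is making sure the membership conditions on the $\overline{\alpha}_i$ are stated at the right level ($\mathcal{M}_i/\mathfrak{f}_i$ versus $\Lambda_i/\mathfrak{f}_i$) and that the ``compatibility'' constraint $(\overline{\alpha}_1,\overline{\alpha}_2) \in (\Lambda/\mathfrak{f})^\times$ is what links the two components — this is the heart of why the freeness test can be ``split'' in the way the later algorithm exploits, reducing the expensive part of the computation to the Eichler component $A_1$.
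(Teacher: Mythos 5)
Your proposal is correct and takes essentially the same approach as the paper: the paper also derives (i)$\Leftrightarrow$(ii) from the freeness criterion (citing Proposition~\ref{prop:central-imsp}, of which Corollary~\ref{cor:central-imsp} is the immediate reformulation you invoke) and treats (ii)$\Leftrightarrow$(iii) as a direct componentwise rewriting using $T=T_1\times T_2$. Your explicit note on the harmlessness of requiring $\overline{\alpha}_1 \in (\mathcal{M}_1/\mathfrak{f}_1)^\times$ rather than $(\Lambda_1/\mathfrak{f}_1)^\times$ is a fair and accurate clarification, though the paper treats it as implicit.
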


\begin{proof}
The equivalence of (i) and (ii) follows from Corollary \ref{cor:central-imsp}. 
Assertion (iii) is just an immediate reformulation of (ii).
\end{proof}

\cite[Algorithm 3.1]{BJ11} essentially uses the equivalence of conditions (i) and (ii) in 
Lemma~\ref{lem:hybrid} to reduce the freeness test to an enumeration over the set $T$ of cardinality $\lvert T \rvert = \lvert T_1 \rvert \cdot \lvert T_2 \rvert$.
In the following, we show how condition~(iii) can be used to replace this by an enumeration over $T_{2}$ and computations related to the reduced norm. 
(See also Remark~\ref{rmk:compare-freeness-methods}.)

By \cite[Lemma 4.4]{MR4493243},  
there exists a surjective group homomorphism $\overline{\nr}$
that fits into the commutative diagram
\begin{equation}\label{eq:barnr-comm-diag}
\xymatrix@1@!0@=48pt { 
\mathcal{M}_{1}^{\times} \ar[rr]^{\nr}  \ar[d] & & \mathcal{O}_{C_{1}}^{\times} \ar[d] \\
(\mathcal{M}_{1}/\mathfrak{f}_{1})^{\times} \ar[rr]^{\overline{\nr}}  & &
(\mathcal{O}_{C_{1}}/\mathfrak{g}_{1})^{\times},
} 
\end{equation}
where the vertical maps are induced by the canonical projections.
For $i=1,2$, let 
$\pi_{i} : \Lambda \longrightarrow \mathcal{M}_{i}$ denote the canonical projection
and set $\pi_{i}(\Lambda):=\Lambda_{i}$.
Hence we can and do also consider $\pi_{i}$ as a map
$\pi_{i} \colon : \Lambda \longrightarrow \Lambda_{i}$, depending on context.

Each $\pi_{i}$ induces a group homomorphism
\begin{equation}\label{eq:def-overlinepii}
\overline{\pi}_{i} \colon (\Lambda/\mathfrak{f})^{\times} 
\longrightarrow (\Lambda_{i}/\mathfrak{f}_{i})^{\times},  
\end{equation}
which is surjective 
by~\cite[Chapter III, (2.9) Corollary]{bass}.
Let $\Sigma := \ker(\overline{\pi}_{2})$ and let
\begin{equation}\label{eq:def-S}
S := \bnr(\overline{\pi}_{1}(\Sigma)) = \left\{ \bnr(\overline{\gamma}_{1}) \mid \exists 
\overline{\gamma}_{2} \in (\Lambda_{2}/\mathfrak{f}_{2})^{\times} : 
(\overline{\gamma}_{1}, \overline{\gamma}_{2}) \in \Sigma \right\}.  
\end{equation}
Then $S$ is a subgroup of $\left( \calO_{C_1} / \mathfrak{g}_{1} \right)^{\times}$
by definition and
\begin{align*}
S 
&= 
\left\{ \bnr(\overline{\gamma}_{1}) \mid \overline{\gamma}_{1} 
\in \left( \mathcal{M}_{1} / \frf_1 \right)^{\times} \text{ and }
(\overline{\gamma}_{1}, \overline{1}) \in  \left(\Lambda/\frf\right)^{\times} \right\}, \\
&= \left\{ \bnr(\overline{\gamma}_{1}) \mid \overline{\gamma}_{1} \in 
\left( \mathcal{M}_{1} / \frf_{1} \right)^{\times} \text{ and }
(\gamma_{1}, 1) \in  \Lambda \right\} \nonumber
\end{align*}
where the first equality follows from \eqref{eq:def-S} and the second follows from the fact that 
$(\Lambda/\mathfrak{f})^{\times} = (\mathcal{M}/\mathfrak{f})^{\times} \cap (\Lambda/\mathfrak{f})$
(see \cite[Chapter I, Theorem 7.9(iii)]{MR1183469}).
Let $\sigma$ denote the following composition of canonical maps
\[
\calO_{C_1}^{\times} \longrightarrow \left( \calO_{C_1} / \frg_1 \right)^{\times} \longrightarrow \left( \calO_{C_1} / \frg_1 \right)^{\times}\!/ S.
\]

\begin{lemma}\label{lem:fixeda2}
Let $X$ be a left ideal of $\Lambda$. 
Suppose that $X + \mathfrak{f} = \Lambda$ and that there exists $\beta \in \mathcal{M} \cap A^{\times}$ such that $\mathcal{M}X = \mathcal{M}\beta$.
Then $\overline{\beta} \in \left( \calM/\frf \right)^{\times}$.
Write $\beta=(\beta_{1},\beta_{2}) \in \mathcal{M}_{1} \oplus \mathcal{M}_{2}$.
Let $\overline{\alpha}_{2} \in \overline{\beta}_{2}^{-1} T_{2}$ and assume that 
$\overline{\gamma}_{1} \in  \left( \mathcal{M}_{1} / \mathfrak{f}_{1} \right)^{\times}$
satisfies
$(\overline{\gamma}_{1}, \overline{\alpha}_{2}) \in \left( \Lambda / \mathfrak{f} \right)^{\times}$.
Then the following assertions are equivalent:
\begin{enumerate}
\item
There exists $\overline{\alpha}_{1} \in \left( \mathcal{M}_{1} / \mathfrak{f}_{1} \right)^{\times}$
such that 
$(\overline{\alpha}_{1}, \overline{\alpha}_{2}) \in (\Lambda/\mathfrak{f})^{\times} 
\text{ and } \overline{\beta}_{1} \overline{\alpha}_{1} \in T_{1}$.
\item
$\bnr (\overline{\beta}_{1} \overline{\gamma}_{1}) \bmod{S} \in \im(\sigma )$.
\end{enumerate}
Moreover, if these equivalent conditions hold, then $X$ is free over $\Lambda$.
\end{lemma}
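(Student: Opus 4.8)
The plan is the following. The auxiliary claim $\overline{\beta}\in(\calM/\frf)^{\times}$ is immediate: from $X+\frf=\Lambda$ and $\calM X=\calM\beta$ one gets $\calM=\calM(X+\frf)=\calM X+\frf=\calM\beta+\frf$, so $\overline{\beta}$ is left invertible in the finite ring $\calM/\frf$ and hence a unit (this is the same argument as at the start of the proof of Corollary~\ref{cor:rep-by-Xbeta}). The final assertion then follows at once from Lemma~\ref{lem:hybrid}: if (i) holds, then the given $\overline{\alpha}_2\in\overline{\beta}_2^{-1}T_2$ together with the $\overline{\alpha}_1$ supplied by (i) is precisely an instance of condition~(iii) of Lemma~\ref{lem:hybrid}, whence $X$ is free over $\Lambda$. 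So the real content is the equivalence of (i) and (ii), and the idea is to rewrite each of them as a membership condition inside one fixed finite abelian group, and then to transport between the two descriptions via $\bnr$.

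Write $Q_1:=(\calM_1/\frf_1)^{\times}/T_1$. By Theorem~\ref{thm:Swan-Eichler} and the assumption that $A_1$ satisfies the Eichler condition, $T_1$ contains the commutator subgroup of $(\calM_1/\frf_1)^{\times}$, so $T_1\trianglelefteq(\calM_1/\frf_1)^{\times}$ and $Q_1$ is abelian. For condition~(i): since $(\overline{\gamma}_1,\overline{\alpha}_2)\in(\Lambda/\frf)^{\times}$, for any $\overline{\alpha}_1\in(\calM_1/\frf_1)^{\times}$ we have $(\overline{\alpha}_1,\overline{\alpha}_2)\in(\Lambda/\frf)^{\times}$ if and only if $(\overline{\alpha}_1\overline{\gamma}_1^{-1},\overline{1})\in(\Lambda/\frf)^{\times}$, i.e.\ $\overline{\alpha}_1\overline{\gamma}_1^{-1}\in\overline{\pi}_1(\Sigma)=\{\,\overline{x}_1:(\overline{x}_1,\overline{1})\in(\Lambda/\frf)^{\times}\,\}$ by definition of $\Sigma=\ker(\overline{\pi}_2)$. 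Writing $\overline{\alpha}_1=\overline{\delta}_1\overline{\gamma}_1$ with $\overline{\delta}_1\in\overline{\pi}_1(\Sigma)$, condition~(i) becomes: there is $\overline{\delta}_1\in\overline{\pi}_1(\Sigma)$ with $\overline{\beta}_1\overline{\delta}_1\overline{\gamma}_1\in T_1$. As $Q_1$ is abelian and $T_1$ is normal, this holds exactly when the image of $\overline{\beta}_1\overline{\gamma}_1$ in $Q_1$ lies in the image of the subgroup $\overline{\pi}_1(\Sigma)$, that is, when $\overline{\beta}_1\overline{\gamma}_1\in T_1\,\overline{\pi}_1(\Sigma)$.

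Now let $\mathcal{U}_1$ be the image of $\calO_{C_1}^{\times}$ in $(\calO_{C_1}/\frg_1)^{\times}$. By construction $\im(\sigma)=\mathcal{U}_1 S/S$ with $S=\bnr(\overline{\pi}_1(\Sigma))$, so condition~(ii) reads $\bnr(\overline{\beta}_1\overline{\gamma}_1)\in\mathcal{U}_1 S$. The key step is the identity
\[
T_1=\bnr^{-1}(\mathcal{U}_1),
\]
i.e.\ an element of $(\calM_1/\frf_1)^{\times}$ lifts to a unit of $\calM_1$ exactly when its reduced norm lifts to a unit of $\calO_{C_1}$. Granting this, $\bnr$ induces an isomorphism $Q_1\cong(\calO_{C_1}/\frg_1)^{\times}/\mathcal{U}_1$ taking the image of $\overline{\pi}_1(\Sigma)$ in $Q_1$ onto $S\mathcal{U}_1/\mathcal{U}_1$ and the class of $\overline{\beta}_1\overline{\gamma}_1$ to the class of $\bnr(\overline{\beta}_1\overline{\gamma}_1)$; hence $\overline{\beta}_1\overline{\gamma}_1\in T_1\,\overline{\pi}_1(\Sigma)$ if and only if $\bnr(\overline{\beta}_1\overline{\gamma}_1)\in\mathcal{U}_1 S$, which is the equivalence of (i) and (ii). To establish $T_1=\bnr^{-1}(\mathcal{U}_1)$ I would use the commutative diagram~\eqref{eq:barnr-comm-diag} (giving surjectivity of $\bnr$ and the identification of $\bnr(T_1)$ with the image of $\nr(\calM_1^{\times})$), the Hasse--Schilling--Maass description $\nr(\calM_1^{\times})=\calO_{C_1}^{\times}\cap C_1^{\times+}$ from \S\ref{subsec:nr}, and \cite[Lemma~4.4]{MR4493243} together with the Eichler condition on $A_1$.

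I expect this identity $T_1=\bnr^{-1}(\mathcal{U}_1)$ to be the main obstacle. It bundles two independent points: an integral strong-approximation statement, namely $\ker(\bnr)\subseteq T_1$, which genuinely requires the Eichler condition and fails in general; and the fact that the archimedean sign conditions distinguishing $\nr(\calM_1^{\times})$ from $\calO_{C_1}^{\times}$ become invisible modulo $\frg_1$ once one works modulo $S$. Everything else in the argument is formal manipulation in the fibre square~\eqref{eq:fibre-product-IJ} and its unit groups.
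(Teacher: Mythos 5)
Your proposal is a genuine reorganisation of the paper's argument rather than a line-by-line match, but it uses the same key technical input. The auxiliary claim ($\overline{\beta}\in(\calM/\frf)^{\times}$, via $\calM=\calM X+\frf=\calM\beta+\frf$) and the final assertion (via Lemma~\ref{lem:hybrid}) are handled exactly as in the paper. For the core equivalence, the paper proves the two directions separately: (i)$\Rightarrow$(ii) is a short computation using only the easy containment $\bnr(T_1)\subseteq\mathcal{U}_1$, writing $\bnr(\overline{\beta}_1\overline{\gamma}_1)=\overline{\epsilon}_1\,\bnr(\overline{\gamma}_1^{-1}\overline{\alpha}_1)^{-1}$ with $\epsilon_1=\nr(u_1)$ and checking $\overline{\gamma}_1^{-1}\overline{\alpha}_1\in\overline{\pi}_1(\Sigma)$; and (ii)$\Rightarrow$(i) moves the $S$-part into a factor $\overline{\delta}_1$ coming from $\Sigma$ and then invokes Eichler's theorem on units \cite[(51.34)]{curtisandreiner_vol2} to lift $\overline{\beta}_1\overline{\gamma}_1\overline{\delta}_1$ to $\calM_1^{\times}$. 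You instead abstract all of this into the single statement $T_1=\bnr^{-1}(\mathcal{U}_1)$ and transport along the induced isomorphism $Q_1\cong(\calO_{C_1}/\frg_1)^{\times}/\mathcal{U}_1$. That is a cleaner packaging, and your reformulation of (i) as $\overline{\beta}_1\overline{\gamma}_1\in T_1\cdot\overline{\pi}_1(\Sigma)$ (using Theorem~\ref{thm:Swan-Eichler} to get $T_1$ normal with abelian quotient) is correct and is precisely the structural content that the paper's two computations encode.

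One caution, which you partly anticipated: the raw identity $T_1=\bnr^{-1}(\mathcal{U}_1)$ is stated without reference to $S$, yet by \S\ref{subsec:nr} and \cite[(45.7)]{curtisandreiner_vol2} one has $\bnr(T_1)$ equal to the image of $\calO_{C_1}^{\times}\cap C_1^{\times+}$ rather than of $\calO_{C_1}^{\times}$; when $\infty_1\neq\emptyset$ (which can happen under the Eichler condition for $A_1$, e.g.\ for a simple component of the form $\Mat_k(B)$ with $k\geq 2$ and $B$ totally definite) these images can genuinely differ. Your remark that the sign conditions "become invisible once one works modulo $S$" is pointing in the right direction, but it is not consistent with stating the key claim as an equality in $(\calM_1/\frf_1)^{\times}$ without any $S$. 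What the argument actually uses, and what the paper effectively establishes by first absorbing an element of $\overline{\pi}_1(\Sigma)$ and only then applying \cite[(51.34)]{curtisandreiner_vol2}, is the $S$-relative statement $T_1\cdot\overline{\pi}_1(\Sigma)=\bnr^{-1}(\mathcal{U}_1 S)$; that is the identity you should prove in your $Q_1$-formulation, rather than the unrelativised $T_1=\bnr^{-1}(\mathcal{U}_1)$.
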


\begin{proof}
We have $\mathcal{M} 
= \mathcal{M}(X+\mathfrak{f})
= \mathcal{M}X + \mathfrak{f}
= \mathcal{M}\beta + \mathfrak{f}$,
and so $\overline{\beta} \in (\mathcal{M}/\mathfrak{f})^{\times}$. 
  
Assume that~(i) holds. 
Then by definition of $T_{1}$ there exists $u_1 \in \mathcal{M}_{1}^{\times}$
such that
$\bnr(\overline{\beta}_{1}\overline{\alpha}_{1}) 
= \bnr(\overline{u}_{1}) = \overline{\epsilon}_{1}$ in 
$\left( \calO_{C_1} / \mathfrak{g}_{1} \right)^{\times}$, where $\epsilon_{1} := \nr({u}_{1})$.
Hence
$\bnr( (\overline{\beta}_{1} \overline{\gamma}_{1})(\overline{\gamma}_{1}^{-1} \overline{\alpha}_{1})) = \overline{\epsilon}_{1}$ 
and it remains to show that $\bnr( \overline{\gamma}_{1}^{-1} \overline{\alpha}_{1} ) \in S$.
This in turn is a consequence of
\[
( \overline{\gamma}_{1}^{-1} \overline{\alpha}_{1}, \overline{1}) 
= ( \overline{\gamma}_{1}^{-1} \overline{\alpha}_{1}, \overline{\alpha}_{2}^{-1} \overline{\alpha}_{2})
= ( \overline{\gamma}_{1}, \overline{\alpha}_{2})^{-1} 
(\overline{\alpha}_{1}, \overline{\alpha}_{2})  \in \left( \Lambda / \frf \right)^{\times}.
\]
Now assume that (ii) holds.
Then there exists a unit $\epsilon_1 \in \calO_{C_1}^{\times}$ and
an element $(\overline{\delta}_{1}, \overline{1}) \in \left(\Lambda / \frf \right)^{\times}$
with $\overline{\delta}_1 \in \left(\calM_1/\frf_1\right)^\times$ such that
$\bnr(\overline{\beta}_{1} \overline{\gamma}_{1} \overline{\delta}_{1}) 
= \overline{\epsilon}_{1}$ in $\left(\calO_{C_1}/ \frg_1\right)^{\times}$.
By Eichler's theorem on units \cite[(51.34)]{curtisandreiner_vol2} there
exists $u_{1} \in \mathcal{M}_{1}^{\times}$ such that 
$\overline{\beta}_{1} \overline{\gamma}_{1} \overline{\delta}_{1} = \overline{u}_{1}$
in $\left(\mathcal{M}_1/\frf_{1}\right)^{\times}$, that is,
$\overline{\beta}_{1} \overline{\gamma}_{1} \overline{\delta}_{1} \in T_{1}$. 
Since $(\overline{\gamma}_{1} \overline{\delta}_{1}, \overline{\alpha}_{2}) 
= ( \overline{\gamma}_{1}, \overline{\alpha}_{2})( \overline{\delta}_{1} , \overline{1}) 
\in \left( \Lambda / \frf \right)^{\times}$ we may take
$\overline{\alpha}_{1} := \overline{\gamma}_{1} \overline{\delta}_{1}$. 

The final assertion follows immediately from Lemma~\ref{lem:hybrid}.
\end{proof}

\begin{prop}\label{prop:freeness-test}
Let $X$ be a left ideal of $\Lambda$. 
Suppose that $X + \mathfrak{f} = \Lambda$ and that there exists $\beta \in \mathcal{M} \cap A^{\times}$ such that $\mathcal{M}X = \mathcal{M}\beta$.
Then the following statements hold:
\begin{enumerate}
\item $X$ is free over $\Lambda$ if and only if there exist
$\overline{\alpha}_{2} \in \overline{\beta}_{2}^{-1} T_{2}$ and $\overline{\gamma}_{1} \in \left( \mathcal{M}_{1}/\mathfrak{f}_{1}\right)^{\times}$ such that 
$(\overline{\gamma}_{1}, \overline{\alpha}_{2}) \in \left( \Lambda / \mathfrak{f} \right)^{\times}$
and $\bnr(\overline{\beta}_{1} \overline{\gamma}_{1}) \bmod{S} \in \im ( \sigma )$.
\item Fix $\overline{\alpha}_{2} \in \overline{\beta}_{2}^{-1} T_{2}$.
Suppose there exists 
$\overline{\gamma}_{1} \in \left( \mathcal{M}_{1}/\mathfrak{f}_{1}\right)^{\times}$ such that 
$(\overline{\gamma}_{1}, \overline{\alpha}_{2}) \in \left( \Lambda / \mathfrak{f} \right)^{\times}$
and $\bnr(\overline{\beta}_{1} \overline{\gamma}_{1}) \bmod{S} \in \im ( \sigma )$. 
Then for every $\overline{\mu}_{1} \in \left( \mathcal{M}_{1}/\mathfrak{f}_{1}\right)^{\times}$ such that 
$(\overline{\mu}_{1}, \overline{\alpha}_{2}) \in \left( \Lambda / \mathfrak{f} \right)^{\times}$,
we have $\bnr(\overline{\beta}_{1} \overline{\mu}_{1}) \in \im ( \sigma )$. 
\end{enumerate}
\end{prop}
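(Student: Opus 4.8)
The plan is to deduce Proposition~\ref{prop:freeness-test} more or less formally from Lemmas~\ref{lem:hybrid} and~\ref{lem:fixeda2}, so the real content has already been established; what remains is careful bookkeeping with the cosets $\overline{\beta}_{2}^{-1}T_{2}$ and with the homomorphism property of $\bnr$.

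For part~(i) I would first record, exactly as at the start of the proof of Lemma~\ref{lem:fixeda2}, that $\mathcal{M} = \mathcal{M}(X+\mathfrak{f}) = \mathcal{M}\beta + \mathfrak{f}$ forces $\overline{\beta} \in (\mathcal{M}/\mathfrak{f})^{\times}$, so the decomposition $\overline{\beta} = (\overline{\beta}_{1},\overline{\beta}_{2})$ makes sense with $\overline{\beta}_{i} \in (\mathcal{M}_{i}/\mathfrak{f}_{i})^{\times}$. For the forward direction, suppose $X$ is free over $\Lambda$; by Lemma~\ref{lem:hybrid} there exist $\overline{\alpha}_{2} \in \overline{\beta}_{2}^{-1}T_{2}$ and $\overline{\alpha}_{1} \in (\mathcal{M}_{1}/\mathfrak{f}_{1})^{\times}$ with $(\overline{\alpha}_{1},\overline{\alpha}_{2}) \in (\Lambda/\mathfrak{f})^{\times}$ and $\overline{\beta}_{1}\overline{\alpha}_{1} \in T_{1}$. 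Now take $\overline{\gamma}_{1} := \overline{\alpha}_{1}$: the hypotheses of Lemma~\ref{lem:fixeda2} are met (note $(\overline{\gamma}_{1},\overline{\alpha}_{2}) \in (\Lambda/\mathfrak{f})^{\times}$), condition~(i) of that lemma holds with witness $\overline{\alpha}_{1}$ itself, and hence its condition~(ii) gives $\bnr(\overline{\beta}_{1}\overline{\gamma}_{1}) \bmod S \in \im(\sigma)$, which is the asserted existence. Conversely, given $\overline{\alpha}_{2} \in \overline{\beta}_{2}^{-1}T_{2}$ and $\overline{\gamma}_{1}$ with $(\overline{\gamma}_{1},\overline{\alpha}_{2}) \in (\Lambda/\mathfrak{f})^{\times}$ and $\bnr(\overline{\beta}_{1}\overline{\gamma}_{1}) \bmod S \in \im(\sigma)$, the equivalent conditions of Lemma~\ref{lem:fixeda2} hold and its final assertion yields directly that $X$ is free over $\Lambda$.

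For part~(ii) I would fix $\overline{\alpha}_{2}$ and $\overline{\gamma}_{1}$ as in the hypothesis and let $\overline{\mu}_{1} \in (\mathcal{M}_{1}/\mathfrak{f}_{1})^{\times}$ be any element with $(\overline{\mu}_{1},\overline{\alpha}_{2}) \in (\Lambda/\mathfrak{f})^{\times}$. The key identity is
\[
(\overline{\gamma}_{1}^{-1}\overline{\mu}_{1},\overline{1}) = (\overline{\gamma}_{1},\overline{\alpha}_{2})^{-1}(\overline{\mu}_{1},\overline{\alpha}_{2}) \in (\Lambda/\mathfrak{f})^{\times},
\]
which lies in $\Sigma = \ker(\overline{\pi}_{2})$ because its second component is trivial, so $\overline{\gamma}_{1}^{-1}\overline{\mu}_{1} = \overline{\pi}_{1}(\overline{\gamma}_{1}^{-1}\overline{\mu}_{1},\overline{1}) \in \overline{\pi}_{1}(\Sigma)$. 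Applying $\bnr$, a group homomorphism into the abelian group $(\mathcal{O}_{C_{1}}/\mathfrak{g}_{1})^{\times}$, gives $\bnr(\overline{\gamma}_{1}^{-1}\overline{\mu}_{1}) \in \bnr(\overline{\pi}_{1}(\Sigma)) = S$, whence
\[
\bnr(\overline{\beta}_{1}\overline{\mu}_{1}) = \bnr(\overline{\beta}_{1}\overline{\gamma}_{1})\,\bnr(\overline{\gamma}_{1}^{-1}\overline{\mu}_{1}) \equiv \bnr(\overline{\beta}_{1}\overline{\gamma}_{1}) \pmod{S}.
\]
Thus $\bnr(\overline{\beta}_{1}\overline{\mu}_{1}) \bmod S = \bnr(\overline{\beta}_{1}\overline{\gamma}_{1}) \bmod S \in \im(\sigma)$, which is the claim.

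I do not expect a genuine obstacle here, since the substantive work is packaged in the two preceding lemmas. The only points that need a little care are: in part~(i), choosing $\overline{\gamma}_{1}$ so that the hypotheses of Lemma~\ref{lem:fixeda2} are literally satisfied (in the forward direction one exploits the freedom to set $\overline{\gamma}_{1} = \overline{\alpha}_{1}$); and in part~(ii), keeping track of left versus right cosets and using commutativity of $(\mathcal{O}_{C_{1}}/\mathfrak{g}_{1})^{\times}$ so that the congruence modulo the subgroup $S$ behaves as expected.
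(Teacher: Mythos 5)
Your proof of part~(i) is correct and follows the paper's argument almost verbatim: apply Lemma~\ref{lem:hybrid} to get $\overline{\alpha}_{2}$ and $\overline{\alpha}_{1}$, take $\overline{\gamma}_{1}=\overline{\alpha}_{1}$, and invoke Lemma~\ref{lem:fixeda2} for the forward implication; combine the two lemmas for the converse. For part~(ii), however, you take a genuinely different route. The paper argues as follows: the hypothesis says Lemma~\ref{lem:fixeda2}~(ii) holds for $\overline{\gamma}_{1}$; by the equivalence in that lemma, condition~(i) holds; but condition~(i) makes no reference to $\overline{\gamma}_{1}$, only to $\overline{\alpha}_{2}$; so applying the equivalence again with $\overline{\mu}_{1}$ in place of $\overline{\gamma}_{1}$ yields (ii) for $\overline{\mu}_{1}$. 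That round trip through Lemma~\ref{lem:fixeda2} implicitly invokes Eichler's theorem on units (used in the (ii)\,$\Rightarrow$\,(i) step of that lemma). Your argument is more direct and more elementary: you observe that
$(\overline{\gamma}_{1}^{-1}\overline{\mu}_{1},\overline{1}) = (\overline{\gamma}_{1},\overline{\alpha}_{2})^{-1}(\overline{\mu}_{1},\overline{\alpha}_{2})$
lies in $\Sigma = \ker(\overline{\pi}_{2})$, hence $\bnr(\overline{\gamma}_{1}^{-1}\overline{\mu}_{1}) \in S$, and so by the homomorphism property of $\bnr$ and commutativity of the target, $\bnr(\overline{\beta}_{1}\overline{\mu}_{1})$ and $\bnr(\overline{\beta}_{1}\overline{\gamma}_{1})$ agree modulo $S$. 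This is essentially the same manipulation that appears inside the proof of Lemma~\ref{lem:fixeda2}, but unpacking it here gives a self-contained proof of~(ii) that sidesteps Eichler's theorem entirely and makes clear that~(ii) is a purely formal consequence of the definition of $S$. Both approaches are correct; yours isolates the reason for the independence claim more transparently.
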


\begin{proof}
(i) Suppose that $X$ is free over $\Lambda$. 
Then by Lemma~\ref{lem:hybrid} there exist $\overline{\alpha}_{2} \in \overline \beta_{2}^{-1}T_{2}$ and 
$\overline{\alpha}_{1} \in (\mathcal{M}_{1}/\mathfrak{f}_{1})^{\times}$ such that
$(\overline{\alpha}_{1}, \overline{\alpha}_{2}) \in 
(\Lambda/\mathfrak{f})^{\times} \text{ and }
\overline{\beta}_{1} \overline{\alpha}_{1} \in T_{1}$.
Moreover, $\bnr(\overline{\beta}_{1} \overline{\alpha}_{1}) \bmod S \in \im ( \sigma )$ by 
Lemma~\ref{lem:fixeda2}, and so we may take $\overline{\gamma}_{1}=\overline{\alpha}_{1}$.
The converse holds by Lemmas~\ref{lem:hybrid} and \ref{lem:fixeda2}.

(ii) The elements $\overline{\beta}_{1}$ and $\overline{\gamma}_{1}$ satisfy 
Lemma~\ref{lem:fixeda2} (ii). 
Thus Lemma~\ref{lem:fixeda2}~(i) holds, and this is independent of $\overline{\gamma}_{1}$.
Thus if $\overline{\mu}_{1} \in \left( \mathcal{M}_{1}/\mathfrak{f}_{1}\right)^{\times}$ also satisfies
$(\overline{\mu}_{1}, \overline{\alpha}_{2}) \in \left( \Lambda / \mathfrak{f}\right)^{\times}$,
then Lemma~\ref{lem:fixeda2}~(ii) applied to 
$\overline{\mu}_{1}$ implies that
$\bnr(\overline{\beta}_{1} \overline{\mu}_{1}) \bmod S \in \im ( \sigma )$.
\end{proof}

\begin{lemma}\label{lem:completion}
Let $\alpha_{2} \in \mathcal{M}_{2}$ with
$\overline{\alpha}_{2} \in \left(\mathcal{M}_{2}/\mathfrak{f}\right)^{\times}$.
The following assertions are equivalent:
\begin{enumerate}
\item
There exists an element
$\overline{\gamma}_{1} \in (\calM_1/\frf_1)^\times$ such that 
$(\overline{\gamma}_{1}, \overline{\alpha}_{2}) \in \left(\Lambda/\frf\right)^{\times}$.
\item
$\pi_{2}^{-1}(\{\alpha_{2} \}) \neq \emptyset$, 
where $\pi_{2} \colon \Lambda \longrightarrow \mathcal{M}_{2}$ is the canonical projection.
\end{enumerate}
\end{lemma}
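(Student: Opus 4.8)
The plan is to prove the two implications separately. Both are short and rest entirely on two facts already used in this paper: that a unit of $\mathcal{M}_{2}/\mathfrak{f}_{2}$ which happens to lie in the subring $\Lambda_{2}/\mathfrak{f}_{2}$ is already a unit of $\Lambda_{2}/\mathfrak{f}_{2}$ (\cite[Chapter I, Theorem 7.9(iii)]{MR1183469}), and that the reduction map $\overline{\pi}_{2}\colon (\Lambda/\mathfrak{f})^{\times}\to(\Lambda_{2}/\mathfrak{f}_{2})^{\times}$ is surjective (\cite[Chapter III, (2.9) Corollary]{bass}; see the discussion around \eqref{eq:def-overlinepii}). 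Throughout I will use the chain of inclusions $\mathfrak{f}_{2}\subseteq\mathfrak{h}_{2}=\Lambda\cap\mathcal{M}_{2}\subseteq\Lambda_{2}\subseteq\mathcal{M}_{2}$ together with the identification $\pi_{2}(\Lambda)=\Lambda_{2}$, so that assertion (ii) simply amounts to $\alpha_{2}\in\Lambda_{2}$.

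For (i)$\Rightarrow$(ii): suppose $\overline{\gamma}_{1}\in(\mathcal{M}_{1}/\mathfrak{f}_{1})^{\times}$ satisfies $(\overline{\gamma}_{1},\overline{\alpha}_{2})\in(\Lambda/\mathfrak{f})^{\times}\subseteq\Lambda/\mathfrak{f}$. The second-coordinate projection $\Lambda/\mathfrak{f}\to\mathcal{M}_{2}/\mathfrak{f}_{2}$ has image contained in the subring $\Lambda_{2}/\mathfrak{f}_{2}$ because $\pi_{2}(\Lambda)=\Lambda_{2}$; hence $\overline{\alpha}_{2}$, a priori only an element of $\mathcal{M}_{2}/\mathfrak{f}_{2}$, actually lies in $\Lambda_{2}/\mathfrak{f}_{2}$. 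Therefore $\alpha_{2}\in\Lambda_{2}+\mathfrak{f}_{2}=\Lambda_{2}=\pi_{2}(\Lambda)$, so $\pi_{2}^{-1}(\{\alpha_{2}\})\neq\emptyset$. (Equivalently and more concretely: lift $(\overline{\gamma}_{1},\overline{\alpha}_{2})$ to some $(\gamma_{1}',\alpha_{2}')\in\Lambda$; then $\alpha_{2}-\alpha_{2}'\in\mathfrak{f}_{2}\subseteq\mathfrak{h}_{2}$, so $(0,\alpha_{2}-\alpha_{2}')\in\Lambda$ and hence $(\gamma_{1}',\alpha_{2})\in\Lambda$ maps to $\alpha_{2}$ under $\pi_{2}$.)

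For (ii)$\Rightarrow$(i): suppose $\alpha_{2}\in\pi_{2}(\Lambda)=\Lambda_{2}$. Then $\overline{\alpha}_{2}$ lies in the subring $\Lambda_{2}/\mathfrak{f}_{2}$ of $\mathcal{M}_{2}/\mathfrak{f}_{2}$, and since it is a unit of $\mathcal{M}_{2}/\mathfrak{f}_{2}$ by hypothesis, \cite[Chapter I, Theorem 7.9(iii)]{MR1183469} gives $\overline{\alpha}_{2}\in(\Lambda_{2}/\mathfrak{f}_{2})^{\times}$. By surjectivity of $\overline{\pi}_{2}$ there is an element of $(\Lambda/\mathfrak{f})^{\times}$ mapping to $\overline{\alpha}_{2}$; writing it as $(\overline{\gamma}_{1},\overline{\alpha}_{2})$, its first component automatically lies in $(\mathcal{M}_{1}/\mathfrak{f}_{1})^{\times}$ because $(\Lambda/\mathfrak{f})^{\times}\subseteq(\mathcal{M}/\mathfrak{f})^{\times}=(\mathcal{M}_{1}/\mathfrak{f}_{1})^{\times}\times(\mathcal{M}_{2}/\mathfrak{f}_{2})^{\times}$. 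This is precisely (i).

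I do not anticipate a genuine obstacle here: the content is bookkeeping, and the only nontrivial inputs are the two cited lemmas above. The single point requiring care is to keep the inclusions $\mathfrak{f}_{2}\subseteq\mathfrak{h}_{2}\subseteq\Lambda_{2}\subseteq\mathcal{M}_{2}$ and the identification $\pi_{2}(\Lambda)=\Lambda_{2}$ straight when passing between the quotients $\mathcal{M}/\mathfrak{f}$, $\mathcal{M}_{2}/\mathfrak{f}_{2}$, and $\Lambda_{2}/\mathfrak{f}_{2}$, and to note at the outset that the hypothesis $\overline{\alpha}_{2}\in(\mathcal{M}_{2}/\mathfrak{f}_{2})^{\times}$ makes sense because $\mathfrak{f}_{2}\subseteq\mathcal{M}_{2}$.
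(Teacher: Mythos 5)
Your proof is correct and follows essentially the same route as the paper's: the (i)$\Rightarrow$(ii) direction is the observation that the second component of an element of $\Lambda/\mathfrak{f}$ already lies in $\Lambda_{2}/\mathfrak{f}_{2}$ (the paper treats this as immediate), and the (ii)$\Rightarrow$(i) direction uses exactly the two inputs the paper invokes, namely $(\Lambda_{2}/\mathfrak{f}_{2})^{\times}=(\mathcal{M}_{2}/\mathfrak{f}_{2})^{\times}\cap(\Lambda_{2}/\mathfrak{f}_{2})$ and the surjectivity of $\overline{\pi}_{2}$. The only difference is that you spell out the lift in the forward direction a little more explicitly.
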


\begin{proof}
It is clear that (i) implies (ii). Suppose that (ii) holds.
Then there exists $\alpha_{1} \in \mathcal{M}_{1}$
such that $(\alpha_{1},\alpha_{2}) \in \Lambda$. 
Thus $\alpha_{2} \in \Lambda_{2}$.
By assumption $\overline{\alpha}_{2} \in (\mathcal{M}_{2}/\mathfrak{f}_{2})^{\times}$
and hence $\overline{\alpha}_{2} \in \left(\Lambda_{2}/\mathfrak{f}_{2}\right)^{\times}$
since $(\Lambda_{2}/\frf_{2})^{\times} = (\mathcal{M}_{2}/\frf_{2})^{\times} \cap (\Lambda_{2}/\frf_{2})$ (see \cite[Chapter I, Theorem 7.9(iii)]{MR1183469}).
Since the canonical map $\overline{\pi}_{2} \colon (\Lambda/\mathfrak{f})^{\times} 
\longrightarrow (\Lambda_{2}/\mathfrak{f}_{2})^{\times}$ of \eqref{eq:def-overlinepii}
is surjective, this completes the proof.
\end{proof}

\begin{algorithm}\label{alg:hybrid}
Suppose that $A_{1}$ satisfies the Eichler condition and that every simple component of $A_{2}$ is a totally definite quaternion algebra.
Let $X$ be a left ideal of $\Lambda$. 
Suppose that $X + \mathfrak{f} = \Lambda$ and that there exists $\beta \in \mathcal{M} \cap A^{\times}$ such that $\mathcal{M}X = \mathcal{M}\beta$.
Write $\beta=(\beta_{1},\beta_{2}) \in \mathcal{M}_{1} \oplus \mathcal{M}_{2}$.
If $X$ is free over $\Lambda$ then the following algorithm returns  \ensuremath{\mathsf{true}};
otherwise it returns \ensuremath{\mathsf{false}}.
\renewcommand{\labelenumi}{(\arabic{enumi})}
\begin{enumerate}
\item Compute $T_2$.
\item Compute $S$. 
\item Compute $\im(\sigma)$.
\item
For each $\overline{\alpha}_{2} \in \overline{\beta}_{2}^{-1} T_{2}$ do the following:
\begin{enumerate}
\item
Compute $\overline{\gamma}_{1} \in \left( \calM_1/\frf_1\right)^{\times}$ such that 
$(\overline{\gamma}_{1}, \overline{\alpha}_{2}) \in \left( \Lambda / \frf \right)^{\times}$ or, 
if no such $\overline{\gamma}_{1}$ exists,
continue with the next $\overline{\alpha}_{2}$.
\item
If $\bnr(\overline{\beta}_{1} \overline{\gamma}_{1}) \bmod S \in \im ( \sigma )$, return \ensuremath{\mathsf{true}}.
\end{enumerate}
\item
Return \ensuremath{\mathsf{false}}.
\end{enumerate}
\end{algorithm}

\begin{proof}[Proof of correctness]
The correctness of the output follows from Proposition~\ref{prop:freeness-test}.
\end{proof}

\begin{proof}[Further details on each step]
Step (1): This can be performed as described in \cite[\S 7]{BJ11}. 

Step (2): Although the sets $\Sigma$ and $S$ are finite and can be found by enumerating $(\Lambda/\mathfrak{f})^{\times}$ and $(\mathcal{M}_{1}/\mathfrak{f}_{1})^{\times}$, respectively, this is impractical. We present a practical algorithm for computing $\Sigma$ and $S$ in \S \ref{subsec:compsigma}.

Step (3): This is straightforward given the output of step (2) and standard algorithms
from computational algebraic number theory (see \cite{MR1728313}).

Step 4(a): For a given $\overline{\alpha}_{2} \in (\mathcal{M}_2/\mathfrak{f}_2)^{\times}$
we can determine whether there exists 
$\overline{\gamma}_{1} \in (\mathcal{M}_{1}/\mathfrak{f}_{1})^{\times}$ such 
that  $(\overline{\gamma}_{1},\overline{\alpha}_{2}) \in (\Lambda/\mathfrak{f})^{\times}$ 
(and find such an element if it does exist) as follows.
The canonical projection $\pi_{2} \colon \Lambda \longrightarrow \mathcal{M}_2$ is a ring homomorphism and hence a homomorphism of the underlying finitely generated abelian groups.
Thus we can check condition~(ii) of Lemma~\ref{lem:completion} and find 
$\delta_{1} \in \mathcal{M}_{1}$ with $(\delta_{1}, \alpha_{2}) \in \Lambda$ 
if it exists (see \cite[\S 4.1]{MR1728313}).
Note that $(\overline{\delta}_{1}, \overline{\alpha}_{2}) \in \Lambda/\mathfrak{f}$
maps to $\overline{\alpha}_{2} \in (\Lambda_{2}/\mathfrak{f}_{2})^{\times}$
under the canonical map
$\tilde{\pi}_{2} \colon \Lambda/\mathfrak{f} \to \Lambda_{2}/\mathfrak{f}_{2}$, 
but this preimage of $\overline{\alpha}_{2}$ is not necessarily a unit.
Since $\tilde{\pi}_{2}$ is a homomorphism of finite abelian groups, we can determine random elements of $\ker(\tilde{\pi}_{2})$ and thus of
$(\overline{\delta}_{1}, \overline{\alpha}_{2}) + \ker(\tilde{\pi}_{2}) 
= \tilde{\pi}_{2}^{-1}(\{\overline{\alpha}_{2}\})$ until we have found an invertible preimage.

Step 4(b): This can be performed using the commutative diagram \eqref{eq:barnr-comm-diag}
and standard algorithms for finitely generated abelian groups, as presented in \cite[\S 4.1]{MR1728313}, for example.

Step (5) is trivial.
\end{proof}

\begin{remark}\label{rmk:compare-freeness-methods}
Compared to \cite[Algorithm 3.1]{BJ11} where the size 
of the enumeration set is essentially the cardinality of the image of
the canonical map $\mathcal{M}^{\times} \longrightarrow \left( \calM / \frf \right)^{\times}$,
that is, $\lvert T_1 \rvert \cdot \lvert T_2 \rvert$,  the enumeration set in Algorithm~\ref{alg:hybrid}
is of size $\lvert T_2 \rvert$, and hence only depends on the component $A_2$.
Note that $T_{2}$ can be computed as explained in \cite[\S 4.5]{BJ11}
(see also \cite[Remark 7.5]{Kirschmer2010}), and this is much faster 
than computing the image of $\mathcal{M} \longrightarrow (\mathcal{M}/\mathfrak{f})^{\times}$
for $\mathcal{M}$ a maximal $\mathcal{O}$-order in an arbitrary finite-dimensional semisimple
$K$-algebra. If $|T_{1}|$ is `small', then it may be that \cite[Algorithm 3.1]{BJ11} is faster 
than Algorithm~\ref{alg:hybrid} thanks to the optimisation described in 
\cite[\S 7]{BJ11} (also see \cite[\S 9]{MR4136552}).
\end{remark}

\subsection{Computation of $S$}\label{subsec:compsigma}
Recall the definition of $S$ given in \eqref{eq:def-S}.
Since we require only generators of $S$, it will be enough to consider the computation of (generators of) $\Sigma = \ker(\overline{\pi}_{2})$.
Let $\mathfrak{h}_{1} = \Lambda \cap \mathcal{M}_{1}$ 
and consider
the two-sided ideal $\tilde{\mathfrak{f}} := \mathfrak{h}_1 + \mathfrak{f} = \mathfrak{h}_1 \oplus \mathfrak{f}_2$.
Let
\[
\overline{\pi} \colon ( \Lambda / \frf )^{\times} \longrightarrow  
( \Lambda / \tilde\frf )^{\times}
\]
denote the canonical map, which is surjective by \cite[Chapter III, (2.9) Corollary]{bass}.

\begin{lemma}\label{lem:sigmachar}
We have $\Sigma = \ker(\overline{\pi})$.
\end{lemma}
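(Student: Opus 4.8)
\textbf{Proof proposal for Lemma \ref{lem:sigmachar}.}

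The plan is to show the two inclusions $\Sigma \subseteq \ker(\overline\pi)$ and $\ker(\overline\pi) \subseteq \Sigma$ directly from the definitions, using the explicit description of the ideals $\mathfrak f = \mathfrak f_1 \oplus \mathfrak f_2$ and $\tilde{\mathfrak f} = \mathfrak h_1 \oplus \mathfrak f_2$ together with the identification \eqref{identification2} of $\Lambda$ as a subring of $\Lambda_1 \oplus \Lambda_2$. Recall that $\overline\pi_2 \colon (\Lambda/\mathfrak f)^\times \to (\Lambda_2/\mathfrak f_2)^\times$ is induced by the projection $\pi_2 \colon \Lambda \to \Lambda_2$, so an element $(\overline{x_1}, \overline{x_2}) \in (\Lambda/\mathfrak f)^\times$ lies in $\Sigma = \ker(\overline\pi_2)$ precisely when $x_2 \equiv 1 \bmod \mathfrak f_2$. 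On the other hand, $(\overline{x_1}, \overline{x_2})$ lies in $\ker(\overline\pi)$ precisely when $(x_1, x_2) \equiv (1,1) \bmod \tilde{\mathfrak f}$, i.e.\ $x_1 \equiv 1 \bmod \mathfrak h_1$ and $x_2 \equiv 1 \bmod \mathfrak f_2$.

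First I would prove $\ker(\overline\pi) \subseteq \Sigma$, which is immediate: if $x_1 - 1 \in \mathfrak h_1$ and $x_2 - 1 \in \mathfrak f_2$, then in particular $x_2 \equiv 1 \bmod \mathfrak f_2$, so $(\overline{x_1},\overline{x_2}) \in \ker(\overline\pi_2) = \Sigma$. For the reverse inclusion $\Sigma \subseteq \ker(\overline\pi)$, take $(\overline{x_1}, \overline{x_2}) \in \Sigma$, so that $x_2 - 1 \in \mathfrak f_2$. The point is that one can modify the representative $(x_1,x_2) \in \Lambda$ within its class modulo $\mathfrak f$ so that the first component becomes $\equiv 1 \bmod \mathfrak h_1$: since $x_2 \equiv 1 \bmod \mathfrak f_2$, we have $(0, x_2 - 1) \in \{0\} \oplus \mathfrak f_2 \subseteq \mathfrak f \subseteq \Lambda$, and hence $(x_1, 1) = (x_1, x_2) - (0, x_2-1) \in \Lambda$. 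Thus $x_1 \in \mathfrak h_1 = \Lambda \cap \mathcal M_1$ (as $(x_1, 1) \in \Lambda$ forces the class $(x_1, 0)$ to lie in $\Lambda$ after subtracting the idempotent relation, or more directly, $(x_1 - 1, 0) = (x_1, 1) - (1,1) \in \Lambda \cap \mathcal M_1 = \mathfrak h_1$). Hence $x_1 - 1 \in \mathfrak h_1$ and $x_2 - 1 \in \mathfrak f_2 \subseteq \tilde{\mathfrak f}$, so $(\overline{x_1}, \overline{x_2}) \in \ker(\overline\pi)$, as desired.

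I do not expect any serious obstacle here; the lemma is essentially a bookkeeping exercise with the two ideals, and the only mild subtlety is keeping track of the fact that we are free to choose any lift of a class in $(\Lambda/\mathfrak f)^\times$ to $\Lambda$, and that $(\{0\} \oplus \mathfrak f_2) \subseteq \mathfrak f$ lets us normalise the second component to exactly $1$. One should be slightly careful to phrase things in terms of $\Lambda$ viewed inside $\Lambda_1 \oplus \Lambda_2$ via \eqref{identification2}, and to recall from \S\ref{subsec:rel-to-gen-Swan} that $\mathfrak f_1 \subseteq \mathfrak h_1$ (so $\mathfrak f \subseteq \tilde{\mathfrak f}$ and the map $\overline\pi$ makes sense), but none of this presents a real difficulty. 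The surjectivity of $\overline\pi$, already noted in the statement via \cite[Chapter III, (2.9) Corollary]{bass}, is not needed for the identification of kernels.
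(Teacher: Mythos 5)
Your proof is correct and is essentially the same argument as the paper's: the key content in both is that, for $(x_1,x_2)\in\Lambda$, the condition $x_2\equiv 1\bmod \mathfrak{f}_2$ already forces $x_1\equiv 1\bmod\mathfrak{h}_1$ (equivalently, the composition $\Lambda\xrightarrow{\pi_2}\Lambda_2\to\Lambda_2/\mathfrak{f}_2$ has kernel exactly $\tilde{\mathfrak{f}}$). The paper packages this as an isomorphism $\overline{\tau}\colon(\Lambda/\tilde{\mathfrak{f}})^\times\to(\Lambda_2/\mathfrak{f}_2)^\times$ factoring $\overline{\pi}_2=\overline{\tau}\circ\overline{\pi}$, whereas you check the two kernel inclusions element-wise; the "$x_1\in\mathfrak{h}_1$" in your penultimate paragraph is a transient slip that you immediately correct to $x_1-1\in\mathfrak{h}_1$.
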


\begin{proof}
Since the composition 
$\Lambda \stackrel{\pi_{2}}{\longrightarrow} \Lambda_{2} \longrightarrow \Lambda_{2}/\mathfrak{f}_{2}$
is surjective with kernel $\tilde{\mathfrak{f}}$, we obtain an isomorphism
$\overline{\tau} \colon ( \Lambda/\tilde{\mathfrak{f}} )^{\times} \longrightarrow 
(\Lambda_2/\mathfrak{f}_2)^{\times}$ making the diagram
\[ 
\begin{tikzcd}
& ( \Lambda/\tilde{\mathfrak{f}})^{\times} \arrow{d}{\overline{\tau}}  \\
( \Lambda/\mathfrak{f} )^{\times} \arrow{ru}{\overline{\pi}} \arrow{r}{\overline{\pi}_{2}} &
( \Lambda_{2}/\mathfrak{f}_{2} )^{\times}
\end{tikzcd}
\] 
commute. The claim now follows from $\Sigma = \ker(\overline{\pi}_{2})$.
\end{proof}

\begin{lemma}\label{lem:sigmaprimary}
There exist coprime primary ideals $\{\mathfrak{q}_1,\dotsc,\mathfrak{q}_{r + s}\}$ and $\{\tilde{\mathfrak{q}}_1,\dotsc,\tilde{\mathfrak{q}}_r\}$ of $\Lambda \cap C$ such that the following assertions hold:
\begin{enumerate}
\item
We have $\mathfrak{q}_i \subseteq \tilde{\mathfrak{q}}_i$ for $1 \leq i \leq r$.
\item
We have
\[ 
\Lambda/\mathfrak{f} \cong \prod_{i=1}^{r + s} \Lambda/(\mathfrak q_i \Lambda + \mathfrak{f}) \quad \text{and} \quad \Lambda/\tilde{\mathfrak{f}} \cong \prod_{i=1}^r \Lambda/(\tilde{\mathfrak{q}}_i \Lambda + \tilde{\mathfrak{f}}).
\] 
\item
We have
\[ 
\Sigma \cong \prod_{i=1}^r \ker(\tau_i) \times \prod_{i=1}^s (\Lambda/(\mathfrak{q}_{r + i}\Lambda + \mathfrak{f}))^{\times},
\] 
where 
$\tau_{i} \colon (\Lambda/(\mathfrak{q}_{i}\Lambda + \mathfrak{f}))^{\times} \longrightarrow (\Lambda/(\tilde{\mathfrak{q}}_{i}\Lambda + \tilde{\mathfrak{f}}))^{\times}$
are the canonical surjections.
\end{enumerate}
\end{lemma}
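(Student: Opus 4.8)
The plan is to reduce the entire statement to a primary decomposition carried out in the \emph{centre} of $A$, and then to transport the resulting central idempotents into $\Lambda/\frf$ and $\Lambda/\tilde\frf$.

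First I would work with $\Gamma := \Lambda \cap C = Z(\Lambda)$, which is an $\calO$-order in the \'etale $K$-algebra $C = Z(A)$ and which lies in the centre of $\Lambda$. Set $\frg' := \frf \cap \Gamma = \frf \cap C$ and $\tilde\frg := \tilde\frf \cap \Gamma = \tilde\frf \cap C$. Since $\frf$ and $\tilde\frf = \frh_1 \oplus \frf_2$ are proper full two-sided ideals of $\Lambda$ with $\frf \sseq \tilde\frf$, both $\frg'$ and $\tilde\frg$ are proper full (i.e.\ full rank) ideals of $\Gamma$ with $\frg' \sseq \tilde\frg$, and $\frg'\Lambda \sseq \frf$, $\tilde\frg\Lambda \sseq \tilde\frf$. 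Now I would apply Proposition~\ref{prop:primdec} (in the generality of Corollary~\ref{cor:algprimdec}, which drops the restriction $\Gamma = \OC$): let $\frp_1,\dots,\frp_r$ be the maximal ideals of $\Gamma$ containing $\tilde\frg$, with primary decomposition $\tilde\frq_i := \frp_i^{\tilde e_i} + \tilde\frg$. Every maximal ideal containing $\tilde\frg$ also contains $\frg'$, so the maximal ideals containing $\frg'$ are $\frp_1,\dots,\frp_r,\frp_{r+1},\dots,\frp_{r+s}$ for some further primes; using the freedom in Proposition~\ref{prop:primdec}(i) to enlarge exponents, I would take a primary decomposition $\frq_i := \frp_i^{e_i} + \frg'$ of $\frg'$ with $e_i \ge \tilde e_i$ for $1 \le i \le r$. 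Then $\frq_i = \frp_i^{e_i} + \frg' \sseq \frp_i^{\tilde e_i} + \tilde\frg = \tilde\frq_i$ for $1 \le i \le r$, which is assertion~(i).

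For assertion~(ii), the Chinese remainder theorem in $\Gamma$ produces central orthogonal idempotents $\epsilon_1,\dots,\epsilon_{r+s} \in \Gamma/\frg'$ summing to $1$ with $(1-\epsilon_i)(\Gamma/\frg') = \frq_i/\frg'$. Because $\Gamma$ is central in $\Lambda$ and $\frg'\Lambda \sseq \frf$, the images of the $\epsilon_i$ are central orthogonal idempotents of $\Lambda/\frf$ summing to $1$, and a short computation with the induced $\Gamma/\frg'$-module structure shows $(1-\epsilon_i)(\Lambda/\frf) = (\frq_i\Lambda + \frf)/\frf$; hence $\Lambda/\frf = \bigoplus_i (\Lambda/\frf)\epsilon_i \cong \prod_{i=1}^{r+s} \Lambda/(\frq_i\Lambda + \frf)$, and likewise $\Lambda/\tilde\frf \cong \prod_{i=1}^{r} \Lambda/(\tilde\frq_i\Lambda + \tilde\frf)$ using the idempotents $\tilde\epsilon_i \in \Gamma/\tilde\frg$. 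For assertion~(iii), recall from Lemma~\ref{lem:sigmachar} that $\Sigma = \ker\bigl(\overline\pi : (\Lambda/\frf)^\times \to (\Lambda/\tilde\frf)^\times\bigr)$, and that $\overline\pi$ is induced by the canonical surjection $\Gamma/\frg' \to \Gamma/\tilde\frg$. This surjection sends $\epsilon_i \mapsto \tilde\epsilon_i$ for $1 \le i \le r$ (as $\frq_i$ and $\tilde\frq_i$ both have radical $\frp_i$) and $\epsilon_{r+j} \mapsto 0$ for $1 \le j \le s$, since $\frp_{r+j} \not\supseteq \tilde\frg$ forces $\frq_{r+j} + \tilde\frg = \Gamma$ and hence $\frq_{r+j}\Lambda + \tilde\frf = \Lambda$. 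Under the decompositions of~(ii), $\overline\pi$ thus becomes the block map $(x_i)_{i=1}^{r+s} \mapsto (\tau_i(x_i))_{i=1}^{r}$, with $\tau_i : \Lambda/(\frq_i\Lambda+\frf) \to \Lambda/(\tilde\frq_i\Lambda+\tilde\frf)$ the canonical projection, which restricts to a surjection on unit groups because these finite rings are semilocal (see \cite[Chapter III, (2.9) Corollary]{bass}). Passing to unit groups and taking kernels gives $\Sigma \cong \prod_{i=1}^{r}\ker(\tau_i) \times \prod_{j=1}^{s}\bigl(\Lambda/(\frq_{r+j}\Lambda+\frf)\bigr)^\times$, which is~(iii).

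The step I expect to be the main obstacle is the identification in the last paragraph: carefully matching the central idempotents of $\Gamma/\frg'$ with those of $\Gamma/\tilde\frg$, and verifying that it is precisely the $s$ primary components of $\frg'$ absent from $\tilde\frg$ that get annihilated in $\Lambda/\tilde\frf$, so that $\overline\pi$ really is block-diagonal with respect to the product decompositions of~(ii). Everything else — properness and full rank of $\frg'$ and $\tilde\frg$, the elementary facts about coprime primary ideals, and the semilocality of finite rings — is routine.
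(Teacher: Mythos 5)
Your proposal is correct and follows essentially the same route as the paper: intersect $\frf$ and $\tilde\frf$ down to the central order $\Gamma = \Lambda\cap C$, take a primary decomposition of $\frg' := \frf\cap\Gamma$ indexed by the maximal ideals containing it, observe that discarding the $s$ primes not above $\tilde\frg := \tilde\frf\cap\Gamma$ gives a compatible primary decomposition, and then push through CRT and Lemma~\ref{lem:sigmachar}. The only notable organisational difference is cosmetic: the paper starts with a primary decomposition $\frg' = \prod\frq_i$ and \emph{defines} $\tilde\frq_i := \frq_i + \tilde\frg$ (which is automatically $\frp_i$-primary), whereas you construct $\tilde\frq_i$ first from $\tilde\frg$ and then enlarge exponents on the $\frq_i$-side to force $\frq_i\sseq\tilde\frq_i$ -- both are valid, though the paper's version sidesteps the (small, true, but unverified in your text) claim that enlarging exponents in Proposition~\ref{prop:primdec} still yields a primary decomposition. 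For part (ii), the paper cites \cite[Lemma 3.5]{bley-boltje} to get $\frf = \bigcap_i(\frq_i\Lambda+\frf)$ directly, while you re-derive the same splitting from the central idempotents of $\Gamma/\frg'$ and $\Gamma/\tilde\frg$; this costs a little more ink but makes the block-diagonal structure of $\overline\pi$ in part (iii) transparent, which the paper leaves implicit when it says (iii) ``follows from (ii) together with Lemma~\ref{lem:sigmachar}.''
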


\begin{proof}
Consider $\mathfrak{g} = \mathfrak{f} \cap C$ and $\tilde{\mathfrak{g}} = \mathfrak{\tilde{f}} \cap C$, which are full $(\Lambda \cap C)$-ideals with $\mathfrak{g} \subseteq \tilde{\mathfrak{g}}$.
Let $\mathfrak{p}_1,\dotsc,\mathfrak{p}_r$ and $\mathfrak{p}_1,\dotsc, \mathfrak{p}_{r + s}$ be the maximal ideals of $\Lambda \cap C$ containing $\tilde{\mathfrak{g}}$ and $\mathfrak{g}$, respectively.
Let now $\mathfrak{g} = \prod_{i=1}^{r + s} \mathfrak{q_i}$
be a primary decomposition, where $\mathfrak{q}_{i}$ is $\mathfrak{p}_{i}$-primary for 
$1 \leq i \leq r + s$.
Since the $\mathfrak{p}_{i}$ are maximal, it is clear
from \cite[Proposition (4.2)]{MR242802}
that $\tilde\frq_i := \mathfrak{q}_i + \mathfrak{\tilde{g}}$ is also $\mathfrak{p}_{i}$-primary
for $1 \leq i \leq r$.
Thus
\[
  \tilde{\mathfrak{g}} = \mathfrak{g} + \tilde{\mathfrak{g}} =
  \left( \prod_{i=1}^{r + s} \mathfrak{q}_i \right) + \tilde{\mathfrak{g}} =
   \prod_{i=1}^{r + s} \left(\mathfrak{q}_i  + \tilde{\mathfrak{g}}\right) =
 \prod_{i=1}^r \tilde{\mathfrak{q}}_i
\]
is a primary decomposition of $\tilde{\mathfrak{g}}$.
This immediately shows~(i).
For (ii), note that~\cite[Lemma~3.5]{bley-boltje} implies that
\[
  \mathfrak{f} = \bigcap_{i=1}^{r + s} (\mathfrak{q}_i \Lambda + \mathfrak f) \quad
  \text{ and }\quad\tilde{\mathfrak{f}} = \bigcap_{i=1}^{r} 
  (\tilde{\mathfrak{q}}_i \Lambda +\tilde{\mathfrak f}),
\]
and
thus the claim follows from the Chinese remainder theorem.
Finally, (iii) follows from (ii) together with Lemma~\ref{lem:sigmachar}.
\end{proof}

Since we know how to compute generators for each of the groups
$\left( \Lambda / (\frq_{r + i}\Lambda + \frf) \right)^{\times}$ for $i=1, \ldots, s$ by the results of
\S \ref{subsec:primary-dec},
it remains to consider the computation of $\ker(\tau_i)$ for $i = 1, \ldots,r$.
Each element in $\ker(\tau_i)$ is the image in
$(\Lambda/(\mathfrak{q}_{i}\Lambda + \mathfrak{f}))^{\times}$ of an element
of the form $1+\lambda$ with $\lambda \in  (\tilde\frq_i\Lambda + \tilde\frf)$. 
Note, however, that not every element in $1+(\tilde\frq_i\Lambda + \tilde\frf)$
is actually a unit mod $(\frq_i\Lambda + \frf)$.

\begin{lemma}\label{lem:singlesigma}
Let $\mathfrak{p}$ be a maximal ideal of $\Lambda \cap C$, 
let $\mathfrak{q}$ and $\tilde{\mathfrak{q}}$ be two $\mathfrak{p}$-primary ideals of 
$\Lambda \cap C$ with ${\mathfrak{q}} \subseteq \tilde{\mathfrak{q}}$, and let
\[ 
\tau \colon (\Lambda/(\mathfrak{q}\Lambda + \mathfrak{f}))^\times \longrightarrow (\Lambda/(\tilde{\mathfrak{q}}\Lambda + \tilde{\mathfrak{f}}))^{\times} 
\]
be the canonical projection.
Furthermore set
\[ 
\tilde\frQ := \tilde\frq\Lambda + \tilde\frf, \quad \frQ_1 := \tilde\frq\Lambda + \frf, \quad \frQ := \frq\Lambda + \frf.
\] 
Then the following assertions hold:
\begin{enumerate}
\item For every $\alpha \in \Lambda$ we have
\[
\alpha \text{ is invertible mod } \frQ \iff \alpha \text{ is invertible mod } \frQ_1.
\]
\item
Identifying $(1+\tilde\frQ)$ and $(1+\frQ_1)$ with their images in 
$\Lambda / \frQ$ we have
$\ker(\tau) = (1+\tilde\frQ) \cap \left( \Lambda / \frQ \right)^\times$
and the sequence
\[
0 
\longrightarrow \frac{1+\frQ_1}{1+\frQ} 
\longrightarrow \ker(\tau) 
\longrightarrow \frac{(1+\tilde\frQ) \cap \left( \Lambda / \frQ \right)^\times}{1+\frQ_1} 
\longrightarrow 0
\]
is exact.
\end{enumerate}
\end{lemma}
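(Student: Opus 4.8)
The plan is to deduce everything from one observation: writing $\frQ \subseteq \frQ_1 \subseteq \tilde\frQ$ (the first inclusion holds because $\frq \subseteq \tilde\frq$, the second because $\frf \subseteq \tilde\frf$), the two-sided ideal $\frQ_1/\frQ$ of the finite ring $\Lambda/\frQ$ is nilpotent, hence contained in the Jacobson radical $J(\Lambda/\frQ)$.

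First I would prove this nilpotency. Since $\Lambda \cap C$ is an $\mathcal{O}$-order in the \'etale $K$-algebra $C$ it is Noetherian, and $\frq$ is $\mathfrak{p}$-primary with $\mathfrak{p}$ maximal, so $\mathfrak{p}^{n} \subseteq \frq$ for some $n \geq 1$; since $\tilde\frq$ is $\mathfrak{p}$-primary we also have $\tilde\frq \subseteq \mathfrak{p}$, whence $\tilde\frq^{n} \subseteq \mathfrak{p}^{n} \subseteq \frq$. As the elements of $\tilde\frq$ are central in $\Lambda$, we get $(\tilde\frq\Lambda)^{n} \subseteq \tilde\frq^{n}\Lambda \subseteq \frq\Lambda$. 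Because $\frf$ is a two-sided ideal of $\Lambda$ with $\frf \subseteq \frQ$, expanding the product shows $\frQ_{1}^{n} = (\tilde\frq\Lambda + \frf)^{n} \subseteq (\tilde\frq\Lambda)^{n} + \frf \subseteq \frq\Lambda + \frf = \frQ$, so $\frQ_{1}/\frQ$ is a nilpotent ideal of $\Lambda/\frQ$ and therefore $\frQ_{1}/\frQ \subseteq J(\Lambda/\frQ)$.

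Part (i) then follows from the standard fact that for a ring $R$ and an ideal $I \subseteq J(R)$, an element of $R$ is a unit if and only if its image in $R/I$ is a unit: if $\alpha s \equiv s\alpha \equiv 1 \bmod I$ then $\alpha s$ and $s\alpha$ are of the form $1 - j$ with $j \in J(R)$, hence units of $R$, so $\alpha$ has both a left and a right inverse. Applying this with $R = \Lambda/\frQ$, $I = \frQ_{1}/\frQ$, and the identification $\Lambda/\frQ_{1} \cong (\Lambda/\frQ)/(\frQ_{1}/\frQ)$ gives (i). For part (ii), note that $\tau$ is by definition the restriction to unit groups of the canonical surjection $\Lambda/\frQ \twoheadrightarrow \Lambda/\tilde\frQ$, so $\ker(\tau) = \{\bar\alpha \in (\Lambda/\frQ)^{\times} : \bar\alpha - 1 \in \tilde\frQ/\frQ\}$, which is precisely $(1 + \tilde\frQ) \cap (\Lambda/\frQ)^{\times}$ under the stated identification. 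Since $\frQ_{1}/\frQ \subseteq J(\Lambda/\frQ)$, each $1 + j$ with $j \in \frQ_{1}/\frQ$ is a unit and $1 + \frQ_{1}/\frQ$ is a subgroup of $(\Lambda/\frQ)^{\times}$; this is exactly what the statement denotes by $\frac{1+\frQ_{1}}{1+\frQ}$, namely the image of $1+\frQ_{1}$ in $\Lambda/\frQ$ (with $1+\frQ$ collapsing to $1$). Because $\frQ_{1}/\frQ$ is two-sided it is stable under conjugation, so $\frac{1+\frQ_{1}}{1+\frQ}$ is normal in $\ker(\tau)$, and $\frQ_{1} \subseteq \tilde\frQ$ gives $\frac{1+\frQ_{1}}{1+\frQ} \leq \ker(\tau) = (1+\tilde\frQ)\cap(\Lambda/\frQ)^{\times}$ with quotient exactly $\frac{(1+\tilde\frQ)\cap(\Lambda/\frQ)^{\times}}{1+\frQ_{1}}$; this is the asserted exact sequence.

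I do not expect a serious obstacle here: the whole lemma rests on the containment $\frQ_{1}/\frQ \subseteq J(\Lambda/\frQ)$, and the only points that require care are using the centrality of $\frq$ and $\tilde\frq$ so that powers of $\tilde\frq\Lambda$ are controlled by powers of $\tilde\frq$, and invoking the Noetherianness of $\Lambda \cap C$ to pass from ``$\mathfrak{p}$-primary'' to ``$\mathfrak{p}^{n} \subseteq \frq$''. Everything after that is routine manipulation with unit groups of finite quotient rings.
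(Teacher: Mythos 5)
Your proof is correct and follows essentially the same route as the paper's. Both establish $\frQ_1^m \subseteq \frQ$ from $\tilde\frq^m \subseteq \frq$ (which the paper states directly and you derive via $\frp^n \subseteq \frq$ and $\tilde\frq \subseteq \frp$), and both exploit the resulting nilpotency of $\frQ_1/\frQ$ in $\Lambda/\frQ$; the paper proves (i) by writing out the geometric series $\sum\overline\xi^n$ explicitly, while you package the same computation as ``$\frQ_1/\frQ \subseteq J(\Lambda/\frQ)$, so units lift across it,'' and your unwinding of (ii) fills in what the paper dismisses with ``now follows easily from (i).''
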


\begin{proof}
(i) 
Since both $\tilde\frq$ and $\frq$ are $\frp$-primary,
there exists $m>0$ such that $\tilde\frq^m \subseteq \frq$.
Hence $\frQ_1^m \subseteq \frQ$ and every element of $\frQ_1/\frQ$ is nilpotent in $\Lambda/\frQ$. 
Suppose $\alpha$ is invertible mod $\frQ_1$. 
Then there exist $\beta \in \Lambda$ and $\xi \in \frQ_1$
such that $\alpha \beta = 1-\xi$. 
For $x \in \Lambda$, let $\overline{x}$ denote its image in  $\Lambda/\frQ$.
Then we have
\[
\overline{\alpha} \overline{\beta} \cdot \sum_{n=0}^{\infty} \overline{\xi}^{n} 
= (\overline{1} - \overline{\xi}) \cdot \sum_{n=0}^{\infty} \overline{\xi}^{n} = \overline{1}.
\]
The converse is clear. (ii) now follows easily from (i).
\end{proof}

\begin{remark}\label{rem:compsigma}
In conjunction with Lemma~\ref{lem:sigmaprimary}, 
Lemma~\ref{lem:singlesigma} yields the following method
for computing generators of $\Sigma$ (and hence of $S$).
Primary decompositions in $\Lambda \cap C$ can be determined using
Corollary~\ref{cor:algprimdec} and thus a decomposition
\[
\Sigma \cong \prod_{i=1}^r \ker(\tau_i) \times \prod_{i=1}^{s} 
(\Lambda/(\mathfrak{q}_{r + i}\Lambda + \mathfrak{f}))^{\times},
\] 
with suitable primary ideals $\mathfrak{q}_{i}, \tilde{\mathfrak q}_i$ as in Lemma~\ref{lem:sigmaprimary}~(iii) can be obtained.
For the groups $(\Lambda/(\mathfrak{q}_{r + i}\Lambda + \mathfrak{f}))^\times$, generators can be obtained 
by Corollary~\ref{cor:algunit}
and thus it suffices to determine each $\ker(\tau_i)$.
For $1 \leq i \leq r$ we set $\tau := \tau_i$ and fix the notation as in Lemma~\ref{lem:singlesigma}.
Since 
\begin{equation}\label{eq:ker-tau-ses}
1 
\longrightarrow \frac{1+\frQ_1}{1+\frQ} 
\longrightarrow \ker(\tau) 
\longrightarrow \frac{(1+\tilde\frQ) \cap \left( \Lambda / \frQ \right)^\times}{1+\frQ_1}
\longrightarrow 1  
\end{equation}
is exact, it suffices to find generators for the first and third group.
Generators for $ \frac{1+\frQ_1}{1+\frQ}$ can be computed as in \cite[\S 3.7]{bley-boltje}.
For the computation of generators of the nontrivial group on the right
hand side of \eqref{eq:ker-tau-ses}, the elements can be listed as follows:
determine a set of representatives of $\tilde\frQ / \frQ_1$
and check for each such representative $\alpha$ whether $1+\alpha$ 
is a unit mod $\frQ$.
\end{remark}

\subsection{A probabilistic algorithm}

In the previous section, the computation of $\Sigma$ respectively $S$ was reduced to the problem of enumerating all elements of quotients of the form $\tilde\frQ/\frQ_1$ for two-sided $\Lambda$-ideals $\tilde\frQ$ and $\frQ_1$.
Since the order of $\tilde\frQ/\frQ_1$ can be quite large, we present a probabilistic approach, which avoids enumeration.
We assume that $S' \subseteq S$ is a subgroup and consider the following composition, which we denote by $\sigma'$:
  \[ \calO_{C_1}^\times \longrightarrow \left( \calO_{C_1} / \frg_1 \right)^\times \longrightarrow
    {\left( \calO_{C_1} / \frg_1 \right)^\times} / S'.
\]
Replacing $S$ by $S'$ in Lemma~\ref{lem:fixeda2}, we obtain the following weaker statement.

\begin{lemma}\label{lem:fixeda2-sprime}
Let $X$ be a left ideal of $\Lambda$. 
Suppose that $X + \mathfrak{f} = \Lambda$ and that there exists $\beta \in \mathcal{M} \cap A^{\times}$ such that $\mathcal{M}X = \mathcal{M}\beta$.
Write $\beta=(\beta_{1},\beta_{2}) \in \mathcal{M}_{1} \oplus \mathcal{M}_{2}$.
Let $\overline{\alpha}_{2} \in \overline{\beta}_{2}^{-1} T_{2}$ and assume that 
$\overline{\gamma}_{1} \in  \left( \mathcal{M}_{1} / \mathfrak{f}_{1} \right)^{\times}$
satisfies
$(\overline{\gamma}_{1}, \overline{\alpha}_{2}) \in \left( \Lambda / \mathfrak{f} \right)^{\times}$.
If $\bnr (\overline{\beta}_{1} \overline{\gamma}_{1}) \bmod S' \in \im(\sigma')$, then $X$ is free over $\Lambda$.
\end{lemma}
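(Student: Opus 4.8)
The plan is to deduce this immediately from Lemma~\ref{lem:fixeda2}, using the elementary observation that enlarging the subgroup one quotients by can only enlarge the image of $\sigma$. Since $\calO_{C_{1}}/\frg_{1}$ is commutative, $(\calO_{C_{1}}/\frg_{1})^{\times}$ is abelian, so $S$ and $S' \subseteq S$ are automatically normal and there is a canonical surjective homomorphism
\[
q \colon (\calO_{C_{1}}/\frg_{1})^{\times}/S' \longrightarrow (\calO_{C_{1}}/\frg_{1})^{\times}/S .
\]
By the definitions of $\sigma$ and $\sigma'$ as the composites $\calO_{C_{1}}^{\times} \to (\calO_{C_{1}}/\frg_{1})^{\times} \to (\calO_{C_{1}}/\frg_{1})^{\times}/S$ and $\calO_{C_{1}}^{\times} \to (\calO_{C_{1}}/\frg_{1})^{\times} \to (\calO_{C_{1}}/\frg_{1})^{\times}/S'$, we have $\sigma = q \circ \sigma'$, and hence $q(\im(\sigma')) = \im(\sigma)$. (Here, as in Lemma~\ref{lem:fixeda2}~(ii), the hypothesis ``$\bnr(\overline{\beta}_{1}\overline{\gamma}_{1}) \in \im(\sigma')$'' is read as the statement that the class of $\bnr(\overline{\beta}_{1}\overline{\gamma}_{1})$ in $(\calO_{C_{1}}/\frg_{1})^{\times}/S'$ lies in $\im(\sigma')$.)

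First I would note that the hypotheses imposed on $X$, $\beta = (\beta_{1},\beta_{2})$, $\overline{\alpha}_{2}$ and $\overline{\gamma}_{1}$ are exactly those of Lemma~\ref{lem:fixeda2}; in particular, as in the opening line of that proof, $\mathcal{M} = \mathcal{M}(X+\mathfrak{f}) = \mathcal{M}\beta + \mathfrak{f}$ forces $\overline{\beta} \in (\mathcal{M}/\mathfrak{f})^{\times}$, so all the relevant expressions are defined. Next, assuming that $\bnr(\overline{\beta}_{1}\overline{\gamma}_{1}) \bmod S' \in \im(\sigma')$, I would apply $q$: since $q$ maps $\bnr(\overline{\beta}_{1}\overline{\gamma}_{1}) \bmod S'$ to $\bnr(\overline{\beta}_{1}\overline{\gamma}_{1}) \bmod S$ and sends $\im(\sigma')$ onto $\im(\sigma)$, it follows that $\bnr(\overline{\beta}_{1}\overline{\gamma}_{1}) \bmod S \in \im(\sigma)$. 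This is precisely assertion~(ii) of Lemma~\ref{lem:fixeda2}, so the equivalent assertion~(i) of that lemma holds, and its concluding sentence then yields that $X$ is free over $\Lambda$, as required.

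There is no real obstacle here: the statement is a one-line consequence of Lemma~\ref{lem:fixeda2} once the factorisation $\sigma = q \circ \sigma'$ is recorded. The only point that merits a moment's attention is bookkeeping of quotient groups, namely that replacing $S'$ by $S$ is compatible with the reduced-norm maps $\bnr$; this is clear, because $q$ is induced purely by the inclusion $S' \subseteq S$ and does not interact with $\bnr$.
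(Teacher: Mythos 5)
Your proposal is correct and matches the paper's intent: the paper simply says ``Replacing $S$ by $S'$ in Lemma~\ref{lem:fixeda2}, we obtain the following weaker statement,'' treating it as an immediate corollary, and your argument spells out exactly why this works — the canonical surjection $q\colon (\calO_{C_1}/\frg_1)^{\times}/S' \to (\calO_{C_1}/\frg_1)^{\times}/S$ satisfies $\sigma = q\circ\sigma'$, so membership mod $S'$ implies membership mod $S$, reducing directly to Lemma~\ref{lem:fixeda2}~(ii). Your remark that $(\calO_{C_1}/\frg_1)^{\times}$ is abelian (so $S'$ is automatically normal) is a sensible piece of bookkeeping that the paper leaves implicit.
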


This yields the following probabilistic variant of Algorithm~\ref{alg:hybrid}.

\begin{algorithm}
Suppose that $A_{1}$ satisfies the Eichler condition and that every simple component of $A_{2}$ is a totally definite quaternion algebra.
Let $X$ be a left ideal of $\Lambda$. 
Suppose that $X + \mathfrak{f} = \Lambda$ and that there exists $\beta \in \mathcal{M} \cap A^{\times}$ such that $\mathcal{M}X = \mathcal{M}\beta$.
If the following algorithm returns \ensuremath{\mathsf{true}}, then $X$ is free over $\Lambda$;
if it returns \ensuremath{\mathsf{inconclusive}}, then nothing can be said about whether
$X$ is free over $\Lambda$.
\renewcommand{\labelenumi}{(\arabic{enumi})}
\begin{enumerate}
\item Compute $T_2$.
\item Compute $S'$. 
\item Compute $\im(\sigma')$.
\item
For each $\overline{\alpha}_{2} \in \overline{\beta}_{2}^{-1} T_{2}$ do the following:
\begin{enumerate}
\item
Compute $\overline{\gamma}_{1} \in \left( \calM_1/\frf_1\right)^{\times}$ such that 
$(\overline{\gamma}_{1}, \overline{\alpha}_{2}) \in \left( \Lambda / \frf \right)^{\times}$ or, 
if no such $\overline{\gamma}_{1}$ exists,
continue with the next $\overline{\alpha}_{2}$.
\item
If $\bnr(\overline{\beta}_{1} \overline{\gamma}_{1}) \in \im (\sigma') \bmod S'$, return \ensuremath{\mathsf{true}}.
\end{enumerate}
\item
Return \ensuremath{\mathsf{inconclusive}}.
\end{enumerate}
\end{algorithm}

\begin{proof}[Proof of correctness]
The correctness of the output follows from Lemma~\ref{lem:fixeda2-sprime}.
\end{proof}

\begin{proof}[Further details on each step]
Apart from Step (2), all steps are essentially the same as in Algorithm~\ref{alg:hybrid}.
Step (2) can be performed as follows. 
Recall that $S = \bnr(\overline{\pi}_{1}(\Sigma))$. 
Thus in order to find a subgroup $S'$ of $S$ it is sufficient to find a subgroup of
\[ 
\Sigma = 
\ker((\Lambda/\mathfrak{f})^{\times} \longrightarrow (\Lambda_{2}/\mathfrak{f}_{2})^{\times})). 
\] 
Random elements of $\Sigma$ can be found by sampling preimages of 
$\overline{1} \in \Lambda_{2}/\mathfrak{f}_{2}$ under the canonical map 
$\Lambda/\mathfrak{f} \longrightarrow \Lambda_{2}/\mathfrak{f}_{2}$
(a homomorphism of finitely generated abelian groups) and discarding the non-invertible elements.
In this way we obtain a sequence of elements
$(\overline{\alpha}_{i})_{i \in \Z_{\geq 1}}$ in $\Sigma$
and determine $S' := \langle \bnr(\overline{\pi}_{1}(\overline{\alpha}_{i})) \mid i \leq B \rangle$, where 
$B \in \Z_{\geq 1}$ is either a fixed predetermined bound or chosen dynamically until $S'$ stabilises.
\end{proof}

\section{Further results on SFC for integral group rings}\label{sec:further-SFC-for-int-group-rings}

The following result is Theorem~\ref{intro-thm:new-groups-with-SFC} from the introduction.

\begin{theorem}\label{thm:new-groups-with-SFC}
Let $G$ be one of the following finite groups:
\begin{equation}\label{eq:list-groups-have-SFC}
\tilde{T} \times C_{2}, \, 
\tilde{T} \times Q_{12}, \, 
\tilde{T} \times Q_{20}, \,
Q_{8} \rtimes \tilde{T}, \,
Q_{8} \rtimes Q_{12}, \,
\tilde{I} \times C_{2}, \, 
G_{(192, 183)}, \,
G_{(384, 580)}, \,
G_{(480, 962)}.
\end{equation}
Then $\Z[G]$ has SFC.
\end{theorem}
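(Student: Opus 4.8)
The plan is to prove each group in \eqref{eq:list-groups-have-SFC} has SFC by combining the reduction criteria already established, choosing for each group a fibre product (typically via a central subgroup of order $2$) that reduces the problem to a component whose SFC status is already known or computable. For $G = \tilde{T} \times C_{2}$ this is exactly Theorem \ref{thm:tildeT-times-C2-has-SFC}, so nothing new is needed. For $G = \tilde{I} \times C_{2}$, the plan is to apply the same strategy as for $\tilde{T} \times C_{2}$: use Proposition \ref{prop:ZHtimesC2} with $H = \tilde{I}$ (noting $\Z[\tilde{I}]$ has SFC by Theorem \ref{thm:bp-canc-group-rings}), compute the three class-group orders $|\Cl(\Z[\tilde{I}])|$, $|\Cl(\Z[\tilde{I} \times C_{2}])|$ via the Bley--Boltje algorithm \cite{bley-boltje}, and $|\F_{2}[\tilde{I}]^{\times}|$ via Remark \ref{rmk:units-jac-radical}, then produce enough explicit units of $\Z[\tilde{I}]$ (e.g.\ Ritter--Segal units $u_{g,h}, u'_{g,h}$) so that their images in $\F_{2}[\tilde{I}]^{\times}$ generate a subgroup of $V_{\tilde{I}}$ of the size dictated by the inequality in Proposition \ref{prop:ZHtimesC2}; equality then forces SFC.

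For the remaining six groups the plan is to invoke Algorithm \ref{alg:sfc-fibreproduct} (equivalently, Corollary \ref{cor:Eichler-W-consequences}) with a judicious choice of fibre product $\Lambda = \Z[G]$, $\Lambda = \{(x_1,x_2) : g_1(x_1) = g_2(x_2)\}$ in which $A_1$ satisfies the Eichler condition and $A_2$ (a sum of totally definite quaternion algebras) has SFC. Concretely, for $\tilde{T} \times Q_{12}$, $\tilde{T} \times Q_{20}$, $Q_{8} \rtimes \tilde{T}$, and $Q_{8} \rtimes Q_{12}$, one takes $N \trianglelefteq G$ of order $2$, sets $H = G/N$ and $\Gamma = \Z[G]/\mathrm{Tr}_N\Z[G]$, and tries first the cheap test of Proposition \ref{prop:ZG-SFC-iff-Gamma-SFC}: if $\Z[H \times C_2]$ already has SFC (which it will for these $H$, by the cases above and Theorems \ref{thm:direct-products}, \ref{thm:Nicholson-C}) and the class-group equation \eqref{eq:quotients-class-groups} holds, then SFC for $\Z[G]$ follows from SFC for $\Gamma$, and $\Gamma$ sits in a smaller algebra to which the same machinery (or Algorithm \ref{alg:SFC-naive}) applies recursively. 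For $G_{(192,183)}$, $G_{(384,580)}$, $G_{(480,962)}$ the direct decomposition of $\Z[G]$ into an Eichler part plus totally definite quaternion part will likely not exist, so here one genuinely runs Algorithm \ref{alg:sfc-fibreproduct}: pick $I, J$ with $I \cap J = 0$, $K(I+J) = A$, $A_1 = K(\Lambda/I)$ Eichler, $A_2 = K(\Lambda/J)$ totally definite quaternion; verify $\mathcal{M}_2$ and $\Lambda_2$ have SFC (via Corollary \ref{cor:sfc-alg-maxord} and Theorem \ref{thm:Smertnig-Voight-alg}, or recursively); compute $\overline{\Lambda}_2^\times$ and its abelianisation (the primary-decomposition and unit-group algorithms of \S\ref{sec primary decomposition}); compute $I_{\mathfrak{g}}/P_{\mathfrak{g}}^+$ and $\tilde\nu((\Lambda/\mathfrak{f})^\times)$ via \cite{bley-boltje, MR4493243}; extract generators $\overline{\gamma}_1, \dots, \overline{\gamma}_r$ of $\ker(\nu_2')/[\overline{\Lambda}_2^\times, \overline{\Lambda}_2^\times]$; and for each lift $\beta_i$ test whether $X_{(1,\beta_i)}$ is free over $\Lambda$ using the hybrid freeness test of \S\ref{sec:hybrid-method}. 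By Lemma \ref{lem:kerpartial}, Proposition \ref{prop:mu-gen-swan}, and Corollary \ref{cor:Eichler-W-consequences} with $N = [\overline{\Lambda}^\times, \overline{\Lambda}^\times]$, if every such test returns \textsf{true} then $\Z[G]$ has SFC.

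The main obstacle will be the computational feasibility of the last bullet for the large groups, especially $G_{(384,580)}$ and $G_{(480,962)}$: the orders live in $\Z$-algebras of dimension $384$ and $480$, so (i) choosing a fibre product for which $A_2$ is small and $\overline{\Lambda}_2^\times$ is tractable, (ii) computing $\overline{\Lambda}_2^\times$ as an abstract group (often the bottleneck, per the remarks on Step (9) of Algorithm \ref{alg:sfc-fibreproduct} and \S\ref{subsec:unit-groups-finite-rings}), and (iii) running the freeness test of Algorithm \ref{alg:hybrid} — whose enumeration set has size $|T_2|$ — are all delicate. The write-up will therefore, for each group, record the specific fibre product used, the relevant orders of class groups and finite unit groups, and state that the verifications were carried out on a computer algebra system (Magma \cite{Magma} or Oscar \cite{Oscar}), citing the algorithms of the preceding sections; the theoretical content is entirely contained in Corollary \ref{cor:Eichler-W-consequences} and Proposition \ref{prop:ZG-SFC-iff-Gamma-SFC}, with the novelty lying in the explicit choices that make the computations terminate.
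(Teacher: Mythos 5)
Your proposal is correct in overall strategy and matches the paper's: reduce via fibre products (Proposition~\ref{prop:ZG-SFC-iff-Gamma-SFC} when it applies, Algorithm~\ref{alg:sfc-fibreproduct} in general), verify freeness of finitely many test lattices, and lean on the class-group and unit-group algorithms of the preceding sections. The one genuine tactical divergence is $\tilde{I}\times C_{2}$: you propose to copy the $\tilde{T}\times C_{2}$ argument verbatim, using Proposition~\ref{prop:ZHtimesC2} with $N = \{1\}\times C_2$ (so $H=\tilde{I}$) and generating $V_{\tilde{I}}\subseteq\F_{2}[\tilde{I}]^{\times}$ by Ritter--Segal units. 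The paper instead takes $N$ to be the central $C_{2}\leq\tilde{I}$, giving $H\cong A_{5}\times C_{2}$; since $\Q[H]$ (and hence $\Q[H\times C_2]$) satisfies the Eichler condition, $\Z[H\times C_{2}]$ has LFC for free by Jacobinski, the class-group equation \eqref{eq:quotients-class-groups} is verified numerically, and the problem reduces to a $60$-dimensional order $\Gamma$ rather than an explicit search inside $\GL_{120}(\F_{2})$. Your version is not wrong in principle, but the paper's choice of $N$ is cheaper; in practice the $\F_{2}[\tilde{I}]^{\times}$ computation you propose would be substantially harder than the $\F_{2}[\tilde{T}]^{\times}$ one, and is avoided. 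Two smaller remarks: first, for $Q_{8}\rtimes\tilde{T}$ the class-group equality \eqref{eq:quotients-class-groups} \emph{fails} (the paper says so explicitly), so the cheap test you propose trying first would be uninformative and you would have to fall back to Algorithm~\ref{alg:sfc-fibreproduct} with $\Lambda_{1}=\Z[H]$, $\Lambda_{2}=\Gamma$; you anticipate this contingency but do not flag that it is actually forced here. Second, the paper streamlines all these verifications by first observing that every binary polyhedral quotient of each listed $G$ appears in the list \eqref{eq:list-bp-max}, so Corollary~\ref{cor:LFC-max-order-group-rings} shows every relevant maximal order already has LFC; this lets one skip Step~(3) of Algorithm~\ref{alg:sfc-fibreproduct} throughout, which your write-up would do well to record.
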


\begin{remark}
In order, the first six groups listed in \eqref{eq:list-groups-have-SFC} are isomorphic to 
\[
G_{(48,32)}, \, G_{(288,409)}, \, G_{(480,266)}, \, G_{(192,1022)}, \,  
G_{(96, 66)}, \, G_{(240,94)}.
\]
\end{remark}

\begin{proof}[Proof of Theorem~\ref{thm:new-groups-with-SFC}]
Further details on the computations used in this proof can be found in 
Appendix~\ref{appendix}.

The result for $\tilde{T} \times C_{2}$ is Theorem~\ref{thm:tildeT-times-C2-has-SFC}.
Note that $\tilde{T} \times Q_{12}$ and $\tilde{T} \times Q_{20}$ both have quotients
isomorphic to $\tilde{T} \times C_{2}$, so Theorem~\ref{thm:tildeT-times-C2-has-SFC} can be 
recovered from Lemma~\ref{lem:quot-group-ring} and the result for either $\tilde{T} \times Q_{12}$ or $\tilde{T} \times Q_{20}$.

Let $G$ be any group listed in \eqref{eq:list-groups-have-SFC}
and let $\mathcal{M}$ be any maximal $\Z$-order in $\Q[G]$. 
Then every binary polyhedral quotient of $G$ is isomorphic to 
$Q_{12}$, $Q_{20}$, $\tilde{T}$, $\tilde{O}$, or $\tilde{I}$,
and thus $\mathcal{M}$ has LFC by Corollary~\ref{cor:LFC-max-order-group-rings}.
Hence every direct summand of $\mathcal{M}$ has LFC by 
Corollary~\ref{cor:cancellation-direct-product}. 
Thus in all applications of Algorithm~\ref{alg:sfc-fiberproduct} below, it is not necessary to perform Step (3)
as the maximal orders in question are already known to have LFC (and thus SFC).

For $G=Q_{8} \rtimes Q_{12}$, $\tilde{T} \times Q_{12}$, $\tilde{T} \times Q_{20}$, or $\tilde{I} \times C_{2}$, 
there exists a normal subgroup $N$ of order $2$, such that $H:=G/N$
is isomorphic to $\GL_{2}(\F_{3})$, $G_{(144,127)}$, $G_{(240,108)}$, or $C_{2} \times A_{5}$, respectively.
In each case, $\Q[H]$ satisfies the Eichler condition and this implies that 
$\Q[H \times C_{2}]$ does as well.
Thus $\Z[H \times C_{2}]$ has LFC by Theorem~\ref{thm:Jacobinski-cancellation}.
Let $\Gamma = \Z[G] / \mathrm{Tr}_{N} \Z[G]$.
The orders of the locally free class groups of $\Z[H]$, $\Z[G]$, $\Z[H \times C_{2}]$
and $\Gamma$ are shown in the table below, and from this it is easily verified that equality \eqref{eq:quotients-class-groups} holds in each case.
Therefore Proposition~\ref{prop:ZG-SFC-iff-Gamma-SFC}
shows that $\Z[G]$ has SFC if only if $\Gamma$ has SFC.

\begin{table}[H]
\begin{tabular}{ccccccc}
$G$ & $H$ & $\lvert \Cl(\Z[G])\rvert$ & $\lvert \Cl(\Z[H]) \rvert$ &  $\lvert \Cl(\Z[H \times C_2]) \rvert$ &  $\lvert \Cl(\Gamma) \rvert$ \\ \toprule
$Q_8 \rtimes Q_{12}$ & $\GL_{2}(\F_{3}) $ & $2^9$ & $2$ & $2^7$ &  $2^3$  \\
$\tilde{T} \times Q_{12}$ & $G_{(144,127)}$ & $2^{28} \cdot 3^7$ & $2^5 \cdot 3^3$ & $2^{23} \cdot 3^7$ & $2^{10} \cdot 3^3$ \\
$\tilde{T} \times Q_{20}$ & $G_{(240,108)}$ &$2^{41} \cdot 3^3$ & $2^9 \cdot 3$ & $2^{34} \cdot 3^3$ & $2^{16} \cdot 3$ \\
$\tilde{I} \times C_{2}$ & $C_{2} \times A_{5}$ & $2^{10}$ & $2$ & $2^6$ & $2^5$ \\
\bottomrule
\end{tabular}
\end{table}

For $G=Q_{8} \rtimes \tilde{T}$, there is a normal subgroup $N$ of order $2$
such that $H:=G/N$ is isomorphic to $G_{(96,204)}$. 
It can easily be checked that $\Q[H]$ satisfies the Eichler condition.
Let $\Gamma = \Z[G] / \mathrm{Tr}_{N} \Z[G]$.
Unfortunately, Proposition~\ref{prop:ZG-SFC-iff-Gamma-SFC} cannot be applied in this situation
because the required equality \eqref{eq:quotients-class-groups} does not hold. 
However, $\Z[G]$ can be shown to have SFC if and only if $\Gamma$ has SFC by 
an application of 
Algorithm~\ref{alg:sfc-fiberproduct} with $\Lambda_{1}=\Z[H]$ and $\Lambda_{2}=\Gamma$.
Note that $\overline{\Lambda}=\F_{2}[H]$, and so an explicit embedding of $\overline{\Lambda}^{\times}$ into $\GL_{96}(\F_{2})$ can be determined as explained in 
Remark~\ref{rmk:embed-units-into-GLFp} (see also \S \ref{subsec:unit-groups-finite-rings}).
Crucially, this allows computations in $\overline{\Lambda}^{\times}$ to be performed
reasonably efficiently.

Therefore, for $G=Q_{8} \rtimes Q_{12}$, $Q_{8} \rtimes \tilde{T}$, 
$\tilde{T} \times Q_{12}$, $\tilde{T} \times Q_{20}$, or $\tilde{I} \times C_{2}$, we have shown that 
$\Z[G]$ has SFC if and only if $\Gamma$ has SFC.
This is highly advantageous from a computational point of view, since 
$\dim_{\Q} \Q \otimes_{\Z} \Gamma = \frac{1}{2}\dim_{\Q} \Q[G] = \frac{1}{2}|G|$.
To verify that $\Gamma$ has SFC, we again apply Algorithm~\ref{alg:sfc-fiberproduct}, 
this time with the fiber product induced by the decomposition
$\mathbb{Q} \otimes_{\mathbb{Z}} \Gamma = B_{1} \oplus B_{2}$, with $B_{1}$ 
the largest component of $\mathbb{Q} \otimes_{\mathbb{Z}} \Gamma$ 
satisfying the Eichler condition (equivalently, with $B_{2}$ the largest component that is the direct sum of totally definite quaternion algebras). 
Computations in $\overline{\Lambda}^{\times}$ are performed by first computing an explicit embedding into the symmetric
group $\mathfrak{S}_{d}$ for some $d \in \Z_{>0}$ (see \S \ref{subsec:unit-groups-finite-rings}).
This is less efficient than working with an embedding into $\GL_{n}(\F_{p})$
for appropriate choices of $n$ and $p$, but such an embedding does not exist in general.

For $G=G_{(192,183)}$ or $G_{(480,962)}$, there exists a normal subgroup $N$ of $G$ such that 
$H:=G/N$ is isomorphic to $Q_{8} \rtimes Q_{12}$ and so $\Z[H]$ has SFC by the result for $Q_{8} \rtimes Q_{12}$.
Let $\Gamma = \Z[G] / \mathrm{Tr}_{N} \Z[G]$.
It can easily be checked that $\Q \otimes_{\Z} \Gamma$ satisfies the
Eichler condition and thus $\Gamma$ has LFC by Theorem~\ref{thm:Jacobinski-cancellation}.
Then $\Z[G]$ can be shown to have SFC by an application of Algorithm~\ref{alg:sfc-fiberproduct}
with $\Lambda_{1}=\Gamma$ and $\Lambda_{2}=\Z[H]$. 
Note that $\overline{\Lambda}=\F_{p}[H]$ for $p=2$ or $5$, which allows 
computations in $\overline{\Lambda}^{\times}$ to be performed
reasonably efficiently, as explained above. 

For $G=G_{(384,580)}$ there exists a normal subgroup 
$N$ of $G$ such that 
$H:=G/N$ is isomorphic to $G_{(192,183)}$ and so $\Z[H]$ has SFC by the result for $G_{(192,183)}$.
Then the same method as in the previous paragraph with $\overline{\Lambda}=\F_{2}[H]$
shows that $\Z[G]$ has SFC.
\end{proof}

\begin{remark}
The computations involved in the proof of Theorem~\ref{thm:new-groups-with-SFC} 
rely on the algorithm of \cite[\S 3]{bley-boltje} to compute locally free class groups and 
on the new freeness testing methods of \S \ref{sec:hybrid-method}. 
These in turn rely on the computation of ray class groups and of unit groups of rings of integers of number fields.
Due to reasons of efficiency, state-of-the-art algorithms to perform these computations yield solutions that are provably correct under the assumption that the Generalized Riemann Hypothesis (GRH) holds.
In the proof of Theorem~\ref{thm:new-groups-with-SFC}, the number fields involved are the character fields of the irreducible complex characters of the finite groups $G$ listed in \eqref{eq:list-groups-have-SFC}:
\begin{align*}
\Q, \quad \Q(i), \quad \Q(\sqrt{-3}), \quad \Q(\sqrt 5), \quad \Q(\sqrt 2), \quad \Q(\sqrt{-2}), \\ \Q(\zeta_8), \quad \Q(\sqrt{-15}), \quad  \Q(\zeta_{12}), 
\quad \Q(\sqrt{-3}, \sqrt{5}), \quad \Q(\zeta_{16}).     
\end{align*}
As these fields are of small degree and discriminant, the necessary computations can be performed unconditionally (or can be found in the literature) and hence Theorem~\ref{thm:new-groups-with-SFC} does not depend on GRH.
\end{remark}

\begin{theorem}\label{thm:part-class-1023}
Let $G$ be a finite group with $|G| \leq 1023$.
Let 
\begin{multline*}
\Sigma =  
\{
3595, 3599, 3704, 3716, 3719, 3721, 3722, 4179, 4487, 4490, 
5069, 5324, 5331, 5332, \\ 5338, 5339, 5340, 5346, 5347, 5348, 5349, 
8602, 8604, 8605, 8610, 8611, 8612, 8614 \}. 
\end{multline*}
Suppose that $G$ is not isomorphic to any of the following groups:
\renewcommand{\labelenumi}{(\alph{enumi})}
\begin{enumerate}
\item $G_{(384,m)}$ for $m \in \{ 572, 577, 579, 5867, 18129, 18138, 18228 \}$,
\item $G_{(480,222)}$,
\item $G_{(576,m)}$ for $m \in \{ 1447, 2006 \}$, 
\item $G_{(720,417)}$,
\item $G_{(768,m+1080000)}$ for 
$m \in \Sigma$,
\item $G_{(864,m)}$ for $m \in \{ 690, 692, 717, 731, 732, 4053, 4123 \}$, or
\item $G_{(960,m)}$ for $m \in \{ 642, 805, 5752 \}$.
\end{enumerate}
\renewcommand{\labelenumi}{(\roman{enumi})}
Then $\Z[G]$ has SFC if and only if $G$ has no quotient isomorphic to
\begin{enumerate}
\item $Q_{4n} \times C_{2}$ for $2 \leq n \leq 5$,
\item $\tilde{T} \times C_{2}^{2}$, $\tilde{O} \times C_{2}$, $\tilde{I} \times C_{2}^{2}$,
\item $G_{(32,14)}$, $G_{(36,7)}$, $G_{(64,14)}$, $G_{(100,7)}$, or
\item $Q_{4n}$ for $6 \leq n \leq 255$.
\end{enumerate} 
\end{theorem}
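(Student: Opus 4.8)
The plan is to establish the two implications separately; the ``only if'' direction is immediate and the ``if'' direction is a computer-assisted case analysis built on the reduction criteria already developed. For the ``only if'' direction, suppose $G$ has a quotient of type (i), (ii), (iii) or (iv). The groups in (i)--(iii) are exactly those listed in Theorem~\ref{thm:new-group-rings-fail-SFC}, and (iv) is $Q_{4n}$ for $6 \le n \le 255$, which is a subfamily of $Q_{4n}$ with $n \ge 6$; in either case $\Z[G]$ fails SFC by Corollary~\ref{cor:group-ring-fail-SFC}. (The upper bound $n \le 255$ is not an extra hypothesis: any quotient of $G$ has order at most $|G| \le 1023$, so $4n \le 1023$ is automatic.)

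For the ``if'' direction, let $G$ be a group with $|G| \le 1023$ that is not isomorphic to any of the $49$ groups in (a)--(g) and has no quotient of type (i)--(iv); the goal is to show $\Z[G]$ has SFC. First I would dispose of the easy cases via the $\mathbb{H}$-multiplicity: if $m_{\mathbb{H}}(G) = 0$ then $\Q[G]$ satisfies the Eichler condition and $\Z[G]$ has LFC, hence SFC, by Theorem~\ref{thm:Jacobinski-cancellation}; if $m_{\mathbb{H}}(G) = 1$ then $\Z[G]$ has SFC by Theorem~\ref{thm:Nicholson-C}. So one may assume $m_{\mathbb{H}}(G) \ge 2$, and it remains to treat the finitely many such $G$ with $|G| \le 1023$ and no quotient of type (i)--(iv), enumerated using the GAP small groups library.

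For each such $G$ I would apply a hierarchy of reductions before resorting to direct computation. If $G$ has a proper quotient $H$ with $m_{\mathbb{H}}(H) = m_{\mathbb{H}}(G)$ for which $K_1(\Z[H])$ is represented by units --- checkable via Proposition~\ref{prop:Nicholson-D} when $SK_1(\Z[H]) = 0$ and $r_{\R}(H) = r_{\Q}(H)$, and in particular when $H$ is binary polyhedral --- then Theorem~\ref{thm:Nicholson-A} (or Theorem~\ref{thm:Nicholson-B}) reduces the question to $\Z[H]$, settled inductively or, for $H$ binary polyhedral, by Theorem~\ref{thm:bp-canc-group-rings}. If $G$ splits as $G' \times H$ with $\Z[G']$ already known to have SFC, $\Q[H]$ satisfying the Eichler condition and $H$ having no subgroup of index $2$, then Theorem~\ref{thm:direct-products} applies; the case $G \cong \tilde{T}^m \times \tilde{I}^n$ is covered by Theorem~\ref{thm:TnxIm}; and the groups of Theorem~\ref{thm:new-groups-with-SFC} together with their lifts via Corollary~\ref{cor:lifts-of-tildeTxC2} are dispatched by those results. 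Any $G$ not handled this way is fed into Algorithm~\ref{alg:sfc-fibreproduct}, typically first with the fibre product induced by a normal subgroup $N$ of order $2$ (so one component is $\Z[G/N]$, already shown to have SFC, and the other is $\Z[G]/\mathrm{Tr}_N\Z[G]$, of half the dimension), and then recursively with the fibre product coming from the decomposition of the relevant $\Q$-algebra into its largest Eichler component and a totally definite quaternionic component; in every instance occurring here the algorithm returns \textsf{true}, so $\Z[G]$ has SFC.

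The $49$ groups in (a)--(g) are precisely those $G$ with $m_{\mathbb{H}}(G) \ge 2$ and no quotient of type (i)--(iv) for which none of the above reductions applies and Algorithm~\ref{alg:sfc-fibreproduct} does not terminate within feasible computational resources; their SFC status is left open, and one verifies that any group of order at most $1023$ having one of them as a quotient is itself on the list, so that the statement is internally consistent under passing to quotients. For every other $G$ with $|G| \le 1023$ the argument above yields $\Z[G]$ has SFC, and combining the two implications gives the stated equivalence. The main obstacle is computational rather than conceptual: one must arrange the case analysis so that the theoretical criteria (Jacobinski, Nicholson, Swan) settle the overwhelming majority of the groups, leaving Algorithm~\ref{alg:sfc-fibreproduct} to handle only a residual set. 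Within that algorithm the bottleneck is the computation of the unit group $\overline{\Lambda}^{\times}$ of the large finite ring $\overline{\Lambda} = \Lambda/(I+J)$ (Step~(9)), followed by the freeness tests of Step~(16); even after the order-$2$ fibre product halves the ambient dimension, these computations become intractable exactly for the $49$ excluded groups, which is why they are set aside.
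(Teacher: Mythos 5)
Your proposal captures the right framework --- a computer-assisted enumeration over the GAP small groups library, applying a fixed list of reduction criteria, with the ``only if'' direction handled by Corollary~\ref{cor:group-ring-fail-SFC} --- and the handling of conditions (i)--(iv) and the bound $n\le 255$ are correct. However, the structure of the ``if'' direction does not match the paper, and the deviation is where the gap lies.

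The paper's proof of Theorem~\ref{thm:part-class-1023} never invokes Algorithm~\ref{alg:sfc-fibreproduct} at all. It runs a short list of purely theoretical checks over all $G$ with $|G|\le 1023$: the quotient test for failure, $m_{\mathbb{H}}(G)\le 1$ via Theorem~\ref{thm:Nicholson-C}, a binary polyhedral quotient with equal $\mathbb{H}$-multiplicity via Theorem~\ref{thm:Nicholson-B}, a quotient $\tilde{T}\times C_{2}$ with $m_{\mathbb{H}}(G)=2$ via Corollary~\ref{cor:lifts-of-tildeTxC2}, $G\cong\tilde{T}\times\tilde{T}$ via Theorem~\ref{thm:TnxIm}, and $G\cong H\times K$ with $H$ from Theorem~\ref{thm:new-groups-with-SFC} and $K\in\{C_{3},C_{5},C_{7},C_{9},C_{3}^{2}\}$ via Theorem~\ref{thm:direct-products}. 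The $49$ excluded groups are declared to be exactly the leftover from this sieve; the paper makes no claim about what Algorithm~\ref{alg:sfc-fibreproduct} would return on them, feasibly or otherwise. (The algorithm is used only to establish Theorem~\ref{thm:new-groups-with-SFC}, which is then consumed here as a black box.) Your plan instead proposes a further fallback: feed residual groups into Algorithm~\ref{alg:sfc-fibreproduct} and assert that ``in every instance occurring here the algorithm returns \textsf{true}.'' That assertion is unverified and does not appear in the paper; it would require actually running the algorithm on every non-excluded group not covered by theory, which is precisely the expensive work the paper's proof is designed to avoid. Likewise your characterization of the $49$ excluded groups as ``those for which the algorithm does not terminate within feasible computational resources'' is not what the paper claims --- they are simply those not covered by the listed theoretical criteria. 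You also invoke the more general Theorem~\ref{thm:Nicholson-A} via Proposition~\ref{prop:Nicholson-D}; the paper restricts itself to the two specific consequences (Theorem~\ref{thm:Nicholson-B} and Corollary~\ref{cor:lifts-of-tildeTxC2}), which is what makes the sieve a finite, explicit check. To close the gap, replace the algorithmic fallback step with the claim (verified by the GAP/Magma computation) that the theoretical criteria alone already cover all $G$ with $|G|\le 1023$ outside the list (a)--(g), so that no further algorithmic runs are needed.
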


\begin{proof}
We use the GAP \cite{GAP4} small groups database \cite{MR1935567}, which in particular, contains all groups $G$ with $|G| \leq 1023$.
For each such $G$, we have performed the following checks
using a computer program whose implementation is discussed in Appendix~\ref{appendix}.
\begin{itemize}
\item If $m_{\mathbb{H}}(G) \leq 1$ then $\Z[G]$ has SFC by Theorem~\ref{thm:Nicholson-C}.
\item If $G$ has a quotient isomorphic to any of the groups listed in (i), (ii), (iii), or (iv), 
then $\Z[G]$ fails SFC by Corollary~\ref{cor:group-ring-fail-SFC}.
\item If $G$ has a binary polyhedral quotient $H$ listed in Theorem~\ref{thm:bp-canc-group-rings} such that 
  $m_{\mathbb{H}}(G)=m_{\mathbb{H}}(H)$ then $\Z[G]$ has SFC by Theorem~\ref{thm:Nicholson-B}.
\item If $G$ has a quotient isomorphic to  $\tilde{T} \times C_{2}$ and 
$m_{\mathbb{H}}(G)=2$ then $\Z[G]$ has SFC by Corollary~\ref{cor:lifts-of-tildeTxC2}.
\item If $G$ is any group listed in \eqref{eq:list-groups-have-SFC} 
then $\Z[G]$ has SFC by Theorem~\ref{thm:new-groups-with-SFC}.
\item If $G \cong G_{(576,5128)} \cong \tilde{T} \times \tilde{T}$ then 
$\Z[G]$ has SFC by Theorem~\ref{thm:TnxIm}.
\item If $G \cong H \times K$ where $H$ is any group listed in \eqref{eq:list-groups-have-SFC} and $K$ is any of $C_{3}, C_{5}, C_{7}, C_{9}$ or $C_{3}^{2}$, then $\Z[G]$ has SFC by 
Theorems~\ref{thm:new-groups-with-SFC} and \ref{thm:direct-products}.
\end{itemize}
The program outputs the groups $G$ with $|G| \leq 1023$ for which
the question of whether $\Z[G]$ has SFC is not determined by any
of the above conditions. These groups are listed in (a)--(g) above. 
\end{proof}

The following result is Theorem~\ref{intro-thm:classification-383} from the introduction.

\begin{corollary}\label{cor:classification-383}
Let $G$ be a finite group with $|G| \leq 383$.  
Then $\Z[G]$ has SFC if and only if $G$ has no quotient of the form
\begin{enumerate}
\item $Q_{4n} \times C_{2}$ for $2 \leq n \leq 5$,
\item $\tilde{T} \times C_{2}^{2}$, $\tilde{O} \times C_{2}$, $\tilde{I} \times C_{2}^{2}$,
\item $G_{(32,14)}$, $G_{(36,7)}$, $G_{(64,14)}$, $G_{(100,7)}$, or
\item $Q_{4n}$ for $6 \leq n \leq 95$.
\end{enumerate} 
\end{corollary}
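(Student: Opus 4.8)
The plan is to derive the corollary as an immediate consequence of Theorem~\ref{thm:part-class-1023}, whose statement already covers all groups of order at most $1023$ apart from the $49$ explicitly listed exceptions in its parts (a)--(g). The first step is to observe that every one of these exceptional groups has order at least $384$: the orders occurring are $384$, $480$, $576$, $720$, $768$, $864$ and $960$, which can be read off directly from the GAP small groups labels $G_{(n,m)}$. Hence if $|G| \leq 383$, then $G$ is not isomorphic to any of the groups in parts (a)--(g) of Theorem~\ref{thm:part-class-1023}, and so that theorem applies to $G$ without exception: $\Z[G]$ has SFC if and only if $G$ has no quotient isomorphic to one of the groups enumerated in (i)--(iv) of Theorem~\ref{thm:part-class-1023}.

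The second step is to reconcile the list of forbidden quotients in Theorem~\ref{thm:part-class-1023} with the list in the corollary. Conditions (i)--(iii) are verbatim identical in the two statements, so only condition (iv) needs attention. If $H$ is a quotient of $G$ with $H \cong Q_{4n}$, then $|H| = 4n$ divides $|G|$, so in particular $4n \leq |G| \leq 383$, forcing $n \leq 95$. Consequently, under the hypothesis $|G| \leq 383$, the group $G$ admits a quotient isomorphic to $Q_{4n}$ for some $6 \leq n \leq 255$ if and only if it admits one for some $6 \leq n \leq 95$. Substituting this equivalence into the conclusion of Theorem~\ref{thm:part-class-1023} gives exactly the statement of the corollary. (One may also note that a few of the forbidden quotients listed, such as $\tilde I \times C_2^2$ of order $480$, can never occur when $|G| \leq 383$; including them does no harm and keeps the formulation uniform with the case $|G| \leq 1023$.)

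There is no genuine obstacle at this stage: the entire substance of the argument is already packaged in Theorem~\ref{thm:part-class-1023}, whose proof in turn rests on the classification results of Swan, Chen and Nicholson recalled in~\S\ref{sec:review-canc-int-grp-rings} together with Corollary~\ref{cor:group-ring-fail-SFC}, Theorem~\ref{thm:new-groups-with-SFC}, Corollary~\ref{cor:lifts-of-tildeTxC2}, Theorems~\ref{thm:TnxIm} and~\ref{thm:direct-products}, and the algorithmic machinery of the preceding sections. The only point requiring any care is the bookkeeping check that none of the $49$ exceptional groups has order below $384$, which is immediate from their labels; everything else is a matter of comparing two lists. Thus the proof reduces to the two short observations above.
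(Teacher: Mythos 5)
Your proposal is correct and is essentially the only route from Theorem~\ref{thm:part-class-1023} to the corollary, which is also what the paper implicitly does (the paper states the corollary without a separate proof). The two observations you make are exactly what is needed: all $49$ exceptional groups in parts (a)--(g) have order at least $384$, so they never interfere when $|G|\leq 383$; and since any quotient $H\cong Q_{4n}$ of $G$ has $|H|=4n$ dividing $|G|\leq 383$, one has $n\leq 95$, so the range $6\leq n\leq 255$ in condition (iv) of the theorem may be replaced by $6\leq n\leq 95$ without changing the truth value.
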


\appendix

\section{Implementation of code including data and timings}\label{appendix}

All code used in this article is available at \texttt{github.com/thofma/TestingSFC.jl/}. 
All timings below are in seconds and are for computations performed using 
a single core of  a server with an Intel Xeon 6226R 2.90GHz CPU and 384GB RAM.
The software used was \textsc{OSCAR} 1.4.1 \cite{Oscar,Oscar-book}
(which includes \textsc{Hecke} \cite{Fieker2017} and \textsc{GAP} \cite{GAP4})
and \textsc{Magma} V2.28-7 \cite{Magma}.

\subsection{Computations of locally free class groups}
Locally free class groups of orders can be computed using the algorithm of \cite[\S 3]{bley-boltje} implemented in~\textsc{OSCAR}. 
The timings of the computations used in the proofs of Theorems~\ref{thm:tildeT-times-C2-has-SFC} and \ref{thm:new-groups-with-SFC} are given in Table~\ref{tab:classgroups}.
Recall that in each case apart from that in the first row, 
we have $H=G/N$ and $\Gamma = \Z[G] / \mathrm{Tr}_{N} \Z[G]$,
where $N$ is a certain normal subgroup of $G$ of order $2$.

\begin{table}[h]
\caption{Locally free class group computations}
\vspace{1em}
\begin{tabular}{cccccc}
$G$ & $H$ &  $\Cl(\Z[G])$ & $\Cl(\Z[H])$ & $\Cl(\Z[H \times C_2])$ & 
$\Cl(\Gamma)$ \\ \toprule
-- & $\tilde{T}$ & -- & 3s & 3s & -- \\
$Q_8 \rtimes Q_{12}$ & $\mathrm{GL}_{2}(\F_{3})$ & 17s & 1s & 19s & 1s \\
$\tilde{T} \times Q_{12}$ & $G_{(144,127)}$ & 2012s & 112s & 2205s & 115s \\
$\tilde{T} \times Q_{20}$ & $G_{(240,108)}$ & 6446s & 604s & 9337s & 1479s \\
$\tilde{I} \times C_{2}$ & $C_{2} \times A_{5}$ & 769s & 68s & 1027s & 85s \\
\bottomrule
\label{tab:classgroups}
\end{tabular}
\end{table}

\subsection{The proof of Theorem~\ref{thm:new-group-rings-fail-SFC}}
In Table~\ref{tab:notsfc} we present details on the computations, which were performed using \textsc{OSCAR}.
Recall that for each group $G$ under consideration, we find a quotient $\Gamma$ of $\Z[G]$, for which failure of SFC is shown using Algorithm~\ref{alg:SFC-fail}.
We report the dimension of $\Q \Gamma \cong \Q \otimes_{\Z} \Gamma$ and the overall run time.

\begin{table}[h]
\caption{Proof of failure of SFC for $\Z[G]$ using Algorithm~\ref{alg:SFC-fail}}
\label{tab:notsfc}
\begin{tabular}{ccc}
    \toprule
    $G$ & $\dim(\Q \Gamma)$ & total time \\
    \midrule
    $Q_{8} \times C_2$       & 16 & 121s \\
    $Q_{12} \times C_2$      & 16 & 131s \\
    $Q_{16} \times C_2$      & 24 & 136s \\
    $Q_{20} \times C_2$      & 24 & 234s \\
    $\tilde{T} \times C_2^2$ & 28 & 210s \\
    $\tilde{O} \times C_2$   & 48 & 5740s \\
    $\tilde{I} \times C_2^2$ & 36 & 1443s \\
    $G_{(32, 14)}$             & 20 & 131s \\
    $G_{(36, 7)}$              & 20 & 133s \\
    $G_{(64, 14)}$             & 24 & 152s \\
    $G_{(100, 7)}$             & 24 & 124s \\
    \bottomrule
\end{tabular}
\end{table}

\subsection{The proof of Theorem~\ref{thm:new-groups-with-SFC}}

We present details on the computations, which were performed using both~\textsc{OSCAR} and~\textsc{Magma}.
Table~\ref{tab:reduction} contains information on the application of Algorithm~\ref{alg:sfc-fiberproduct} to the respective integral group ring 
$\Lambda=\Z[G]$.
Recall that in each case, 
we have $H=G/N$ and $\Gamma = \Z[G] / \mathrm{Tr}_{N} \Z[G]$,
where $N$ is a certain normal subgroup of $G$ of order $2$.
The column labeled `reduced to' refers to the order, for which the SFC property is shown to be equivalent to the SFC property of $\Z[G]$.
The columns grouped together under `(9)--(10)' refer to computations of $\overline{\Lambda}^{\times \mathrm{ab}}$ in Steps~(9)--(10) of Algorithm~\ref{alg:sfc-fiberproduct}.
(Recall that in the notation of Algorithm~\ref{alg:sfc-fiberproduct},
we have that $\overline{\Lambda}_{2} \cong \overline{\Lambda}$.)
We give a description of the finite ring $\overline{\Lambda}$, 
a list $[d_1^{n_1},\dotsc,d_r^{n_r}]$ such that $\overline{\Lambda}^{\times  \mathrm{ab}}$ is isomorphic to
\[
\Z/d_1^{n_1}\Z \times \dotsb \times \Z/d_r^{n_r}\Z,
\]
and the run time of these steps, respectively.
The columns grouped together under `(13)--(16)' refer to freeness checks of test lattices in Steps~(13)--(16) of Algorithm~\ref{alg:sfc-fiberproduct}.
We report the number $r$ of test lattices checked for freeness and the run time for these steps respectively.
Finally, the column labeled `total time' contains the total run time of Algorithm~\ref{alg:sfc-fiberproduct}.

Table~\ref{tab:gammasfc}  contains information on the proof of the SFC property for $\Gamma$ using Algorithm~\ref{alg:sfc-fiberproduct},
where $\Gamma$ is the order for which the SFC property is equivalent to the SFC property of $\Z[G]$.
The fiber product used is obtained from the `maximal Eichler splitting' of 
$\Q \Gamma \cong \Q \otimes_{\Z} \Gamma$ which is induced by the decomposition
$\mathbb{Q} \Gamma = B_{1} \oplus B_{2}$, with $B_{1}$ 
the largest component of $\mathbb{Q}\Gamma$ 
satisfying the Eichler condition.
The columns are similar to the columns of Table~\ref{tab:reduction}, except
that we now also include information on Step~(7), which checks whether the projection $\Lambda_2$ of $\Lambda$ onto $B_{2}$ has SFC.
We report both $\dim(\Q \Lambda_2) = \dim(B_2)$ and the run time of this step.

\begin{table}[h]
  \centering
  \caption{Reduction of the SFC property for $\Lambda = \Z[G]$ via Algorithm~\ref{alg:sfc-fiberproduct}}
  \label{tab:reduction}
  \begin{tabular}{ccccccccc}
    \toprule
     & & & \multicolumn{3}{c}{(9)--(10)} & \multicolumn{2}{c}{(13)--(16)} & \\
    \cmidrule(lr){4-6}
    \cmidrule(lr){7-8}
                         $G$ & $H$ & reduced to & $\overline\Lambda$ & $\overline{\Lambda}^{\times \mathrm{ab}}$ & time & $r$ & time & total time\\
    \midrule
    $Q_8 \rtimes \tilde{T}$ & $G_{(96,204)}$ & $\Gamma$ & $\F_2[H]$ & $[2^6, 12]$ & 625s & 2 & 362s & 1080s  \\
    $G_{(192,183)}$ & $Q_8 \rtimes Q_{12}$ & $\Z[H]$ & $\F_2[H]$ & $[2^{10}, 4, 8]$ & 686s & 4 & 810s & 1612s  \\
    $G_{(480,962)}$ & $Q_8 \rtimes Q_{12}$ & $\Z[H]$ & $\F_2[H]$ &  $[4^{12}, 24^2]$ & 636s &  11 & 67730s & 104619s \\
    $G_{(384,580)}$ & $G_{(192,183)}$ & $\Z[H]$ & $\F_2[H]$ & $[2^{17}, 4^4, 8^2]$ & 38750s & 10 & 107968s & 152376s \\
    \bottomrule
  \end{tabular}
\end{table}

\begin{table}[H]
  \centering
  \caption{Proof of SFC property for $\Lambda = \Gamma$ using Algorithm~\ref{alg:sfc-fiberproduct} and the `maximal Eichler splitting'}
  \label{tab:gammasfc}
  \begin{tabular}{cccccccccc}
    \toprule
    & & \multicolumn{2}{c}{(9)--(10)} & \multicolumn{2}{c}{(13)--(16)} & \multicolumn{2}{c}{(7)} & \\
    \cmidrule(lr){3-4}
    \cmidrule(lr){5-6}
    \cmidrule(lr){7-8}

                       $G$ & $\dim(\Q\Gamma)$ & $\overline{\Lambda}^{\times \mathrm{ab}}$ & time & $r$ & time & $\dim(\Q\Lambda_2)$ & time & total time\\
    \midrule
    $Q_8 \rtimes Q_{12}$ & 48 & $[2^2, 4, 8^2]$ & 51s & 5 & 9s &12 & 27s &  201s \\
    $\tilde{T} \times Q_{12}$ & 144 & $[2^6, 6^3, 24]$ & 16s & 4 & 380s & 12 & 27s & 532s\\
    $\tilde{T} \times Q_{20}$ & 240 & $[2^8, 4^4, 12, 14]$ & 10183s & 8 & 6800s & 12 & 255s & 19098s\\
    $\tilde{I} \times C_2$ & 120 & $[2^2, 4^2, 8, 24]$ & 28s & 5 & 210s & 16 & 88s& 485s \\
    $Q_8 \rtimes \tilde{T}$ & 96 & $[2^5, 12]$ & 5100s & 2 & 480s & 8 & 9s & 7055s \\
    \bottomrule
  \end{tabular}
\end{table}

\subsection{The proof of Theorem~\ref{thm:part-class-1023}}\label{subsec:comps-part-class-1023}
There are two implementations of the checks described in this proof. 
One is written in \textsc{Magma} and the other 
written in \textsc{OSCAR}.
For technical reasons, there are in fact two separate programs in each case:
one for $|G|=512$, and another for $|G| \leq 1023$ and $|G| \neq 512$.

\bibliography{SFC_WriteUp}{}

\newcommand{\etalchar}[1]{$^{#1}$}
\providecommand{\bysame}{\leavevmode\hbox to3em{\hrulefill}\thinspace}
\providecommand{\MR}{\relax\ifhmode\unskip\space\fi MR }
\providecommand{\MRhref}[2]{%
  \href{http://www.ams.org/mathscinet-getitem?mr=#1}{#2}
}
\providecommand{\href}[2]{#2}
\begin{thebibliography}{DEF{\etalchar{+}}25}

\bibitem[ADOS87]{MR0870729}
R.~C. Alperin, R.~K. Dennis, R.~Oliver, and M.~R. Stein, \emph{{$SK_1$} of
  finite abelian groups. {II}}, Invent. Math. \textbf{87} (1987), no.~2,
  253--302.

\bibitem[AM69]{MR242802}
M.~F. Atiyah and I.~G. Macdonald, \emph{Introduction to commutative algebra},
  Addison-Wesley Publishing Co., Reading, Mass.-London-Don Mills, Ont., 1969.

\bibitem[Bas65]{MR193120}
H.~Bass, \emph{The {D}irichlet unit theorem, induced characters, and
  {W}hitehead groups of finite groups}, Topology \textbf{4} (1965), 391--410.

\bibitem[Bas68]{bass}
\bysame, \emph{Algebraic {$K$}-theory}, W. A. Benjamin, Inc., New
  York-Amsterdam, 1968.

\bibitem[BB06]{bley-boltje}
W.~Bley and R.~Boltje, \emph{Computation of locally free class groups},
  Algorithmic Number Theory (F.~Hess, S.~Pauli, and M.~Pohst, eds.), Lecture
  Notes in Computer Science, no. 4076, Springer, 2006, pp.~72--86.

\bibitem[BCNS15]{Braun2015}
O.~Braun, R.~Coulangeon, G.~Nebe, and S.~Sch{\"o}nnenbeck, \emph{Computing in
  arithmetic groups with {V}orono\"\i 's algorithm}, J. Algebra \textbf{435}
  (2015), 263--285.

\bibitem[BCP97]{Magma}
W.~Bosma, J.~Cannon, and C.~Playoust, \emph{The {M}agma algebra system. {I}.
  {T}he user language}, vol.~24, 1997, Computational algebra and number theory
  (London, 1993), pp.~235--265.

\bibitem[BE05]{bley-endres}
W.~Bley and W.~Endres, \emph{Picard groups and refined discrete logarithms},
  LMS J. Comput. Math. \textbf{8} (2005), 1--16.

\bibitem[BEO02]{MR1935567}
H.~U. Besche, B.~Eick, and E.~A. O'Brien, \emph{A millennium project:
  constructing small groups}, Internat. J. Algebra Comput. \textbf{12} (2002),
  no.~5, 623--644.

\bibitem[BHJ22]{MR4493243}
W.~Bley, T.~Hofmann, and H.~Johnston, \emph{Computation of lattice isomorphisms
  and the integral matrix similarity problem}, Forum Math. Sigma \textbf{10}
  (2022), Paper No. e87, 36.

\bibitem[BJ11]{BJ11}
W.~Bley and H.~Johnston, \emph{Computing generators of free modules over orders
  in group algebras {II}}, Math. Comp. \textbf{80} (2011), no.~276, 2411--2434.

\bibitem[CH03]{MR1962794}
J.~J. Cannon and D.~F. Holt, \emph{Automorphism group computation and
  isomorphism testing in finite groups}, J. Symbolic Comput. \textbf{35}
  (2003), no.~3, 241--267.

\bibitem[Che86]{ChenPhD}
H.-F. Chen, \emph{The locally free cancellation property of the group ring
  {$\mathbb{Z}[G]$}}, Ph.D. thesis, University of Illinois at Urbana-Champaign,
  1986.

\bibitem[Coh00]{MR1728313}
H.~Cohen, \emph{Advanced topics in computational number theory}, Graduate Texts
  in Mathematics, vol. 193, Springer-Verlag, New York, 2000.

\bibitem[Cou94]{MR1313877}
J.~Cougnard, \emph{Un anneau d'entiers stablement libre et non libre},
  Experiment. Math. \textbf{3} (1994), no.~2, 129--136.

\bibitem[Cou98]{MR1827291}
\bysame, \emph{Anneaux d'entiers stablement libres sur {$\Bbb Z[H_8\times
  C_2]$}}, J. Th\'{e}or. Nombres Bordeaux \textbf{10} (1998), no.~1, 163--201.
  \MR{1827291}

\bibitem[CR81]{curtisandreiner_vol1}
C.~W. Curtis and I.~Reiner, \emph{Methods of representation theory. {V}ol.
  {I}}, John Wiley \&\ Sons, Inc., New York, 1981.

\bibitem[CR87]{curtisandreiner_vol2}
\bysame, \emph{Methods of representation theory. {V}ol. {II}}, John Wiley \&
  Sons, Inc., New York, 1987.

\bibitem[DEF{\etalchar{+}}25]{Oscar-book}
W.~Decker, C.~Eder, C.~Fieker, M.~Horn, and M.~Joswig (eds.), \emph{The
  {C}omputer {A}lgebra {S}ystem {OSCAR}: {A}lgorithms and {E}xamples}, 1 ed.,
  Algorithms and {C}omputation in {M}athematics, vol.~32, Springer, 2025.

\bibitem[FHHJ17]{Fieker2017}
C.~Fieker, W.~Hart, T.~Hofmann, and F.~Johansson, \emph{Nemo/{H}ecke: computer
  algebra and number theory packages for the {J}ulia programming language},
  I{SSAC}'17---{P}roceedings of the 2017 {ACM} {I}nternational {S}ymposium on
  {S}ymbolic and {A}lgebraic {C}omputation, ACM, New York, 2017, pp.~157--164.

\bibitem[FR85]{Friedl1985}
K.~Friedl and L.~R\'{o}nyai, \emph{Polynomial time solutions of some problems
  of computational algebra}, Proceedings of the Seventeenth Annual ACM
  Symposium on Theory of Computing (New York, NY, USA), STOC '85, Association
  for Computing Machinery, 1985, p.~153–162.

\bibitem[Fri00]{friedrichs}
C.~Friedrichs, \emph{Berechnung von {M}aximalordnungen \"uber
  {D}edekindringen}, Ph.D. thesis, Technische Universit\"at Berlin, 2000.

\bibitem[Fr{\"{o}}75]{MR376619}
A.~Fr{\"{o}}hlich, \emph{Locally free modules over arithmetic orders}, J. Reine
  Angew. Math. \textbf{274(275)} (1975), 112--124.

\bibitem[GAP24]{GAP4}
The GAP~Group, \emph{{GAP -- Groups, Algorithms, and Programming, Version
  4.13.0}}, 2024.

\bibitem[HEO05]{MR2129747}
D.~F. Holt, B.~Eick, and E.~A. O'Brien, \emph{Handbook of computational group
  theory}, Discrete Mathematics and its Applications (Boca Raton), Chapman \&
  Hall/CRC, Boca Raton, FL, 2005.

\bibitem[HJ20]{MR4136552}
T.~Hofmann and H.~Johnston, \emph{Computing isomorphisms between lattices},
  Math. Comp. \textbf{89} (2020), no.~326, 2931--2963.

\bibitem[HM06]{MR2244802}
E.~Hallouin and C.~Maire, \emph{Cancellation in totally definite quaternion
  algebras}, J. Reine Angew. Math. \textbf{595} (2006), 189--213.

\bibitem[Jac68]{MR251063}
H.~Jacobinski, \emph{Genera and decompositions of lattices over orders}, Acta
  Math. \textbf{121} (1968), 1--29.

\bibitem[Joh03]{MR2012779}
F.~E.~A. Johnson, \emph{Stable modules and the {$D(2)$}-problem}, London
  Mathematical Society Lecture Note Series, vol. 301, Cambridge University
  Press, Cambridge, 2003.

\bibitem[Kar92]{MR1183469}
G.~Karpilovsky, \emph{Group representations. {V}ol. 1. {P}art {A}},
  North-Holland Mathematics Studies, vol. 175, North-Holland Publishing Co.,
  Amsterdam, 1992, Background material.

\bibitem[KV10]{Kirschmer2010}
M.~Kirschmer and J.~Voight, \emph{Algorithmic enumeration of ideal classes for
  quaternion orders}, SIAM J. Comput. \textbf{39} (2010), no.~5, 1714--1747.

\bibitem[Len92]{Lenstra1992}
H.~W. Lenstra, Jr., \emph{Algorithms in algebraic number theory}, Bull. Amer.
  Math. Soc. (N.S.) \textbf{26} (1992), no.~2, 211--244.

\bibitem[MS22]{F2SL2F3}
S.~Maheshwari and R.K. Sharma, \emph{A note on units in
  $\mathbb{F}_{q}\mathrm{SL}(2,\mathbb{Z}_{3})$}, Ukr. Math. J. \textbf{73}
  (2022), 1331--1337.

\bibitem[Neu99]{Neukirch1999}
J.~Neukirch, \emph{Algebraic number theory}, Grundlehren der Mathematischen
  Wissenschaften, vol. 322, Springer-Verlag, Berlin, 1999.

\bibitem[Nic21a]{MR4299591}
J.~Nicholson, \emph{A cancellation theorem for modules over integral group
  rings}, Math. Proc. Cambridge Philos. Soc. \textbf{171} (2021), no.~2,
  317--327.

\bibitem[Nic21b]{MR4302168}
\bysame, \emph{On {CW}-complexes over groups with periodic cohomology}, Trans.
  Amer. Math. Soc. \textbf{374} (2021), no.~9, 6531--6557.

\bibitem[Nic24a]{MR4788715}
\bysame, \emph{Cancellation for {$(G,n)$}-complexes and the {S}wan finiteness
  obstruction}, Int. Math. Res. Not. IMRN (2024), no.~16, 11832--11864.

\bibitem[Nic24b]{nicholson2024cancellation}
\bysame, \emph{The cancellation property for projective modules over integral
  group rings}, arXiv:2406.08692, 2024.

\bibitem[NS09]{Nebe2009}
G.~Nebe and A.~Steel, \emph{Recognition of division algebras}, J. Algebra
  \textbf{322} (2009), no.~3, 903--909.

\bibitem[Oli81]{MR618310}
R.~Oliver, \emph{{$SK_{1}$} for finite group rings. {III}}, Algebraic
  {$K$}-theory, {E}vanston 1980 ({P}roc. {C}onf., {N}orthwestern {U}niv.,
  {E}vanston, {I}ll., 1980), Lecture Notes in Math., vol. 854, Springer,
  Berlin, 1981, pp.~299--337.

\bibitem[Oli88]{MR933091}
\bysame, \emph{Whitehead groups of finite groups}, London Mathematical Society
  Lecture Note Series, vol. 132, Cambridge University Press, Cambridge, 1988.

\bibitem[OSC25]{Oscar}
\emph{O{SCAR} -- {O}pen {S}ource {C}omputer {A}lgebra {R}esearch system,
  {V}ersion 1.4.1}, 2025.

\bibitem[Rei03]{Reiner2003}
I.~Reiner, \emph{Maximal orders}, London Mathematical Society Monographs,
  vol.~28, The Clarendon Press Oxford University Press, Oxford, 2003.

\bibitem[RS91]{MR987166}
J.~Ritter and S.~K. Sehgal, \emph{Construction of units in integral group rings
  of finite nilpotent groups}, Trans. Amer. Math. Soc. \textbf{324} (1991),
  no.~2, 603--621.

\bibitem[RU74]{MR349828}
I.~Reiner and S.~Ullom, \emph{A {M}ayer-{V}ietoris sequence for class groups},
  J. Algebra \textbf{31} (1974), 305--342.

\bibitem[Sho28]{MR1512510}
K.~Shoda, \emph{\"{U}ber die {A}utomorphismen einer endlichen {A}belschen
  {G}ruppe}, Math. Ann. \textbf{100} (1928), no.~1, 674--686.

\bibitem[Sme15]{MR3403458}
D.~Smertnig, \emph{A note on cancellation in totally definite quaternion
  algebras}, J. Reine Angew. Math. \textbf{707} (2015), 209--216.

\bibitem[SV19]{MR4105795}
D.~Smertnig and J.~Voight, \emph{Definite orders with locally free
  cancellation}, Trans. London Math. Soc. \textbf{6} (2019), no.~1, 53--86.

\bibitem[Swa80]{MR584612}
R.~G. Swan, \emph{Strong approximation and locally free modules}, Ring theory
  and algebra, {III} ({P}roc. {T}hird {C}onf., {U}niv. {O}klahoma, {N}orman,
  {O}kla., 1979), Lecture Notes in Pure and Appl. Math., vol.~55, Dekker, New
  York, 1980, pp.~153--223.

\bibitem[Swa83]{MR703486}
\bysame, \emph{Projective modules over binary polyhedral groups}, J. Reine
  Angew. Math. \textbf{342} (1983), 66--172.

\bibitem[Vig76]{MR429841}
M.-F. Vign\'{e}ras, \emph{Simplification pour les ordres des corps de
  quaternions totalement d\'{e}finis}, J. Reine Angew. Math. \textbf{286(287)}
  (1976), 257--277.

\bibitem[Wal74]{MR376746}
C.~T.~C. Wall, \emph{Norms of units in group rings}, Proc. London Math. Soc.
  (3) \textbf{29} (1974), 593--632.

\end{thebibliography}
\bibliographystyle{amsalpha}

\end{document}